\documentclass[10pt]{amsart}
\usepackage[utf8]{inputenc}
\usepackage[T1]{fontenc}
\usepackage[english]{babel}
\title[Entropy and drift in relatively hyperbolic groups]{Entropy and drift for word metrics on relatively hyperbolic groups}
\author{Matthieu Dussaule and Ilya Gekhtman}

\date{}
\usepackage{silence}
\usepackage{comment}
\usepackage{amsmath}
\usepackage{amssymb}
\usepackage{amsfonts}
\usepackage{amsthm}
\usepackage{tikz}
\usepackage{graphicx}
\usepackage{fancyhdr}
\usepackage{caption}
\usepackage{enumerate}
\usepackage{mathtools}
\usepackage[colorlinks=true,pdfstartview=FitH,bookmarks=false]{hyperref}
\hypersetup{urlcolor=black,linkcolor=black,citecolor=black,colorlinks=true}

\newcommand\N{\mathbb{N}}
\newcommand\Z{\mathbb{Z}}

\newcommand\R{\mathbb{R}}

\theoremstyle{definition}
\newtheorem{hyp}{Assumption}

\theoremstyle{plain}
\newtheorem{definition}{Definition}[section]
\newtheorem{proposition}[definition]{Proposition}
\newtheorem{corollary}[definition]{Corollary}
\newtheorem{theorem}[definition]{Theorem}
\newtheorem{lemma}[definition]{Lemma}
\newtheorem*{thm*}{Theorem}
\newtheorem*{prop*}{Proposition}
\newtheorem*{lem*}{Lemma}

\theoremstyle{remark}
\newtheorem{remark}[definition]{Remark}

\newtheorem*{rem*}{Remark}

\DeclareMathOperator{\Cay}{Cay}
\DeclareMathOperator{\Conv}{Conv}
\DeclareMathOperator{\Diag}{Diag}
\DeclareMathOperator{\supp}{supp}
\DeclareMathOperator{\Tr}{Tr}
\DeclareMathOperator{\diam}{diam}

\begin{document}

\begin{abstract}
    We are interested in the Guivarc'h inequality for admissible random walks on finitely generated relatively hyperbolic groups, endowed with a word metric.
    We show that for random walks with finite super-exponential moment, if this inequality is an equality, then the Green distance is roughly similar to the word distance, generalizing results of Blach\`ere, Ha\"issinsky and Mathieu for hyperbolic groups \cite{BlachereHassinskyMathieu2}.
    Our main applications are for relatively hyperbolic groups with some virtually abelian parabolic subgroup of rank at least 2, relatively hyperbolic groups with spherical Bowditch boundary, and free products with at least one virtually nilpotent factor.
    We show that for such groups, the Guivarc'h inequality with respect to a word distance and a finitely supported random walk is always strict.
\end{abstract}

\maketitle

\section{Introduction}
\subsection{Asymptotic properties of random walks}
Consider a finitely generated group $\Gamma$ together with a probability measure $\mu$ on $\Gamma$.
The $\mu$-random walk starting at the neutral element $e$ of $\Gamma$ is defined as $\omega_n=g_1\cdots g_n$, where $g_k$ are independent random variables distributed as $\mu$.
The law of $\omega_n$ is given by the $n$-th convolution power $\mu^{*n}$ of $\mu$.
We say that such a random walk is admissible if the support of $\mu$ generates $\Gamma$ as a semi-group.
Equivalently, $\bigcup_n\supp \mu^{*n}=\Gamma$.
In other words, the random walk can go everywhere in the group with positive probability.

We will use at some point the following framework for studying random walks.
We consider the map
$$(g_1,...,g_n,...)\in \Gamma^{\N}\mapsto (g_1,g_1g_2,...,g_1g_2...g_n,...)\in \Gamma^{\N}$$
and define the measure $P$ as the push-forward of the product measure $\mu^{\otimes \N}$ under this map.
We refer to $(\Gamma^{\N},P)$ as the path-space and to an element $\omega\in \Gamma^{\N}$ as a sample path for the random walk.
When referring to the random walk, the terminology \emph{almost surely} will be used to mean \emph{for $P$-almost every sample path}.

We say that the probability measure $\mu$ is finitely supported if $\mu(g)=0$ except for finitely many $g$.
We also say that $\mu$ has finite exponential (respectively super-exponential) moment with respect to a left invariant metric $d$ on $\Gamma$ if
$$\sum_{g\in \Gamma}\mu(g)c^{d(g,e)}<\infty$$ for some (respectively all) $c>1$.
We say it has finite first moment if $$\sum_{g\in \Gamma}\mu(g)d(g,e)<\infty.$$
We will be mainly interested in probability measures with finite super-exponential moment.

In the following, the groups we will consider are non-amenable.
In particular, any admissible random walk on such a group $\Gamma$ is transient, meaning that it almost surely goes back to $e$ only a finite number of time.
Equivalently, it almost surely visits every element in the group a finite number of time.
In this setting, the random walk almost surely goes to infinity.
It has been a fruitful line of research to understand the behaviour at infinity of $\omega_n$ in terms of geometric large scale properties of the group,
see \cite{Kaimanovich2}, \cite{Kaimanovich}, \cite{Vershik}, \cite{Woess}.
One particular question one can ask is whether the law of $\omega_n$ can be compared with the uniform law on large balls for a word distance on $\Gamma$.
Let us give a precise formulation of this question.

We first recall the definitions of some asymptotic quantities associated with the random walk and the group.
We fix a finite generating set $S$ for $\Gamma$ and denote by $d_{w,S}$ the associated word distance.
When the generating set $S$ is implicit, we will denote it by $d_w$.
For simplicity, we will also use the notations $\|g\|_S=d_{w,S}(e,g)$ for $g\in \Gamma$ and $\|g\|=d_w(e,g)$ when $S$ is implicit.
Define the entropy of $\mu$ as
$$H(\mu)=-\sum_{g\in \Gamma}\log (\mu(g))\mu(g)$$
and the drift of $\mu$ as
$$L(\mu)=\sum_{g \in \Gamma}\mu(g)\|g\|_S.$$
Those two quantities can be finite or infinite.
Note that finiteness of $H(\mu)$ is implied by finiteness of $L(\mu)$, see for example \cite{Derriennic}.
Both quantities are sub-additive with respect to the convolution power, that is,
$$H(\mu*\nu)\leq H(\mu)+H(\nu), L(\mu*\nu)\leq L(\mu)+L(\nu).$$
In particular, the sequences $\frac{H(\mu^{*n})}{n}$ and $\frac{L(\mu^{*n})}{n}$ have respective limits $h_{\mu}$ and $l_{\mu,S}$.
We call $h_{\mu}$ the asymptotic entropy and $l_{\mu,S}$ the asymptotic drift of the random walk with respect to $S$ and simply denote them by $h$ and $l$ when not ambiguous.
Moreover, according to Kingman's ergodic theorem \cite[Theorem~10.1]{Walters} when $H(\mu)$ is finite,
almost surely, one has
$$h=\lim \frac{-\log \mu^{*n}(\omega_n)}{n}.$$
Similarly, when $L(\mu)$ is finite, then, almost surely, one has
$$l=\lim \frac{\|\omega_n\|}{n}.$$

We also define another quantity as follows.
Denote by $b_n$ the cardinality of the ball of center $e$ and radius $n$ for the word metric $d_w$.
Then, the sequence $b_n$ is sub-multiplicative, so that $\frac{\log b_n}{n}$ converges to some limit $v_S$.
We call $v_S$ the volume growth of $\Gamma$ with respect to $S$ and simply denote it by $v$ when not ambiguous.
This quantity only depends on the group and not on the random walk.

\medskip
The fundamental inequality of Guivarc'h (see \cite{Guivarch}) states that
\begin{equation}\label{Guivarchinequality}
    h\leq lv.
\end{equation}
One can interpret this inequality as follows.
On the one hand, the random walk at time $n$ essentially lies in a set of cardinality $\mathrm{e}^{hn}$ (see for example \cite[Proposition~1.13]{Haissinsky} for a precise formulation).
On the other hand, it is asymptotically contained in the ball of radius $ln$, which has cardinality $\mathrm{e}^{lvn}$.
Thus, $\mathrm{e}^{hn}\leq \mathrm{e}^{lvn}$ for large $n$, hence $h\leq lv$.

Thus, comparing the law of $\omega_n$ with a uniform law on large balls is in some weak sense similar to asking whether $h=lv$ or $h<lv$.
As we will see below, in the context of relatively hyperbolic groups, there is a strong connection between the two questions.

\subsection{Probabilistic and geometric boundaries}
Another way of comparing the asymptotic properties of $\omega_n$ with large scale geometric properties of $\Gamma$ is to compare probabilistic boundaries with geometric boundaries.
Let us give some details.
We define the Green function of the random walk as
$$G(g,g')=\sum_{n\geq 0}\mu^{*n}(g^{-1}g').$$
It is left $\Gamma$-invariant, that is $G(g,g')=G(g''g,g''g')$ for every $g,g',g''$.
This function encodes a lot of properties of $\mu^{*n}$.

It satisfies that
$$G(g,g')=P(g\to g')G(e,e),$$
where $P(g\to g')$ is the probability that the random walk ever reaches $g'$, starting at $g$ (see \cite[Lemma~1.13.(b)]{Woess}).
We define the Green distance as
$$d_G(g,g')=-\log G(g,g')+\log G(e,e).$$
Thus, $d_G(g,g')=-\log P(g\to g')$.
This distance was introduced by Blach\`ere and Brofferio in \cite{BlachereBrofferio}.
When the measure $\mu$ is symmetric and the random walk is transient, it is truely a distance.
We will still call it the Green distance in general, even when $\mu$ is not symmetric.

The \emph{Martin compactification} of the random walk is the horofunction compactification of $\Gamma$ for the Green distance.
It is thus the smallest compact set $M$ such that the Martin kernel $K(\cdot,\cdot)$ defined as
$$K(g,g')=\frac{G(g,g')}{G(e,g')}$$
extends continuously as a function on $\Gamma\times M$.
The \emph{Martin boundary} $\partial_{\mu}\Gamma$ is the complement of $\Gamma$ in this compactification.
It always abstractly exists (see \cite{Sawyer}), and since $\Gamma$ preserves the Green distance it acts by homeomorphisms on the Martin boundary.
However, its identification in terms of the geometry of $\Gamma$ is often a difficult problem.

The random walk almost surely converges to some point $\xi$ in the Martin boundary.
One can thus define a measure on $\partial_{\mu}\Gamma$, which is the law of the exit point $\xi$, see \cite[Section~3]{Sawyer} for more details.
This is the \emph{harmonic measure}, that we will denote by $\nu$.
More generally, if the random walk starts at some point $g$ in $\Gamma$ (that is $\omega_n$ is replaced with $g \omega_n$), one can still define the exit point in $\partial_{\mu}\Gamma$ and the corresponding harmonic measure $\nu_{g}$.
The Martin boundary endowed with the harmonic measure is a model for the so-called \emph{Poisson boundary}.
We refer to \cite{KaimanovichVershik} and \cite{Kaimanovich} for many more details and other equivalent definitions, as well as \cite{Kaimanovich2} for more details on the relations between the Martin and the Poisson boundaries.

\medskip
One can also define the horofunction boundary for the word distance.
However, this boundary is typically too large for geometric applications
and there is not always a direct relation between probabilistic boundaries and the horofunction boundary.
In some groups, one can define other boundaries which better capture the geometry of the group.
This is in particular the case for hyperbolic groups, where one can define the Gromov boundary.
The relation between the Poisson boundary and the Gromov boundary was explored by Kaimanovich among others, see in particular \cite{Kaimanovich}.
Kaimanovich showed that for random walks with finite first moment the Gromov boundary endowed with the unique stationary measure is a realization of the Poisson boundary (see \cite{Kaimanovich} for weaker assumptions).

Ancona \cite{Ancona} also proved that for finitely supported $\mu$, the Martin boundary is homeomorphic to the Gromov boundary.
This is a stronger statement than identifying the Gromov boundary with the Poisson boundary.
Indeed, the identification with the Poisson boundary gives an isomorphism of measured spaces (in particular, up to some set of measure 0), whereas the identification with the Martin boundary gives a homeomorphism.
The proof of Ancona relies on the following crucial estimates, the \emph{Ancona inequalities}, a generalization of which we will use in this paper.
These inequalities state that there exists $C\geq 1$ such that the following holds.
If $x,y,z$ are three points in $\Gamma$ along a word-geodesic (in this order), then
$$\frac{1}{C}G(x,y)G(y,z)\leq G(x,z)\leq CG(x,y)G(y,z).$$
One can interpret these inequalities saying that the random walk has to go through $y$ to go from $x$ to $z$ with probability bounded from below.
In general, if there exists $C>0$ such that two quantities $f$ and $g$ satisfy that $\frac{1}{C}f\leq g\leq C f$, we will use the notation
$f\asymp g$.
If the notation is ambiguous, for example if the constant $C$ depends on some parameters, we will avoid using this notation, except if the dependence is clear from the context.
Also, whenever $f\leq C g$ for some constant $C$, we will use the notation $f\lesssim g$.
We can thus restate Ancona inequalities as
\begin{equation}\label{Anconainequality}
    G(x,z)\asymp G(x,y)G(y,z)
\end{equation}
whenever $y$ is on some word-geodesic from $x$ to $z$.

\medskip
In the context of non-elementary hyperbolic groups, one can interpret the Guivarc'h inequality~(\ref{Guivarchinequality}) as follows.
One can endow the Gromov boundary with a visual distance, which depends on some parameter $\epsilon$.
Then, the Hausdorff dimension of the Gromov boundary with the visual distance is equal to $\frac{v}{\epsilon}$, whereas the Hausdorff dimension of the harmonic measure $\nu$ is given by $\frac{h}{\epsilon l}$, see \cite{Ledrappier} for the case of the free group and \cite{BlachereHassinskyMathieu1} and \cite{BlachereHassinskyMathieu2} for more details in the general case.
We thus recover that $\frac{h}{l}\leq v$.
Moreover, $h=lv$ if and only if the harmonic measure $\nu$ has maximal dimension in the Gromov boundary.

Blach\`ere, Ha\"issinsky and Mathieu proved in \cite{BlachereHassinskyMathieu2} that this is a very rigid condition.
They compare the harmonic measure $\nu$ to Patterson-Sullivan measures, which are quasi-conformal measures defined on the Gromov boundary and having maximal Hausdorff dimension for the visual distance.
They prove the following.
Let $\Gamma$ be a non-elementary hyperbolic group endowed with a word metric $d_w$.
Let $\mu$ be a symmetric admissible finitely supported probability measure on $\Gamma$.
Then,
$h=lv$ if and only if the Patterson-Sullivan and harmonic measures are equivalent, if and only if the Green distance is roughly similar to the word metric (precisely, $|d_G-vd_w|\leq C$ for some constant $C$), see \cite[Theorem~1.5]{BlachereHassinskyMathieu2}.
As we will see, the symmetry assumption on $\mu$ is not needed.
Gou\"ezel, Math\'eus and Maucourant then used this result to prove in \cite{GMM} that in this context, $h=lv$ can only occur if the group is virtually free.

\subsection{Random walks in relatively hyperbolic groups}
In this paper, we are interested in relatively hyperbolic groups.
Following Bowditch \cite{Bowditch}, a finitely generated group is called hyperbolic relative to a collection of subgroups $\Omega$ if it acts properly discontinuously and by isometries on a proper geodesic hyperbolic metric space $X$ such that every limit point is either conical or bounded parabolic. Moreover, the stabilizers of the parabolic limit points are exactly the elements of $\Omega$.
We will give more details in Section~\ref{Sectionbackgroundrelhyp}.

A relatively hyperbolic group is equipped with a boundary, called the Bowditch boundary, which is the limit set of $\Gamma$ in the Gromov boundary of a space $X$ on which $\Gamma$ acts as in the definition.
Once $\Omega$ is fixed, the Bowditch boundary does not depend on $X$ up to homeomorphism.
We will denote by $\partial_B\Gamma$ the Bowditch boundary of a relatively hyperbolic group $\Gamma$.
Note that it coincides with the Gromov boundary for hyperbolic groups.

There are many examples of relatively hyperbolic groups.
The combinatorial archetype is given by a free product of finitely many finitely generated groups,
which is hyperbolic relative to its free factors.
Another example is given by geometrically finite Kleinian groups, which are hyperbolic relative to the stabilizers of the cusps of the corresponding geometrically finite hyperbolic manifold.
In this case, the parabolic subgroups are virtually abelian and the hyperbolic space $X$ in Bowditch's definition can be taken to be the real hyperbolic space $\mathbb{H}^n$.
More generally, the fundamental group of a geometrically finite manifold with pinched negative curvature (see \cite{Bowditch2}) is hyperbolic relative to the stabilizers of the cusps.
In this situation, parabolic subgroups are virtually nilpotent.

\medskip
Random walks on relatively hyperbolic groups have been studied by many people.
The Poisson boundary can be identified with the Bowditch boundary endowed with the unique stationary measure $\nu$, as soon as $\mu$ has finite first moment (again, see \cite{Kaimanovich} for weaker assumptions).
This is in particular true for measures $\mu$ with finite super-exponential moment, which are the measures we consider in this paper.
This stationary measure $\nu$ is non-atomic and thus gives full measure to conical limit points.
This was proved by Kaimanovich for Kleinian groups \cite[Section~9]{Kaimanovich}.
In general, one can define the coned-off graph $\hat{\Gamma}$ (see \cite{Farb}), which is a (non-proper) hyperbolic graph on which $\Gamma$ acts \emph{acylindrically} by isometries (see \cite{Osin} for more details) and whose Gromov boundary $\partial \hat{\Gamma}$ is the set of conical limit points, with parabolic points lying in its interior.
This action on the coned-off graph allows us to identify the Poisson boundary with $\partial \hat{\Gamma}$ and we recover the fact that the set of conical limit points with the stationary measure is the Poisson boundary (see \cite[Section~6]{MaherTiozzo} for a more general result).
Note also that according to results of Karlsson \cite{Karlsson2}, the Poisson boundary can be identified with the support of a measure on the Floyd boundary, which is a geometric boundary that we will define below.
The Floyd boundary always covers the Bowditch boundary and the pre-image of conical limit points is a single point according to results of Gerasimov \cite{Gerasimov}.

The precise identification of the Martin boundary up to homeomorphism is much more difficult.
In general, we do not know the Martin boundary of a finitely supported admissible measure $\mu$ on a relatively hyperbolic group.
However, Gekhtman-Gerasimov-Potyagailo-Yang proved in \cite{GGPY} that the Martin boundary always covers the Bowditch boundary (it actually covers the Floyd boundary).
Moreover, the pre-image of a conical limit point consists of one point.
We thus get an identification of the set of conical limit points as a subset of the Martin boundary. The results of \cite{GGPY} in fact hold  for measures with super-exponential first moment.

When the parabolic subgroups are virtually abelian and the measure is finitely supported, the whole Martin boundary is described up to homeomorphism in \cite{DGGP}.
The parabolic limit points in the Bowditch boundary have to be blown-up into spheres of the appropriate dimension.
For Kleinian groups, the Martin boundary coincides with the CAT(0) boundary of the group.
We will use this identification below and we will give a precise statement then (see Theorem~\ref{convergenceinMartin}).

\medskip
Our first theorem is an extension to relatively hyperbolic groups of the results of Blach\`ere, Ha\"issinsky and Mathieu \cite{BlachereHassinskyMathieu2} described above. The major technical difficulty is that the word metric we are considering is not Gromov hyperbolic.
Classical techniques of boundary dynamics in coarse negative curvature are not available. In their place, we use the theory of Floyd functions on relatively hyperbolic groups developed by Gerasimov and Potyagailo \cite{Gerasimov} \cite{GePoJEMS} \cite{GePoCrelle} and the analogue in that setting of Patterson-Sullivan measures developed by Yang \cite{Yang}
We refer to Sections~\ref{SectionrelhypandFloyd} and~\ref{Sectionquasiconformality} for more details.
Among the technical difficulties is the following.
In the hyperbolic setting of \cite{BlachereHassinskyMathieu2} and \cite{GMM}, the authors crucially use the fact that harmonic measures are doubling with respect to a natural metric on the boundary, allowing them to apply the Lebesgue differentiation theorem.
This is not known to be true in our setting.
However, we show that the measures satisfy a weaker regularity property with respect to partial shadows, allowing us to use a differentiation theorem from \cite{Federer}.

\begin{theorem}\label{theoremBHM}
Let $\Gamma$ be a non-elementary relatively hyperbolic group.
Let $\mu$ be an admissible probability measure on $\Gamma$ with finite super-exponential moment with respect to a word metric $d_w$ on $\Gamma$.
Let $d_G$ be the Green metric associated with $\mu$.
Denote by $\nu$ the harmonic measure on the Bowditch boundary $\partial_B\Gamma$ with respect to $\mu$.
Denote by $\kappa$ the Patterson-Sullivan measure based at $e$ on $\partial_B\Gamma$ with respect to $d_w$.
Finally, denote respectively by $h,l,v$ the asymptotic entropy, drift and volume growth associated with $\mu$ and $d_w$.
The following conditions are equivalent.
\begin{enumerate}
    \item The equality $h=lv$ holds.
    \item The measure $\nu$ is equivalent to the measure $\kappa$.
    \item The measure $\nu$ is equivalent to the measure $\kappa$ with Radon-Nikodym derivatives bounded from above and below.
    \item There exists $C\geq 0$ such that for every $g \in \Gamma$, $|d_G(e,g)-vd_w(e,g)|\leq C$.
\end{enumerate}
\end{theorem}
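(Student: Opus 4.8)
The plan is to establish the cycle $(4)\Rightarrow(3)\Rightarrow(2)\Rightarrow(1)\Rightarrow(4)$, the last arrow being the one with real content. The backbone of the whole argument is a pair of shadow estimates on $\partial_B\Gamma$. For the harmonic measure, the generalized Ancona inequalities of \cite{GGPY}---which apply since $\mu$ has finite super-exponential moment---should give $\nu(\mathcal{O}(g))\asymp G(e,g)=G(e,e)\mathrm{e}^{-d_G(e,g)}$, reflecting that the walk exits through the shadow $\mathcal{O}(g)$ of $g$ essentially with the probability of passing near $g$. For the Patterson--Sullivan measure, Yang's shadow lemma \cite{Yang} should give $\kappa(\mathcal{O}(g))\asymp \mathrm{e}^{-v\|g\|}$. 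Since the word metric is not hyperbolic, both estimates degenerate near parabolic points, so I would phrase them using the \emph{partial shadows} and Floyd functions of \cite{Gerasimov,GePoJEMS,GePoCrelle}, controlling shadows only along the conical directions that carry the full harmonic mass.

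Granting the shadow lemmas, $(4)\Rightarrow(3)$ is immediate: if $|d_G(e,g)-v\|g\||\le C$ then $\nu(\mathcal{O}(g))\asymp\kappa(\mathcal{O}(g))$ uniformly in $g$, and a covering argument promotes uniform comparability of all shadow measures to equivalence with Radon--Nikodym derivatives bounded above and below. The implication $(3)\Rightarrow(2)$ is trivial. For $(2)\Rightarrow(1)$ I would work along a typical sample path $\omega_n\to\xi\in\partial_B\Gamma$. The shadow lemmas together with the almost sure convergences $\|\omega_n\|/n\to l$ and $d_G(e,\omega_n)/n\to h$ (the latter identifying the asymptotic entropy with the Green-metric drift, as in \cite{BlachereHassinskyMathieu1}) yield $\tfrac1n\log\nu(\mathcal{O}(\omega_n))\to -h$ and $\tfrac1n\log\kappa(\mathcal{O}(\omega_n))\to -lv$. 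If $\nu$ and $\kappa$ are equivalent, then for $\nu$-a.e. $\xi$ the derivative $\tfrac{d\nu}{d\kappa}(\xi)$ is finite and positive and is computed as the limit of $\nu(\mathcal{O}(\omega_n))/\kappa(\mathcal{O}(\omega_n))$. As the shadows are not known to be doubling, this is exactly where the regularity with respect to partial shadows lets me invoke the differentiation theorem of \cite{Federer} in place of the usual Lebesgue differentiation used in \cite{BlachereHassinskyMathieu2}. Taking logarithms and dividing by $n$ then forces $lv-h=0$.

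The main obstacle is the rigidity implication $(1)\Rightarrow(4)$. Here I would use that each measure is quasi-conformal for its own metric: the shadow lemmas translate into cocycle identities of the form $\tfrac{dg_*\nu}{d\nu}(\xi)\asymp\mathrm{e}^{\beta^G_\xi(g,e)}$ and $\tfrac{dg_*\kappa}{d\kappa}(\xi)\asymp\mathrm{e}^{v\beta_\xi(g,e)}$, where $\beta^G$ and $\beta$ are the Busemann cocycles of the Green and word metrics. The equality $h=lv$ says the two densities have the same exponential decay rate of shadow measures---equivalently, $\nu$ and $\kappa$ have the same exact dimension---and combined with ergodicity of $\Gamma\curvearrowright(\partial_B\Gamma,\nu)$ and the quasi-conformality above, I expect this to force the two cocycles to be cohomologous through a bounded transfer function, hence $|d_G(e,g)-v\|g\||\le C$.

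Two points make this harder than in \cite{BlachereHassinskyMathieu2,GMM}. First, the comparison of Busemann cocycles can only be carried out in the coned-off graph, and must then be corrected along the parabolic subgroups using the generalized Ancona inequalities to bound the defect. Second, even after obtaining the comparison for boundary-visible directions one must transfer it to arbitrary $g\in\Gamma$, in particular to $g$ lying deep inside a parabolic subgroup: for such $g$ I would sandwich $g$ between $e$ and a conical limit point along a word geodesic and use additivity of $\|\cdot\|$ along geodesics together with the Ancona near-additivity of $d_G$ to propagate the bound. Keeping the transfer function from blowing up along the blown-up parabolic directions is the delicate part, and is precisely where the relatively hyperbolic geometry departs from the hyperbolic case.
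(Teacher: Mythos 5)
Your scaffolding is right: the two shadow lemmas (relative Ancona inequalities for $\nu$, Yang's shadow lemma for $\kappa$), the use of partial shadows together with Federer's differentiation theorem in place of a doubling/Lebesgue argument, and the easy implications $(3)\Rightarrow(2)$ and (via the two shadow lemmas) $(4)\Rightarrow$ bounded comparability of shadow measures are all in line with the paper. The deviation-from-geodesics input you would need for the $(2)\Rightarrow(1)$ step (so that $\omega_\infty$ lies in $\Omega(\omega_n)$ with high probability, allowing the differentiation theorem to be applied along the sample path) is also the right mechanism, corresponding to Lemma~\ref{closetotransitionpoints} and the Egorov argument in Proposition~\ref{lemma4.16Haissinsky}.

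The genuine gap is your implication $(1)\Rightarrow(4)$, which you correctly identify as the one with real content but for which you offer only the hope that ``same exact dimension $+$ ergodicity $+$ quasi-conformality forces the two Busemann cocycles to be cohomologous with bounded transfer function.'' No such general principle is available, and this assertion is essentially equivalent to the theorem itself. The paper instead factors this implication through $(1)\Rightarrow(2)\Rightarrow(3)\Rightarrow(4)$, and the two nontrivial links are proved by entirely different mechanisms that your sketch does not contain. For $(1)\Rightarrow(2)$ one argues by contraposition via an entropy/subadditivity argument \`a la Blach\`ere--Ha\"issinsky--Mathieu: setting $\phi_n=\kappa(\Omega(\omega_n))/\nu(\Omega(\omega_n))$ and $\psi_n=\log\phi_n$, one shows that $\frac1N\sum_{n\le N}E(\phi_n)$ is bounded (Proposition~\ref{lemma4.14Haissinsky}, using the super-exponential moment and a Chebyshev estimate), that $E(\psi_n)+C_2$ is subadditive (Proposition~\ref{lemma4.15Haissinskyb}, using Lemma~\ref{closetogeodesics}), that $\psi_n/n\to h-lv$ (Proposition~\ref{lemma4.15Haissinskya}), and that singularity of $\nu$ and $\kappa$ forces $\phi_n\to 0$ in probability (Proposition~\ref{lemma4.16Haissinsky}, via the Vitali differentiation); a Jensen-inequality bookkeeping then yields $E(\psi_p)+C_2\le -1$ for some $p$ and hence $h-lv\le -1/p<0$. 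For $(2)\Rightarrow(3)$, equivalence alone does not give bounded derivatives; the paper obtains boundedness from ergodicity of the $\Gamma$-action on the \emph{double} boundary, comparing the two $\Gamma$-invariant Radon measures on $\partial_B\Gamma\times\partial_B\Gamma\setminus\Diag$ built from the Naim kernel and from the Gromov-product density $e^{2v\rho_e}$ (Lemma~\ref{lemmaergodicdouble} and Proposition~\ref{(2)implies(3)BHM}); this also requires knowing the reflected harmonic measure $\check\nu$ is equivalent to $\kappa$, which follows from the $(1)\Leftrightarrow(2)$ equivalence applied to $\check\mu$. Without these two ingredients your cycle does not close: your $(4)\Rightarrow(3)\Rightarrow(2)\Rightarrow(1)$ chain is fine, but $(1)\Rightarrow(4)$ remains unproved.
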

\begin{remark}
A result analogous to Theorem \ref{theoremBHM} can be also be proved where instead of the word metric we take a metric on $\Gamma$ coming from its action on the hyperbolic space $X$. The arguments in this "geometric metric" case are considerably simpler because one has to deal with the geometry of the hyperbolic space $X$ instead of the Cayley graph and the relevant results about Patterson-Sullivan measures have been proved by Coornaert \cite{Coornaert}. For symmetric measures, the equivalence of the items analogous to (2), (3), (4) above were proved in Section 11 of \cite{GGPY}. A complete proof will be a special case of a result concerning Gibbs measures in \cite{GTgibbs}.
\end{remark}
We do not interpret this result in terms of Hausdorff dimensions.
Note however that $v$ is, up to a multiplicative constant (that depends on some parameter $\lambda$), the Hausdorff dimension of the Bowditch boundary for a well-defined distance.
This distance is the pull-back of the Floyd distance, which is defined on the Floyd boundary (see \cite[Theorem~1.1, Theorem~1.2]{PotyagailoYang} for more details, note in particular that the parameter $\lambda$ above actually depends on the choice of the Floyd distance).
In the particular case of free products, Candellero, Gilch and M\"uller proved in \cite{CandelleroGilchMuller} that $v$ is the Hausdorff dimension of the set of infinite words, seen as a subset of the the space of ends for some visual distance on it.
Also, Tanaka proved that for measures with finite first moment, $h/l$ is the Hausdorff dimension of the harmonic measure with respect to a visual distance coming from the Gromov product on the coned-off graph $\hat{\Gamma}$, see \cite[Theorem~1.3]{Tanaka} for more details.

\subsection{Strict inequality in the Guivarc'h inequality}
We use this rigidity result to prove that $h<lv$ in several cases.

\medskip
Our first application is to relatively hyperbolic groups with virtually abelian parabolic subgroups.
The description of the Martin boundary in \cite{DGGP} allows us to prove that $h<lv$ for finitely supported measures $\mu$, if one of the parabolic subgroups  is virtually abelian of rank at least 2.

\begin{theorem}\label{theoremhlvabelianparabolics}
Let $\Gamma$ be a non-elementary relatively hyperbolic group.
If one of the parabolic subgroups is virtually abelian of rank at least 2, then $h<lv$ for any word distance and any finitely supported admissible measure $\mu$.
\end{theorem}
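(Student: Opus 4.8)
The plan is to leverage the rigidity of Theorem~\ref{theoremBHM}. Since Guivarc'h's inequality~(\ref{Guivarchinequality}) always gives $h\leq lv$, it suffices to rule out condition~(1), and for this I will show that condition~(4) fails. In other words, I want to exhibit group elements along which $d_G(e,\cdot)-v\,d_w(e,\cdot)$ is unbounded. The natural place to look is inside the virtually abelian parabolic subgroup $P$ of rank $k\geq 2$, where the word metric is essentially linear but the Green metric carries an extra polynomial correction coming from the geometry of $\mathbb{Z}^k$.

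Concretely, I would fix an infinite order element $u\in P$ and compare $d_w(e,u^n)$ and $d_G(e,u^n)$ as $n\to\infty$. Because peripheral subgroups of relatively hyperbolic groups are undistorted and the word metric on a virtually abelian group stays within bounded distance of its stable norm, one has $d_w(e,u^n)=\ell n+O(1)$ for some $\ell>0$. The heart of the matter is the asymptotics of the Green function along this ray. Using the relative Ancona inequalities of \cite{GGPY} (refined in \cite{DGGP}), the probability that the walk travels from $e$ to $u^n$ is comparable to the corresponding quantity for the first-return (induced) random walk on $P$. This induced walk is a transient, killed random walk on the virtually abelian group $P\cong\mathbb{Z}^k$, and since excursions of the ambient walk into the relatively hyperbolic directions return with exponentially small probability, it has finite exponential moment. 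The local limit theorem for such walks --- precisely the estimate underlying the spherical blow-up of parabolic points in the Martin boundary description of \cite{DGGP} --- then yields
$$-\log G(e,u^n)=c\,n+\frac{k-1}{2}\log n+O(1)$$
for some $c>0$, where the polynomial correction of exponent $\tfrac{k-1}{2}$ reflects the $(k-1)$ transverse Gaussian fluctuations in $\mathbb{Z}^k$.

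Combining the two estimates gives
$$d_G(e,u^n)-v\,d_w(e,u^n)=(c-v\ell)\,n+\frac{k-1}{2}\log n+O(1).$$
If $c\neq v\ell$ the linear term already makes this unbounded; and if $c=v\ell$ then the surviving term $\tfrac{k-1}{2}\log n$ is unbounded precisely because $k\geq 2$. In either case condition~(4) of Theorem~\ref{theoremBHM} fails, so $h\neq lv$, and with~(\ref{Guivarchinequality}) we conclude $h<lv$. This also makes transparent why rank at least $2$ is required: for a rank one parabolic subgroup the exponent $\tfrac{k-1}{2}$ vanishes and the logarithmic obstruction disappears.

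I expect the main obstacle to be the third step, namely extracting the precise logarithmic correction in the Green function asymptotics. This needs both the relative Ancona inequalities, to reduce the computation to $P$, and a sufficiently sharp local limit theorem for the induced killed walk on $\mathbb{Z}^k$ --- verifying its moment and aperiodicity hypotheses, and controlling the passage between the ambient and induced Green functions up to the bounded multiplicative error allowed by $\asymp$. These ingredients are exactly what is developed in \cite{DGGP}, so in practice the proof reduces to isolating from that work the statement that $-\log G(e,u^n)$ exceeds its linear part by an unbounded amount.
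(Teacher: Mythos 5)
Your overall strategy is the paper's: reduce to the rigidity of Theorem~\ref{theoremBHM} and contradict it using the precise Green function asymptotics along the virtually abelian parabolic subgroup from \cite{DGGP} and \cite{Dussaule} (Proposition~\ref{estimatesGreen}), the point being exactly the polynomial correction of exponent $\frac{d-1}{2}$, which is nonzero only when the rank is at least $2$. The ingredients you invoke are the right ones. However, the way you run the final contradiction has a genuine gap. You need $d_w(e,u^n)=\ell n+O(1)$ (or at least an error $o(\log n)$): in the critical case $c=v\ell$, if the error $E(n)=d_w(e,u^n)-\ell n$ (which is $\geq 0$ by subadditivity but a priori unbounded) happened to equal $\frac{k-1}{2v}\log n+O(1)$, your quantity $d_G(e,u^n)-v\,d_w(e,u^n)$ would stay bounded and no contradiction would follow. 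Your justification for the $O(1)$ error --- undistortedness of $P$ plus Burago's theorem for word metrics on virtually abelian groups --- does not suffice: undistortedness only gives a quasi-isometry between the restriction of $d_w^\Gamma$ to $P$ and an intrinsic word metric on $P$, and a quasi-isometry loses exactly the additive/logarithmic precision you need. To close this one would have to apply a Burago-type "bounded distance from the stable norm" statement to the \emph{induced} metric on a neighborhood $N_r(P)$ (using \cite[Lemma~4.3]{DrutuSapir} to see that ambient geodesics between points of $P$ stay in such a neighborhood); this is plausible but is a nontrivial extra input that your sketch does not supply.

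The paper avoids this issue entirely by a different final step. From $h=lv$ and item (4) of Theorem~\ref{theoremBHM} it first deduces (Corollary~\ref{coroh=lvimpliesAncona}) that $G(x,z)\asymp G(x,y)G(y,z)$ for \emph{every} triple $x,y,z$ in order on a word geodesic. It then takes three points $g, g_{m}, g_{n}$ on an actual geodesic ray inside $N(P)$ converging to the parabolic point: there the word-metric defect is identically zero, so no asymptotics for $d_w(e,\cdot)$ are needed, and in the ratio $G(g,g_{m})G(g_{m},g_{n})/G(g,g_{n})$ the exponential factors $\mathrm{e}^{u\cdot z}$ from Proposition~\ref{estimatesGreen} cancel exactly, leaving $\asymp \|z_{m}\|^{-\frac{d-1}{2}}\to 0$ --- so there is also no need to compare the exponential rate $c$ with $v\ell$. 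I would recommend restructuring your argument along these lines: test additivity of $d_G$ on geodesic triples inside $N(P)$ rather than comparing $d_G(e,u^n)$ with $v\,d_w(e,u^n)$ head-on.
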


Using the results of Gou\"ezel, Math\'eus and Maucourant \cite{GMM}, we then get the following corollary.

\begin{corollary}\label{coroabelianparabolics+GMM}
Let $\Gamma$ be a non-elementary relatively hyperbolic group with virtually abelian parabolic subgroups.
If $\mu$ is an admissible finitely supported measure such that $h=lv$ for some word distance, then $\Gamma$ is virtually free.
\end{corollary}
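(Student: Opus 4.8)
The plan is to reduce the statement to the hyperbolic case and then quote Gouëzel, Mathéus and Maucourant \cite{GMM}. The hypothesis tells us that every parabolic subgroup of $\Gamma$ is virtually abelian, and the assumption $h=lv$ is precisely the negation of the conclusion of Theorem~\ref{theoremhlvabelianparabolics}. So the first move is a contrapositive argument at the level of the peripheral structure: if some parabolic subgroup were virtually abelian of rank at least $2$, then Theorem~\ref{theoremhlvabelianparabolics} would force $h<lv$ for the given word distance and the given finitely supported admissible measure $\mu$, contradicting $h=lv$. Hence every parabolic subgroup is virtually abelian of rank at most $1$, i.e.\ each is either finite or virtually $\Z$; in all cases each is virtually cyclic, and therefore itself word-hyperbolic.

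Next I would upgrade $\Gamma$ from relatively hyperbolic to genuinely hyperbolic. This uses the standard structural fact that a finitely generated group which is hyperbolic relative to a collection of hyperbolic subgroups is itself hyperbolic (see for instance \cite{Osin}); concretely, since each member of the peripheral collection $\Omega$ is hyperbolic, $\Gamma$ is hyperbolic relative to the trivial subgroup, that is, $\Gamma$ is a hyperbolic group. It remains non-elementary because $\Gamma$ was assumed non-elementary. At this stage the word distance $d_w$ on $\Gamma$ is a hyperbolic metric in the usual sense and $\mu$ is still an admissible finitely supported measure.

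Finally, with $\Gamma$ now a non-elementary hyperbolic group, I would invoke the theorem of \cite{GMM}: for such a group, the equality $h=lv$ with respect to a word metric and a finitely supported admissible measure can only hold if the group is virtually free. Applying this to $\Gamma$, $d_w$ and $\mu$ yields that $\Gamma$ is virtually free, which is the desired conclusion.

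The main point that is not purely formal is the passage from the relatively hyperbolic structure to absolute hyperbolicity once all parabolic subgroups are virtually cyclic; I expect this to be the step requiring the most care, since one must make sure the collapse of the peripheral structure is compatible with the fixed word metric so that \cite{GMM} applies to the very same $(\Gamma,d_w,\mu)$, and that the possible degenerate case of finite parabolic subgroups does not disturb the non-elementary hyperbolic structure. The two genuinely geometric inputs, Theorem~\ref{theoremhlvabelianparabolics} and \cite{GMM}, are then combined in an essentially bookkeeping manner.
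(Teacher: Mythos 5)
Your proposal is correct and follows essentially the same route as the paper: apply Theorem~\ref{theoremhlvabelianparabolics} in contrapositive form to conclude that all parabolic subgroups are virtually abelian of rank at most $1$, deduce that $\Gamma$ is hyperbolic, and then invoke \cite[Theorem~1.5]{GMM}. The paper's own proof is exactly this argument, stated more tersely (it does not spell out the step that relative hyperbolicity with virtually cyclic peripherals implies hyperbolicity, which you rightly identify as the only non-formal point).
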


We also prove a theorem for free products.
Let $\Gamma=\Gamma_1*\cdots *\Gamma_N$ be a free product of finitely generated groups.
We choose a finite generating set $S_i$ for each free factor $\Gamma_i$ and define $S=\cup S_i$.
Then $S$ is a finite generating set for $\Gamma$.
Such a generating set is called \emph{adapted}.
We also say that the word metric is adapted.

\begin{theorem}\label{hlvnilpotentfreefactors}
Let $\Gamma=\Gamma_1*\cdots*\Gamma_N$ be a free product of finitely generated groups.
Assume that $\Gamma_1$ is a non-virtually cyclic nilpotent group.
Then $h<lv$ for any adapted word metric and any admissible probability measure $\mu$ with finite super-exponential moment.
\end{theorem}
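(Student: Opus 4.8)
The plan is to argue by contradiction: assume $h=lv$ and derive a contradiction from the internal geometry of the nilpotent free factor $\Gamma_1$, using the rigidity supplied by Theorem~\ref{theoremBHM}. Suppose $h=lv$. Then condition $(4)$ of Theorem~\ref{theoremBHM} produces a constant $C$ with $|d_G(e,g)-v\|g\|_S|\le C$ for every $g\in\Gamma$. Because the generating set $S=\bigcup S_i$ is adapted, any word-geodesic in $\Gamma$ from $e$ to an element of $\Gamma_1$ stays inside $\Gamma_1$, so $\|g\|_S=\|g\|_{S_1}$ for $g\in\Gamma_1$. Restricting the displayed bound to the factor we get
\[
|d_G(e,g)-v\|g\|_{S_1}|\le C\qquad\text{for all }g\in\Gamma_1,
\]
that is, the Green metric is roughly similar to the word metric on all of $\Gamma_1$. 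The strategy is to show this is impossible once $\Gamma_1$ is virtually nilpotent of polynomial growth degree $D\ge 2$.

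First I would reinterpret $d_G$ on $\Gamma_1$ via the induced walk. Since $\Gamma_1$ is a free factor it acts on the left preserving $\Gamma_1$, so the trace of the $\mu$-walk on $\Gamma_1$ (the process recorded only at the times it lies in $\Gamma_1$) is itself a random walk on $\Gamma_1$, driven by the first-return distribution $\mu_1$, where $\mu_1(h)$ is the probability that, starting from $e$, the first positive-time visit to $\Gamma_1$ occurs at $h$. Every visit to a point of $\Gamma_1$ is recorded by the induced chain, so the Green functions agree, $G(e,g)=G_{\mu_1}(e,g)$ for $g\in\Gamma_1$, and hence $d_G$ restricted to $\Gamma_1$ is exactly the Green metric of $(\Gamma_1,\mu_1)$. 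The measure $\mu_1$ is defective: its total mass $q=P(\text{the walk ever returns to }\Gamma_1)$ is strictly less than $1$, since $\Gamma$ is non-amenable and $\Gamma_1$ a proper factor, so the walk escapes through the other factors with positive probability. One also checks that $\mu_1$ inherits a finite super-exponential moment, because in a free product the length of an excursion outside $\Gamma_1$ has exponentially decaying tails, which combines with the moment assumption on $\mu$.

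The heart of the matter is then a local limit theorem for the defective walk $(\Gamma_1,\mu_1)$ on the virtually nilpotent group $\Gamma_1$. As $\mu_1$ has mass $q<1$ the walk carries a genuine mass gap, so $G_{\mu_1}$ decays exponentially and $d_G(e,g)=v\|g\|_{S_1}+o(\|g\|_{S_1})$; the point is to control the lower-order term. I would establish Ney--Spitzer / Ornstein--Zernike type asymptotics of the form $G_{\mu_1}(e,g)\asymp \|g\|_{S_1}^{-\beta}\,\mathrm{e}^{-v\|g\|_{S_1}}$ with $\beta=(D-1)/2>0$ in the bulk of directions, equivalently $d_G(e,g)-v\|g\|_{S_1}=\beta\log\|g\|_{S_1}+O(1)$ along a sequence $g_k$ with $\|g_k\|_{S_1}\to\infty$. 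The positivity of $\beta$ is exactly where the hypothesis that $\Gamma_1$ is non-virtually cyclic enters: a virtually cyclic $\Gamma_1$ has $D=1$ and $\beta=0$ (a bounded correction, consistent with the virtually free case where $h=lv$ can occur), whereas $D\ge 2$ forces an unbounded logarithmic correction. This contradicts the bound $|d_G(e,g)-v\|g\|_{S_1}|\le C$ from the first step, and therefore $h<lv$.

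The main obstacle is precisely this last step: proving a sharp local limit theorem, with the positive-order polynomial correction isolated, for a defective random walk on a virtually nilpotent group under only a super-exponential moment assumption rather than finite support, and for the non-explicit induced measure $\mu_1$. One must identify the homogeneous limit geometry of $(\Gamma_1,\|\cdot\|_{S_1})$, show that the $\mu_1$-walk at its spectral radius spreads over $\asymp\|g\|_{S_1}^{(D-1)/2}$ sites transversally to a geodesic, and convert this spreading into the logarithmic term. This is the analytic core, and it is here that the free-product structure, which makes the induced measure tractable, is essential.
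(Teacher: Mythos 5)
Your reduction steps are fine: assuming $h=lv$, Theorem~\ref{theoremBHM} gives $|d_G-vd_w|\leq C$; the adapted generating set makes $\Gamma_1$ isometrically embedded so the bound restricts to $(\Gamma_1,\|\cdot\|_{S_1})$; and the Green function of $\mu$ restricted to $\Gamma_1$ is the Green function of the induced first-return walk $\mu_1$, which is defective and inherits moment bounds. The problem is that everything then rests on the Ney--Spitzer/Ornstein--Zernike asymptotic $G_{\mu_1}(e,g)\asymp \|g\|_{S_1}^{-(D-1)/2}\mathrm{e}^{-v\|g\|_{S_1}}$, and this is not something you can ``establish'' in passing: no such local limit theorem is known for (defective) random walks on general virtually nilpotent groups, and the paper says so explicitly --- the authors note that the Green function estimates they use for virtually \emph{abelian} parabolics (Proposition~\ref{estimatesGreen}, from \cite{Dussaule}) ``are not known when the parabolic subgroups are not virtually abelian,'' which is precisely why the nilpotent case gets a separate argument. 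You flag this yourself as ``the main obstacle,'' but it is not an obstacle to be smoothed over later; it is the entire content of the theorem in your architecture, so the proposal as written is not a proof. (There is also a secondary wrinkle: even granting a polynomial correction, your displayed form presupposes that the exponential rate of $G_{\mu_1}$ along the chosen ray is exactly $v\|g\|_{S_1}$, which itself has to be extracted from the assumed rough similarity; the cleaner contradiction is with the multiplicativity $G(x,z)\asymp G(x,y)G(y,z)$ along geodesics, as in Corollary~\ref{coroh=lvimpliesAncona}.)

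The paper circumvents the missing analysis entirely with a combinatorial argument. From $|d_G-vd_w|\leq C$ it first deduces Ancona inequalities along all word geodesics (Corollary~\ref{coroh=lvimpliesAncona}), and from these a purely geometric consequence: there is a uniform bound $N$ on the number of distinct midpoints of geodesics between any two points of $\Gamma$ (Proposition~\ref{propfinitemiddles}) --- roughly, if there were many midpoints, two of them would have to be visited on the same trajectory with definite probability, forcing them to be uniformly close. The contradiction is then produced inside $\Gamma_1$ by Walsh's construction: using that the abelianization of a non-virtually cyclic nilpotent group has rank at least $2$ (Lemma~\ref{rankgeq2abelianization}), one finds commuting elements $x,y$ spelled with letters mapping into a single facet of the polytope $\Conv(\phi(S_1))$, so that every word $x^ky^lx^m$ in those letters is geodesic and the elements $x^ky^l$ are pairwise distinct. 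Commutativity lets one reorder to get arbitrarily many geodesics with the same endpoints but distinct midpoints $x^jy^{l_j}$, contradicting the uniform bound. If you want to salvage your route, you would need to actually prove the local limit theorem for $\mu_1$ on $\Gamma_1$ (interesting but open in this generality); otherwise the midpoint-counting argument is the way the theorem is actually within reach under only a super-exponential moment hypothesis.
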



Finally, we give a result for relatively hyperbolic groups whose Bowditch boundary are spheres.

\begin{theorem}\label{sphericalBowditchboundary}
Let $\Gamma$ be a non-elementary relatively hyperbolic group.
Assume that the Bowditch boundary is homeomorphic to the $d$-sphere $\mathbb{S}^{d}$, with $d\geq 2$.
Then, $h<lv$ for any word distance and any admissible measure $\mu$ with finite super-exponential moment.
\end{theorem}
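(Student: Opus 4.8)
The plan is to argue by contradiction: assume $h=lv$ and use the rigidity of Theorem~\ref{theoremBHM} to reach a geometric impossibility. By that theorem, $h=lv$ forces the harmonic measure $\nu$ and the Patterson-Sullivan measure $\kappa$ on $\partial_B\Gamma=\mathbb{S}^d$ to be equivalent with Radon-Nikodym derivatives bounded above and below; equivalently $|d_G-vd_w|\leq C$. I would then split according to whether $\Gamma$ carries infinite parabolic subgroups. If it does not, then $\Gamma$ is a non-elementary hyperbolic group whose Gromov boundary is $\mathbb{S}^d$ with $d\geq 2$; by \cite{GMM}, $h=lv$ would force $\Gamma$ to be virtually free, but the boundary of a virtually free group is totally disconnected, contradicting the connectedness of $\mathbb{S}^d$. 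Hence in the non-relative case the inequality is automatically strict, and the substantive case is when parabolic points are present.

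So suppose $p\in\mathbb{S}^d$ is a bounded parabolic point with infinite stabilizer $P$. By definition of bounded parabolic points, $P$ acts properly discontinuously and cocompactly on $\partial_B\Gamma\setminus\{p\}\cong\mathbb{R}^d$, with $\{p\}$ its unique limit point. The model cases to keep in mind are real, complex and quaternionic hyperbolic non-uniform lattices, whose Bowditch boundary is exactly a sphere and whose cusp groups are respectively virtually abelian and Heisenberg-type; the latter show that one cannot simply invoke Theorem~\ref{theoremhlvabelianparabolics}. The structural input I want is that $P$ is \emph{large}: acting geometrically on $\mathbb{R}^d$ with $d\geq 2$ forces cohomological dimension $d\geq 2$, so $P$ is not virtually cyclic (a virtually cyclic group acts cocompactly only on $\mathbb{R}^1$). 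This is exactly the feature distinguishing $d\geq 2$ from the excluded case $d=1$, where $P\cong\mathbb{Z}$ is allowed and $\Gamma$ may be virtually free with $h=lv$, as for non-uniform Fuchsian lattices.

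The heart of the argument is a local comparison of $\nu$ and $\kappa$ in shrinking neighborhoods of $p$. On one side, Yang's shadow lemma and the associated global measure formula give a scaling $\kappa(B(p,r))\asymp r^{\beta_\kappa}$ with $\beta_\kappa$ governed by the ambient growth $v$ and the growth of $P$. On the other side, the random-walk shadow estimates in the spirit of \cite{GGPY}, together with the Martin boundary description of \cite{DGGP}, should give $\nu(B(p,r))\asymp r^{\beta_\nu}$ with $\beta_\nu$ governed by $h$, $l$ and the induced first-passage walk on $P$. The equivalence $\nu\asymp\kappa$ forces $\beta_\kappa=\beta_\nu$, and I would show this is impossible precisely when $P$ is non-virtually-cyclic. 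Equivalently, $|d_G-vd_w|\leq C$ restricted to the undistorted subgroup $P$ would make the induced Green metric roughly similar to the word metric of $P$, whereas the induced walk on a parabolic acting geometrically on $\mathbb{R}^d$ exhibits the spherical blow-up of its Martin boundary established in \cite{DGGP}, which is incompatible with a metric bi-Lipschitz to the word metric. In the subcase where $P$ is virtually abelian of rank at least $2$ one may instead quote Theorem~\ref{theoremhlvabelianparabolics} directly.

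The main obstacle I anticipate is the harmonic-measure side of this comparison for a general, not necessarily abelian, parabolic subgroup. For virtually abelian cusps the induced walk and its blown-up boundary are controlled via \cite{DGGP}, but for Heisenberg-type and other nilpotent cusps one must analyze the first-passage walk on $P$ and compare its Green metric with the word metric of $P$, transferring global estimates to $P$ through the relative Ancona inequalities. A secondary, more structural, difficulty is to ensure that an abstract parabolic subgroup acting geometrically on $\mathbb{R}^d$ carries the polynomial (nilpotent) growth these estimates rely on; this is automatic in the examples but in full generality must be extracted from the convergence dynamics with a single parabolic limit point. Once the two scaling exponents are pinned down, the contradiction for $d\geq 2$ follows, giving $h<lv$.
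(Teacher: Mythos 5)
Your proposal diverges from the paper's argument and, as written, has genuine gaps at its core. The decisive one is the parabolic case, which you correctly identify as the substantive case but do not actually resolve. The ``scaling exponent'' comparison of $\nu$ and $\kappa$ near a parabolic point $p$ rests on a description of the induced first-return walk on $P$ and of the Martin boundary over $p$ that is only available, via \cite{DGGP} and \cite{Dussaule}, when $P$ is virtually abelian \emph{and} $\mu$ is finitely supported. For sphere boundaries the relevant parabolics are typically Heisenberg-type or more general nilpotent groups (and a priori one does not even know from the convergence dynamics that $P$ has polynomial growth), and no analogue of the spherical blow-up or of the Green function asymptotics is known there; the paper states this obstruction explicitly in the introduction as the reason Section~\ref{Sectionnilpotentparabolics} takes a different route. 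You flag this as ``the main obstacle,'' but it is not a technical loose end: it is the entire content of the theorem in the hard case. A secondary but real issue is that both external inputs you invoke --- \cite{GMM} in the non-relative case and \cite{DGGP} in the parabolic case --- require finitely supported measures, whereas the statement only assumes finite super-exponential moment, so even the hyperbolic branch of your dichotomy is not covered under the stated hypotheses.

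The paper's proof avoids the structure of the parabolic subgroups entirely. Assuming $h=lv$, Theorem~\ref{theoremBHM} gives $|d_G-vd_w|\leq C$, hence the stable translation length spectra of $d_G$ and $vd_w$ coincide; Proposition~\ref{proparithmeticityword} shows the word spectrum is arithmetic (rational with bounded denominator, via counting special geodesics between the fixed points of a loxodromic), so the Green spectrum lies in a discrete set $\alpha_G\N$. On the other hand, the Naim kernel is continuous at pairs of distinct conical points (Proposition~\ref{continuityNaim}, proved from the relative Ancona inequalities of \cite{GGPY}), so the cross-ratio $[\xi_1,\xi_2,\xi_3,\xi_4]$ is a continuous function taking values, by Lemmas~\ref{lemmadynamic1} and~\ref{lemmadynamic2}, in the discrete set $\mathrm{e}^{\alpha_G\N}$. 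On a connected component of $\partial\hat{\Gamma}\setminus\{g^-,g^+\}$ (and $\mathbb{S}^d$ minus countably many points is connected for $d\geq 2$) the function $\xi\mapsto [g^-,g^+,g\cdot\xi_0,\xi]$ is therefore constant, yet it equals $\mathrm{e}^{-2l^G(g)}<1$ at $\xi_0$ and $1$ at $g\cdot\xi_0$ --- a contradiction. This argument works for any admissible $\mu$ with finite super-exponential moment and needs only the topological input (connectedness), which is why the theorem can be stated without finite support and without any hypothesis on the parabolics. If you want to salvage your approach, you would essentially have to prove the missing Green function estimates for nilpotent parabolic subgroups, which is an open problem.
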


We can actually prove a stronger statement, which is a bit more technical to state (see Theorem~\ref{mainTheoremconnected}). 
However, this one is sufficient to get the following corollary.

\begin{corollary}\label{corofundamentalgroups}
Let $M$ be a finite volume Riemannian manifold of pinched negative curvature and of dimension $n\geq 3$.
Then, $h<lv$ for any word metric on $\pi_1(M)$ and any admissible measure $\mu$ on $\pi_1(M)$ with finite super-exponential moment.
\end{corollary}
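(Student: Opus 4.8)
The plan is to deduce the statement directly from Theorem~\ref{sphericalBowditchboundary}, so that the entire task reduces to checking that $\Gamma=\pi_1(M)$ is a non-elementary relatively hyperbolic group whose Bowditch boundary is a sphere $\mathbb{S}^{n-1}$ with $n-1\geq 2$. As recalled in the introduction, a finite volume manifold of pinched negative curvature is geometrically finite, hence $\Gamma$ is hyperbolic relative to the stabilizers of its cusps, which are virtually nilpotent. When $M$ is compact this peripheral collection is empty and $\Gamma$ is an ordinary (word) hyperbolic group, which is the special case of relative hyperbolicity with empty peripheral structure; in either case, since $n\geq 3$, the group $\Gamma$ is infinite and not virtually cyclic, so it is non-elementary.

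First I would fix the hyperbolic space $X$ appearing in Bowditch's definition to be the universal cover $\widetilde{M}$. Being complete, simply connected and of pinched negative curvature, $\widetilde{M}$ is a proper geodesic Gromov hyperbolic space, and its boundary at infinity is homeomorphic to the sphere $\mathbb{S}^{n-1}$. The group $\Gamma$ acts properly discontinuously and by isometries on $\widetilde{M}$, with every limit point conical or bounded parabolic, so this is precisely the action required in the definition of relative hyperbolicity. The Bowditch boundary $\partial_B\Gamma$ is then the limit set $\Lambda(\Gamma)\subseteq\partial_\infty\widetilde{M}$, and since $\partial_B\Gamma$ does not depend, up to homeomorphism, on the auxiliary space, this identification is legitimate.

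Next I would invoke the finite volume hypothesis: a finite volume quotient corresponds to a lattice action on $\widetilde{M}$, for which the limit set is the whole sphere at infinity, that is $\Lambda(\Gamma)=\partial_\infty\widetilde{M}\cong\mathbb{S}^{n-1}$. Hence $\partial_B\Gamma\cong\mathbb{S}^{n-1}$, and because $n\geq 3$ we have $n-1\geq 2$. Applying Theorem~\ref{sphericalBowditchboundary} with $d=n-1$ to an arbitrary word metric on $\Gamma$ and an arbitrary admissible measure $\mu$ with finite super-exponential moment then yields $h<lv$, which is the assertion.

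The only genuinely nontrivial point is the identification $\partial_B\Gamma\cong\mathbb{S}^{n-1}$, and within it the claim that finiteness of the volume forces the limit set to fill the whole boundary sphere; this is where I expect the main (though standard) obstacle to sit, as it relies on geometric finiteness combined with the finite volume condition. Everything else is bookkeeping, namely matching the hypotheses of Theorem~\ref{sphericalBowditchboundary} (non-elementarity, spherical Bowditch boundary of dimension at least two, admissibility and the super-exponential moment bound on $\mu$) with the data supplied by $M$. I would stress that this corollary requires neither the refined statement Theorem~\ref{mainTheoremconnected} nor the virtual nilpotency of the cusp subgroups: the spherical boundary theorem alone suffices.
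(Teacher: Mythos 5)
Your proposal is correct and follows exactly the route the paper intends: identify $\partial_B\pi_1(M)$ with the limit set of the lattice $\pi_1(M)$ acting on $\widetilde{M}$, which is all of $\partial_\infty\widetilde{M}\cong\mathbb{S}^{n-1}$ by geometric finiteness and the finite volume hypothesis, and then invoke Theorem~\ref{sphericalBowditchboundary} with $d=n-1\geq 2$. The paper leaves these (standard) verifications implicit, and your observation that neither Theorem~\ref{mainTheoremconnected} nor the virtual nilpotency of the cusp groups is needed beyond establishing relative hyperbolicity is consistent with the paper's remark that the spherical-boundary theorem alone suffices.
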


For hyperbolic manifolds and finitely supported random walks, this corollary is also a consequence of Theorem~\ref{theoremhlvabelianparabolics}.
However, in general, parabolic subgroups are not necessarily virtually abelian, but as explained, they are virtually nilpotent.

We remark that unlike \cite{GMM} and \cite{BlachereHassinskyMathieu2}, we do not assume the measure $\mu$ is symmetric in any of our results.




\subsection{Organization of the paper}
Let us now give some more details on the organization of the paper and on the proofs of our results.

\medskip
In Section~\ref{Sectionbackgroundrelhyp}, we prove some geometric properties of relatively hyperbolic groups and recall the construction of partial shadows from \cite{Yang}. We also state a version of the Lebesgue differentiation theorem for measures on the Bowditch boundary that satisfy a weak doubling property with respect to the partial shadows, see Proposition~\ref{localsingularity} for a precise statement. Section~\ref{SectionLebesguetheorem} is devoted to the proof of this Lebesgue differentiation theorem.

\medskip
We begin Section~\ref{Sectionrigidity} recalling some facts from \cite{Yang} about Patterson-Sullivan measures $\kappa_{g}$ constructed with the word metric in relatively hyperbolic groups.
In particular, we recall the shadow lemma, which states that $\kappa_e(\Omega(g))$ is approximately $\mathrm{e}^{-vd_w(e,g)}$,
where $\Omega(g)$ is a partial shadow at $g$ (see Sections~\ref{SectionrelhypandFloyd} and~\ref{Sectionquasiconformality} for the precise definitions).
We also prove a shadow lemma for the harmonic measure $\nu$.
We show that $\nu(\Omega(g))$ is approximately $\mathrm{e}^{-d_G(e,g)}$.
This second shadow lemma is based on the following relative Ancona inequalities proved in \cite{GGPY}.
A transition point on a geodesic is a point that is not deep inside a parabolic subgroup (see the precise definition in Section~\ref{Sectionbackgroundrelhyp}).
Let $x,y,z\in \Gamma$ and assume that $y$ is at distance at most $r$ from an $(\epsilon,\eta)$-transition point on a geodesic from $x$ to $z$.
Then
\begin{equation}\label{relativeAncona}
    G(x,z)\asymp G(x,y)G(y,z)
\end{equation}
where the implicit constant only depends on $\epsilon$, $\eta$ and $r$.
We use these estimates together with our Lebesgue differentiation theorem as tools to prove Theorem~\ref{theoremBHM}, following a strategy inspired by Blach\`ere, Ha\"issinsky and Mathieu \cite{BlachereHassinskyMathieu2}.

\medskip
In Section~\ref{Sectionabelianparabolics}, we first prove that if $h=lv$ in a relatively hyperbolic group, then Ancona inequalities like~(\ref{Anconainequality}) hold for every word geodesic in the group.
Recall that these inequalities basically state that to go from a point $x$ to a point $z$ in $\Gamma$, the random walk has to visit the points $y$ on a geodesic from $x$ to $z$ with probability bounded from below.
This property certainly is not true in virtually abelian groups of rank at least 2.
It even seems reasonable to think that if Ancona inequalities hold everywhere in the group, then the group has to be hyperbolic.
However, we cannot prove such a statement, the problem being that the parabolic subgroup are not isometrically embedded and by replacing geodesics with quasi-geodesics, we lose information.
We can still find a contradiction when parabolic subgroups are virtually abelian of rank at least 2.
We use the description of the Martin boundary given in \cite{DGGP} as well as estimates for the Green function restricted to the parabolic subgroups proved in \cite{Dussaule}.
These contradict Ancona inequalities and we
can then prove Theorem~\ref{theoremhlvabelianparabolics}.

\medskip
In Section~\ref{Sectionnilpotentparabolics}, we are interested in virtually nilpotent parabolic subgroups.
As explained, our proof of the fact that $h=lv$ cannot hold when the parabolic subgroups are virtually abelian uses crucial estimates for the Green function restricted to the parabolic subgroups.
Such estimates are not known when the parabolic subgroups are not virtually abelian.
Instead, we prove the following property.
If (for any random walk on a finitely generated group $\Gamma$) the Green and word metrics are roughly similar, then the number of middle points on geodesics between two points $x,y\in \Gamma$ is uniformly bounded independently of $x$ and $y$ (see Proposition~\ref{propfinitemiddles}).
To find a contradiction and prove that $h<lv$, we thus have to construct lots of different geodesics with different middle points from one point to another.
Such a construction is possible in nilpotent groups, using the work of Walsh \cite{Walsh}.
We can then construct geodesics in free products with an adapted word distance when one of the free factor is nilpotent.
We deduce from all this Theorem~\ref{hlvnilpotentfreefactors}.
It seems conceivable (although we could not prove this) that the uniform bound on middle points of geodesics between two points implies hyperbolicity of the group. If this were the case, we would be able to conclude that for any finite super-exponential first moment random walk on a relatively hyperbolic group, $h=lv$ implies  $\Gamma$ is virtually free. 
\medskip

Section~\ref{Sectionconnectedconicallimitpoints} deals with connectedness properties of the different boundaries of $\Gamma$.
In particular, we relate these boundaries to the stable translation spectrum of the Green and word metric and we use them to prove Theorem~\ref{sphericalBowditchboundary}.
More precisely, we show that the stable translation spectrum of any word metric on a non-elementary relatively hyperbolic group is arithmetic: the stable translation length for $d_w$ of a loxodromic element is rational with bounded denominator.
This generalizes known results for hyperbolic groups.
When the Bowditch boundary is a topological sphere of dimension $\geq 2$ (or more generally, the conical limit set minus any two points has a finite number of connected components), we show that the stable translation length spectrum of the Green metric is not arithmetic. This uses continuity of cross-ratios for the Green metric on the conical limit set, which we prove using the relative Ancona inequalities from \cite{GGPY}.
Since arithmeticity of the stable translation spectrum is preserved under rough similarity of metrics, we are able to conclude Theorem \ref{sphericalBowditchboundary}.
Along the way, we prove a technical result of independent interest: the Gromov product for the Green metric, which is called the \textit{Naim kernel} in this context, extends continuously to pairs of conical points on the Bowditch boundary (a set with countable complement), see Proposition \ref{continuityNaim}.




\section{Background on relatively hyperbolic groups}\label{Sectionbackgroundrelhyp}
\subsection{Relatively hyperbolic groups and the Floyd metric}\label{SectionrelhypandFloyd}
Let $\Gamma$ be a finitely generated group. The action of $\Gamma$ on a compact Hausdorff space $T$ is called a convergence action if the induced action on triples of distinct points of $T$ is properly discontinuous. 
Suppose $\Gamma\curvearrowright T$ is a convergence action. The set of accumulation points  $\Lambda \Gamma$ of any orbit $\Gamma \cdot x\ (x\in T)$ is called the {\it limit set} of the action. As long as $\Lambda \Gamma$ has more than two points, it is uncountable and is the unique minimal closed $\Gamma$-invariant subset of $T$.
The action is then said to be non-elementary. In this case, the orbit of every point in $\Lambda \Gamma$ is infinite. 
The action is {\it minimal} if $\Lambda \Gamma=T$.

{A point $\zeta\in\Lambda \Gamma$  is called {\it conical} if there is a sequence $(g_{n})$ of $\Gamma$ and distinct points $\alpha,\beta \in \Lambda \Gamma$ such that
$g_{n}\zeta \to \alpha$ and $g_{n}\eta \to \beta$ for all $\eta \in  T \setminus\{\zeta\}.$}
The point $\zeta\in\Lambda \Gamma$ is called bounded parabolic if it is the unique fixed point of its stabilizer in $\Gamma$, which is infinite and acts cocompactly on $\Lambda \Gamma \setminus \{\zeta\}$.
The stabilizers of bounded parabolic points are called (maximal) parabolic subgroups.
The convergence action $\Gamma \curvearrowright  T$ is called geometrically finite if every point of $\Lambda \Gamma \subset T$ is either conical or bounded parabolic.
Since $\Gamma$ is assumed to be finitely generated, every maximal parabolic subgroup is finitely generated too (see \cite[Main~Theorem~(d)]{Gerasimov2}).
Then, by Yaman's results \cite{Yaman}, it follows that if $\Gamma \curvearrowright T$ is a minimal geometrically finite action, then there exists a proper geodesic Gromov hyperbolic space $X$ on which $\Gamma$ acts properly discontinuously by isometries and a $\Gamma$-equivariant homeomorphism $T \to \partial X$.

Suppose now $\Omega$ is a collection of subgroups of $\Gamma$. We say that $\Gamma$ is \textit{hyperbolic relative to $\Omega$} if there exists some compactum $T$ on which $\Gamma$ acts minimally and geometrically finitely and such that the maximal parabolic subgroups are the elements of $\Omega$.
Such a compactum is then unique up to $\Gamma$-equivariant homeomorphism \cite{Bowditch} and is called the Bowditch boundary of $(\Gamma, \Omega)$. 
The group $\Gamma$ is non-elementary relatively hyperbolic if it admits a non-elementary geometrically finite convergence action on some infinite compactum.
A useful fact is the following. Let $\Gamma$ be a group hyperbolic relative to a collection of parabolic subgroups $\Omega$.
The set $\Omega$ is invariant under conjugacy, since the set of parabolic limit points is invariant under translation.
Furthermore, the set $\Omega$ contains at most finitely many conjugacy classes of maximal parabolic subgroups (see \cite[Theorem~1B]{Tukia}).

Let $f:\mathbb{R}^{+}\to \mathbb{R}^{+}$ be a function satisfying two conditions: $\sum_{n\geqslant0}f_n<\infty$ and there exists a  $\lambda\in (0,1)$ such that $1\geqslant f_{n+1}/f_n\geqslant\lambda$ for all $n\in\mathbb{N}$. The function $f$ is called the {\it rescaling function.}
Pick a basepoint $o\in \Gamma$ and rescale the Cayley graph $\Cay(\Gamma,S)$ by declaring the length of an edge $\sigma$ to be $f(d(o,\sigma))$. The induced shortpath metric on $\Cay(\Gamma,S)$  is called the {\it Floyd metric} with respect to the basepoint $o$ and Floyd function $f$ and denoted by $\delta^{f}_{o}(.,.)$.
Its Cauchy completion (whose topology does not depend on the basepoint) is called the Floyd compactification $\overline{\Gamma}_{f}$ and $\partial_{f}\Gamma= \overline{\Gamma}_{f} \setminus \Gamma$ is called the Floyd boundary.

On the one hand, Karlsson showed that the action of a group on its Floyd boundary is always a convergence action \cite[Theorem~2]{Karlsson}.
On the other hand, if $\Gamma$ is relatively hyperbolic and if the Floyd function $f$ is not decreasing exponentially too fast, Gerasimov proved that there is a continuous $\Gamma$-equivariant surjection ({\it Floyd map}) from the Floyd boundary to the Bowditch boundary \cite[Map theorem]{Gerasimov}.
Furthermore,  Gerasimov and Potyagailo \cite[Theorem~A]{GerasimovPotyagailo} proved that the pre-image  of any conical point by this map is a singleton and the pre-image of a parabolic fixed point $p$ is the limit set for the action of its stabilizer $\Gamma_p$ on  $\partial_f\Gamma$.
In particular if $\Gamma_p$ is an amenable non-virtually cyclic group then its limit set on the Floyd boundary is a point. Consequently, when $\Gamma$ is hyperbolic relative to a collection of infinite amenable subgroups which are not virtually cyclic, the Floyd boundary is homeomorphic to the Bowditch boundary.

The Floyd metric extends to a bounded metric of the same name on the Floyd compactification $\overline{\Gamma}_{f}$.
In turn, this can be transferred to the \emph{shortcut metric} on the Bowditch compactification $\overline{\Gamma}_{B}=\Gamma \cup \partial_{B}\Gamma$ as follows.
Let $\overline{\delta}^{f}_{o}$ be the maximal pseudo-metric on $\overline{\Gamma}_{f}$ which is constant on fibers of Gerasimov's map  $F:\overline{\Gamma}_{f}\to \overline{\Gamma}_{B}$ and satisfies $\overline{\delta}^{f}_{o}\leq \delta^{f}_{o}$.
Then $\overline{\delta}^{f}_{o}(F(.),F(.))$ defines a true metric on the Bowditch compactification, called the shortcut metric, which we also denote by $\overline{\delta}^{f}_{o}$, see \cite[Definition~2.6]{GePoJEMS} for more details on the construction.
The Floyd and shortcut metrics are Lipschitz with respect to change in basepoint: there is a $\lambda>0$ such that 
$\delta^{f}_{o}/\delta^{f}_{o'}\leq \lambda^{d_{w}(o,o')}$ for $o,o'\in \Gamma$ and the same holds for $\overline{\delta}^{f}_{o}$. 
See \cite[Section 2]{GePoJEMS} for details.

\begin{definition}
If $\alpha$ is a (finite or infinite) geodesic in $\Cay(\Gamma,S)$ for the word metric, a point $p\in \alpha$ is said to be {\it $(\epsilon,\eta)$-deep} if there is a $g \in \Gamma$, $P\in \Omega$ such that the part of $\alpha$ containing the points at distance at most $\eta$ from $p$
is contained in the $\epsilon$-neighborhood of $gP$. Otherwise, $p\in \alpha$ is called an \textit{$(\epsilon,\eta)$-transition point} of $\alpha$.
\end{definition}

By \cite[Proposition~2.4]{GePoJEMS}, any infinite word geodesic converges to a unique point in the Floyd boundary and any two distinct points in the Floyd compactification can be joined by a word geodesic. By Gerasimov's map, the same is true for the Bowditch compactification.

We also record several facts about word geodesics in relatively hyperbolic groups which will be used in the paper.

\begin{lemma}\label{transitionpoints}
For each $\epsilon>0$ there is an $\eta_{0}(\epsilon)>0$ such that the $\epsilon$-neighborhoods of any two distinct cosets of parabolic subgroups have intersection whose diameter is bounded above by $\eta_{0}(\epsilon)$.
Moreover, there is an $\epsilon_0>0$ such that the following holds for any $\epsilon>\epsilon_0$ and $\eta>\eta_{0}(\epsilon)$.
\begin{enumerate}[a)]
    \item \cite[Theorem 4.1] {DrutuSapir}, \cite[Proposition 5.6]{GePoCrelle} Let $\alpha$ be a geodesic and $v$ a point on $\alpha$.
    Assume that $v$ is $(\epsilon, \eta)$-deep in some coset $gP$ of a parabolic subgroup.
    Then the entry and exit point of $\alpha$ in $N_{\epsilon}(gP)$ are $(\epsilon,\eta)$-transition points.
    \item \cite[Lemma 2.20]{Yang} Any geodesic ray converging to a conical point on the Bowditch (or Floyd) boundary contains an unbounded sequence of $(\epsilon,\eta)$-transition points.
\end{enumerate}
\end{lemma}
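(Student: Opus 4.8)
The plan is to regard the opening sentence --- the uniform bound $\eta_0(\epsilon)$ on $\diam\bigl(N_\epsilon(gP)\cap N_\epsilon(hQ)\bigr)$ for distinct cosets $gP\neq hQ$ --- as the one substantive point, since parts a) and b) are quoted verbatim from \cite{DrutuSapir,GePoCrelle} and \cite{Yang}. I would prove the bounded-intersection estimate geometrically in the hyperbolic space $X$ furnished by Yaman's theorem \cite{Yaman}, and then deduce a) from it as a formal consequence, leaving b) to the cited result. The reason this order is natural is that a) is really the assertion that the exit point of $\alpha$ from $N_\epsilon(gP)$ cannot be deep in \emph{another} coset, and that is precisely what a diameter bound on pairwise intersections gives.

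For the bounded-intersection estimate I would argue as follows. Using that $\Gamma$ acts geometrically finitely on the proper hyperbolic space $X$, I would fix a $\Gamma$-equivariant family of open horoballs, one based at each parabolic fixed point, chosen deep enough to be pairwise disjoint with a uniform lower gap $\rho>0$ in the metric of $X$; distinct cosets $gP\neq hQ$ then correspond to distinct horoballs, and the orbit of $gP$ stays within bounded $X$-distance of the corresponding horosphere by cocompactness of $P$ on it. The key geometric input is the standard fact that in a $\delta$-hyperbolic space two disjoint horoballs based at distinct boundary points, separated by $\rho$, have $\diam_X\bigl(N_R(\cdot)\cap N_R(\cdot)\bigr)$ bounded in terms of $R,\rho,\delta$ only --- horoballs based at different points diverge. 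Finally I would transfer this to the word metric: a point lying in $N_\epsilon(gP)\cap N_\epsilon(hQ)$ for \emph{distinct} cosets sits near the necks of two distinct horoballs, hence is a transition point at the relevant scale, and on transition points the word metric and the metric of $X$ are comparable up to multiplicative and additive constants. Thus a bound on the $X$-diameter of the intersection yields a bound $\eta_0(\epsilon)$ on its word-diameter. (An alternative route is via the asymptotic cone: by Drut\c{u}--Sapir \cite{DrutuSapir} the pair $(\Gamma,d_w)$ is asymptotically tree-graded with respect to $\{gP\}$, and an unbounded family of intersections would, after rescaling, produce two distinct pieces sharing two points, contradicting the tree-graded axiom.)

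Granting this, part a) follows quickly. The entry and exit points of $\alpha$ in $N_\epsilon(gP)$ lie on the boundary of that neighbourhood, so they are not $(\epsilon,\eta)$-deep in $gP$ itself. If such a point $p$ were $(\epsilon,\eta)$-deep in some other coset $hQ$, then the length-$\eta$ subarc of $\alpha$ around $p$ would lie in $N_\epsilon(hQ)$ while also remaining within $N_\epsilon(gP)$ near the boundary, producing a subset of $N_\epsilon(gP)\cap N_\epsilon(hQ)$ of diameter comparable to $\eta$; choosing $\eta>\eta_0(\epsilon)$ contradicts the first assertion. Hence these points are $(\epsilon,\eta)$-transition points, which is exactly a). For b) I would simply invoke \cite[Lemma~2.20]{Yang}: a ray that was $(\epsilon,\eta)$-deep in a single coset from some point on would be trapped in a bounded neighbourhood of $gP$ and hence converge to the associated parabolic point, so convergence to a conical point forces the ray to leave each coset neighbourhood infinitely often, yielding an unbounded sequence of transition points.

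The main obstacle is the metric comparison in the transfer step: the orbit map $(\Gamma,d_w)\to X$ is only a quasi-isometric embedding away from the cusps, and inside a single cusp the two metrics differ drastically, so a set that is $X$-small can be $d_w$-large. The argument must therefore hinge on the fact that points lying near two \emph{distinct} cosets are transition points, where the comparison is valid; making this uniform --- i.e. controlling the transition constants and the horoball separation simultaneously for all pairs of cosets by $\Gamma$-invariance and the finiteness of conjugacy classes of parabolics (\cite[Theorem~1B]{Tukia}) --- is the delicate part. In the asymptotic-cone alternative the corresponding difficulty is ruling out that two distinct cosets have coinciding ultralimit pieces, which again reduces to a uniform coarse-separation statement for distinct cosets.
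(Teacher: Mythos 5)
First, a point of comparison: the paper does not prove this lemma at all --- it is stated as a compilation of facts quoted from the literature (Dru\c{t}u--Sapir and Gerasimov--Potyagailo for a), Yang for b), with the bounded-intersection statement being a standard consequence of the tree-graded structure). So your proposal is being measured against the cited sources rather than against an in-paper argument. Your overall architecture --- prove the bounded coarse intersection of distinct parabolic cosets first, then deduce a) from it --- is the right one and matches how these facts are established in the literature. Two issues, though.

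The first is that you misdiagnose the transfer step. Passing from a bound on $\diam_X$ of the intersection to a bound on $\diam_{d_w}$ does not require any comparison of metrics on transition points: if $A\subset\Gamma$ has orbit image of $X$-diameter at most $D$, then for $a_0\in A$ the set $a_0^{-1}A$ lies in $\{k\in\Gamma: d_X(ko,o)\leq D\}$, which is finite (hence of bounded word-diameter, uniformly in $a_0$) by proper discontinuity of the action. The distortion of cusps works against you only in the other direction (word-close does not force $X$-far); bounded $X$-diameter always forces bounded word-diameter. Your ``delicate part'' is therefore not delicate, and the detour through transition points is both unnecessary and circular-looking, since transition points are what you are trying to produce.

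The second issue is a genuine gap in the deduction of a). If $p$ is the entry point of $\alpha$ into $N_{\epsilon}(gP)$ and $p$ is $(\epsilon,\eta)$-deep in some other coset $hQ$, you claim the $\eta$-subarc around $p$ lies in $N_{\epsilon}(hQ)$ ``while also remaining within $N_{\epsilon}(gP)$ near the boundary,'' giving a long segment in the intersection. But nothing in the definitions forces the subarc of $\alpha$ after $p$ to stay in $N_{\epsilon}(gP)$: the hypothesis only gives a deep point $v$ somewhere later on $\alpha$, possibly at distance much larger than $\eta$ from $p$, and a priori $\alpha$ could leave and re-enter $N_{\epsilon}(gP)$ in between. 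To get the long segment in $N_{\epsilon'}(gP)\cap N_{\epsilon}(hQ)$ you need the quasiconvexity of parabolic cosets (the portion of a geodesic between its entry and exit points of $N_{\epsilon}(gP)$ stays in $N_{\epsilon'}(gP)$ for a uniform $\epsilon'=\epsilon'(\epsilon)$, which is precisely \cite[Theorem~4.1]{DrutuSapir}), and you must then take $\eta_0$ for the enlarged constant $\epsilon'$ rather than for $\epsilon$. A similar point is hidden in your sketch of b): ``deep at every point of a tail'' does not immediately mean ``deep in a single coset,'' and one needs the bounded-intersection statement again to see that consecutive deep points share their coset before concluding that the ray is trapped in one $N_{\epsilon}(gP)$ and hence converges to a parabolic point.
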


The following proposition describes the connection between the Floyd metric and transition points.

\begin{proposition}\label{Floydgeo}
Let $\epsilon_0$ be as in Lemma \ref{transitionpoints}.
Then, the following holds.
\begin{enumerate}[a)]
\item \cite[Proposition 4.1]{GePoCrelle} For every $\epsilon\geq \epsilon_0$ and every $\delta>0$, there exists a number $D_{0}=D_{0}(\epsilon,\delta)>0$ such that for each coset $gH$ of a parabolic subgroup and each $w\in \Gamma$, any $z$ in the $\epsilon$-neighborhood $N_{\epsilon}(gH)$ of $gH$ satisfies either $d(z,w)\leq D_0$ or $\overline{\delta}^{f}_{w}(z,\xi)\leq \delta$ where $\xi\in \partial_B \Gamma$ is the unique parabolic limit point of $gH$.
\item \cite[Corollary 5.10]{GePoCrelle} For every $\eta>0$ and every $D>0$, there exists a number $\delta>0$ such that for $y\in \Gamma$ and $x,z\in \overline{\Gamma}_{B}$,
if $y$ is within word distance $D$ of an $(\epsilon,\eta)$-transition point of a word geodesic from $x$ to $z$ then $\overline{\delta}^{f}_{y}(x,z)>\delta$.
\item \cite[Lemma 2.10]{PotyagailoYang} For every $\epsilon>\epsilon_0$, every $\eta>\eta_0(\epsilon)$ and every $\delta>0$ there exist a number $D_1=D_1(\delta,\epsilon)>0$ such that for $y\in \Gamma$ and $x,z\in \overline{\Gamma}_{B}$, if $\overline{\delta}^{f}_{y}(x,z)>\delta$ then any word geodesic from $x$ to $z$ has an $(\epsilon,\eta)$-transition point within $D_1$ of $y$.
\end{enumerate}
\end{proposition}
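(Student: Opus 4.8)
\textbf{Proof plan for Proposition~\ref{Floydgeo}.}
The three statements all compare the shortcut (or Floyd) metric centred at a point with the word-geometric dichotomy between deep points and transition points supplied by Lemma~\ref{transitionpoints}. The common engine is the elementary estimate that the Floyd length of a path $\alpha$ avoiding the ball $B(o,R)$ is at most a bounded multiple of the tail $\sum_{n\geq R}f(n)$, which tends to $0$ as $R\to\infty$ by summability of $f$; combined with the bound $\delta^{f}_{o}(x,y)\leq L_{f}(\gamma)$ for any path $\gamma$ from $x$ to $y$, this lets one make Floyd distances small whenever the relevant geodesics stay far from the basepoint. The passage from $\delta^{f}_{o}$ to the shortcut metric $\overline{\delta}^{f}_{o}$ is then controlled by the inequality $\overline{\delta}^{f}_{o}\leq\delta^{f}_{o}$ for upper bounds, while lower bounds require showing that collapsing the fibres of Gerasimov's map does not identify the two points in question.

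For part (a) the plan is to exploit that an entire parabolic coset $gH$ accumulates at the single Bowditch point $\xi$. Fix $z\in N_{\epsilon}(gH)$ and join $z$ to $\xi$ by a path running through $N_{\epsilon}(gH)$ and converging to $\xi$. If $d(z,w)>D_{0}$ with $D_{0}$ large, then the part of $N_{\epsilon}(gH)$ this path meets lies outside $B(w,R)$ for $R$ comparable to $D_{0}$ (using that the $\epsilon$-neighbourhood of a coset is coarsely convex), so the Floyd length of the path is at most the tail $\sum_{n\geq R}f(n)$. Choosing $D_{0}$ so that this tail is below $\delta$ gives $\delta^{f}_{w}(z,\xi)\leq\delta$, hence $\overline{\delta}^{f}_{w}(z,\xi)\leq\delta$. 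Uniformity over all cosets follows because there are only finitely many conjugacy classes of parabolics and the estimate depends only on $\epsilon$ and the basepoint distance.

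For part (b) I would take a word geodesic from $x$ to $z$ and an $(\epsilon,\eta)$-transition point $p$ on it within $D$ of $y$. Since the geodesic passes within $D+O(1)$ of $y$, a standard lower bound for the Floyd metric in terms of how close a connecting geodesic comes to the basepoint forces $\delta^{f}_{y}(x,z)$ to be at least a positive constant depending only on $D$, because the rescaling factor $f$ is bounded below on a ball of radius $D$. The substantive point is to transfer this bound to the shortcut metric, i.e. to rule out that the fibre collapse identifies the images of $x$ and $z$. This is exactly where the hypothesis that $p$ is a \emph{transition} point enters: by Lemma~\ref{transitionpoints}(a) the geodesic is not $\epsilon$-deep in any parabolic coset near $p$, so the two rays issuing from $p$ head into Bowditch directions that are not merged by Gerasimov's map, and the lower bound survives. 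For part (c) I would argue by contraposition: if no $(\epsilon,\eta)$-transition point of the geodesic lies within $D_{1}$ of $y$, then along the geodesic every point word-close to $y$ is $(\epsilon,\eta)$-deep, hence contained in an $\epsilon$-neighbourhood of a parabolic coset, while the remainder of the geodesic avoids a large word-ball around $y$. The far part contributes small Floyd length by the tail estimate, and the deep part contributes small \emph{shortcut} length by part (a), since all of its points are shortcut-close to a single parabolic point. Summing and taking $D_{1}$ large yields $\overline{\delta}^{f}_{y}(x,z)\leq\delta$.

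The main obstacle throughout is the interaction between the two metrics: upper bounds transfer freely from Floyd to shortcut, but the lower bound needed in (b) demands genuine control of the fibre collapse, namely a proof that transition points correspond to honest branching of the Bowditch boundary and are not washed out by the identification along parabolic directions. Securing this, together with the uniformity of the constants $D_{0},\delta,D_{1}$ over all cosets and basepoints — which rests on the finitely many conjugacy classes of parabolics and the Lipschitz dependence of $\delta^{f}_{o}$ on the basepoint recorded above — is the crux; the remaining estimates are routine consequences of summability of the Floyd function.
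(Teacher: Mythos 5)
The paper does not actually prove this proposition: all three parts are imported verbatim from \cite[Proposition 4.1, Corollary 5.10]{GePoCrelle} and \cite[Lemma 2.10]{PotyagailoYang}, so the only question is whether your sketch would stand as a self-contained proof. It would not, for two concrete reasons. First, in part (a) your entire argument rests on the claim that when $d(z,w)>D_0$, a path from $z$ to $\xi$ inside $N_{\epsilon}(gH)$ stays outside $B(w,R)$ with $R$ comparable to $D_0$. That is false precisely in the hard case, namely when $w$ itself lies on or near the coset $gH$: then $d(z,w)$ large does not prevent a path from $z$ toward the parabolic direction from passing arbitrarily close to $w$, and coarse convexity of $N_{\epsilon}(gH)$ gives no help. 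This is exactly the content that makes \cite[Proposition~4.1]{GePoCrelle} nontrivial; its proof goes through the convergence dynamics and bounded parabolicity of $\xi$, not through a tail estimate. (A smaller issue: the bound of the Floyd length by $\sum_{n\geq R}f(n)$ for a path avoiding $B(w,R)$ is Karlsson's lemma for \emph{geodesics}; an arbitrary path in $N_{\epsilon}(gH)$ can wind at distance $R$ from $w$ and have arbitrarily large Floyd length.)

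Second, and more seriously, part (b) invokes a ``standard lower bound for the Floyd metric in terms of how close a connecting geodesic comes to the basepoint.'' No such lower bound exists: Karlsson's lemma is the implication in the other direction, and its converse fails. Take $x,z$ deep inside a single parabolic coset on opposite sides of $y$; every word geodesic from $x$ to $z$ passes within bounded distance of $y$, yet by part (a) both $x$ and $z$ are shortcut-close to the parabolic point of that coset, so $\overline{\delta}^{f}_{y}(x,z)$ is small (and already $\delta^{f}_{y}(x,z)$ is small when the parabolic subgroup has a one-point Floyd limit set, e.g.\ a rank-$2$ abelian parabolic). So the transition-point hypothesis is needed to establish the lower bound already at the Floyd level, not merely to survive the fiber collapse, and your sketch offers no mechanism for this beyond asserting that the two rays from $p$ ``head into Bowditch directions that are not merged'' --- which, with the required uniformity of $\delta$ in $D,\epsilon,\eta$, is the statement to be proved. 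Part (c), by contrast, is the right contrapositive argument granted (a) and Karlsson's lemma, except that (a) must be applied to the entry and exit points of the single deep excursion (which are far from $y$) and one then chains through $\xi$ in the shortcut metric; the points of the excursion near $y$ are not covered by (a) as your phrasing suggests.
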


We will use the following terminology.
Let $x,y,z$ be either points in the group $\Gamma$ or limit points in the Bowditch boundary $\partial_B\Gamma$.
A \emph{geodesic triangle} with vertices $x,y,z$ is the union of (finite or infinite) geodesics $[x,y]$, $[y,z]$, $[z,x]$ from $x$ to $y$, from $y$ to $z$ and from $z$ to $x$ respectively.
We will be mainly interested in geodesic triangles whose vertices are either in $\Gamma$ or are conical limit points.

\begin{lemma}\label{idealtriangles}
Fix $\epsilon\geq \epsilon_0$ and $\eta \geq \eta_0 (\epsilon)$.
There is a $D=D(\epsilon,\eta)>0$ such that for any geodesic triangle with vertices $x,y,z$, where $x,y,z$ are either elements of the groups or conical limit points, any $(\epsilon,\eta)$-transition point on one side is within $D$ of an $(\epsilon,\eta)$-transition point on one of the other two sides.
\end{lemma}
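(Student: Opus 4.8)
The plan is to pass through the shortcut metric and use the two complementary statements in Proposition~\ref{Floydgeo}, which convert between ``being near a transition point'' and ``having shortcut distance between the endpoints bounded below''. Fix $\epsilon\geq \epsilon_0$ and $\eta\geq \eta_0(\epsilon)$, let the triangle have vertices $x,y,z\in\overline{\Gamma}_B$ (each either a group element or a conical limit point), and let $p$ be an $(\epsilon,\eta)$-transition point lying on the side $[x,y]$. First I would apply Proposition~\ref{Floydgeo}(b) to the geodesic $[x,y]$ with basepoint $p$ (which is within word distance $0$ of a transition point of a geodesic from $x$ to $y$): this produces a constant $\delta=\delta(\epsilon,\eta)>0$, depending only on $\epsilon$ and $\eta$, with $\overline{\delta}^{f}_{p}(x,y)>\delta$.

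Next I would exploit that $\overline{\delta}^{f}_{p}$ is a genuine metric on the Bowditch compactification $\overline{\Gamma}_B$, so the triangle inequality gives
\[
\delta<\overline{\delta}^{f}_{p}(x,y)\leq \overline{\delta}^{f}_{p}(x,z)+\overline{\delta}^{f}_{p}(z,y).
\]
Hence at least one of $\overline{\delta}^{f}_{p}(x,z)$ and $\overline{\delta}^{f}_{p}(z,y)$ exceeds $\delta/2$. Say $\overline{\delta}^{f}_{p}(x,z)>\delta/2$ (the other case is symmetric, with the roles of the sides $[z,x]$ and $[y,z]$ exchanged). Now Proposition~\ref{Floydgeo}(c), applied with threshold $\delta/2$, yields a constant $D_1=D_1(\delta/2,\epsilon)$ such that the side $[z,x]$ --- being a word geodesic from $x$ to $z$ --- carries an $(\epsilon,\eta)$-transition point within word distance $D_1$ of $p$. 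In the symmetric case the same conclusion holds for the side $[y,z]$. Since $\delta$ depends only on $\epsilon,\eta$, the constant $D:=D_1(\delta/2,\epsilon)$ depends only on $\epsilon$ and $\eta$, and $p$ is within $D$ of a transition point on one of the other two sides, as required.

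I do not expect a serious obstacle here: the whole argument is a ``thin triangle'' / visibility statement transported into the Floyd picture, and the real content is already packaged in Proposition~\ref{Floydgeo}, whose parts (b) and (c) are precisely the two implications needed. The only points requiring care are bookkeeping ones: that all three vertices genuinely lie in $\overline{\Gamma}_B$ so that the shortcut distances are defined and the triangle inequality is available (true, since group elements and conical points both lie in $\Gamma\cup\partial_B\Gamma$), and that the constants chain together correctly so that the final $D$ depends only on $\epsilon$ and $\eta$ and not on the particular triangle or on $p$. One minor technicality is the mismatch between $\epsilon\geq\epsilon_0$ in the statement and the strict inequality $\epsilon>\epsilon_0$ in Proposition~\ref{Floydgeo}(c); this is harmless and can be absorbed by working with any $\epsilon>\epsilon_0$ or by a direct monotonicity remark.
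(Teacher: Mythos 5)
Your proof is correct, and it uses the same two key inputs as the paper's proof: Proposition~\ref{Floydgeo}~b) to convert ``$p$ is a transition point of $[x,y]$'' into $\overline{\delta}^{f}_{p}(x,y)>\delta$, and Proposition~\ref{Floydgeo}~c) to convert a lower bound on a shortcut distance back into a nearby transition point on the corresponding side. The one place you diverge is the middle step: the paper splits into cases according to whether $p$ is far from the opposite side, invoking Karlsson's lemma to bound $\overline{\delta}^{f}_{p}(x,z)$ from above when $p$ is far from $[x,z]$, and then deduces $\overline{\delta}^{f}_{p}(y,z)>\delta/2$; you instead apply the triangle inequality for the shortcut metric directly to get that one of $\overline{\delta}^{f}_{p}(x,z)$, $\overline{\delta}^{f}_{p}(z,y)$ exceeds $\delta/2$. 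Your version is slightly cleaner: it needs no extra lemma, treats the two remaining sides symmetrically, and avoids the paper's implicit case (where $p$ is close to the opposite side as a set, which does not by itself produce a nearby transition point on that side). The caveats you flag --- that the shortcut distance is a genuine metric on $\overline{\Gamma}_B$ so the triangle inequality applies to ideal vertices, and the harmless $\epsilon\geq\epsilon_0$ versus $\epsilon>\epsilon_0$ mismatch --- are the right ones and are handled correctly.
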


\begin{proof}
Suppose $v$ is an $(\epsilon,\eta)$-transition point on $[x,z]$.
Then, Proposition~\ref{Floydgeo}~b) shows that $\overline{\delta}^{f}_{v}(x,z)>\delta(\epsilon,\eta)>0$.
Suppose moreover that $v$ is more than $K$ away from $(x,y)$.
Then, by Karlsson's lemma \cite[Lemma~1]{Karlsson}, $\overline{\delta}^{f}_{v}(x,y)\leq \phi^{-1}(K)$ where $\phi^{-1}(K)\to 0$ as $K\to \infty$.
Thus, $\overline{\delta}^{f}_{v}(y,z)>\delta(\epsilon,\eta)-\phi^{-1}(K)>\delta(\epsilon,\eta)/2$ when $K$ is large enough.
It follows from Proposition~\ref{Floydgeo}~c) that $v$ is within $D=D(\epsilon, \delta(\epsilon,\eta)/2)$ of an $(\epsilon,\eta)$-transition point on $[y,z]$, completing the proof.
\end{proof}

\begin{remark}
This property is sometimes called the "relatively thin triangles property." It states that geodesic triangle are $D$-thin along transition points.
Note that for finite triangles (that is with vertices $g_1,g_2,g_3\in \Gamma$), this property is given by \cite[Proposition 4.6]{Sisto}.
It can also be derived using techniques of \cite[Section~8]{Hruska} or from the proof of \cite[Proposition~7.1.1]{GerasimovPotyagailo}.
\end{remark}

\subsection{Shadows and coverings}\label{Sectionshadowsandcoverings}
We fix a finite generating set for $\Gamma$ and denote by $d_w$ the corresponding word metric.
What we call geodesics are geodesic for this word distance.

Three kinds of shadows are defined by Yang in \cite{Yang}, what the author calls large, small and partial shadows.
We will be only interested in partial shadows in this paper, but other kinds of shadows can be useful in different contexts.

\begin{definition}
Let $\epsilon,\eta>0$ and let $r\geq 0$.
The partial shadow $\Omega_{r,\epsilon,\eta}(g)$ at $g \in \Gamma$ is the set of points $\xi\in \partial_B\Gamma$ such that there is a geodesic ray $[e,\xi)$ intersecting $B(g,r)$ and containing an $(\epsilon,\eta)$-transition point in $B(g,2\eta)$.
\end{definition}

We will write $\Omega_{\epsilon,\eta}(g)=\Omega_{2\eta,\epsilon,\eta}(g)$.
Note that $\Omega_{\epsilon,\eta}(g)$ simply consists of those points of $\partial_{B}\Gamma$ which can be connected to $e$ by a geodesic ray containing an $(\epsilon,\eta)$-transition point in $B_{2\eta}(g)$.
We will need the following application of a variant of Lebesgue's differentiation theorem, which will be proved in the next section.

\begin{proposition}\label{localsingularity}
Fix $\epsilon>\epsilon_0$.
Let $\kappa_1$ be a finite Borel measure on $\partial_B\Gamma$, giving full measure to conical limit points.
Assume that for a constant $C>0$, for all sufficiently large $\eta$,
$$\kappa_{1}(\Omega_{\epsilon,2\eta}(g))\leq C \kappa_{1}(\Omega_{\epsilon,\eta}(g)).$$
Let $\kappa_2$ be any finite Borel measure on $\partial_B\Gamma$.

Then the following holds for large enough $\eta>0$. 
\begin{enumerate}[a)]

   \item If $\kappa_1$ and $\kappa_2$ are mutually singular then for $\kappa_1$-almost every $\xi \in \partial_B\Gamma$ we have $$\lim_{t\to \infty} \sup_{g\in \Gamma: ||g||>t,\xi \in \Omega_{\epsilon,\eta}(g)}\kappa_{2}(\Omega_{\epsilon,\eta}(g))/\kappa_{1}(\Omega_{\epsilon,\eta}(g))= 0.$$
   
   \item If $\kappa_1$ and $\kappa_2$ are equivalent and have no atoms then for $\kappa_1$-almost every $\xi \in \partial_B\Gamma$ we have $$\lim_{t\to \infty} \sup_{g\in \Gamma: ||g||>t,\xi \in \Omega_{\epsilon,\eta}(g)}\log \kappa_{2}(\Omega_{\epsilon,\eta}(g))/\log \kappa_{1}(\Omega_{\epsilon,\eta}(g))= 1.$$
\end{enumerate}
\end{proposition}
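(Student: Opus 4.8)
The plan is to reduce Proposition~\ref{localsingularity} to a Vitali-type covering argument and then invoke a differentiation theorem (the one from \cite{Federer} advertised in the introduction) for the measure $\kappa_1$ with respect to the family of partial shadows $\{\Omega_{\epsilon,\eta}(g)\}_{g\in\Gamma}$. The weak doubling hypothesis $\kappa_1(\Omega_{\epsilon,2\eta}(g))\leq C\,\kappa_1(\Omega_{\epsilon,\eta}(g))$ is exactly what one needs to play the role of the classical doubling condition in a Besicovitch/Vitali covering lemma: it controls the $\kappa_1$-mass lost when one passes from the shadow at scale $\eta$ to the shadow at the doubled scale $2\eta$. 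So the first step is to set up the covering family and verify that, for $\kappa_1$-almost every conical $\xi$, the shadows $\Omega_{\epsilon,\eta}(g)$ containing $\xi$ with $\|g\|\to\infty$ form a \emph{fine cover} of $\xi$ that shrinks nicely. Here I would use Lemma~\ref{transitionpoints}~b): a geodesic ray $[e,\xi)$ to a conical limit point has an unbounded sequence of $(\epsilon,\eta)$-transition points, so there are indeed points $g$ with $\|g\|$ arbitrarily large and $\xi\in\Omega_{\epsilon,\eta}(g)$, and moreover the shadows $\Omega_{\epsilon,\eta}(g)$ shrink (in the shortcut metric on $\partial_B\Gamma$) as $\|g\|\to\infty$ by the Floyd-distance estimates of Proposition~\ref{Floydgeo}.

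The second step handles the two items separately but by the same mechanism. For part~(a), assume $\kappa_1\perp\kappa_2$, so there is a Borel set $A$ with $\kappa_1(A)=0$ and $\kappa_2(\partial_B\Gamma\setminus A)=0$. The differentiation theorem gives that for $\kappa_1$-a.e.\ $\xi$, the ratio $\kappa_2(\Omega_{\epsilon,\eta}(g))/\kappa_1(\Omega_{\epsilon,\eta}(g))$ along shrinking shadows converges to the Radon--Nikodym derivative of the $\kappa_2$-part that is absolutely continuous with respect to $\kappa_1$, which is zero $\kappa_1$-almost everywhere by singularity. The key point is that the convergence holds not just along one sequence but as a supremum over \emph{all} $g$ with $\|g\|>t$ and $\xi\in\Omega_{\epsilon,\eta}(g)$; this is precisely the strong form of differentiation that the Federer-type theorem provides under the weak doubling hypothesis, so I would cite the version proved in the next section rather than reprove it. For part~(b), assume $\kappa_1\sim\kappa_2$ with no atoms. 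Writing $\kappa_2 = \phi\,\kappa_1$ with $\phi>0$ finite $\kappa_1$-a.e., differentiation gives
\begin{equation}
\kappa_2(\Omega_{\epsilon,\eta}(g))/\kappa_1(\Omega_{\epsilon,\eta}(g)) \longrightarrow \phi(\xi)
\end{equation}
uniformly in the above supremum sense for $\kappa_1$-a.e.\ $\xi$. Taking logarithms, $\log\kappa_2(\Omega_{\epsilon,\eta}(g)) = \log\kappa_1(\Omega_{\epsilon,\eta}(g)) + \log\bigl(\phi(\xi)+o(1)\bigr)$, and since non-atomicity forces $\kappa_1(\Omega_{\epsilon,\eta}(g))\to 0$ as $\|g\|\to\infty$ (again because the shadows shrink to the point $\xi$), the denominator $\log\kappa_1(\Omega_{\epsilon,\eta}(g))\to-\infty$ while the additive error $\log\phi(\xi)$ stays bounded, so the ratio of logarithms tends to $1$. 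The non-atomicity is used exactly here, to guarantee that the shadow masses vanish and the bounded additive term becomes negligible against the divergent logarithm.

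The main obstacle is establishing the \emph{uniform} differentiation statement, that is, that the supremum over all admissible $g$ (not merely a limit along one well-chosen sequence of transition points) converges. A naive differentiation theorem only controls a single shrinking sequence; the subtlety is that two shadows $\Omega_{\epsilon,\eta}(g)$ and $\Omega_{\epsilon,\eta}(g')$ of comparable scale containing the same $\xi$ need not be nested, and the partial shadows are not balls in any genuine metric, so the usual Besicovitch overlap control is not automatic. The resolution is to translate comparability of shadows containing $\xi$ into comparability in the shortcut metric $\overline{\delta}^{f}_{e}$ via Proposition~\ref{Floydgeo}~a) and~c): any two transition points $g,g'$ with $\xi\in\Omega_{\epsilon,\eta}(g)\cap\Omega_{\epsilon,\eta}(g')$ and $\|g\|\asymp\|g'\|$ lie near a common geodesic $[e,\xi)$ (by the relatively thin triangle property, Lemma~\ref{idealtriangles}), hence within bounded word distance of each other, so the two shadows are comparable up to the weak doubling constant $C$. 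This bounded-overlap structure is what upgrades the one-sequence differentiation to the uniform supremum form, and verifying it carefully — reconciling the combinatorics of transition points along geodesics with the metric geometry of the Floyd/shortcut metric — is where the real work lies. This is also precisely why the full Federer-style differentiation theorem is deferred to Section~\ref{SectionLebesguetheorem} rather than proved inline.
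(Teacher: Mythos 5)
Your proposal is correct and follows essentially the same route as the paper: a Vitali relation built from the partial shadows $\Omega_{\epsilon,\eta}(g)$, fineness at conical points via the unbounded sequence of transition points, verification of Federer's Vitali criterion using the weak doubling hypothesis together with the relatively thin triangles property (two intersecting shadows of comparable scale are absorbed into a shadow at the doubled scale), and then Federer's differentiation theorem to get the uniform limit, with the non-atomicity used exactly as you describe to make $\log\kappa_1(\Omega_{\epsilon,\eta}(g))\to-\infty$ in part~(b). The "main obstacle" you single out — upgrading single-sequence differentiation to the supremum form — is precisely the content of the paper's verification that the relation is Vitali, so your outline matches the paper's Section~\ref{SectionLebesguetheorem} step for step.
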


\section{Lebesgue differentiation theorem}\label{SectionLebesguetheorem}
This section is devoted to the proof of Proposition~\ref{localsingularity}.
See Sections~2.8 and~2.9 of the book by Federer \cite{Federer} for background on Vitali relations and their applications to differentiation theorems. We were inspired by their use in \cite{MYJ} to study quasi-conformal measures for divergence type isometry groups of Gromov hyperbolic spaces.
Let $\Lambda$ be a metric space. A covering relation $\mathcal{C}$ is a subset of the set of all pairs $(\xi,S)$ such that $\xi \in S\subset \Lambda$. A covering relation $\mathcal{C}$ is said to be fine at $\xi \in \Lambda$ if there exists a sequence $S_n$ of subsets of $\Lambda$ with $(\xi,S_n)\in \mathcal{C}$ and such that the diameter of $S_n$ converges to $0$.

Let $\mathcal{C}$ be a covering relation.
For any measurable subset $E\subset \Lambda$, define $\mathcal{C}(E)$ to be the collection of subsets $S\subset \Lambda$ such that $(\xi,S)\in \mathcal{C}$ for some $\xi \in E$. A covering relation $\mathcal{C}$ is said to be a Vitali relation for a finite measure $\mu$ on $\Lambda$ if it is fine at every point of $\Lambda$ and if the following holds: 
if $\mathcal{C}'\subset \mathcal{C}$ is fine at every point of $\Lambda$ then for every measurable subset $E$, $\mathcal{C}'(E)$ has a countable disjoint subfamily $\{S_n\}$ such that $\mu(E\setminus \cup^\infty_{n=1}S_{n})=0$.

For a covering relation $\mathcal{V}$ and a real valued function $f$ on $\Lambda$ let
$(\mathcal{V})\lim_{S\to x}f$ denote
$$(\mathcal{V})\lim_{S\to x}f=\lim_{\epsilon \to 0} \sup \{f(z),(x,S)\in \mathcal{V}, z\in S, \diam(S)<\epsilon\}.$$
Similarly we define $(\mathcal{V})\limsup_{S\to x}f$ and $(\mathcal{V})\liminf_{S\to x}f$.
The following criterion guarantees a covering relation is Vitali, see \cite[Theorem 2.8.17]{Federer}.

\begin{proposition}\label{Vitalicriterion}
Let $\mathcal{V}$ be a covering relation on $\Lambda$ such that each
$S\in \mathcal{V}(\Lambda)$ is a closed bounded subset, $\mathcal{V}$ is fine at every point of $\Lambda$.
Let $\mu$ be a measure on $\Lambda$ such that for $\mu$ almost every $x$
$$\inf \{\diam(S):(x,S)\in V\}>0.$$
For a positive function $f$ on $\mathcal{V}(\Lambda)$, $S\in \mathcal{V}(\Lambda)$, and a constant $\tau>1$ define
$\tilde{S}$ to be the union of all $S'\in \mathcal{V}(\Lambda)$ which have nonempty intersection with $S$ and satisfy $f(S')\leq \tau f(S)$.

Suppose that for $\mu$-almost every $\xi\in \Lambda$ we have 
$$\limsup_{S\to \xi} f(S)+\mu(\tilde{S})/\mu(S)<\infty.$$ 
Then the relation $\mathcal{V}$ is Vitali for $\mu$.
\end{proposition}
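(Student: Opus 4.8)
The plan is to run the classical Vitali selection-and-exhaustion argument, which is the substance of \cite[Theorem~2.8.17]{Federer}; I sketch it in the present notation. Fix a subfamily $\mathcal{C}'\subset \mathcal{V}$ that is fine at every point and a measurable set $E$; the goal is a countable disjoint family $\{S_n\}\subset \mathcal{C}'(E)$ with $\mu\big(E\setminus\bigcup_n S_n\big)=0$. Since $\mu$ is finite, I would first \emph{localize the pointwise hypothesis into uniform constants}. For each $\xi$ in the full-measure set where $\limsup_{S\to\xi}\big(f(S)+\mu(\tilde S)/\mu(S)\big)<\infty$, there are $N(\xi)\in\N$ and $\rho(\xi)>0$ so that every $(\xi,S)\in\mathcal{V}$ with $\diam(S)<\rho(\xi)$ satisfies $f(S)<N(\xi)$ and $\mu(\tilde S)\le N(\xi)\,\mu(S)$. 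Setting $E_{N,k}=\{\xi: N(\xi)\le N,\ \rho(\xi)\ge 1/k\}$, these sets increase to a set of full $\mu$-measure, so it suffices to cover $\mu$-almost all of $E\cap E_{N,k}$ for each fixed $(N,k)$, using only admissible sets of diameter $<1/k$ (on which $f<N$ and $\mu(\tilde S)\le N\mu(S)$). By outer regularity I would also fix an open $U\supseteq E\cap E_{N,k}$ with $\mu(U)\le \mu(E\cap E_{N,k})+\varepsilon$ and only ever use sets contained in $U$.

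The combinatorial heart is to extract a disjoint subfamily whose \emph{enlargements} cover the layer. Here $f$ is bounded by $N$, so I would organize the greedy selection into dyadic $f$-scale rounds: in round $j$ choose a \emph{maximal} disjoint subfamily among the admissible sets $S\subseteq U$ meeting $E\cap E_{N,k}$, of diameter $<1/k$, disjoint from everything chosen in earlier rounds, and with $f(S)\in\big(N\tau^{-(j+1)},\,N\tau^{-j}\big]$. Taking the union over all rounds yields a countable disjoint family $\{S_n\}$. The key claim is
$$E\cap E_{N,k}\ \subseteq\ \bigcup_n \tilde S_n.$$
Indeed, given $\xi$ in the left-hand side, fineness furnishes an admissible $S\ni\xi$ contained in $U$, of small diameter, whose $f$-value lies in some band $j$. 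If $S$ were disjoint from every set chosen in rounds $0,\dots,j$, it would have been an eligible, non-conflicting candidate in round $j$, contradicting maximality; hence $S$ meets some chosen $S_n$ from a round $j'\le j$. For that $n$ one has $f(S_n)>N\tau^{-(j'+1)}\ge N\tau^{-(j+1)}$, while $f(S)\le N\tau^{-j}$, so $f(S)\le \tau f(S_n)$ and therefore $S\subseteq \tilde S_n$, giving $\xi\in\tilde S_n$.

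Finally I would upgrade the enlargement-covering to a full-measure covering by a geometric iteration driven by the uniform ratio bound. Since the $S_n\subseteq U$ are disjoint, $\sum_n\mu(S_n)\le \mu(U)<\infty$, and the covering above gives $\mu(E\cap E_{N,k})\le \sum_n\mu(\tilde S_n)\le N\sum_n\mu(S_n)$, whence $\sum_n\mu(S_n)\ge \mu(E\cap E_{N,k})/N$. Choosing a finite $p$ capturing most of this sum and using that $\bigcup_{n\le p}S_n$ is closed, one gets $\mu\big(E\cap E_{N,k}\setminus\bigcup_{n\le p}S_n\big)\le (1-\theta)\,\mu(E\cap E_{N,k})$ for a fixed $\theta\approx 1/N>0$ (after letting $\varepsilon$ be small). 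Applying the whole construction again to the residual measurable set, with fine sets chosen small enough to avoid the finitely many closed sets already selected, the uncovered measure decays geometrically, and the accumulated disjoint family covers $\mu$-almost all of $E\cap E_{N,k}$. Letting $N,k\to\infty$ and amalgamating gives the Vitali property. The step I expect to be the main obstacle is exactly this covering/bookkeeping: organizing the maximal selection by $f$-scales so that \emph{every} fine set through an uncovered point lands inside some enlargement $\tilde S_n$, and then converting that into full measure via the uniform control $\mu(\tilde S)\le N\mu(S)$ and the geometric iteration; the localization and the final summation are then routine.
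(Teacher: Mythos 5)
The paper gives no proof of this proposition at all — it is quoted directly from Federer \cite[Theorem~2.8.17]{Federer} — so there is no internal argument to compare against. Your proposal is a correct reconstruction of Federer's standard proof (localize the pointwise $\limsup$ hypothesis to layers with uniform constants, make a greedy maximal disjoint selection organized by $\tau$-scales of $f$ so that every fine set through an uncovered point lands in some enlargement $\tilde S_n$, then exhaust using $\mu(\tilde S_n)\le N\mu(S_n)$ and disjointness), and the steps you flag as routine — countability of the maximal disjoint families (which uses that the admissible sets have positive measure, as the paper's own remark after the proposition points out) and the final amalgamation over the localization parameters — are indeed routine.
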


\begin{remark}
To give a proper meaning to $f(S)+\mu(\tilde{S})/\mu(S)<\infty$, one has to ensure that $\mu(S)\neq 0$.
This is implicit in the book of Federer since the set of $\xi$ such that $(S\to \xi)$ in the above sense satisfying $\mu(S)=0$ has measure zero, see in particular \cite[Theorem~2.9.5]{Federer}.
Anyway, in the following, we will use partial shadows to define a Vitali relation, which have positive measure for the different measures we will use.
\end{remark}

Fix $\epsilon \geq \epsilon_0$. We will show that for large enough $\eta$, the following relation $\mathcal{V}_{\epsilon,\eta}$ is a Vitali relation for reasonable measures $\kappa_1$ on $\partial_B \Gamma$. 
For $\zeta\in \partial_{B}\Gamma$ parabolic, we declare $(\zeta,\{\zeta\})\in \mathcal{V}_{\epsilon,\eta}$.
For  $\zeta\in \partial_{B}\Gamma$ conical, we declare $(\zeta, \Omega_{\epsilon,\eta}(g))\in \mathcal{V}_{\epsilon,\eta}$ whenever $\zeta \in  \Omega_{\epsilon, \eta}(g)$.



\begin{proposition}
For large enough $\eta$ the above relation $\mathcal{V}_{\epsilon,\eta}$ is fine at every conical point $\xi \in \partial_B \Gamma$.
\end{proposition}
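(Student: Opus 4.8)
The plan is to produce, for each conical point $\xi\in\partial_B\Gamma$, an explicit sequence of shadows containing $\xi$ whose diameters (measured in the shortcut metric $\overline{\delta}^{f}_{e}$ on $\partial_B\Gamma$) tend to $0$. Since $(\xi,\Omega_{\epsilon,\eta}(g))\in\mathcal{V}_{\epsilon,\eta}$ precisely when $\xi\in\Omega_{\epsilon,\eta}(g)$, fineness at $\xi$ amounts to exhibiting $g_n\in\Gamma$ with $\xi\in\Omega_{\epsilon,\eta}(g_n)$ and $\diam\big(\Omega_{\epsilon,\eta}(g_n)\big)\to 0$. First I would fix a geodesic ray $[e,\xi)$; taking $\eta$ large enough that $\eta>\eta_0(\epsilon)$, Lemma~\ref{transitionpoints}~b) furnishes an unbounded sequence of $(\epsilon,\eta)$-transition points $v_n$ on $[e,\xi)$, so that $R_n:=\|v_n\|\to\infty$. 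Setting $g_n=v_n$, the ray $[e,\xi)$ meets $B(v_n,2\eta)$ (it passes through $v_n$) and carries the $(\epsilon,\eta)$-transition point $v_n\in B(v_n,2\eta)$, whence $\xi\in\Omega_{\epsilon,\eta}(v_n)$ by definition.

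The heart of the argument, and the main obstacle, is the diameter estimate: shadows of transition points far from the basepoint are small. I would argue directly with the Floyd length. Let $\zeta_1,\zeta_2\in\Omega_{\epsilon,\eta}(v_n)$, and for each $i$ choose a witnessing geodesic ray $[e,\zeta_i)$ carrying an $(\epsilon,\eta)$-transition point $w_i\in B(v_n,2\eta)$; thus $\|w_i\|\geq R_n-2\eta$ and $d_w(w_1,w_2)\leq 4\eta$. Concatenating the subray of $[e,\zeta_1)$ from $\zeta_1$ inward to $w_1$, a word geodesic from $w_1$ to $w_2$, and the subray of $[e,\zeta_2)$ from $w_2$ outward to $\zeta_2$ yields a path in $\overline{\Gamma}_f$ joining lifts of $\zeta_1$ and $\zeta_2$, whose Floyd length bounds $\overline{\delta}^{f}_{e}(\zeta_1,\zeta_2)$ from above. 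Every edge of this path lies at word distance at least $R_n-6\eta$ from $e$: the two tails recede monotonically from $e$ along geodesic rays issuing from $e$, while the connecting segment stays within $4\eta$ of $w_1$. Hence each edge is rescaled by at most $f(R_n-6\eta)$, and summing, the tails contribute at most $2\sum_{k\geq R_n-2\eta}f(k)$ and the connecting segment at most $4\eta\, f(R_n-6\eta)$. Therefore
$$\overline{\delta}^{f}_{e}(\zeta_1,\zeta_2)\ \lesssim\ \sum_{k\geq R_n-6\eta}f(k)\;+\;4\eta\, f(R_n-6\eta),$$
and both terms tend to $0$ as $R_n\to\infty$: the first because $\sum_k f(k)<\infty$ forces its tails to vanish, the second because $f(k)\to 0$. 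As this bound is uniform in $\zeta_1,\zeta_2\in\Omega_{\epsilon,\eta}(v_n)$, we get $\diam\big(\Omega_{\epsilon,\eta}(v_n)\big)\to 0$, which establishes fineness at $\xi$.

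I expect the only genuinely delicate points to be bookkeeping ones, namely checking that the subrays of $[e,\zeta_i)$ beyond $w_i$ stay at distance $\geq R_n-2\eta$ from $e$ (immediate, since they are subrays of geodesics based at $e$) and that passing from the Floyd length of the concatenated path to the shortcut distance $\overline{\delta}^{f}_{e}$ on $\partial_B\Gamma$ is legitimate. The latter follows from $\overline{\delta}^{f}_{o}\leq\delta^{f}_{o}$ together with the fact that $\overline{\delta}^{f}_{o}$ is constant on the fibers of Gerasimov's map $F$, so the estimate descends through $F$. An alternative route to the same decay is the basepoint-change inequality $\overline{\delta}^{f}_{e}/\overline{\delta}^{f}_{v_n}\leq\lambda^{d_w(e,v_n)}$ combined with boundedness of $\overline{\delta}^{f}_{v_n}$, but the direct computation above makes the role of the summability hypothesis on $f$ most transparent, which is why I would present it that way.
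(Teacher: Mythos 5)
Your proposal is correct and follows essentially the same route as the paper's proof: pick the unbounded sequence of $(\epsilon,\eta)$-transition points $v_n$ on a geodesic ray $[e,\xi)$ (Lemma~\ref{transitionpoints}~b)), observe $\xi\in\Omega_{\epsilon,\eta}(v_n)$ by definition, and show the shadows shrink to $\xi$. The only difference is that the paper simply asserts the last step ("for large enough $n$ it is contained in any neighborhood of $\zeta$"), whereas you justify it with an explicit and correct Floyd-length estimate, using summability and monotone decay of the rescaling function $f$ together with $\overline{\delta}^{f}_{e}\leq\delta^{f}_{e}$ to descend to the shortcut metric.
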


\begin{proof}
If $\zeta \in \partial_B \Gamma$ is parabolic, there is nothing to prove since $(\zeta,\{\zeta\})\in \mathcal{V}_{\epsilon,\eta}$.
Let $\zeta \in \partial_B \Gamma$ be conical.
Then, according to Lemma~\ref{transitionpoints}, for $\eta\geq \eta_{0}(\epsilon)$, a geodesic ray $\alpha$ connecting $e$ to $\zeta$ contains infinitely many $(\epsilon,\eta)$-transition points $g_n$.
By definition, the partial shadow $\Omega_{\epsilon,\eta}(g_n)$ contains $\zeta$.
For large enough $n$, it is contained in any neighborhood of $\zeta$, completing the proof.
\end{proof}

\begin{proposition}
Let $\mu$ be a Borel measure on $\partial_B \Gamma$ with the property that for all sufficiently large $\eta$, there exists a constant $C>0$ such that
$$\mu(\Omega_{\epsilon,2\eta}(g))\leq C \mu(\Omega_{\epsilon,\eta}(g)).$$
Also assume that $\mu$ gives full measure to the set of conical limit points.
Then for large enough $\eta$ the relation $\mathcal{V}_{\epsilon,\eta}$ is
a Vitali relation for $\mu$.
\end{proposition}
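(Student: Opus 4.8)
The plan is to deduce the proposition from Proposition~\ref{Vitalicriterion}, so it suffices to produce a function $f$ on $\mathcal{V}_{\epsilon,\eta}(\partial_B\Gamma)$ for which the criterion applies. I would fix $\tau=2$ and set $f(\Omega_{\epsilon,\eta}(g))=\mathrm{e}^{-\|g\|}$ on the shadows (this is well defined up to a bounded factor, since if $\Omega_{\epsilon,\eta}(g)=\Omega_{\epsilon,\eta}(g')$ then $\|g\|$ and $\|g'\|$ are comparable) and $f(\{\zeta\})=1$ on the parabolic singletons; the values on the singletons are irrelevant, as parabolic points form a countable, hence $\mu$-null, set and therefore never contribute to the measure of any $\tilde S$. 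The members of $\mathcal{V}_{\epsilon,\eta}(\partial_B\Gamma)$ are bounded because $\partial_B\Gamma$ is compact, and to make them closed one replaces each shadow by its closure, which by an Arzelà--Ascoli argument on geodesics lies in $\Omega_{\epsilon,\eta'}(g)$ for slightly larger $\eta'$; by the doubling hypothesis this enlargement changes $\mu$ by at most a bounded factor and affects none of the estimates below. Fineness of $\mathcal{V}_{\epsilon,\eta}$ was established in the preceding proposition. Since $\mu$ gives full measure to conical points, it remains to check at every conical $\xi$ that $\limsup_{S\to\xi}\big(f(S)+\mu(\tilde S)/\mu(S)\big)<\infty$; as $f\leq 1$, only the ratio $\mu(\tilde S)/\mu(S)$ must be controlled.

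The heart of the matter is an \emph{engulfing} statement: there is a constant $C=C(\epsilon,\eta)$, independent of $g$, such that whenever $S=\Omega_{\epsilon,\eta}(g)$ and $S'=\Omega_{\epsilon,\eta}(g')$ meet (at a conical point) and $f(S')\leq 2f(S)$, one has $S'\subset\Omega_{\epsilon,C\eta}(g)$; here $f(S')\leq 2f(S)$ means exactly $\|g'\|\geq\|g\|-\log 2$. To prove it, I would fix a conical $\xi_0\in S\cap S'$ and an arbitrary conical $\xi'\in S'$. First I record the stability of transition points: if $v$ is an $(\epsilon,\eta)$-transition point of one geodesic between two points of $\overline{\Gamma}_{B}$, then by Proposition~\ref{Floydgeo}~b) and~c) every geodesic between those two points carries an $(\epsilon,\eta)$-transition point within a uniform distance of $v$. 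Using this I fix geodesics $[e,\xi_0]$ and $[e,\xi']$, each carrying a transition point boundedly close to $g'$, with $[e,\xi_0]$ additionally carrying one, say $p$, boundedly close to $g$. Applying the relatively thin triangle property (Lemma~\ref{idealtriangles}) to the triangle $e,\xi_0,\xi'$, the point $p$ lies within $D$ of a transition point on $[e,\xi']$ or on $[\xi_0,\xi']$. The delicate point, which I expect to be the main obstacle, is excluding the second alternative: since $\xi_0,\xi'\in\Omega_{\epsilon,\eta}(g')$, both geodesics pass boundedly close to $g'$ at arc-length $\approx\|g'\|\geq\|g\|-\log 2$, whereas $p$ sits at arc-length $\approx\|g\|$, before this common passage, so comparing the two geodesics across the short side near $g'$ and invoking thin triangles once more forces $p$ to be close to a transition point of $[e,\xi']$ rather than of $[\xi_0,\xi']$. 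That transition point is boundedly close to $g$, so $\xi'\in\Omega_{\epsilon,C\eta}(g)$ once $C$ is large enough; as $\xi'$ was arbitrary this gives $S'\subset\Omega_{\epsilon,C\eta}(g)$, and hence $\tilde S\subset\Omega_{\epsilon,C\eta}(g)$ (the parabolic points of the various $S'$ being $\mu$-negligible).

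Finally I would close the argument by iterating the doubling hypothesis. Writing $C\leq 2^{m}$ with $m=\lceil\log_2 C\rceil$ and using that $\Omega_{\epsilon,\eta}(g)$ is nondecreasing in $\eta$, I apply the doubling inequality $m$ times at the scales $\eta,2\eta,\dots,2^{m-1}\eta$ (all sufficiently large for $\eta$ large) to obtain $\mu(\tilde S)\leq\mu(\Omega_{\epsilon,2^{m}\eta}(g))\leq C'\mu(\Omega_{\epsilon,\eta}(g))=C'\mu(S)$, where $C'$ is the product of the finitely many doubling constants. Thus $\mu(\tilde S)/\mu(S)\leq C'$ for every such $S$, so $\limsup_{S\to\xi}\big(f(S)+\mu(\tilde S)/\mu(S)\big)\leq 1+C'<\infty$ at every conical $\xi$, that is $\mu$-almost everywhere, and Proposition~\ref{Vitalicriterion} yields that $\mathcal{V}_{\epsilon,\eta}$ is a Vitali relation for $\mu$. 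The crux throughout is the engulfing step, which replaces, in the non-hyperbolic word metric, the elementary nesting of shadows available in Gromov hyperbolic spaces; everything else is bookkeeping once the relatively thin triangle property and the stability of transition points are in hand.
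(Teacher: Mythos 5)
Your proposal is correct and follows essentially the same route as the paper: Federer's criterion with $f(S)=\mathrm{e}^{-c\|g\|}$, an engulfing statement for intersecting shadows proved via the relatively thin triangles property applied to the finite triangle over the short side near $g'$ (this is exactly the paper's triangle $(e,p,q)$, with the trivial case $d_w(g,g')$ small handled separately), and the doubling hypothesis to bound $\mu(\tilde S)/\mu(S)$. The only differences are bookkeeping: you take $\tau=2$ instead of $\mathrm{e}^{v\eta}$ and iterate the doubling inequality finitely many times instead of arranging $\tilde S\subset\Omega_{\epsilon,2\eta}(g)$ in one step.
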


\begin{remark}
Notice that $C$ is allowed to depend on $\eta$.
\end{remark}

\begin{proof}
We verify that our relation satisfies the conditions of Proposition~\ref{Vitalicriterion}. 
First, let $(\xi,\Omega_{\epsilon,\eta}(g))\in \mathcal{V}_{\epsilon,\eta}$, where $\xi$ is conical, so that $\xi \in \Omega_{\epsilon,\eta}(g)$.
We define then $f(\xi,\Omega_{\epsilon,\eta}(g))=e^{-v\|g\|}$, where $v$ is the volume growth (defined in the introduction).
Let $(\xi,\{\xi\})\in \mathcal{V}_{\epsilon,\eta}$, where $\xi$ is parabolic.
We define then $f(\xi,\{\xi\})=1$.
In any case, set $\tau=e^{v\eta}>1.$

To apply Proposition~\ref{Vitalicriterion}, we only have to deal with $\mu$-almost every point, so that we can only consider conical limit points.
For $S=(\xi,\Omega_{\epsilon,\eta}(g))$ in $\mathcal{V}_{\epsilon,\eta}(\partial_B \Gamma)$ we want to consider the union $\tilde{S}$ of elements $S'$ with $(\xi',S')$ in $\mathcal{V}_{\epsilon,\eta}(\partial_{B}\Gamma)$ such that $S'$ intersects $S$ and $f(S')\leq \tau f(S)$.
This is contained in the union of parabolic limit points inside $\Omega_{\epsilon,\eta}(g)$ and the union of partial shadows $\Omega_{\epsilon,\eta}(h)$ intersecting $\Omega_{\epsilon,\eta}(g)$ with $\|h\|\geq \|g\|-\eta$.
We want to bound $f(S)+\mu(\tilde{S})/\mu(S)$.
Again, since $\mu$ is supported on the conical limit points, we only have to consider those $S'$ of the form $\Omega_{\epsilon,\eta}(h)$.

If $d_w(h,g)\leq 10\eta$ then 
 $\Omega_{\epsilon,\eta}(h) \subset \Omega_{\epsilon,6\eta}(g)$.
Suppose $d_w(h,g)> 10 \eta$ and let $\zeta \in \Omega_{\epsilon,\eta}(h) \cap \Omega_{\epsilon,\eta}(g)$.
Then some geodesic $[e,\zeta)$ passes first at an $(\epsilon,\eta)$-transition point in $B_{2\eta}(g)$ and then at one in $B_{2\eta}(h)$.  Let $\xi$ be any other point of $\Omega_{\epsilon,\eta}(h)$.
Let $p$ and $q$ be $(\epsilon,\eta)$-transition points on $[e,\zeta)$ and $[e,\xi)$ respectively within $2\eta$ of $h$.
By the relatively thin triangles property Lemma~\ref{idealtriangles} applied to the associated triangle $(e,p,q)$, the fact that $[e,\zeta)$ has an $(\epsilon,\eta)$-transition point in $B_{2\eta}(g)$ and the fact that $d_w(g,h)>10\eta$, we know that $[e,\xi)$ has a transition point in $B_{2\eta+D}(g)$. Thus, $\xi\in\Omega_{\epsilon,\eta+D}(g)$.
The number $D$ depends on $\eta$, but for large enough $\eta$ we have that $\tilde{S}\subset \Omega_{\epsilon,2\eta}(g)$.
By the assumption on the measure we have $\mu(\tilde{S})/\mu(S)$ bounded above, completing the proof.
\end{proof}

The following is contained in Theorems 2.9.5 and 2.9.7 of \cite{Federer}.
\begin{proposition}\label{RNexists}
Let $\Lambda$ be a metric space, $\kappa_1$ a finite Borel measure on $\Lambda$ and $\mathcal{V}$ a Vitali relation for $\kappa_1$.
Let $\kappa_2$ be any finite Borel measure on $\Lambda$. Define a new Borel measure $\psi(\kappa_{1},\kappa_{2})$ by 
$$\psi(\kappa_{1},\kappa_{2})(A)=\inf \{\kappa_2(B): B \textrm{ Borel },  \kappa_1(B\Delta A)=0\}.$$
This measure is absolutely continuous to $\kappa_1$.
The limit $$D(\kappa_1,\kappa_2,\mathcal{V},x)=(\mathcal{V})\lim_{S\to x}\kappa_2(S)/\kappa_1(S)$$ exists for $\kappa_1$-almost every $x$ and is equal to
$d\psi(\kappa_{1},\kappa_{2})/d\kappa_1$.
\end{proposition}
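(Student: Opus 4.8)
The plan is to read the proposition off from Federer's general differentiation theory \cite{Federer}, the conceptual content being the Lebesgue decomposition of $\kappa_2$ relative to $\kappa_1$ together with the Vitali covering property. First I would identify the measure $\psi(\kappa_1,\kappa_2)$ concretely. Writing the Lebesgue decomposition $\kappa_2=\kappa_2^{ac}+\kappa_2^{s}$ with respect to $\kappa_1$, with $\kappa_2^{s}$ concentrated on a Borel set $N$ satisfying $\kappa_1(N)=0$, I would note that for any Borel $A$ the competitor $B=A\setminus N$ has $\kappa_1(B\Delta A)=0$ and $\kappa_2(B)=\kappa_2^{ac}(A)$, while every admissible $B$ satisfies $\kappa_2(B)\geq\kappa_2^{ac}(B)=\kappa_2^{ac}(A)$ because $\kappa_2^{ac}\ll\kappa_1$. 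Hence $\psi(\kappa_1,\kappa_2)(A)=\kappa_2^{ac}(A)$ is exactly the absolutely continuous part of $\kappa_2$; in particular $\psi(\kappa_1,\kappa_2)\ll\kappa_1$ and it admits a Radon--Nikodym density $g=d\psi(\kappa_1,\kappa_2)/d\kappa_1$. This gives the first assertion.

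Next I would differentiate the two parts separately along the Vitali relation $\mathcal{V}$. For the absolutely continuous part, writing $\psi(\kappa_1,\kappa_2)(S)=\int_S g\,d\kappa_1$, the Vitali differentiation theorem \cite[Theorem~2.9.5]{Federer} yields $(\mathcal{V})\lim_{S\to x}\psi(\kappa_1,\kappa_2)(S)/\kappa_1(S)=g(x)$ for $\kappa_1$-almost every $x$. For the singular part, since $\kappa_2^{s}$ is concentrated on a $\kappa_1$-null set, the Vitali property forces $(\mathcal{V})\lim_{S\to x}\kappa_2^{s}(S)/\kappa_1(S)=0$ for $\kappa_1$-almost every $x$; this is the one genuinely measure-theoretic input and is precisely what the Vitali relation is designed to supply, being contained in \cite[Theorem~2.9.7]{Federer}. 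Because $\kappa_2(S)=\psi(\kappa_1,\kappa_2)(S)+\kappa_2^{s}(S)$ and the singular ratio is nonnegative with vanishing $(\mathcal{V})\limsup$, its $(\mathcal{V})\lim$ exists and equals $0$, so $(\mathcal{V})\lim_{S\to x}\kappa_2(S)/\kappa_1(S)=g(x)$ for $\kappa_1$-almost every $x$, which is the claimed identity $D(\kappa_1,\kappa_2,\mathcal{V},x)=d\psi(\kappa_1,\kappa_2)/d\kappa_1$.

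The main obstacle is not the algebra of these two steps but the bookkeeping of matching hypotheses: one must check that $\Lambda$ equipped with $\mathcal{V}$ fits Federer's framework (finite Borel measures over a metric space, $\mathcal{V}$ a Vitali relation for $\kappa_1$ in his precise sense), and that his definitions of the derivative $D(\kappa_1,\kappa_2,\mathcal{V},x)$ and of $\psi$ agree with the ones used here. Once this dictionary is in place, the existence $\kappa_1$-almost everywhere and the identification with $d\psi/d\kappa_1$ are exactly the content of \cite[Theorem~2.9.5 and Theorem~2.9.7]{Federer}, so no further work is needed. I would therefore present the proof as a verification of hypotheses followed by a citation, spelling out the decomposition argument above only to make the vanishing role of the singular part transparent.
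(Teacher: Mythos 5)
Your proposal is correct and takes essentially the same route as the paper, which gives no argument beyond observing that the statement is contained in Theorems 2.9.5 and 2.9.7 of \cite{Federer}. The extra detail you supply --- identifying $\psi(\kappa_1,\kappa_2)$ with the absolutely continuous part of $\kappa_2$ via the Lebesgue decomposition and then differentiating the two parts separately --- is a correct and useful unpacking of what those theorems deliver, not a different method.
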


As a corollary we obtain:
\begin{corollary} \label{Vitalidichotomy}
\begin{enumerate}[a)]
    \item If the $\kappa_i$ are mutually singular, then $$(\mathcal{V})\lim_{S\to \zeta}\kappa_{2}(S)/\kappa_{1}(S)=0$$   for $\kappa_{1}$-almost every $\zeta \in \Lambda$.
    
    \item If the $\kappa_{i}$ are equivalent and non-atomic, then 
    $$(\mathcal{V})\lim_{S\to \zeta} \frac{\log \kappa_{2}(S)}{\log \kappa_{1}(S)}=1$$
    for $\kappa_{1}$-almost every $\zeta \in \Lambda$.
\end{enumerate}
\end{corollary}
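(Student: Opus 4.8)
The plan is to read off both statements from Proposition~\ref{RNexists} by computing the auxiliary measure $\psi(\kappa_1,\kappa_2)$ explicitly in each case, and then, in case (b), transferring the conclusion about the ratio $\kappa_2(S)/\kappa_1(S)$ to the logarithmic ratio. In other words, the substantive analytic work is already contained in the differentiation theorem; the corollary is about correctly identifying the Radon--Nikodym derivative $d\psi(\kappa_1,\kappa_2)/d\kappa_1$ and then doing an elementary logarithm estimate.

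For part (a), I would first check that mutual singularity forces $\psi(\kappa_1,\kappa_2)\equiv 0$. Pick a Borel set $N$ with $\kappa_1(N)=0$ and $\kappa_2(\Lambda\setminus N)=0$. For an arbitrary Borel set $A$, take $B=A\setminus N$; then $B\Delta A = A\cap N$, so $\kappa_1(B\Delta A)\le\kappa_1(N)=0$, while $\kappa_2(B)\le\kappa_2(\Lambda\setminus N)=0$. Hence the infimum defining $\psi(\kappa_1,\kappa_2)(A)$ is $0$ for every $A$, so $d\psi(\kappa_1,\kappa_2)/d\kappa_1=0$ for $\kappa_1$-almost every point. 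Proposition~\ref{RNexists} then gives $(\mathcal{V})\lim_{S\to\zeta}\kappa_2(S)/\kappa_1(S)=0$ for $\kappa_1$-a.e.\ $\zeta$, which is exactly the claim.

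For part (b), the first step is to note that when $\kappa_2\ll\kappa_1$ the infimum defining $\psi(\kappa_1,\kappa_2)(A)$ is attained at $B=A$: any $B$ with $\kappa_1(B\Delta A)=0$ also satisfies $\kappa_2(B\Delta A)=0$, hence $\kappa_2(B)=\kappa_2(A)$. Thus $\psi(\kappa_1,\kappa_2)=\kappa_2$, and Proposition~\ref{RNexists} yields $D(\kappa_1,\kappa_2,\mathcal{V},\zeta)=d\kappa_2/d\kappa_1(\zeta)=:\rho(\zeta)$ for $\kappa_1$-a.e.\ $\zeta$. Since $\kappa_1$ and $\kappa_2$ are equivalent, $\rho$ is finite and strictly positive $\kappa_1$-a.e.\ (the set $\{\rho=0\}$ is $\kappa_2$-null, hence $\kappa_1$-null). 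To pass to logarithms I would invoke non-atomicity: for a conical $\zeta$ and any $S$ in the relation with $\zeta\in S$, one has $S\subset\overline{B}(\zeta,\diam S)$, and as $\overline{B}(\zeta,r)$ decreases to $\{\zeta\}$ with $\kappa_1(\{\zeta\})=0$, continuity from above gives $\kappa_1(S)\to 0$ as $\diam S\to 0$. Writing $\log\kappa_2(S)=\log\bigl(\kappa_2(S)/\kappa_1(S)\bigr)+\log\kappa_1(S)$ and dividing by $\log\kappa_1(S)\to-\infty$, the term $\log\bigl(\kappa_2(S)/\kappa_1(S)\bigr)\to\log\rho(\zeta)$ stays bounded and so contributes $0$ in the limit, leaving the ratio tending to $1$.

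The only genuine subtlety I anticipate is the final passage to the logarithm in (b): one needs $\kappa_1(S)\to 0$, not merely $\diam S\to 0$, and one needs $\rho(\zeta)$ to be both finite and nonzero so that $\log\rho(\zeta)$ remains bounded. The former is precisely where non-atomicity of $\kappa_1$ is used, and the latter is where the full strength of the equivalence $\kappa_1\sim\kappa_2$ (both directions of absolute continuity) is needed. Everything else is a direct translation of Proposition~\ref{RNexists}.
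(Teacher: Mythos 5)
Your proposal is correct and follows essentially the same route as the paper: in both cases one identifies $\psi(\kappa_1,\kappa_2)$ (as $0$ under mutual singularity, as $\kappa_2$ under equivalence), invokes Proposition~\ref{RNexists}, and in case (b) uses non-atomicity of $\kappa_1$ to force $\kappa_1(S)\to 0$ so that the bounded term $\log\bigl(\kappa_2(S)/\kappa_1(S)\bigr)$ is killed after dividing by $\log\kappa_1(S)$. You have merely filled in details the paper leaves implicit (the explicit verification that $\psi(\kappa_1,\kappa_2)\equiv 0$, that $\psi(\kappa_1,\kappa_2)=\kappa_2$, and that the derivative is finite and positive almost everywhere), so no further comment is needed.
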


\begin{proof}
If $\kappa_1$ and $\kappa_2$ are mutually singular, then by definition 
$\psi(\kappa_{1},\kappa_{2})=0$.
Together with Proposition~\ref{RNexists}, this proves a).

For b), assume $\kappa_1$ and $\kappa_2$ are equivalent. 
By Proposition \ref{RNexists} we have 
$$D(\kappa_1,\kappa_2,\mathcal{V},x)=(\mathcal{V})\lim_{S\to x}\kappa_{1}(S)/\kappa_{2}(S)>0$$
for $\kappa_1$ almost every $x$.
Thus, since $\kappa_1$ is non-atomic, as $(\mathcal{V})S\to x$, we have
\begin{equation*}
    \frac{\log \kappa_{2}(S)}{\log \kappa_{1}(S)}=1+\frac{\log(\kappa_{2}(S)/\kappa_{1}(S))}{\log (\kappa_{1}(S))}\to 1. \qedhere
\end{equation*}
\end{proof}

Proposition \ref{localsingularity} now follows by applying the Corollary \ref{Vitalidichotomy} to the Vitali relation defined by $\mathcal{V}_{\epsilon,\eta}$. \qed

\section{Rigidity of the Guivarc'h inequality}\label{Sectionrigidity}
The goal of this section is to prove Theorem~\ref{theoremBHM}.
We will follow the same strategy as in \cite{BlachereHassinskyMathieu2}.
We consider a non-elementary relatively hyperbolic group $\Gamma$.

\subsection{Quasi-conformality of the measures}\label{Sectionquasiconformality}
We fix a finite generating set $S$ for $\Gamma$ and denote by $d_w$ the corresponding word metric.
We consider geodesics for this word distance.

Let $\xi$ be a conical limit point.
Define the Busemann function $\beta_{\xi}(\cdot,\cdot)$ at $\xi$ as follows.
Let $\alpha$ be a geodesic ray in the Cayley graph $\Cay(\Gamma,S)$ ending at $\xi$.
Define then, for $g \in \Gamma$,
$b_{\alpha,\xi}(g)=\limsup [d_w(g,\alpha(t))-t]$.
The Busemann function $\beta_{\xi}$ is then defined as
$$\beta_{\xi}(g,g')=\sup(b_{\alpha,\xi}(g)-b_{\alpha,\xi}(g')),$$
where the supremum is taken over all geodesic rays $\alpha$ ending at $\xi$.
According to \cite[Lemma~2.20]{Yang}, there exists $C\geq 0$ such that for any $g,g'$, there exists a neighborhood $\mathcal{U}$ of $\xi$ in $\Gamma\cup \partial_B\Gamma$ such that
for all $g_0\in \mathcal{U}$,
$$\left |\beta_{\xi}(g,g')-[d_w(g,g_0)-d_w(g',g_0)]\right |\leq C.$$




Yang constructs in \cite{Yang} Patterson-Sullivan measures $\kappa_\gamma$ on $\partial_B\Gamma$ for every $\gamma\in \Gamma$ as follows.
Recall that we define the Poincar\'e series as
$$\mathcal{P}(s)=\sum_{g\in \Gamma}\mathrm{e}^{-s\|g\|}$$
where we recall that $\|g\|=d_w(e,g)$.
For $s>v$, the Poincar\'e series is convergent and for $s<v$, it is divergent.
The Patterson-Sullivan measure $\kappa_{g_0}$  at $g_0$ is then a weak limit point of
$$\frac{1}{\mathcal{P}(s)}\sum_{g\in \Gamma}\mathrm{e}^{-sd_w(g,g_0)}\delta_{g}$$
when $s$ tends to $v$ from above (where $\delta_{g}$ is the Dirac measure at $g$).

Yang proves that the Patterson-Sullivan measures $\kappa_{g}$ defines a $v$-dimensional quasi-conformal density in the following sense.

\begin{proposition}\cite[Lemma~4.14]{Yang}
For any $g$, $\kappa_{g}$ is without atoms, and in particular gives full weight to conical points.
Moreover, for any $g,g'$, the measures $\kappa_{g}$ and $\kappa_{g'}$ are equivalent and
$$\frac{d\kappa_{g}}{d\kappa_{g'}}(\xi)\asymp \mathrm{e}^{-v\beta_{\xi}(g,g')}$$
for $\kappa_{g}$-almost every conical limit point $\xi$.
\end{proposition}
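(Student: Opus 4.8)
The plan is to follow Patterson's classical construction, establishing the quasi-conformality estimate by a direct computation with the approximating measures and the absence of atoms by exploiting the convergence dynamics together with divergence type. I would first record existence: since $\overline{\Gamma}_B$ is compact, the probability measures $\frac{1}{\mathcal{P}(s)}\sum_{g}e^{-sd_w(g,g_0)}\delta_g$ are weak-$*$ precompact, and passing to a single sequence $s_k\to v^+$ (valid because only countably many basepoints $g_0$ are relevant) produces the limits $\kappa_{g_0}$. Divergence of $\mathcal{P}(s)$ as $s\to v^+$ forces the mass to escape every bounded ball, so each $\kappa_{g_0}$ is supported on $\partial_B\Gamma$; should $\mathcal{P}(v)$ happen to converge, I would first insert Patterson's slowly varying weight to restore divergence, a standard modification that does not affect the exponent $v$.

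For the transformation rule I would compute the density of one approximating measure against another directly. At an atom $h\in\Gamma$,
$$\frac{e^{-sd_w(h,g)}}{e^{-sd_w(h,g')}}=e^{-s[d_w(h,g)-d_w(h,g')]}.$$
As $h$ tends to a conical point $\xi$, the Busemann approximation \cite[Lemma~2.20]{Yang} recalled above gives $|d_w(h,g)-d_w(h,g')-\beta_\xi(g,g')|\le C$ in a neighborhood of $\xi$, so on any partial shadow $\Omega_{\epsilon,\eta}(w)$ based far out toward $\xi$ the density of $\kappa_{g,s}$ relative to $\kappa_{g',s}$ is pinched between $e^{-s\beta_\xi(g,g')}e^{\pm sC}$. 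Passing to the weak limit and comparing the $\kappa_g$- and $\kappa_{g'}$-masses of shadows yields equivalence of $\kappa_g$ and $\kappa_{g'}$; the pointwise identity $\frac{d\kappa_g}{d\kappa_{g'}}(\xi)\asymp e^{-v\beta_\xi(g,g')}$ for $\kappa_g$-almost every conical $\xi$ then follows from a differentiation argument through shrinking shadows, exactly of the Vitali type developed in Section~\ref{SectionLebesguetheorem}. The one point requiring care is upgrading the prelimit pointwise comparison of densities to a genuine Radon--Nikodym statement for the limit measures, which is where the shadow differentiation is needed.

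For the absence of atoms I would first reduce the problem: the parabolic points split into finitely many orbits by \cite[Theorem~1B]{Tukia}, each orbit countable, so it suffices to show $\kappa_g$ has no atoms whatsoever, the conical points then receiving full mass. At a conical $\xi$ there is a sequence $g_n$ with $g_n\xi\to\alpha$ and $g_n\eta\to\beta$ uniformly for $\eta$ away from $\xi$; feeding this north--south contraction into the quasi-conformal transformation rule would force an atom at $\xi$ to be reproduced as a fixed amount of mass accumulating near $\alpha$ for every $n$, contradicting the uniform bound on total mass. The genuinely delicate case, which I expect to be the main obstacle, is a bounded parabolic point $\xi$ with stabilizer $P$: here one must compare the growth of $P$ with the ambient exponent $v$, using the shadow lemma $\kappa_e(\Omega_{\epsilon,\eta}(g))\asymp e^{-v\|g\|}$ recalled in this section together with the cocompactness of $P$ on $\partial_B\Gamma\setminus\{\xi\}$ to show that shadows shrinking to $\xi$ carry mass tending to $0$. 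This growth comparison, where the divergence-type hypothesis and Yang's fine estimates on parabolic subgroups are essential, is the technical heart of \cite[Lemma~4.14]{Yang}.
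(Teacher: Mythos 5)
The paper does not prove this proposition at all: it is quoted verbatim from Yang \cite{Yang} (Lemma~4.14 there), so there is no internal proof to compare against. Your outline is, in substance, the standard Patterson--Sullivan construction that Yang carries out for word metrics on relatively hyperbolic groups: weak-$*$ limits of the normalized orbital measures (with Patterson's weight if the series converges at $s=v$), the transformation rule read off from the ratio $e^{-s[d_w(h,g)-d_w(h,g')]}$ combined with the coarse Busemann approximation, and a separate treatment of conical versus parabolic points for non-atomicity. The points you flag as delicate --- upgrading the prelimit comparison of densities to an almost-everywhere Radon--Nikodym identity, and the parabolic case of non-atomicity --- are indeed the delicate points, and your identification of the parabolic gap (convergence of the parabolic Poincar\'e series at the exponent $v$) as the essential input there is correct.

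One concrete step in your sketch would not work as stated: you propose to kill a putative atom at a bounded parabolic point $\xi$ by showing that ``shadows shrinking to $\xi$ carry mass tending to $0$.'' But the partial shadows $\Omega_{\epsilon,\eta}(g)$ are anchored at $(\epsilon,\eta)$-transition points, and a geodesic ray $[e,\xi)$ to a parabolic point eventually lies deep in a single coset $gP$, so beyond some point it has no transition points at all; consequently $\xi$ does not lie in $\Omega_{\epsilon,\eta}(g_n)$ for $g_n$ far out along that ray, and partial shadows do not form a shrinking neighborhood basis at $\xi$. The argument that actually closes this case decomposes $\partial_B\Gamma\setminus\{\xi\}$ into translates $pK$, $p\in P$, of a compact set $K$ (using cocompactness of $P$ on the complement of $\xi$), bounds $\kappa_e(pK)\lesssim e^{-v\|p\|}$ via quasi-conformality, and sums using convergence of $\sum_{p\in P}e^{-v\|p\|}$; one then plays this off against quasi-invariance to conclude $\kappa_e(\{\xi\})=0$. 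Your conical-point argument is fine (it is cleaner to take $h_n$ on a geodesic ray to $\xi$, note $\beta_\xi(h_n,e)\to-\infty$, and observe that $\kappa_{h_n}(\{\xi\})\asymp e^{-v\beta_\xi(h_n,e)}\kappa_e(\{\xi\})$ would blow up against the uniform bound on total mass), and the reduction ``no atoms $\Rightarrow$ full mass on conical points'' via countability of the parabolic points is exactly right.
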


Furthermore, Yang shows that the $\kappa_{g}$ are ergodic with respect to the $\Gamma$-action \cite[Appendix A]{Yang}.
Let $\kappa=\kappa_e$.
Yang proves an analogue of Sullivan's shadow lemma for these measures.

\begin{proposition}[Shadow Lemma for P.-S. Measure]\cite[Lemma~4.3, Lemma~5.2]{Yang}\label{Shadowkappa}
Let $\epsilon_0$ be as in Lemma \ref{transitionpoints}.
For each $\epsilon\geq \epsilon_0$ there is an $\eta(\epsilon)$ and an $r(\epsilon)>0$ such that for each $r\geq r(\epsilon)$, $\eta \geq \eta(\epsilon)$ and any $g \in \Gamma$ we have
$$\kappa(\Omega_{r,\epsilon,\eta}(g))\asymp \mathrm{e}^{-v\|g\|}$$
where the implicit constant depends on $r,\epsilon,\eta$ but not on $g$.
\end{proposition}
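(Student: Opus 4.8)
The plan is to combine the $v$-dimensional quasi-conformal density property of the family $\{\kappa_g\}$ with the equivariance of the construction. First I would record the basic equivariance $\gamma_*\kappa_e=\kappa_\gamma$, which follows directly from the definition of $\kappa_{g_0}$ as a weak limit of $\frac{1}{\mathcal{P}(s)}\sum_g \mathrm{e}^{-sd_w(g,g_0)}\delta_g$ together with left-invariance of $d_w$: pushing forward by $\gamma$ and relabelling $h=\gamma g$ turns the basepoint $g_0$ into $\gamma g_0$. In particular $\kappa_g(A)=\kappa_e(g^{-1}A)$ for every Borel set $A$, and since $\kappa_e$ is finite, all the $\kappa_g$ share the same finite total mass.

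Second, I would establish a Busemann estimate valid on the shadow: for every $\xi\in\Omega_{r,\epsilon,\eta}(g)$ one has $|\beta_\xi(e,g)-\|g\||\leq C$ with $C=C(r,\epsilon,\eta)$. The point is that $\xi$ is joined to $e$ by a geodesic ray meeting $B(g,r)$ at an $(\epsilon,\eta)$-transition point, and such a transition point guarantees, via near-additivity of distances through transition points and the Busemann comparison of \cite[Lemma~2.20]{Yang}, that $g$ lies within bounded distance of a geodesic ray from $e$ to $\xi$, so that $d_w(e,g_0)-d_w(g,g_0)=\|g\|+O(1)$ for $g_0$ near $\xi$. Feeding this into the quasi-conformal estimate $\frac{d\kappa_e}{d\kappa_g}(\xi)\asymp \mathrm{e}^{-v\beta_\xi(e,g)}$ and integrating over the shadow gives
\[
\kappa_e(\Omega_{r,\epsilon,\eta}(g))\asymp \mathrm{e}^{-v\|g\|}\,\kappa_g(\Omega_{r,\epsilon,\eta}(g))=\mathrm{e}^{-v\|g\|}\,\kappa_e\big(g^{-1}\Omega_{r,\epsilon,\eta}(g)\big).
\]
The upper bound $\kappa_e(\Omega_{r,\epsilon,\eta}(g))\lesssim \mathrm{e}^{-v\|g\|}$ is then immediate from finiteness of $\kappa_e$.

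The real content, and the step I expect to be the main obstacle, is the uniform lower bound $\inf_{g\in\Gamma}\kappa_e\big(g^{-1}\Omega_{r,\epsilon,\eta}(g)\big)>0$. Unwinding definitions, $g^{-1}\Omega_{r,\epsilon,\eta}(g)$ is the set of boundary points $\zeta$ joined to $g^{-1}$ by a geodesic carrying an $(\epsilon,\eta)$-transition point within $2\eta$ of $e$ and meeting $B(e,r)$, i.e.\ a shadow cast from the far point $g^{-1}$ through a fixed ball at $e$. I would argue by contradiction: if the infimum were $0$, choose $g_n$ with $\kappa_e(g_n^{-1}\Omega_{r,\epsilon,\eta}(g_n))\to 0$ and pass to a subsequence with $g_n^{-1}\to\omega\in\partial_B\Gamma$. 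Using Lemma~\ref{transitionpoints}~b), every geodesic ray to a conical point carries unboundedly many transition points, and using the shortcut/Floyd comparison of Proposition~\ref{Floydgeo} together with the relatively thin triangle property (Lemma~\ref{idealtriangles}), for $r$ and $\eta$ large enough (depending only on $\epsilon$) these shadows eventually contain a fixed neighborhood, in the shortcut metric, of the conical limit set with a small neighborhood of $\omega$ removed. Since $\kappa_e$ is non-atomic, gives full measure to conical points and is supported on the whole limit set, such sets have $\kappa_e$-measure bounded below, contradicting $\kappa_e(g_n^{-1}\Omega_{r,\epsilon,\eta}(g_n))\to 0$. The delicate choices are the thresholds $r(\epsilon)$ and $\eta(\epsilon)$, which must be large enough that the transition-point condition defining the partial shadow is stably satisfied as the basepoint escapes to infinity, together with the control of the limiting geometry near parabolic points, where the word metric is not hyperbolic; this is exactly where the Floyd-metric estimates of Proposition~\ref{Floydgeo} and the thin-triangle property do the work that convexity of balls does in the hyperbolic setting.
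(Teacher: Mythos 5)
The paper does not prove this statement at all: it is quoted verbatim from Yang (\cite[Lemma~4.3, Lemma~5.2]{Yang}), so there is no in-paper proof to compare against. Your reconstruction is the standard Sullivan--Coornaert shadow-lemma argument transported to the relatively hyperbolic setting, and it is sound; it is also exactly the template the paper itself uses for the analogous Shadow Lemma for the harmonic measure (Proposition~\ref{Shadownu}): a conformality/Busemann estimate on the shadow giving $\kappa_e(\Omega_{r,\epsilon,\eta}(g))\asymp \mathrm{e}^{-v\|g\|}\kappa_e(g^{-1}\Omega_{r,\epsilon,\eta}(g))$, followed by a uniform lower bound on $\kappa_e(g^{-1}\Omega_{r,\epsilon,\eta}(g))$. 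For that last step the paper's Lemma~\ref{partialshadowscontainballs} does precisely what your compactness argument does, and more directly: the set $\{\zeta:\overline{\delta}^{f}_{e}(g^{-1},\zeta)>A/10\}$ is contained in $g^{-1}\Omega_{r,\epsilon,\eta}(g)$ for suitable $r,\eta$, so the shadow contains a shortcut-metric ball of uniform radius; one then only needs that $\kappa_e$ gives uniformly positive mass to such balls, which follows from full support (minimality of the action on $\partial_B\Gamma$) and compactness, without passing to a limit point $\omega$. Two small points worth making explicit in your write-up: the exact equivariance $\gamma_*\kappa_e=\kappa_\gamma$ requires constructing the whole family $\{\kappa_g\}$ as a weak limit along a single common subnet $s_k\downarrow v$ (otherwise you only get quasi-equivariance, which still suffices here); and in the Busemann lower bound it is the transition point in $B(g,2\eta)$, via Proposition~\ref{Floydgeo}~b) and~c), that forces \emph{every} geodesic from $e$ to any $g_0$ near $\xi$ to pass uniformly close to $g$ --- mere intersection of one ray with $B(g,r)$ would not control the $\limsup$ over all rays in the definition of $\beta_\xi$. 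You have the right mechanism in place for both.
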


We are now interested in the same properties, but for the Green distance and the harmonic measure.
Let $\mu$ be an admissible measure on $\Gamma$ with finite super-exponential moment with respect to $d_w$.
Let $\nu_{g}$ be the harmonic measure of the random walk associated with $\mu$ starting at $g \in \Gamma$ and let $\nu=\nu_e$.
First, we define the Busemann function for the Green distance at a conical limit point.
According to \cite{GGPY}, conical limit points are in one-to-one correspondence with a subset of the Martin boundary.
Precisely, if $g_n$ is a sequence in $\Gamma$ converging to a conical limit point $\xi$, then
for every $g$, $\frac{G(g,g_n)}{G(e,g_n)}$ converges to some limit $K_{\xi}(g)$.
In particular, if $g,g'$ are fixed, then $\frac{G(g,g_n)}{G(g',g_n)}$ converges to $\frac{K_{\xi}(g)}{K_{\xi}(g')}$.
Define then
$$\beta^G_{\xi}(g,g')=-\log \left ( \frac{K_{\xi}(g)}{K_{\xi}(g')}\right ).$$
Then, 
$\beta^G_{\xi}(g,g')=\lim [d_G(g,g_n)-d_G(g',g_n)]$
for any sequence $g_n$ converging to $\xi$.


The harmonic measure $\nu$ is always conformal with respect to the Green metric in the sense that
$$\frac{d\nu_{g}}{d\nu}(\xi)=K_{\xi}(g)$$
for $\nu_{g}$-almost every conical limit point $\xi$,
see for example \cite[Theorem~24.10]{Woess}.
Before stating the shadow lemma for the harmonic measure, we prove the following lemma.

\begin{lemma}\label{partialshadowscontainballs}
For any $\epsilon \geq \epsilon_0$ there is an $\eta(\epsilon)>0$, an $r(\epsilon)>0$ and a $c>0$ such that for $r\geq r(\epsilon)$, $\eta \geq \eta(\epsilon)$ and all $g\in \Gamma$ we have that $g^{-1}\Omega_{r,\epsilon,\eta}(g)$ contains a ball of radius $c$ in the shortcut metric on $\partial_{B}\Gamma$.
\end{lemma}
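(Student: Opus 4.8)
The plan is to work entirely with the shortcut metric $\overline{\delta}^{f}_{e}$ based at $e$ and to exploit the equivariance of partial shadows under the left $\Gamma$-action. Writing $w=g^{-1}$, left-translation by $g^{-1}$ sends a geodesic ray $[e,\xi)$ realizing membership in $\Omega_{r,\epsilon,\eta}(g)$ to a geodesic ray $[w,g^{-1}\xi)$; since the action permutes the cosets $hP$, it preserves $(\epsilon,\eta)$-transition points, and it carries $B(g,r)$ to $B(e,r)$ and $B(g,2\eta)$ to $B(e,2\eta)$. Hence $\zeta\in g^{-1}\Omega_{r,\epsilon,\eta}(g)$ if and only if some word geodesic from $w$ to $\zeta$ meets $B(e,r)$ and has an $(\epsilon,\eta)$-transition point in $B(e,2\eta)$. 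The key observation is that Proposition~\ref{Floydgeo}~c) converts a lower bound on the shortcut distance into exactly such a transition point: if $\overline{\delta}^{f}_{e}(w,\zeta)>\delta$, then any geodesic from $w$ to $\zeta$ has an $(\epsilon,\eta)$-transition point within $D_1=D_1(\delta,\epsilon)$ of $e$. Choosing $\eta\geq D_1/2$ and $r\geq D_1$ makes this transition point lie in $B(e,2\eta)$ and forces the geodesic to meet $B(e,r)$, so that $\overline{\delta}^{f}_{e}(w,\zeta)>\delta$ already implies $\zeta\in g^{-1}\Omega_{r,\epsilon,\eta}(g)$.

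It therefore suffices to produce, uniformly in $g$, a point $\zeta_0$ with $\overline{\delta}^{f}_{e}(w,\zeta_0)\geq \delta_0$ for some fixed $\delta_0>\delta$; the ball $B_{\overline{\delta}^{f}_{e}}(\zeta_0,c)$ with $c:=\delta_0-\delta$ then lies in $g^{-1}\Omega_{r,\epsilon,\eta}(g)$ by the triangle inequality, since every $\zeta$ in it satisfies $\overline{\delta}^{f}_{e}(w,\zeta)\geq \overline{\delta}^{f}_{e}(w,\zeta_0)-\overline{\delta}^{f}_{e}(\zeta_0,\zeta)>\delta$. To get such a $\zeta_0$ I would extend a geodesic segment $[w,e]$ past $e$, in the direction away from $w$, to a geodesic ray $[w,\zeta_0)$ ending at a conical point $\zeta_0$ (such rays exist by Proposition~2.4 of \cite{GePoJEMS}). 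If I can arrange that $e$ is an $(\epsilon,\eta)$-transition point of $[w,\zeta_0)$ at a fixed scale, then Proposition~\ref{Floydgeo}~b) yields $\overline{\delta}^{f}_{e}(w,\zeta_0)>\delta_0$ with $\delta_0$ depending only on $\epsilon$ and that scale, hence uniform in $g$. Fixing $\delta:=\delta_0/2$ and $c:=\delta_0/2$ then closes the argument, and one checks that the resulting constraints $\eta\geq \max(\eta_0,D_1/2)$ and $r\geq D_1$, with $D_1=D_1(\delta_0/2,\epsilon)$, are mutually consistent precisely because $\delta_0$ is obtained at a fixed transition scale and does not itself depend on the shadow scale $\eta$.

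The main obstacle is the last point in the construction: guaranteeing that $e$ is a transition point of $[w,\zeta_0)$. This can fail a priori, because $e$ may be $(\epsilon,\eta)$-deep in a coset $hP$ with respect to the incoming segment $[w,e]$. I would handle this by observing that deepness at $e$ requires the geodesic to remain in $N_\epsilon(hP)$ on both sides of $e$ for length $\eta$, whereas the outgoing ray $[e,\zeta_0)$ — heading away from $w$, and hence away from $hP$ — leaves $N_\epsilon(hP)$ within a bounded distance $\eta_1(\epsilon)$. Thus for $\eta\geq\eta_1(\epsilon)$ the forward $\eta$-segment is not contained in $N_\epsilon(hP)$, and the bounded-intersection statement in Lemma~\ref{transitionpoints} (two distinct coset neighborhoods meet in a set of diameter $\leq\eta_0(\epsilon)$) rules out $e$ being deep in any other coset once $\eta>\eta_0(\epsilon)$. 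Consequently $e$ is an $(\epsilon,\eta)$-transition point for every $\eta\geq\max(\eta_0,\eta_1)$, which is exactly what is needed and holds at a fixed scale independent of the shadow parameter. I expect the routine verifications — that the outgoing ray can be chosen to continue $[w,e]$ geodesically while exiting $N_\epsilon(hP)$ quickly, and that conical endpoints $\zeta_0$ are available — to follow from cocompactness of the parabolic action on $\Lambda\Gamma\setminus\{\xi_P\}$ and density of conical points; but the geometric heart of the lemma is this uniform production of a transition point near $e$.
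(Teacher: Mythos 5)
Your first two paragraphs are correct and coincide with the paper's argument: by equivariance, $\zeta\in g^{-1}\Omega_{r,\epsilon,\eta}(g)$ exactly when some geodesic from $w=g^{-1}$ to $\zeta$ meets $B(e,r)$ and has an $(\epsilon,\eta)$-transition point in $B(e,2\eta)$, and Proposition~\ref{Floydgeo}~c) then shows that the set $\{\zeta\in\partial_B\Gamma:\ \overline{\delta}^{f}_{e}(w,\zeta)>\delta\}$ is contained in $g^{-1}\Omega_{r,\epsilon,\eta}(g)$ once $r$ and $\eta$ dominate $D_1(\delta,\epsilon)$ (and, as you note, $D_1$ does not depend on $\eta$, so the choices are consistent). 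The problem is in your third step, the production of a point $\zeta_0$ uniformly far from $w$. Two claims there are genuinely unjustified. First, a word-geodesic segment $[w,e]$ need not extend past $e$ to a geodesic ray: Cayley graphs can have dead ends, and \cite[Proposition~2.4]{GePoJEMS} only provides geodesics between two prescribed points of the compactification, not extensions of a prescribed segment through a prescribed interior point. Second, even granting an extension, the assertion that the outgoing ray, "heading away from $w$, hence away from $hP$," exits $N_\epsilon(hP)$ within bounded distance has no basis; if $[w,e]$ terminates deep inside $N_\epsilon(hP)$, nothing prevents the continuation from travelling further inside that coset neighborhood, and then $e$ is not a transition point at any uniform scale.

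The paper sidesteps this entirely: the required far point exists for trivial metric reasons. Let $A>0$ be the diameter of $\partial_B\Gamma$ for $\overline{\delta}^{f}_{e}$ (positive since $\Gamma$ is non-elementary) and pick $\xi_1,\xi_2\in\partial_B\Gamma$ with $\overline{\delta}^{f}_{e}(\xi_1,\xi_2)$ close to $A$. By the triangle inequality one of them, say $\xi_1$, satisfies $\overline{\delta}^{f}_{e}(w,\xi_1)\gtrsim A/2$ for \emph{every} $w$, whence the ball of radius $A/10$ about $\xi_1$ in $\partial_B\Gamma$ lies in $\{\zeta:\overline{\delta}^{f}_{e}(w,\zeta)>A/10\}$. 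Feeding $\delta=A/10$ into your own second paragraph finishes the proof with $r,\eta\geq D_1(A/10,\epsilon)$. So the geometric construction you identify as "the heart of the lemma" is both gapped and unnecessary; I would replace it with this diameter argument and keep the rest of your write-up as is.
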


\begin{proof}
Let $A>0$ be the diameter of $\partial_{B}\Gamma$ in the shortcut metric $\overline{\delta}^{f}_{e}$. Then the subset $S$ of $\partial_{B}\Gamma$ consisting of $\zeta$ such that $\overline{\delta}^{f}_{e}(g^{-1},\zeta)>A/10$ contains  a ball of radius $A/10$ in the restriction of this metric to $\partial_{B}\Gamma$. On the other hand, by Proposition \ref{Floydgeo}, $S$ is contained in $g^{-1}\Omega_{r,\epsilon,\eta}(g)$ for suitable $r,\eta>0$ depending only on $A$.
\end{proof}

We can now prove the following.

\begin{proposition}[Shadow Lemma for Harmonic Measure]\label{Shadownu}
For any $\epsilon\geq \epsilon_0$ there is an $\eta(\epsilon)>0$ and an $r(\epsilon)>0$ such that for each $r\geq r(\epsilon)$, $\eta \geq \eta(\epsilon)$ and any $g \in \Gamma$ we have 
$\nu(\Omega_{r,\epsilon,\eta}(g))\asymp \mathrm{e}^{-d_G(e,g)}$
where the implicit constant only depends on $r,\epsilon,\eta$ but not on $g$.
\end{proposition}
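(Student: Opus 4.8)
The plan is to combine the conformality of the harmonic measure with the relative Ancona inequalities~(\ref{relativeAncona}) in order to reduce the estimate to a uniform lower bound on the mass of translated shadows, which Lemma~\ref{partialshadowscontainballs} supplies.

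First I would estimate the Martin kernel $K_{\xi}(g)$ for $\xi\in\Omega_{r,\epsilon,\eta}(g)$. By definition of the partial shadow there is a geodesic ray $[e,\xi)$ containing an $(\epsilon,\eta)$-transition point $p$ with $d_w(p,g)\leq 2\eta$. Choosing a sequence $g_n\to\xi$ along this ray, the point $g$ lies within $2\eta$ of the transition point $p$ on the geodesic $[e,g_n]$ for all large $n$, so the relative Ancona inequality~(\ref{relativeAncona}) gives $G(e,g_n)\asymp G(e,g)G(g,g_n)$ with implicit constant depending only on $\epsilon,\eta$. Hence $\frac{G(g,g_n)}{G(e,g_n)}\asymp \frac{1}{G(e,g)}$, and letting $n\to\infty$ yields $K_{\xi}(g)\asymp \frac{1}{G(e,g)}$. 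Recalling that $G(e,g)=G(e,e)\,\mathrm{e}^{-d_G(e,g)}$, this reads $K_{\xi}(g)\asymp \mathrm{e}^{d_G(e,g)}$, uniformly over $g$ and over conical $\xi\in\Omega_{r,\epsilon,\eta}(g)$.

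Next I would use the conformality relation $\frac{d\nu_{g}}{d\nu}(\xi)=K_{\xi}(g)$, valid for $\nu_g$-almost every conical $\xi$. Writing $\Omega=\Omega_{r,\epsilon,\eta}(g)$ and integrating $d\nu=K_{\xi}(g)^{-1}\,d\nu_g$ over $\Omega$ (which carries full $\nu$-measure among conical points), the estimate of the previous step gives
$$\nu(\Omega)=\int_{\Omega}\frac{1}{K_{\xi}(g)}\,d\nu_{g}(\xi)\asymp \mathrm{e}^{-d_G(e,g)}\,\nu_{g}(\Omega).$$
Since $\nu_{g}=g_{*}\nu$ we have $\nu_{g}(\Omega)=\nu(g^{-1}\Omega)$, so it remains to prove $\nu(g^{-1}\Omega)\asymp 1$ uniformly in $g$. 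The upper bound is immediate because $\nu$ is a probability measure.

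For the lower bound I would invoke Lemma~\ref{partialshadowscontainballs}: for suitable $r,\eta$ the set $g^{-1}\Omega$ contains a ball of fixed radius $c$ in the shortcut metric $\overline{\delta}^{f}_{e}$, so it suffices to bound below the $\nu$-mass of every such ball by a constant independent of its center. This follows from compactness of $\partial_B\Gamma$ together with the full support of $\nu$ (a consequence of admissibility of $\mu$): covering $\partial_B\Gamma$ by finitely many balls of radius $c/2$, each has positive $\nu$-mass, and any ball of radius $c$ contains one of them, giving a uniform lower bound $\nu(g^{-1}\Omega)\geq m>0$. Combining the three steps yields $\nu(\Omega_{r,\epsilon,\eta}(g))\asymp \mathrm{e}^{-d_G(e,g)}$ with constant depending only on $r,\epsilon,\eta$. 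The main obstacle is this last uniform lower bound, where the geometric input of Lemma~\ref{partialshadowscontainballs} must be married to the full support of the harmonic measure; the Ancona and conformality steps are comparatively direct once the transition-point geometry of the shadow is exploited correctly.
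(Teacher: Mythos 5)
Your proof is correct and follows essentially the same route as the paper's: the relative Ancona inequalities give $K_{\xi}(g)\asymp 1/G(e,g)$ for $\xi$ in the shadow, conformality of $\nu$ reduces the claim to $\nu(g^{-1}\Omega_{r,\epsilon,\eta}(g))\asymp 1$, and Lemma~\ref{partialshadowscontainballs} supplies the uniform lower bound. Your finite-covering argument for bounding balls in the shortcut metric from below just makes explicit the step the paper attributes to the harmonic measure having full support on $\partial_B\Gamma$.
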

 
\begin{proof}
Let $\Omega(g)=\Omega_{r,\epsilon,\eta}(g)$.
The proof uses the relative Ancona inequalities~(\ref{relativeAncona}) proved in \cite{GGPY}.
We have
$$\nu(\Omega(g))=\int_{\xi \in \Omega(g)}\frac{1}{K_{\xi}(g)}d\nu_{g}(\xi)=\int_{\xi \in \Omega(g)}\mathrm{e}^{-\beta^{G}_{\xi}(e,g)}d\nu_{g}(\xi).$$
By definition of the relative shadow, for each $\xi\in \Omega(g)$ there exists a sequence $g_n$ converging to $\xi$ such that $g$ is at most $r$ away from an $(\epsilon,\eta)$-transition point on a geodesic from $e$ to $g_n$.
The relative Ancona inequalities~(\ref{relativeAncona})
show that
$$G(e,g_n)\asymp G(e,g)G(g,g_n),$$
where the implicit constant only depends on $r,\eta,\epsilon$.
Thus,
$K_{\xi}(g)\asymp \frac{1}{G(e,g)}$.
We can reformulate this as
$$d_{G}(e,g)-C\leq \beta^{G}_{\xi}(e,g)\leq d_{G}(e,g)+C$$
where $C$ only depends on $r,\eta,\epsilon$.
Furthermore,
$\nu_{g}(\Omega(g))=\nu(g^{-1}\Omega(g))$.
According to Lemma~\ref{partialshadowscontainballs}, the partial shadow $g^{-1}\Omega(g)$ always contains a ball for the shortcut metric of uniform radius independent of $g$, and since by \cite[Theorem 9.4]{GGPY} $\nu$ gives full measure to $\partial_{B}\Gamma$, we have
$$\nu(g^{-1}\Omega(g))\geq c$$
for some uniform $c>0$.
The result now follows. 
\end{proof}
Proposition \ref{Shadownu} implies the doubling property
$$\nu(\Omega_{\epsilon,2\eta}(g))\leq C_{\epsilon,\eta} \nu(\Omega_{\epsilon,\eta}(g))$$
and allows us to apply Proposition \ref{localsingularity} to the measure $\nu$ in place of $\kappa_1$.

\subsection{Deviation inequalities for relatively hyperbolic groups}\label{Sectiondeviationinequalities}
As before, we consider a word metric on a non-elementary relatively hyperbolic group $\Gamma$ and a probability measure $\mu$ with finite super-exponential moment.
For a sample path $\omega$ let $\omega_n$ be its $n$-th position and $\omega_\infty \in \partial_B \Gamma$ its limit point, which exists almost surely. Recall that the harmonic measure $\nu$ satisfies $\nu(A)=P(\omega_\infty \in A)$ for any Borel set $A\subset \partial_B \Gamma$.

The following results control the deviation of sample paths of the random walk from geodesics in the word metric.
\begin{lemma}\label{closetogeodesics} \cite[Theorem 10.6]{Mathieu-Sisto}
There exists $C>0$ such that for each $k$, $n\geq k$ and for each $a>1$, we have
$$P(\sup_{\alpha}\ d_w(\omega_k, \alpha)>a)<Ce^{-a/C},$$
where the supremum is taken over all geodesics $\alpha$ for the word metric from $e$ to $\omega_n$.
\end{lemma}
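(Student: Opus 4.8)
The final statement to prove is Lemma~\ref{closetogeodesics}, which asserts a deviation inequality: sample paths of the random walk stay within distance $a$ of any geodesic from $e$ to $\omega_n$, with probability of deviation decaying like $Ce^{-a/C}$.

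My plan is as follows. The inequality is cited from Mathieu--Sisto \cite[Theorem 10.6]{Mathieu-Sisto}, so the natural approach is to explain how this fits into their general framework of deviation inequalities for acylindrically hyperbolic (or relatively hyperbolic) groups. First I would note that the key structural input is that $\Gamma$ acts acylindrically on the coned-off graph $\hat{\Gamma}$, which is Gromov hyperbolic, and that the random walk projects to a random walk on $\hat{\Gamma}$ with bounded step distribution in the coned-off metric (since $\mu$ has finite super-exponential moment with respect to $d_w$, hence certainly finite exponential moment, which controls the displacement in $\hat{\Gamma}$). The core of the Mathieu--Sisto machinery is to establish that the random walk trajectory tracks a geodesic in the hyperbolic coned-off space with exponentially small error, and then to transfer this tracking statement back to the word metric using the relatively thin triangles property (Lemma~\ref{idealtriangles}) and the geometry of transition points.

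The main steps I would carry out in order: (1) reduce deviation from a word geodesic to deviation from a geodesic in the coned-off graph together with control on how the trajectory enters and exits parabolic cosets; (2) invoke the hyperbolicity of $\hat{\Gamma}$ and the finite exponential moment to obtain exponential deviation bounds in the coned-off metric, via the standard argument that the Gromov product $(\omega_k \mid \omega_\infty)$ grows linearly and deviations are controlled by a Markov inequality applied to an exponential moment; (3) upgrade these to the word metric by observing that at transition points the word metric and coned-off metric are comparable, while deep excursions into parabolic cosets are themselves geodesic segments that the true geodesic $\alpha$ also traverses (up to bounded error by Lemma~\ref{transitionpoints}~a) and the bounded-intersection property). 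The exponential tail in $a$ comes from combining the exponential moment hypothesis on $\mu$ with the exponential decay of the hyperbolic tracking estimate.

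The hard part will be step (3): the passage from the coned-off metric back to the word metric. Since parabolic subgroups are not isometrically embedded and the word metric is not hyperbolic, a deviation that is small in $\hat{\Gamma}$ can a priori be large in $d_w$ if the trajectory wanders deep inside a parabolic coset away from the geodesic. Controlling this requires showing that whenever the sample path makes a deep parabolic excursion, the geodesic $\alpha$ makes a parallel excursion into the same coset, so that the word-distance deviation is governed by the transition points at the entry and exit (which are comparable to coned-off distances) rather than by the parabolic interior. This is precisely where the relatively thin triangles property and the structure theory of \cite{DrutuSapir}, \cite{GePoCrelle} enter, and it is the step that makes the statement genuinely harder than in the purely hyperbolic case. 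Fortunately, all of this is packaged in \cite[Theorem 10.6]{Mathieu-Sisto}, so in the paper the cleanest route is to cite their theorem directly after verifying that our hypotheses — finite super-exponential (hence exponential) moment and the relatively hyperbolic structure — satisfy their standing assumptions.
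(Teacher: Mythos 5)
Your proposal reaches the same conclusion as the paper: Lemma~\ref{closetogeodesics} is not proved in the text but is quoted directly from \cite[Theorem~10.6]{Mathieu-Sisto}, whose hypotheses (relative hyperbolicity and a moment condition implied by finite super-exponential moment) are satisfied here. The surrounding sketch of the Mathieu--Sisto machinery is reasonable background but plays no role in the paper's argument, which simply cites the theorem as you ultimately recommend.
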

For a finite or infinite geodesic $\alpha$ let $\Tr_{\epsilon,\eta} \alpha$ denote the set of $(\epsilon,\eta)$-transition points on $\alpha$.
\begin{lemma}\label{closetotransitionpoints}
For every $\epsilon \geq \epsilon_0,\eta>\eta_{0}(\epsilon)$, there exists a function $F_{\epsilon,\eta}:\R\to \R$ satisfying that $F_{\epsilon,\eta}(t)\to 0$ as $t\to \infty$ such that for each $k$ we have
$$P(\sup_{\alpha}\ d_w(\omega_k,\Tr_{\epsilon,\eta} \alpha)>a)<F_{\epsilon,\eta}(a),$$
where the supremum is taken over all geodesic rays $\alpha$ for the word metric from $e$ to $\omega_{\infty}$.
\end{lemma}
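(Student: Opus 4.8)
The plan is to deduce this transition-point deviation inequality from the geodesic deviation inequality of Lemma~\ref{closetogeodesics} together with the characterization of transition points via the Floyd/shortcut metric from Proposition~\ref{Floydgeo}. The governing intuition is that the random walk converges to a \emph{conical} limit point almost surely (the harmonic measure $\nu$ is non-atomic and charges only conical points, as recorded after Proposition~\ref{Shadownu}), and a geodesic ray to a conical point is densely populated by transition points (Lemma~\ref{transitionpoints}~b)); so being far from a transition point should be a rare event controllable by the Floyd metric.

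First I would translate ``$\omega_k$ is far from every transition point of some ray $\alpha$ from $e$ to $\omega_\infty$'' into a Floyd-metric statement. By the contrapositive of Proposition~\ref{Floydgeo}~c), if every $(\epsilon,\eta)$-transition point of a geodesic ray from $e$ to $\omega_\infty$ is more than $D_1(\delta,\epsilon)$ away from $\omega_k$, then necessarily $\overline{\delta}^{f}_{\omega_k}(e,\omega_\infty)\leq \delta$. Equivalently, for a threshold $a$, if $d_w(\omega_k,\Tr_{\epsilon,\eta}\alpha)>a$ then $\overline{\delta}^{f}_{\omega_k}(e,\omega_\infty)\leq \delta(a)$, where $\delta(a)\to 0$ as $a\to\infty$ because $D_1$ grows as its $\delta$-argument shrinks. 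So it suffices to bound the probability that $\overline{\delta}^{f}_{\omega_k}(e,\omega_\infty)$ is small.

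Next I would control this Floyd distance using the Lipschitz-in-basepoint property of the shortcut metric, namely $\overline{\delta}^{f}_{o}/\overline{\delta}^{f}_{o'}\leq \lambda^{d_w(o,o')}$ recorded in Section~\ref{SectionrelhypandFloyd}. Writing $\overline{\delta}^{f}_{\omega_k}(e,\omega_\infty)\geq \lambda^{d_w(\omega_k,\omega_n)}\,\overline{\delta}^{f}_{\omega_n}(e,\omega_\infty)$ for $n\geq k$ (or passing to the limit $n\to\infty$ along the path), the Floyd distance seen from $\omega_k$ can only be small if either $\omega_k$ is far from the later positions of the walk, or $\omega_\infty$ is close to $e$ in the Floyd metric from a far-out basepoint. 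The first scenario is exactly the kind of event controlled by the fact that the walk escapes to infinity with positive speed combined with the geodesic-tracking estimate of Lemma~\ref{closetogeodesics}: conditionally the future trajectory moves $\omega_n$ far from $\omega_k$, pushing up the Floyd distance. I would make this quantitative by splitting on the position of $\omega_k$ relative to a geodesic from $e$ to $\omega_n$ for large $n$, using Lemma~\ref{closetogeodesics} to say $\omega_k$ lies within bounded distance of such a geodesic with overwhelming probability, and then invoking Proposition~\ref{Floydgeo}~b) in the converse direction: a point near a transition point of the geodesic has Floyd distance bounded \emph{below}. Taking $n\to\infty$ so that $\omega_n\to\omega_\infty$ and using that the walk a.s.\ converges to a conical point, this produces the desired lower bound on $\overline{\delta}^{f}_{\omega_k}(e,\omega_\infty)$ off an event of probability $F_{\epsilon,\eta}(a)\to 0$.

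The main obstacle I anticipate is the interchange of the supremum over all geodesic rays $\alpha$ to $\omega_\infty$ with the almost-sure/limiting arguments: the statement quantifies over \emph{all} rays to $\omega_\infty$, whereas Lemma~\ref{closetogeodesics} concerns geodesics to the finite position $\omega_n$. Bridging these requires that transition points of any ray to a conical point $\omega_\infty$ are coarsely tracked by transition points of geodesics to $\omega_n$ for large $n$ — this is exactly the content of the relatively thin triangles property (Lemma~\ref{idealtriangles}) applied to the ideal triangle $(e,\omega_n,\omega_\infty)$, which guarantees that transition points of the ray $[e,\omega_\infty)$ stay uniformly close to transition points of $[e,\omega_n]$. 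I would use this to replace the ray by a long finite geodesic at bounded cost in $a$, reducing everything to the finite-geodesic estimate Lemma~\ref{closetogeodesics} and the purely geometric Floyd-metric inputs of Proposition~\ref{Floydgeo}. Assembling these, the error function $F_{\epsilon,\eta}(a)$ is built from the exponential tail $Ce^{-a/C}$ of Lemma~\ref{closetogeodesics} together with the monotone-in-$a$ decay of the Floyd thresholds, and it tends to $0$ as $a\to\infty$ as required.
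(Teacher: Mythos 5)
Your first reduction is correct and is exactly how the paper begins: by the contrapositive of Proposition~\ref{Floydgeo}~c), the event $\sup_\alpha d_w(\omega_k,\Tr_{\epsilon,\eta}\alpha)>a$ forces $\overline{\delta}^{f}_{\omega_k}(e,\omega_\infty)<W(a)$ with $W(a)\to 0$, so it suffices to show $\sup_k P(\overline{\delta}^{f}_{\omega_k}(e,\omega_\infty)<t)\to 0$ as $t\to 0$. The problem is in how you then bound this probability: your argument has a genuine gap and is circular at its central step. You propose to lower-bound $\overline{\delta}^{f}_{\omega_k}(e,\omega_\infty)$ by combining Lemma~\ref{closetogeodesics} (which places $\omega_k$ within bounded distance of a geodesic $[e,\omega_n]$ with high probability) with Proposition~\ref{Floydgeo}~b). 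But Proposition~\ref{Floydgeo}~b) requires $\omega_k$ to be near an $(\epsilon,\eta)$-\emph{transition point} of that geodesic, which is precisely the conclusion you are trying to prove: in a relatively hyperbolic group a point on a word geodesic can be arbitrarily far from every transition point of it (namely when it is deep inside a parabolic coset), so ``near the geodesic'' does not imply ``near a transition point.'' Ruling out that $\omega_k$ lingers deep in a parabolic coset is the whole content of the lemma and cannot come from geodesic tracking alone. The basepoint-change inequality also does not help: it gives $\overline{\delta}^{f}_{\omega_k}\geq\lambda^{d_w(\omega_k,\omega_n)}\,\overline{\delta}^{f}_{\omega_n}$ with the factor degenerating as $n\to\infty$, so passing to the limit yields a vacuous lower bound.

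The missing ingredient is probabilistic, and it is the heart of the paper's proof: conditioning on $\omega_k=g$, the variable $\omega_k^{-1}\omega_\infty$ is independent of $\omega_k$ and distributed as $\omega_\infty$, i.e.\ according to the harmonic measure $\nu$; combined with equivariance, $\overline{\delta}^{f}_{\omega_k}(e,\omega_\infty)=\overline{\delta}^{f}_{e}(\omega_k^{-1},\omega_k^{-1}\omega_\infty)$, this gives
$$P\bigl(\overline{\delta}^{f}_{\omega_k}(e,\omega_\infty)<t\bigr)\leq \sup_{g\in\Gamma}\nu\bigl(\{\xi\in\partial_B\Gamma:\overline{\delta}^{f}_{e}(g^{-1},\xi)<t\}\bigr)=:Q(t),$$
uniformly in $k$. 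One then shows $Q(t)\to 0$ as $t\to 0$ by compactness of the Bowditch compactification and non-atomicity of $\nu$ (a failure would produce an atom). Your appeal to almost-sure convergence to a conical point is not quantitative enough to substitute for this: it gives no uniformity in $k$ and no rate in $a$. To salvage your outline you would need to insert this independence/stationarity step (or an equivalent quantitative input about the harmonic measure) between your correct first reduction and the conclusion; Lemma~\ref{closetogeodesics} and the thin-triangles bookkeeping are not actually needed.
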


\begin{proof}
By Proposition \ref{Floydgeo}~c) there is a function $W(t)$ which decays to zero as $t\to \infty$ such that for any $g,h\in \Gamma$ and $\xi\in \partial_B\Gamma$
$d(g,\Tr_{\epsilon,\eta}[h,\xi))>D$ implies
$\overline{\delta}^{f}_{g}(h,\xi)<W(D)$.
Thus, it suffices to show that
$\sup_{k\geq 1}P(\overline{\delta}^{f}_{\omega_k}(e,\omega_\infty)<t)\to 0$,
as $t\to 0$.
We have
\begin{align*}
    P(\overline{\delta}^{f}_{\omega_k}(e,\omega_\infty)<t)&=\sum_{g\in \Gamma}
P(\overline{\delta}^{f}_{\omega_k}(e,\omega_\infty)<t,\omega_k=g)\\
&=\sum_{g\in \Gamma}
P(\overline{\delta}^{f}_{g}(e,g\omega^{-1}_k \omega_\infty)<t,\omega_k=g).
\end{align*}
The random variables $\omega_k$ and $\omega^{-1}_k\omega_{\infty}$ are independent, and  $\omega^{-1}_k\omega_{\infty}$ has the same distribution as $\omega_\infty$, so that we get
\begin{align*}
P(\overline{\delta}^{f}_{\omega_k}(e,\omega_\infty)<t)&=\sum_{g\in \Gamma}P(\overline{\delta}^{f}_{g}(e,g \omega_\infty)<t) P(\omega_k=g)\\
&=\sum_{g\in \Gamma}P(\overline{\delta}^{f}_{e}(g^{-1},\omega_\infty)<t) P(\omega_k=g)\\
&=\sum_{g\in \Gamma}\nu\left (\left \{\xi\in \partial_B\Gamma, \overline{\delta}^{f}_{e}(g^{-1},\xi)<t\right \}\right )P(\omega_k=g).
\end{align*}
The last following from the fact that $\nu$ is the law of $\omega_{\infty}$.
Letting
$$Q(t)=\sup_{g\in \Gamma} \nu\left (\left \{\xi\in \partial_B\Gamma, \overline{\delta}^{f}_{e}(g^{-1},\xi)<t\right \}\right ),$$
we obtain 
$$P(\overline{\delta}^{f}_{\omega_k}(e,\omega_\infty)<t)\leq Q(t).$$
It thus suffices to prove that $Q(t)\to 0$ as $t\to 0$.
If not, there is a $c>0$ and a sequence $z_n\in \Gamma$ with 
$$\nu\left (\left \{\xi\in \partial_B\Gamma, \overline{\delta}^{f}_{e}(z_n,\xi)<1/n\right \}\right )>c.$$
Up to taking a subsequence, we can assume that $z_n\to \alpha \in \partial_B\Gamma$ so that we have $\overline{\delta}^{f}_{e}(w_n,\alpha)<1/n$.
Then by the triangle inequality for $\overline{\delta}^{f}_{e}$ we have
$$\nu\left (\left \{\xi\in \partial_B\Gamma, \overline{\delta}^{f}_{e}(\xi,\alpha)<1/n\right \}\right )>c$$
for each $n$.
This implies $\nu(\{\alpha\})>c$ which is impossible since $\nu$ has no atoms.
\end{proof}

\subsection{Guivarc'h inequality and comparison of the measures}\label{Section1implies2}
The goal of this subsection is to prove the following step in the proof of Theorem~\ref{theoremBHM}.
We consider a word metric on $\Gamma$ and a probability measure $\mu$ with finite super-exponential moment.

\begin{proposition}\label{(1)implies(2)BHM}
The harmonic measure and the Patterson-Sullivan measure are equivalent if and only if $h=lv$.
\end{proposition}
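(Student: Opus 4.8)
The plan is to exploit the two shadow lemmas we have just established---Proposition \ref{Shadowkappa} for the Patterson-Sullivan measure $\kappa$ and Proposition \ref{Shadownu} for the harmonic measure $\nu$---together with the Lebesgue differentiation theorem of Proposition \ref{localsingularity}, applied with $\kappa_1=\nu$ (whose doubling property is guaranteed by Proposition \ref{Shadownu}) and $\kappa_2=\kappa$, and symmetrically with the roles reversed. The key dictionary is that along a partial shadow $\Omega(g)=\Omega_{\epsilon,\eta}(g)$ we have $-\log\kappa(\Omega(g))\asymp v\|g\|$ and $-\log\nu(\Omega(g))\asymp d_G(e,g)$, so that comparing the two measures on shadows is the same as comparing $v\|g\|$ with $d_G(e,g)$ along geodesics the random walk actually follows. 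The quantities $h,l,v$ enter through the asymptotics of the sample path: almost surely $\|\omega_n\|/n\to l$ and $-\log\nu(\Omega(\omega_{k_n}))/n \to \cdots$, which I will relate to $d_G(e,\omega_n)/n$ and ultimately to the asymptotic entropy $h$ via the fundamental identity $d_G(e,\omega_n)/n\to h$ (the Green-speed equals the entropy for measures with finite entropy; this follows from the definition of $d_G$ and Kingman's theorem applied to $-\log\mu^{*n}(\omega_n)$, together with the relative Ancona inequalities~(\ref{relativeAncona}) to pass between $d_G(e,\omega_n)$ and $-\log G(e,\omega_n)$).

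Concretely, I would first pick, for $\nu$-almost every sample path, an unbounded sequence of $(\epsilon,\eta)$-transition points $\omega_{k_j}$ on a geodesic ray from $e$ to $\omega_\infty$; Lemma \ref{closetotransitionpoints} guarantees that the positions $\omega_{k}$ themselves stay uniformly close to transition points with high probability, so that $\omega_\infty\in\Omega(\omega_{k})$ for a density-one set of indices $k$, and $\|\omega_k\|\to\infty$. Along such a sequence, Proposition \ref{Shadownu} gives $-\log\nu(\Omega(\omega_k))= d_G(e,\omega_k)+O(1)$, which by the speed results tends to $hk+o(k)$, while Proposition \ref{Shadowkappa} gives $-\log\kappa(\Omega(\omega_k))=v\|\omega_k\|+O(1)= vlk+o(k)$. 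The forward direction ($h=lv\Rightarrow$ equivalence) then runs by contraposition through Proposition \ref{localsingularity}~a): if $\nu$ and $\kappa$ were mutually singular, the differentiation theorem forces
$$\lim_{k\to\infty}\frac{\kappa(\Omega(\omega_k))}{\nu(\Omega(\omega_k))}=0$$
along the shadow sequence, i.e. $-\log\kappa(\Omega(\omega_k))-(-\log\nu(\Omega(\omega_k)))\to+\infty$, giving $vl\,k\geq h\,k+o(k)$ with a genuine linear gap, hence $vl>h$, contradicting the hypothesis. The reverse direction ($h<lv\Rightarrow$ singular, equivalently equivalence $\Rightarrow h=lv$) uses Proposition \ref{localsingularity}~b): if $\nu$ and $\kappa$ are equivalent (both are non-atomic by Proposition \ref{Shadowkappa} and by the fact that $\nu$ has no atoms), then
$$\lim_{k\to\infty}\frac{\log\kappa(\Omega(\omega_k))}{\log\nu(\Omega(\omega_k))}=1,$$
which forces $vl\,k/(h\,k)\to 1$, i.e. $h=lv$.

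The main obstacle I anticipate is the careful passage from the full sequence $\omega_n$ to the shadow sequence indexed by transition points, and the interchange of the almost-sure speed limits with the $\sup$ over $g$ appearing in Proposition \ref{localsingularity}. The differentiation statement controls $\sup_{g:\|g\|>t,\,\xi\in\Omega(g)}$ of the measure ratio, so I must ensure that the particular points $\omega_k$ realizing $\xi=\omega_\infty$ in their shadow are genuinely \emph{admissible} in this supremum, which is exactly what Lemma \ref{closetotransitionpoints} and Lemma \ref{closetogeodesics} provide (they keep $\omega_k$ within bounded distance of a transition point on a geodesic to $\omega_\infty$, so $\omega_\infty\in\Omega_{\epsilon,\eta}(\omega_k)$ with $\|\omega_k\|\to\infty$). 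The second delicate point is justifying $d_G(e,\omega_n)/n\to h$: I would invoke that $-\log G(e,\omega_n)/n\to h$ (a standard consequence of subadditivity and Kingman, valid under finite entropy, hence under the finite super-exponential moment assumption) and note that $d_G$ differs from $-\log G$ by the additive constant $\log G(e,e)$, so the limits coincide. Once these two analytic inputs are secured, the logical skeleton above closes the equivalence in both directions.
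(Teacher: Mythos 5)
Your reverse direction (equivalence $\Rightarrow h=lv$) is essentially the paper's argument: combine Proposition \ref{localsingularity}~b) with the almost sure limits $-\log\kappa(\Omega(\omega_n))/n\to vl$ and $-\log\nu(\Omega(\omega_n))/n\to h$ coming from the two shadow lemmas and the identification of $h$ with the Green speed. (One caveat: the paper is careful to obtain the conclusion of Proposition \ref{localsingularity} only as convergence \emph{in probability}, via an Egorov argument in Proposition \ref{lemma4.16Haissinsky}; your claim that $\omega_\infty\in\Omega(\omega_k)$ for a density-one set of indices almost surely is stronger than what Lemma \ref{closetotransitionpoints} gives, but convergence in probability suffices here, so this is repairable.)

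The forward direction, however, has a genuine gap. Arguing by contraposition, you assume $h=lv$ and that $\nu,\kappa$ are mutually singular, deduce from Proposition \ref{localsingularity}~a) that $\phi_k=\kappa(\Omega(\omega_k))/\nu(\Omega(\omega_k))\to 0$, rewrite this as $-\log\kappa(\Omega(\omega_k))+\log\nu(\Omega(\omega_k))\to+\infty$, and then claim a ``genuine linear gap'' $vl>h$. This last step is a non sequitur: by the shadow lemmas the left-hand side equals $(vl-h)k+o(k)$, and if $h=lv$ this is $o(k)$, which is perfectly compatible with divergence to $+\infty$ (e.g.\ at rate $\sqrt{k}$). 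Mutual singularity gives no rate of decay for $\phi_k$, so you only recover $vl\geq h$, which is Guivarc'h's inequality and yields no contradiction. This is precisely the difficulty the paper's proof is built to overcome: it establishes the Ces\`aro bound $\frac{1}{N}\sum_{n=1}^{N}E(\phi_n)\leq C_1$ (Proposition \ref{lemma4.14Haissinsky}), uses Jensen's inequality together with $\phi_n\to 0$ in probability to force $E(\psi_p)+C_2\leq -1$ for some finite $p$, and then invokes the subadditivity of $E(\psi_n)+C_2$ (Proposition \ref{lemma4.15Haissinskyb}, which rests on Lemma \ref{closetogeodesics}) to propagate this single negative value into the linear estimate $h-lv\leq -1/p<0$. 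Without the subadditivity-plus-Jensen mechanism, or some substitute producing a quantitative linear rate, the forward implication does not close.
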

Recall that we use the notation $\Omega_{\epsilon,\eta}(g)=\Omega_{2\eta,\epsilon,\eta}(g)$ to denote the set of $\xi \in \partial_B \Gamma$ such that some geodesic $[e,\xi)$ has an $(\epsilon,\eta)$-transition point within $2\eta$ of $g$.
For the rest of the paper we fix the following convention on $\epsilon,\eta$.
We choose an $\epsilon>\epsilon_0$ and an $\eta=\eta(\epsilon)$ large enough to satisfy Lemma~\ref{transitionpoints} and Proposition~\ref{Floydgeo}.
Moreover, $\eta$ is chosen large enough so that the shadow lemmas (Propositions~\ref{Shadowkappa} and~\ref{Shadownu}) hold for $r=2\eta$, and Proposition~\ref{localsingularity} also holds.

Define for a sample path $\omega$
$$\phi_n=\phi_n(\omega)=\frac{\kappa(\Omega_{\epsilon,\eta}(\omega_n))}{\nu(\Omega_{\epsilon,\eta}(\omega_n))}.$$
Let $\psi_n=\ln \phi_n$.
For simplicity, we do not refer to $\eta$ and $\epsilon$ in the notation for $\phi_n$ and $\psi_n$.
We will also write $\Omega(g)=\Omega_{\epsilon,\eta}(g)$ when not ambiguous.
Notice that the expectation of $\phi_n$ is given by
$$E(\phi_n)=\sum_{g\in \Gamma}\mu^{*n}(g)\frac{\kappa(\Omega(g))}{\nu(\Omega(g))}.$$

\begin{proposition}\label{lemma4.14Haissinsky}
There exists $C_1>0$ such that for any $N\geq 1$ we have 
$$\frac{1}{N}\sum^{N}_{n=1}E(\phi_n)\leq C_1.$$
\end{proposition}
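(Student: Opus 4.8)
The plan is to use the two shadow lemmas to replace the ratio $\kappa(\Omega(g))/\nu(\Omega(g))$ by an explicit exponential, and then to split the Ces\`aro sum according to whether $g$ can be reached by the walk in linear time. By Propositions~\ref{Shadowkappa} and~\ref{Shadownu} we have $\kappa(\Omega(g))\asymp \mathrm{e}^{-v\|g\|}$ and $\nu(\Omega(g))\asymp \mathrm{e}^{-d_G(e,g)}$, so that
$$E(\phi_n)\asymp \sum_{g\in\Gamma}\mu^{*n}(g)\,\mathrm{e}^{d_G(e,g)-v\|g\|}.$$
Fix a large constant $T>0$ to be chosen later and write $\frac1N\sum_{n=1}^N E(\phi_n)=I_N+II_N$, where $I_N$ collects the terms with $\|g\|\le TN$ and $II_N$ those with $\|g\|>TN$.

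For $I_N$, I would exchange the order of summation over $n$ and $g$ and use $\sum_{n=1}^N\mu^{*n}(g)\le G(e,g)$ together with the identity $\mathrm{e}^{d_G(e,g)}=G(e,e)/G(e,g)$. This yields the termwise bound $\mathrm{e}^{d_G(e,g)}\sum_{n=1}^N\mu^{*n}(g)\le \mathrm{e}^{d_G(e,g)}G(e,g)=G(e,e)$, so that, reattaching $\mathrm{e}^{-v\|g\|}\asymp\kappa(\Omega(g))$, one gets $I_N\lesssim \frac{G(e,e)}{N}\sum_{\|g\|\le TN}\kappa(\Omega(g))$. It then remains to show $\sum_{\|g\|\le TN}\kappa(\Omega(g))\lesssim N$. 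For this I would rewrite this sum as $\int_{\partial_B\Gamma}\#\{g:\|g\|\le TN,\ \xi\in\Omega(g)\}\,d\kappa(\xi)$ by Tonelli and bound the cardinality uniformly in $\xi$. Fixing a geodesic ray from $e$ to a conical $\xi$, Proposition~\ref{Floydgeo} and the relatively thin triangles property (Lemma~\ref{idealtriangles}) show that any $g$ with $\xi\in\Omega(g)$ lies within a bounded distance of an $(\epsilon,\eta)$-transition point of this fixed ray; since there are at most $TN+1$ such transition points at distance $\le TN$ and each bounded ball contains a bounded number of group elements, the cardinality is $O(N)$ uniformly in $\xi$. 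As $\kappa$ is finite this gives $I_N=O(1)$.

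For the tail $II_N$, I would discard the Green function and instead exploit the super-exponential moment. Since $\mu$ is admissible, the Green distance is dominated by the word distance, $d_G(e,g)\le C_0\|g\|$, so the summand is at most $\mu^{*n}(g)\,\mathrm{e}^{C_0\|g\|}$. Fixing $c=\mathrm{e}^{2C_0}$, the super-exponential moment assumption gives $B:=\sum_g\mu(g)\mathrm{e}^{2C_0\|g\|}<\infty$, whence $\sum_g\mu^{*n}(g)\mathrm{e}^{2C_0\|g\|}\le B^n$ by submultiplicativity of the moment generating function. Inserting the factor $\mathrm{e}^{-C_0\|g\|}\le \mathrm{e}^{-C_0TN}$, valid on $\{\|g\|>TN\}$, then bounds the inner sum by $\mathrm{e}^{-C_0TN}B^n\le (B\mathrm{e}^{-C_0T})^N$ for $n\le N$; choosing $T$ large enough that $B\mathrm{e}^{-C_0T}<1$ makes $II_N\le (B\mathrm{e}^{-C_0T})^N\to 0$. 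Combining the two estimates gives the desired uniform bound $C_1$.

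I expect the main obstacle to be the covering estimate $\sum_{\|g\|\le TN}\kappa(\Omega(g))\lesssim N$. The subtlety is that a single boundary point can lie in the partial shadows of many elements, essentially one near each transition point of its geodesic, so one must use the geodesic-independence of transition points provided by the Floyd metric (Proposition~\ref{Floydgeo}) and the thin triangles property to reduce to a single reference ray and count only $O(N)$ contributions; the tail estimate is then a comparatively routine Chernoff-type argument, but it is essential there that one uses a strictly larger exponent ($2C_0$ rather than $C_0$) so that the super-exponential decay in $\|g\|$ beats the factor $\mathrm{e}^{d_G(e,g)}$.
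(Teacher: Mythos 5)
Your proof is correct, and its skeleton is the same as the paper's: split the Ces\`aro average at $\|g\|\asymp N$, handle the bulk by exchanging the order of summation and using $\sum_{n\le N}\mu^{*n}(g)\le G(e,g)\asymp \nu(\Omega(g))$ so that only $\frac1N\sum_{\|g\|\le TN}\kappa(\Omega(g))$ remains, and kill the tail with a Chernoff-type bound using the super-exponential moment (your trick of writing $\mathrm{e}^{C_0\|g\|}=\mathrm{e}^{2C_0\|g\|}\mathrm{e}^{-C_0\|g\|}$ is exactly the paper's exponential Chebyshev with exponent $2c$, including the submultiplicative bound $E(\mathrm{e}^{2C_0\|\omega_n\|})\le B^n$). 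The one place where you genuinely diverge is the estimate $\sum_{\|g\|\le TN}\kappa(\Omega(g))\lesssim N$: the paper deduces it from Yang's purely exponential growth theorem $|\{g: n-a<\|g\|\le n\}|\asymp \mathrm{e}^{vn}$ combined with the upper bound $\kappa(\Omega(g))\lesssim \mathrm{e}^{-v\|g\|}$ of the shadow lemma, whereas you use Tonelli and a uniform multiplicity bound $\#\{g:\|g\|\le TN,\ \xi\in\Omega(g)\}=O(N)$ for conical $\xi$, obtained by pulling every such $g$ within bounded distance of a single reference ray $[e,\xi)$ via Proposition~\ref{Floydgeo}~b) and~c). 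Your covering argument is sound (the constant should be $TN+D_1+1$ rather than $TN+1$, but this is immaterial) and has the advantage of not invoking the counting theorem \cite[Theorem~1.9]{Yang}; the paper's route is shorter once that theorem is taken as given and avoids any further geometric input. Both yield the same uniform constant $C_1$.
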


\begin{proof}
Recall that the notation $f\lesssim g$ means that $f\leq C g$ for some constant $C$.
Consider $n,N$ with $1\leq n\leq N$. We will first show that there is some $k>0$ such that the quantity
$$R_{k}=\sum_{g\in \Gamma: \|g\|\geq k N} \frac{\kappa(\Omega(g))}{\nu(\Omega(g))}\mu^{*n}(g)$$ is bounded independently of $n,N$.

Indeed, by the shadow lemma for harmonic measure
\begin{equation}\label{equationnu}
   \nu(\Omega(g))\asymp G(e,g)=\sum^{\infty}_{n=1}\mu^{*n}(g).
\end{equation}
By the shadow lemma for the Patterson-Sullivan measure,
\begin{equation}\label{equationkappa}
    \kappa(\Omega(g))\asymp \mathrm{e}^{-v\|g\|}.
\end{equation}
Furthermore, since $d_{w}$ and $d_{G}$ are quasi-isometric (see for example \cite[Lemma~4.2]{Haissinsky} and note that the proof does not use the symmetry assumption),
$$\frac{\kappa(\Omega(g))}{\nu(\Omega(g))}\lesssim e^{c\|g\|}$$ for a constant $c$. 
This yields
$$R_{k}\lesssim \sum_{m\geq kN} e^{cm} \sum_{g: \|g\|\in [m,m+1)}\mu^{*n}(g)\leq \sum_{m\geq kN} e^{cm}P(\|\omega_n\|\geq m).$$

Since $\mu$ has finite super-exponential moment, we can apply the exponential Chebyshev inequality with exponent $2c$ to obtain
$$R_{k}\lesssim \sum_{m\geq kN} e^{-cm}E(e^{2c\|\omega_n\|}).$$
Since $n\leq N$ we have $$\|\omega_n\|\leq \sum^{N-1}_{j=0}\|\omega^{-1}_{j}\omega_{j+1}\|$$
from which we obtain, since the $\omega^{-1}_{j}\omega_{j+1}$ are independent random variables,
$$E(e^{2c\|\omega_n\|})\leq E_0^{N}$$ where $E_0=\sum_{g\in \Gamma}e^{2c\|g\|}\mu(g)$.
Choosing $k\geq \frac{2}{c}\log E_0$ we thus obtain 
$$R_{k}\lesssim \sum_{m\geq kN} e^{-cm}E(e^{2c\|\omega_n\|})\lesssim e^{-ckN}E_0^{N}\lesssim 1$$ giving us the desired estimate for $R_{k}$.

Now, we will show that the quantity
$$P_{N}=\frac{1}{N}\sum^{N}_{n=1}\sum_{g\in \Gamma:\|g\|\leq kN}\frac{\kappa(\Omega(g))}{\nu(\Omega(g))}\mu^{*n}(g)$$ is bounded independently of $N$.
Together with the estimate on $R_k$ this will prove the proposition.
Interchanging the order of summation we get, using~(\ref{equationnu}),
$$P_{N}=\frac{1}{N}\sum_{g\in \Gamma:\|g\|\leq kN}\frac{\sum^{N}_{n=1}\mu^{*n}(g)}{\nu(\Omega(g))}\kappa(\Omega(g))
\lesssim \frac{1}{N}\sum_{g:\|g\|\leq kN}\kappa(\Omega(g)).$$
By \cite[Theorem 1.9]{Yang}, for large enough $a>0$, we have
$$|\{g\in \Gamma: n-a<\|g\|\leq n\}|\asymp e^{vn}.$$
Consequently,
\begin{align*}
    \sum_{g \in \Gamma:1\leq \|g\|\leq kN}\kappa(\Omega(g))&\lesssim \sum^{N}_{n=1}\sum_{g:k(n-1)< \|g\|\leq kn} e^{-vkn}\\
    & \lesssim \sum^{N}_{n=1} e^{vkn}e^{-vkn}\lesssim N.
\end{align*}
The estimate for $P_N$ follows.
\end{proof}

\begin{proposition}\label{lemma4.15Haissinskya}
We have that $\psi_{n}/n$ converges to $h-lv$ almost surely and in expectation. 
\end{proposition}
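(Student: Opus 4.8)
The plan is to use the two shadow lemmas to convert $\psi_n$ into a difference of the Green and word displacements of the random walk, and then to invoke the subadditive ergodic theorem together with the identification of the Green speed with the asymptotic entropy.

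First I would record the pointwise consequence of the shadow lemmas. By Proposition~\ref{Shadowkappa} we have $\kappa(\Omega(g))\asymp \mathrm{e}^{-v\|g\|}$, and by Proposition~\ref{Shadownu} we have $\nu(\Omega(g))\asymp \mathrm{e}^{-d_G(e,g)}$, both with multiplicative constants independent of $g$. Taking logarithms and evaluating at $g=\omega_n$ yields
$$\psi_n=\log\kappa(\Omega(\omega_n))-\log\nu(\Omega(\omega_n))=d_G(e,\omega_n)-v\|\omega_n\|+O(1),$$
where the error term is bounded by a constant depending only on $\epsilon,\eta$. Dividing by $n$, the bounded term disappears in the limit, so it suffices to understand the two quantities $\|\omega_n\|/n$ and $d_G(e,\omega_n)/n$.

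Next I would treat both displacements by Kingman's subadditive ergodic theorem. Since $\mu$ has finite super-exponential moment with respect to $d_w$, it has finite first moment, so $\|\omega_n\|/n\to l$ both almost surely and in $L^1$, as recalled in the introduction. For the Green metric, $d_G(e,\cdot)$ is left-invariant and subadditive: the inequality $P(e\to gh)\geq P(e\to g)P(g\to gh)$ gives $d_G(e,gh)\leq d_G(e,g)+d_G(e,h)$, so $d_G(e,\omega_n)$ is a nonnegative subadditive cocycle over the shift on path space. Because $d_G$ and $d_w$ are quasi-isometric (as used in the proof of Proposition~\ref{lemma4.14Haissinsky}), the first moment of $d_G$ is finite as well, and Kingman's theorem provides a constant $\ell_G$ with $d_G(e,\omega_n)/n\to \ell_G$ almost surely and in $L^1$. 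Combining these with the previous display, $\psi_n/n$ converges almost surely and in $L^1$ to $\ell_G-lv$; the $L^1$ convergence is exactly the asserted convergence in expectation.

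The remaining, and main, point is to identify $\ell_G$ with the asymptotic entropy $h$. The easy inequality $G(e,\omega_n)\geq \mu^{*n}(\omega_n)$ already gives $\ell_G\leq h$ after taking $-\tfrac1n\log$ and using Kingman's formula $h=\lim -\tfrac1n\log\mu^{*n}(\omega_n)$. The reverse inequality is the genuinely delicate step, since it requires controlling the full sum $G(e,\omega_n)=\sum_k\mu^{*k}(\omega_n)$ and not just its $n$-th term; this is precisely the Green-speed-equals-entropy theorem of Blach\`ere, Ha\"issinsky and Mathieu~\cite{BlachereHassinskyMathieu1}, whose proof does not require symmetry of $\mu$ and applies here since the relevant finite-entropy and moment hypotheses hold. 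With $\ell_G=h$ in hand, the two modes of convergence established above give $\psi_n/n\to h-lv$, as claimed. I expect this identification of the Green speed to be the crux of the argument; the rest is a bookkeeping application of the shadow lemmas and the subadditive ergodic theorem.
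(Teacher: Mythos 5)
Your proof is correct and follows essentially the same route as the paper: both use the two shadow lemmas to write $\psi_n = d_G(e,\omega_n)-v\|\omega_n\|+O(1)$, then apply Kingman's theorem and the identification of the Green speed with the entropy from \cite[Theorem~1.1]{BlachereHassinskyMathieu1}. Your version simply spells out a few steps (subadditivity of $d_G$, finiteness of its first moment, the easy inequality $\ell_G\leq h$) that the paper leaves implicit.
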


\begin{proof}
By the shadow lemmas (Propositions~\ref{Shadowkappa} and~\ref{Shadownu}), we have
$$\frac{\psi_n}{n}=\frac{d_{G}(e,\omega_n)}{n}-\frac{v\|\omega_n\|}{n}+O\left (\frac{1}{n}\right ).$$
According to \cite[Theorem~1.1]{BlachereHassinskyMathieu1}, $\frac{1}{n}d_G(e,\omega_n)$ almost surely converges to $h$ whenever $\mu$ has finite entropy $H(\mu)$, which is implied by finite first moment, so in particular by finite super-exponential moment.
In other words, entropy is equal to the drift of $d_G$.
Thus, Kingman's ergodic theorem \cite[Theorem~10.1]{Walters} implies that $\psi_{n}/n$ converges to $h-lv$ almost surely and in expectation.
\end{proof}

\begin{proposition}\label{lemma4.15Haissinskyb}
There exists $C_2>0$ such that the sequence $E(\psi_n)+C_2$ is sub-additive.
\end{proposition}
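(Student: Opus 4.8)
The plan is to reduce the statement to a comparison of the word-metric and Green-metric drifts and then control the discrepancy using the deviation inequality of Lemma~\ref{closetogeodesics}. First, combining the two shadow lemmas (Propositions~\ref{Shadowkappa} and~\ref{Shadownu}), there is a constant $C_0$, independent of $g$, such that pointwise
$$\left| \psi_n + v\|\omega_n\| - d_G(e,\omega_n)\right| \leq C_0,$$
since $\ln\kappa(\Omega(\omega_n)) = -v\|\omega_n\| + O(1)$ and $\ln\nu(\Omega(\omega_n)) = -d_G(e,\omega_n) + O(1)$ with uniform implied constants. Writing $A_n = E(d_G(e,\omega_n))$ and $B_n = E(\|\omega_n\|)$, both finite because $d_w$ and $d_G$ are quasi-isometric and $\mu$ has finite first moment, it therefore suffices to produce a constant with $A_{m+n} - vB_{m+n} \leq (A_m - vB_m) + (A_n - vB_n) + C$.

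I would then decompose the walk as $\omega_{m+n} = \omega_m\sigma$ with $\sigma = \omega_m^{-1}\omega_{m+n}$, which is independent of $\omega_m$ and distributed as $\omega_n$. For the Green term, left-invariance together with the triangle inequality for $d_G$ (valid because $P(x\to z)\geq P(x\to y)P(y\to z)$) gives $d_G(e,\omega_{m+n}) \leq d_G(e,\omega_m) + d_G(e,\sigma)$, hence $A_{m+n}\leq A_m + A_n$; this points in the favorable direction. The difficulty is the word-metric term: the sub-additivity of $B_n$ goes the wrong way, so I instead need the reverse bound $B_{m+n}\geq B_m + B_n - C'$, a super-additivity up to a constant that fails pointwise because of the triangle inequality.

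The crux, and the main obstacle, is to establish this reverse bound. I would introduce the deficit $\Delta = \|\omega_m\| + \|\sigma\| - \|\omega_{m+n}\| \geq 0$ and observe geometrically that if $p$ is a closest point to $\omega_m$ on a word-geodesic $[e,\omega_{m+n}]$ and $a = d_w(\omega_m, [e,\omega_{m+n}])$, then $\|\omega_{m+n}\| = d_w(e,p) + d_w(p,\omega_{m+n})$, while $\|\omega_m\|\leq d_w(e,p) + a$ and $\|\sigma\| = d_w(\omega_m,\omega_{m+n})\leq a + d_w(p,\omega_{m+n})$; hence $\Delta \leq 2a$. Lemma~\ref{closetogeodesics}, applied with $k=m$ and endpoint $\omega_{m+n}$, yields $P(a>t) < Ce^{-t/C}$ uniformly in $m,n$, so $E(a) = \int_0^\infty P(a>t)\,dt$ is bounded by a constant independent of $m,n$, giving $E(\Delta)\leq C''$. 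Taking expectations in the identity $-v\|\omega_{m+n}\| = -v\|\omega_m\| - v\|\sigma\| + v\Delta$ then produces $-vB_{m+n}\leq -vB_m - vB_n + vC''$, exactly the reverse bound required.

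Finally I would assemble the two estimates: combining $A_{m+n}\leq A_m + A_n$ with $-vB_{m+n}\leq -vB_m - vB_n + vC''$ gives $(A_{m+n} - vB_{m+n}) \leq (A_m - vB_m) + (A_n - vB_n) + vC''$, and reinstating the bounded error $C_0$ from the shadow lemmas produces a constant $C_2$ with $E(\psi_{m+n})\leq E(\psi_m) + E(\psi_n) + C_2$, so that $E(\psi_n) + C_2$ is sub-additive. The entire weight of the argument lies in the word-metric deficit bound: because $d_w$ is not Gromov hyperbolic, the control of $E(a)$ through the Mathieu--Sisto deviation inequality is the essential ingredient, whereas the Green term is handled by the triangle inequality alone.
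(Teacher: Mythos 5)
Your proof is correct and follows essentially the same route as the paper's: reduce $\psi_n$ to $d_G(e,\omega_n)-v\|\omega_n\|$ via the two shadow lemmas, use the triangle inequality for $d_G$ on the Green term, and bound the word-metric deficit $\|\omega_m\|+\|\omega_m^{-1}\omega_{m+n}\|-\|\omega_{m+n}\|$ by twice the deviation from a geodesic $[e,\omega_{m+n}]$, whose expectation is uniformly bounded by Lemma~\ref{closetogeodesics}. The paper compresses this into two lines, but the decomposition, the key lemma, and the geometric estimate $\Delta\leq 2a$ are identical to yours.
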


\begin{proof}
Let $m,n\geq 1$.
The shadow lemma for the Patterson-Sullivan measure and the triangle inequality for $d_G$ implies that
\begin{align*}
E(\psi_{n+m})-(E(\psi_{n})+E(\psi_{m}))&\lesssim v E(\|\omega_n\|+ \|\omega_m\|-\|\omega_{n+m}\|)\\
&\leq 2vE(d_w(\omega_n,[e,\omega_{n+m}])).
\end{align*}
By Lemma~\ref{closetogeodesics}, the last expression is bounded by a constant $C_2$, so that $E(\psi_n)+C_2$ is sub-additive.
\end{proof}

\begin{proposition}\label{lemma4.16Haissinsky}
We have the following two cases.
\begin{enumerate}[a)]
    \item If $\kappa$ and $\nu$ are not equivalent, then $\phi_n$ converges to 0 in probability.
    \item If $\kappa$ and $\nu$ are equivalent then $\frac{\log \kappa(\Omega_{\epsilon,\eta}(\omega_n))}{\log \nu(\Omega_{\epsilon,\eta}(\omega_n))}$ tends to 1 in probability.
\end{enumerate}
\end{proposition}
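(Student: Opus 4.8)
The plan is to apply the Lebesgue differentiation machinery (Proposition~\ref{localsingularity}) to the two measures $\kappa$ and $\nu$, exploiting the fact that the doubling hypothesis needed to run that machinery has already been established for $\nu$ via the shadow lemma (Proposition~\ref{Shadownu}). The key observation is that the quantities $\phi_n$ and $\log\kappa(\Omega(\omega_n))/\log\nu(\Omega(\omega_n))$ are exactly the ratios appearing in the two conclusions of Proposition~\ref{localsingularity}, evaluated along the shadows $\Omega_{\epsilon,\eta}(\omega_n)$ that track the random walk as it converges to its exit point $\omega_\infty$. The role of the deviation estimate Lemma~\ref{closetotransitionpoints} is to guarantee that $\omega_\infty$ genuinely lies in these shadows for large $n$ with high probability, so that the supremum in Proposition~\ref{localsingularity} actually controls the quantity evaluated at $g=\omega_n$.

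For part a), I would argue as follows. Since $\kappa$ and $\nu$ are not equivalent, and both are non-atomic ergodic quasi-conformal densities with respect to the $\Gamma$-action (Yang's ergodicity for $\kappa$, and the standard ergodicity of the harmonic measure under the shift/action), a zero-one law forces them to be \emph{mutually singular}. Thus we are in the setting of Proposition~\ref{localsingularity}~a), applied with $\kappa_1=\nu$ (which satisfies the doubling property) and $\kappa_2=\kappa$. That proposition yields, for $\nu$-almost every $\xi$,
$$\lim_{t\to\infty}\sup_{g:\|g\|>t,\ \xi\in\Omega_{\epsilon,\eta}(g)}\frac{\kappa(\Omega_{\epsilon,\eta}(g))}{\nu(\Omega_{\epsilon,\eta}(g))}=0.$$
Applying this with $\xi=\omega_\infty$ (which has law $\nu$), and using that $\|\omega_n\|\to\infty$ almost surely together with the fact that $\omega_\infty\in\Omega_{\epsilon,\eta}(\omega_n)$ eventually---this is where Lemma~\ref{closetotransitionpoints} enters, ensuring $\omega_n$ is within $2\eta$ of a transition point on a geodesic $[e,\omega_\infty)$ with probability tending to $1$---we conclude $\phi_n\to0$ in probability. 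For part b), the equivalence of $\kappa$ and $\nu$ (both non-atomic) places us in case b) of Proposition~\ref{localsingularity}, giving the logarithmic ratio $\to1$ for $\nu$-almost every $\xi$, and the same evaluation-along-the-walk argument upgrades this to convergence in probability for $\log\kappa(\Omega(\omega_n))/\log\nu(\Omega(\omega_n))$.

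The main obstacle I anticipate is the passage from the almost-everywhere-in-$\xi$ statement of Proposition~\ref{localsingularity} to convergence in probability of the random sequence indexed by $n$. The subtlety is that the supremum in Proposition~\ref{localsingularity} is taken over \emph{all} $g$ with $\|g\|>t$ and $\xi\in\Omega_{\epsilon,\eta}(g)$, whereas along the walk we have a specific $g=\omega_n$; one must verify that $\omega_\infty\in\Omega_{\epsilon,\eta}(\omega_n)$ with high probability so that $\phi_n$ is dominated by that supremum with $t=\|\omega_n\|$. This requires combining Lemma~\ref{closetotransitionpoints} (to control the event that $\omega_n$ lies near a transition point of a geodesic to $\omega_\infty$) with the almost-sure escape $\|\omega_n\|\to\infty$. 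A secondary point requiring care is justifying the mutual singularity in part a): the failure of equivalence must be boosted to singularity, which follows because both measures are ergodic under the group action and any $\Gamma$-quasi-invariant pair is either equivalent or mutually singular. Once these two points are handled, both statements follow directly by unwinding the definitions of $\phi_n$ and the logarithmic ratio.
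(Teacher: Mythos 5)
Your overall strategy is the paper's: reduce non-equivalence to mutual singularity via ergodicity of both measures under the $\Gamma$-action, apply Proposition~\ref{localsingularity} with $\kappa_1=\nu$ (whose doubling hypothesis is supplied by Proposition~\ref{Shadownu}) and $\kappa_2=\kappa$, and then evaluate the resulting $\nu$-almost-everywhere statement along the shadows containing $\omega_\infty$. You also correctly identify the delicate point, namely passing from the almost-everywhere-in-$\xi$ statement to convergence in probability along the walk.

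However, the way you resolve that point does not work as written. You claim that Lemma~\ref{closetotransitionpoints} ensures $\omega_\infty\in\Omega_{\epsilon,\eta}(\omega_n)$ ``with probability tending to $1$.'' For the \emph{fixed} parameter $\eta$ this is false: the lemma only gives $P\bigl(\sup_\alpha d_w(\omega_n,\Tr_{\epsilon,\eta}\alpha)>a\bigr)<F_{\epsilon,\eta}(a)$ with $F_{\epsilon,\eta}(a)\to 0$ as $a\to\infty$, so taking $a=2\eta$ yields only the uniform-in-$n$ bound $P(\omega_\infty\notin\Omega_{\epsilon,\eta}(\omega_n))\leq F_{\epsilon,\eta}(2\eta)$, a constant that need not be small (the walk may, for instance, sit deep inside a parabolic coset at time $n$ with probability bounded away from $0$). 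The missing idea is a two-scale argument: given $\alpha>0$, choose $D$ large enough that $P(\omega_\infty\notin\Omega_{\epsilon,D}(\omega_n))\leq\alpha$ for all $n$; apply the differentiation theorem with the supremum taken over $\{g:\|g\|>t,\ \xi\in\Omega_{\epsilon,D}(g)\}$ while keeping the ratio $\kappa(\Omega_{\epsilon,\eta}(g))/\nu(\Omega_{\epsilon,\eta}(g))$ at the original scale $\eta$; and use the shadow lemma to compare $\nu(\Omega_{\epsilon,D}(g))\leq C\,\nu(\Omega_{\epsilon,\eta}(g))$ so that the scale-$D$ supremum still dominates $\phi_n$ whenever $\omega_\infty\in\Omega_{\epsilon,D}(\omega_n)$. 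One then applies Egorov's theorem to upgrade the $\nu$-a.e.\ convergence of $R_t(\xi)=\sup_{\|g\|\geq t,\,\xi\in\Omega_{\epsilon,D}(g)}\kappa(\Omega_{\epsilon,\eta}(g))/\nu(\Omega_{\epsilon,\eta}(g))$ to uniform convergence off a set of sample paths of probability $<\alpha$, which is what actually produces the bound $P(\phi_n\geq c)\leq 2\alpha$ for large $n$; your ``evaluation-along-the-walk'' step needs this uniformization to be made precise. The same corrections apply verbatim to part b).
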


\begin{remark}
For a), note that in \cite{BlachereHassinskyMathieu2}, the analogous result for hyperbolic groups
states that $\phi_n$ converges to 0 almost surely.
This is actually a mistake, which is corrected in \cite[Lemma~4.16]{Haissinsky}.
\end{remark}

\begin{proof}
\begin{enumerate}[a)]
\item \noindent
First, both measures are ergodic with respect to the action of $\Gamma$ on $\partial_B\Gamma$.
Thus, if they are not equivalent, they are mutually singular.
Let $\alpha>0,c>0$.
By Lemma \ref{closetotransitionpoints} we have $P(\omega_\infty \notin \Omega_{\epsilon,D}(\omega_k))\leq F(D)$ independently of $k$ where $F(D)\to 0$ as $D\to \infty$.
Fix $D$ so that
$$P(\omega_\infty \notin \Omega_{\epsilon,D}(\omega_k))\leq \alpha$$
for all $k$.
By Proposition~\ref{localsingularity}~a) we have, for $\nu$-almost every $\xi$,
$$\lim_{t\to \infty}\sup_{||g||>t,\xi\in \Omega_{\epsilon,D}(g)}\frac{\kappa(\Omega_{\epsilon,D}(g))}{\nu(\Omega_{\epsilon,D}(g))}\to 0.$$
The shadow lemma for $\nu$ shows that $\nu(\Omega_{\epsilon,D}(g))\leq C \nu(\Omega_{\epsilon,\eta}(g))$ where $C$ depends only on $\epsilon,\eta,D$.
Thus the quantity 
$$R_{t}(\xi)=\sup_{||g||\geq t,\xi\in \Omega_{\epsilon,D}(g)}\frac{\kappa(\Omega_{\epsilon,\eta}(g))}{\nu(\Omega_{\epsilon,\eta}(g))}$$  converges to $0$ as $t\to \infty$ for $\nu$-almost every $\xi$.
Furthermore, for almost every sample path $\omega$, we have 
$||\omega_n||\to \infty$. 
Thus, by Egorov's theorem, we may choose a subset $E\subset \Gamma^{\mathbb{N}}$ of sample paths with $P(E^c)<\alpha$ and such that $||\omega_n||\to \infty$ and $R_{t}(\omega_\infty)\to 0$ uniformly over $\omega\in E$.
It follows that $R_{||\omega_n||}(\omega_\infty)\to 0$ uniformly over $\omega\in E$. This means that for large enough $n$, the conditions $\omega\in E$ and $\frac{\kappa(\Omega_{\epsilon,\eta}(\omega_n))}{\nu(\Omega_{\epsilon,\eta}(\omega_n))}\geq c$ imply $\omega_\infty \notin \Omega_{\epsilon,D}(\omega_n)$. The latter has probability at most $\alpha$, so we get
$$P(\phi_n\geq c)\leq P(E^c)+P(\omega_\infty \notin \Omega_{\epsilon,D}(\omega_n))\leq 2\alpha.$$
As $\alpha,c>0$ were chosen arbitrarily we get $P(\phi_n\geq c)\to 0$ as $n\to \infty$ for each $c>0$ so $\phi_n \to 0$ in probability.

\item \noindent This time, we define for each $D>0$
$$R_{t}(\xi)=
\sup_{||g||\geq t,\xi\in \Omega_{\epsilon,D}(g)}\left |\frac{\log \kappa(\Omega_{\epsilon,\eta}(g))}{\log \nu(\Omega_{\epsilon,\eta}(g))}-1\right|.$$
Using b) of Proposition~\ref{localsingularity} we obtain that for every $D$, 
$$\lim_{t\to \infty}R_{t}(\xi)= 1$$
for $\nu$-almost every $\xi$.
The proof is then similar to a). \qedhere
\end{enumerate}
\end{proof}

We now prove Proposition~\ref{(1)implies(2)BHM}.
\begin{proof}
Assume that the Patterson-Sullivan and the harmonic measures are not equivalent.
Let $\beta>0$.
Let $A_n$ be the event $\{\phi_n\geq \beta\}$ and $B_n=A_n^c$.
For every $n$,
$$E(\psi_n)=\int_{A_n}\psi_ndP+\int_{B_n}\psi_ndP.$$
According to Proposition~\ref{lemma4.16Haissinsky}, $\phi_n$ converges to 0 in probability. Thus, there exists $n_0$ such that for every $n\geq n_0$, $P(B_n)\geq 1-\beta$. In particular,
$$\int_{B_n}\psi_ndP\leq P(B_n)\log \beta \leq (1-\beta)\log \beta.$$
Let $C_1$ be the constant in Proposition~\ref{lemma4.14Haissinsky}.
Jensen inequality shows that $$\int_{A_n}\psi_ndP=P(A_n)\int_{A_n}\log \phi_n \frac{dP}{P(A_n)}\leq P(A_n)\log \left (\int_{A_n}\phi_n \frac{dP}{P(A_n)}\right ).$$
Rewrite the right-hand side as
$$P(A_n)\log \left (\int_{A_n}\phi_n \frac{dP}{P(A_n)}\right )=-P(A_n)\log P(A_n)+P(A_n)\log \left (\int_{A_n}\phi_n dP\right ).$$
The function $x\mapsto x\log x$ is first decreasing then increasing on $[0,1]$, so if $\beta$ is small enough,
$-P(A_n)\log P(A_n)\leq -\beta \log \beta$.
Moreover,
$$P(A_n)\log \left (\int_{A_n}\phi_n dP\right )\leq \beta \sup \left (0,\log E(\phi_n)\right ).$$
We thus have $$\int_{A_n}\psi_ndP\leq \beta\log \frac{1}{\beta}+\beta \sup \left (0,\log E(\phi_n)\right ).$$
According to Proposition~\ref{lemma4.14Haissinsky}, $\liminf E(\phi_n)\leq 2C_1$, so that there exists $p\geq n_0$ such that
$E(\phi_p)\leq 2C_1$.
In particular, for every small enough $\beta$, we can find $p$ such that
$$E(\psi_p)\leq (1-\beta)\log \beta+\beta\log \frac{1}{\beta}+\beta|\log 2C_1|.$$
The right-hand side tends to $-\infty$ when $\beta$ goes to 0.
If $\beta$ is small enough, we thus have for some $p$
$$E(\psi_p)+C_2\leq -1,$$
where $C_2$ is the constant in Proposition~\ref{lemma4.15Haissinskyb}.
Since $E(\psi_p)+C_2$ is sub-additive, we have
$$\frac{1}{k}(E(\psi_{kp})+C_2)\leq E(\psi_p)+C_2\leq -1.$$
Finally, $\frac{1}{kp}E(\psi_{kp})$ converges to $h-lv$ by Proposition~\ref{lemma4.15Haissinskya}, so letting $k$ tend to infinity, we get
$$h-lv\leq -\frac{1}{p}<0.$$
Thus, $h<lv$.


Conversely, suppose the measures are equivalent.
By the shadow lemma for the Patterson-Sullivan measure $\kappa$ we have
$$\frac{-\log \kappa(\Omega(g))}{\|g\|}\to v$$
as $\|g\|\to \infty$ and in particular 
$$\frac{-\log \kappa(\Omega(\omega_n))}{\|\omega_n\|}\to v$$
for almost every sample path.

Furthermore, for almost every sample path we have
$$\lim_{n\to \infty}\frac{-\log \nu(\Omega(\omega_n))}{\|\omega_n\|}=\lim_{n\to \infty}\frac{d_{G}(e,\omega_n)}{\|\omega_n\|}=\lim_{n\to \infty}\frac{d_{G}(e,\omega_n)/n}{\|\omega_n\|/n}=\frac{h}{l}.$$
Thus, almost surely, $$\frac{\log \kappa(\Omega(\omega_n))}{\log \nu(\Omega(\omega_n))}\to \frac{vl}{h}.$$
As the measures are equivalent we have according to Proposition~\ref{lemma4.16Haissinsky},
$$\frac{\log \kappa(\Omega_{\epsilon,\eta}(\omega_n))}{\log \nu(\Omega_{\epsilon,\eta}(\omega_n))}\to 1$$ in probability, which ensures that $h=lv$.
\end{proof}

The following result is a consequence of Proposition~\ref{(1)implies(2)BHM}.
Indeed, notice that $h$ and $l$ are the same for the measure $\mu$ and the reflected measure $\check{\mu}$.
Actually, $H(\mu)=H(\check{\mu})$ and $L(\mu)=L(\check{\mu})$.
\begin{corollary}\label{reflectedequivalence}
Let $\check{\nu}$ be the harmonic measure for the reflected random walk $\check{\mu}$. Then $\check{\nu}$ is equivalent to $\kappa$ whenever $\nu$ is equivalent to $\kappa$.
\end{corollary}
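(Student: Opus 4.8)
The plan is to deduce the corollary directly from Proposition~\ref{(1)implies(2)BHM} by exploiting the fact that the only walk-dependent input to the equivalence $\nu\sim\kappa$ is the scalar identity $h=lv$, and that both $h$ and $l$ --- as well as $v$ and the measure $\kappa$ --- are unchanged when $\mu$ is replaced by its reflection $\check{\mu}$. Concretely, I would apply Proposition~\ref{(1)implies(2)BHM} twice: once to $\mu$, which gives $\nu\sim\kappa$ if and only if $h=lv$, and once to $\check{\mu}$, which gives $\check{\nu}\sim\kappa$ if and only if $h_{\check{\mu}}=l_{\check{\mu}}v$. The corollary then follows as soon as the two scalar conditions are identified.

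Before applying the proposition to $\check{\mu}$, I would check that $\check{\mu}$ meets its hypotheses. Admissibility is preserved: if $g^{-1}=a_1\cdots a_k$ with $a_i\in\supp\mu$, then $g=a_k^{-1}\cdots a_1^{-1}$ is a product of elements of $\supp\check{\mu}$, so $\supp\check{\mu}$ generates $\Gamma$ as a semigroup. The finite super-exponential moment is preserved because the word metric is symmetric: $\sum_g\check{\mu}(g)c^{\|g\|}=\sum_h\mu(h)c^{\|h^{-1}\|}=\sum_h\mu(h)c^{\|h\|}<\infty$ for every $c>1$.

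Next I would record the reflection-invariance of the asymptotic invariants. Since reflection is an anti-automorphism of the convolution algebra, one has $\check{(\mu^{*n})}=(\check{\mu})^{*n}$, so that $(\check{\mu})^{*n}(g)=\mu^{*n}(g^{-1})$ for all $n$. Combined with $\|g^{-1}\|=\|g\|$, this yields $H((\check{\mu})^{*n})=H(\mu^{*n})$ and $L((\check{\mu})^{*n})=L(\mu^{*n})$ for every $n$ (the case $n=1$ being the equalities $H(\mu)=H(\check{\mu})$ and $L(\mu)=L(\check{\mu})$ noted just above the corollary), whence $h_{\check{\mu}}=h$ and $l_{\check{\mu}}=l$ in the limit. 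Moreover $v$ and $\kappa$ depend only on the group and its word metric, not on the random walk, so they are literally the same for $\mu$ and $\check{\mu}$. Putting these together, $\nu\sim\kappa$ forces $h=lv$, hence $h_{\check{\mu}}=l_{\check{\mu}}v$, hence $\check{\nu}\sim\kappa$; the argument is in fact symmetric in $\mu$ and $\check{\mu}$ and gives the full equivalence $\nu\sim\kappa\Leftrightarrow\check{\nu}\sim\kappa$. I do not expect any genuine obstacle here: the entire content is the observation that Proposition~\ref{(1)implies(2)BHM} reduces measure equivalence to a reflection-invariant numerical condition, and the only points needing (routine) verification are the transfer of the hypotheses to $\check{\mu}$ and the reflection-invariance of $h$ and $l$.
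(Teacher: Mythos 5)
Your proposal is correct and follows exactly the paper's route: the paper deduces the corollary from Proposition~\ref{(1)implies(2)BHM} by noting that $h$ and $l$ (via $H(\mu^{*n})=H(\check{\mu}^{*n})$ and $L(\mu^{*n})=L(\check{\mu}^{*n})$) are unchanged under reflection, while $v$ and $\kappa$ depend only on the group and the word metric. Your additional verifications (admissibility and moment condition for $\check{\mu}$) are routine details the paper leaves implicit.
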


\subsection{Radon-Nikodym derivatives}\label{Section2implies3}
In this subsection, we prove the following result.

\begin{proposition}\label{(2)implies(3)BHM}
If the harmonic measure $\nu$ and its reflection $\check{\nu}$ are both equivalent to the Patterson-Sullivan measure $\kappa$, then the Radon-Nikodym derivative $d\kappa/d\nu$ is bounded away from 0 and infinity.
\end{proposition}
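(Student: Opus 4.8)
The plan is to follow the strategy of Blach\`ere, Ha\"issinsky and Mathieu, upgrading the bare equivalences $\nu\sim\kappa$ and $\check\nu\sim\kappa$ to a two-sided bound on the density by combining a shadow realization of the Radon--Nikodym derivative with the quasi-conformal equivariance of all three measures. Since both $\nu$ and $\check\nu$ are equivalent to $\kappa$ by hypothesis, the derivative $\Phi=d\kappa/d\nu$ is defined, positive and finite $\kappa$-almost everywhere, and the goal is to show that $\Phi$ is essentially bounded above and below.

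First I would realize $\Phi$ as a limit of ratios of shadow measures. Proposition~\ref{Shadownu} supplies the doubling property needed to apply Proposition~\ref{localsingularity} (via Proposition~\ref{RNexists}) to the Vitali relation $\mathcal V_{\epsilon,\eta}$ with $\kappa_1=\nu$ and $\kappa_2=\kappa$, so that for $\kappa$-almost every conical $\xi$,
\[
\Phi(\xi)=\lim_{g\to\xi}\frac{\kappa(\Omega(g))}{\nu(\Omega(g))}\asymp\lim_{g\to\xi}e^{d_G(e,g)-v\|g\|},
\]
the limit being taken over $(\epsilon,\eta)$-transition points $g$ on a geodesic $[e,\xi)$, and using the two shadow lemmas (Propositions~\ref{Shadowkappa} and~\ref{Shadownu}). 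In particular this limit exists, is finite and positive almost everywhere.

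Next I would write down the quasi-conformal cocycle. Using $\gamma_*\kappa=\kappa_\gamma$ and $\gamma_*\nu=\nu_\gamma$, the conformality $\tfrac{d\nu_\gamma}{d\nu}(\xi)=K_\xi(\gamma)=e^{\beta^{G}_\xi(e,\gamma)}$, and Yang's quasi-conformality $\tfrac{d\kappa_\gamma}{d\kappa}(\xi)\asymp e^{-v\beta_\xi(\gamma,e)}$, the chain rule gives, with a uniform implicit constant,
\[
\log\Phi(\gamma\xi)-\log\Phi(\xi)=v\beta_\xi(e,\gamma^{-1})-\beta^{G}_\xi(e,\gamma^{-1})+O(1).
\]
The key cancellation is to take $\gamma=g^{-1}$ with $g$ a transition point on $[e,\xi)$. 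Then $\gamma^{-1}=g\to\xi$, and for $\xi\in\Omega(g)$ one has $\beta_\xi(e,g)=\|g\|+O(1)$, while the relative Ancona inequalities~(\ref{relativeAncona}) (exactly as in the proof of Proposition~\ref{Shadownu}) give $\beta^{G}_\xi(e,g)=d_G(e,g)+O(1)$. Substituting, and using the shadow realization $\log\Phi(\xi)=d_G(e,g)-v\|g\|+O(1)$, the terms $d_G(e,g)-v\|g\|$ cancel against $\log\Phi(\xi)$, yielding a uniform bound $|\log\Phi(g^{-1}\xi)|\le C$ for $\kappa$-almost every $\xi$ and all sufficiently deep transition points $g$ on $[e,\xi)$.

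Finally I would promote this to an essential bound on $\Phi$. The points $g^{-1}\xi$ are the forward endpoints of the shifted geodesic rays, so the previous step controls $\log\Phi$ along a family of translates of a generic point rather than at the point itself. To conclude I would pass to the space of bi-infinite geodesics with endpoints $(\eta,\xi)$, endowed with a measure equivalent to $\check\nu\otimes\nu$: here the hypothesis $\check\nu\sim\kappa$ is used to identify this class with $\kappa\otimes\kappa$, and the relative Ancona inequalities provide the quasi-product structure. A Hopf-type argument, using the ergodicity of the $\Gamma$-action on $(\partial_B\Gamma,\kappa)$ recalled after Proposition~\ref{Shadowkappa}, should then propagate the boundedness of $\log\Phi$ along transition-point translates to an essential bound, giving the proposition. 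I expect this last step to be the main obstacle: $\Phi$ is only measurable, so the bound at the translates $g^{-1}\xi$ does not transfer to a limit point by continuity, and one must argue at the level of positive-measure sets of endpoints and invoke ergodicity and the $\Gamma$-quasi-invariance of the measure class. This is precisely the point at which both $\nu\sim\kappa$ and its reflection $\check\nu\sim\kappa$ are needed, matching the appearance of the reflected walk in the hypothesis and in Corollary~\ref{reflectedequivalence}.
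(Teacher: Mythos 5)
Your first three steps are sound and consistent with the machinery the paper sets up: the differentiation theorem (Propositions~\ref{RNexists} and~\ref{localsingularity} applied with $\kappa_1=\nu$, using the doubling from Proposition~\ref{Shadownu}) does realize $\Phi=d\kappa/d\nu$ as a limit of shadow ratios, and the cocycle computation with $\gamma=g^{-1}$ for $g$ a deep transition point on $[e,\xi)$ correctly yields $|\log\Phi(g^{-1}\xi)|\leq C$ for $\nu$-a.e.\ $\xi$. But this is where the proof stops being a proof. What you have controlled is $\log\Phi$ on the set $\{g_n^{-1}\xi\}$, a countable orbit-like family attached to each generic $\xi$; there is no reason this family sweeps out a set of positive $\kappa$-measure, and since $\Phi$ is only measurable and is \emph{not} $\Gamma$-invariant, the sets $\{|\log\Phi|\leq C\}$ are not invariant and ergodicity of $\Gamma\curvearrowright(\partial_B\Gamma,\kappa)$ cannot be applied to them. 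The sentence ``a Hopf-type argument \dots should then propagate the boundedness'' is precisely the content of the proposition, not a step toward it, and as written the argument would fail for any measurable function bounded along such orbit families but essentially unbounded.

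The paper closes this gap by a different mechanism (Lemma~\ref{lemmaergodicdouble}), which you may find instructive to compare with your sketch: one builds two $\Gamma$-\emph{invariant} ergodic Radon measures on $\partial_B\Gamma\times\partial_B\Gamma\setminus\Diag$, namely $m_\nu=\Theta\,\check\nu\otimes\nu$ with $\Theta$ the Naim kernel (from \cite[Corollary~10.3]{GGPY}) and $m_\kappa=f_\kappa\,\kappa\otimes\kappa$ with $f_\kappa\asymp e^{2v\rho_e}$ obtained from the Gromov product via Furman's coboundary result. Equivalence plus ergodicity forces $m_\nu$ and $m_\kappa$ to be proportional, which gives the identity $J_1(a)J_2(b)=f_\kappa(a,b)/f_\nu(a,b)$ for the two Radon--Nikodym derivatives $J_1=d\check\nu/d\kappa$, $J_2=d\nu/d\kappa$; since the right-hand side is bounded away from $0$ and $\infty$ on products of disjoint closed sets, fixing one variable immediately bounds the other derivative essentially, with no pointwise differentiation needed. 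This is exactly where both hypotheses $\nu\sim\kappa$ and $\check\nu\sim\kappa$ enter (to get ergodicity of $\kappa\otimes\kappa$ from double ergodicity of the Poisson boundary), so your instinct about the role of the reflected walk is right, but the invariant double-boundary measures, not a Hopf argument along geodesics, are what make the conclusion fall out.
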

This will use the following general lemma.

\begin{lemma}\label{lemmaergodicdouble}
Let $Z$ be a compact metrizable space and let $G$ act by homeomorphisms. 
Let $\nu_1,\nu_2,\kappa_1,\kappa_2$ be Borel probability measures with full support on $Z$ and with $\nu_i$ equivalent to $\kappa_i$ for $i=1,2$.

Assume $G$ preserves the measure class of $\nu_i$ and acts ergodically on $(Z\times Z, \nu_1 \otimes \nu_2)$ and $(Z\times Z, \kappa_1 \otimes \kappa_2)$ for $i=1,2$.
Suppose there are positive bounded away from 0 measurable functions $f_\nu,f_\kappa:Z\times Z\setminus \Diag \to \mathbb{R}$, bounded on compact subsets of $Z\times Z \setminus \Diag$ such that
$m_\nu=f_\nu \nu_1 \otimes \nu_2$ and $m_\kappa=f_\kappa \kappa_1 \otimes \kappa_2$ are $G$-invariant ergodic Radon measure on 
$Z\times Z\setminus \Diag $.
Then for each $i=1,2$, $d\nu_i/d\kappa_i$ is bounded away from $0$ and $\infty$.
\end{lemma}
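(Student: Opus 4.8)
The plan is to exploit the two $G$-invariant measures $m_\nu$ and $m_\kappa$ on $Z\times Z\setminus\Diag$ together with ergodicity to show that the product densities ``separate variables,'' and then to convert the local regularity of $f_\nu,f_\kappa$ into a global bound by staying away from the diagonal. Write $\rho_i=d\nu_i/d\kappa_i$, which is defined and strictly positive $\kappa_i$-almost everywhere; the goal is precisely to show that each $\rho_i$ is essentially bounded above and below by positive constants.

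First I would record that $m_\nu$ and $m_\kappa$ are mutually absolutely continuous on $Z\times Z\setminus\Diag$: since $\nu_i\sim\kappa_i$ we have $\nu_1\otimes\nu_2\sim\kappa_1\otimes\kappa_2$, and $f_\nu,f_\kappa$ are strictly positive. Computing Radon--Nikodym derivatives against $\kappa_1\otimes\kappa_2$ then gives
$$\frac{dm_\nu}{dm_\kappa}(x,y)=\frac{f_\nu(x,y)}{f_\kappa(x,y)}\,\rho_1(x)\,\rho_2(y).$$
Because $m_\nu$ and $m_\kappa$ are both $G$-invariant and equivalent, this derivative is a $G$-invariant measurable function; since $m_\kappa$ is ergodic (equivalently, $\kappa_1\otimes\kappa_2$ is ergodic), it must equal a positive constant $c$ almost everywhere. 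Hence, for $\kappa_1\otimes\kappa_2$-almost every off-diagonal $(x,y)$,
$$\rho_1(x)\,\rho_2(y)=c\,\frac{f_\kappa(x,y)}{f_\nu(x,y)}=:\Psi(x,y).$$

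The key point is now that the right-hand side is controlled away from the diagonal. For each $r>0$ the set $D_r=\{(x,y)\in Z\times Z:\ d(x,y)\ge r\}$ is a compact subset of $Z\times Z\setminus\Diag$, so by hypothesis $f_\nu,f_\kappa$ are bounded above there and, being bounded away from $0$ globally, also bounded below there; thus $\Psi$ takes values in some interval $[c_1(r),c_2(r)]$ with $0<c_1(r)\le c_2(r)<\infty$ on $D_r$. To bound $\rho_1$ I would fix two reference points: choose distinct $z_0,z_1\in Z$, take $r$ to be a small fraction of $d(z_0,z_1)$, and, using that $\kappa_2$ has full support, select points $y_0,y_1$ near $z_0,z_1$ that are Fubini-good (the identity $\rho_1(x)\rho_2(y_j)=\Psi(x,y_j)$ holds for $\kappa_1$-a.e.\ $x$), satisfy $\rho_2(y_j)\in(0,\infty)$, and have $B(y_0,r)\cap B(y_1,r)=\emptyset$. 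For $\kappa_1$-a.e.\ $x$ with $d(x,y_0)\ge r$ we get $\rho_1(x)=\Psi(x,y_0)/\rho_2(y_0)\in[c_1(r)/\rho_2(y_0),\,c_2(r)/\rho_2(y_0)]$, so $\rho_1$ is essentially bounded above and below on $Z\setminus B(y_0,r)$; the same holds on $Z\setminus B(y_1,r)$. Since these two complements cover $Z$, $\rho_1$ is essentially bounded above and below on all of $Z$. Running the identical argument with the two factors interchanged bounds $\rho_2$, proving the lemma.

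The main obstacle is this last step: the densities $f_\nu,f_\kappa$ (in the application, Gromov-product/Naim-type kernels) are only controlled on compact subsets of the off-diagonal and may degenerate as $(x,y)\to\Diag$, so one cannot simply plug in a single reference point and bound $\rho_1$ everywhere at once. The device that resolves this is to never evaluate near the diagonal---covering $Z$ by the complements of two disjoint small balls and using full support to find admissible reference points inside them---so that every estimate takes place on a set $D_r$ on which $\Psi$ is genuinely bounded above and below.
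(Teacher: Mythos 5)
Your proposal is correct and follows essentially the same route as the paper: use the dichotomy/ergodicity for the two $G$-invariant measures $m_\nu,m_\kappa$ to get $\rho_1(x)\rho_2(y)=c\,f_\kappa(x,y)/f_\nu(x,y)$ off the diagonal, then fix reference points away from the diagonal where the kernels are bounded above and below, and cover $Z$ by two such complements. The only cosmetic difference is that the paper bounds the ratio $J_1(a)/J_1(a')$ (cancelling $J_2(p)$) whereas you divide by $\rho_2(y_j)$ directly after checking it is finite and positive; both are valid.
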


\begin{proof}
Since $\nu_i$ and $\kappa_i$ are equivalent, we have $d\nu_i=J_i d\kappa_i$ for a measurable positive function $J_i$. We want to show $J_i$ is $\kappa_i$-essentially bounded. This will prove that $d\nu_i/d\kappa_i$ is bounded away from $\infty$.
Interchanging the role of $\nu_i$ and $\kappa_i$, we will then get that it is also bounded away from 0.

Since $m_\nu$ and $m_\kappa$ are $G$-invariant ergodic measures, either they are singular or they are scalar multiples of each other. Thus, the assumption $d\nu_i=J_i d\kappa_i$ implies they are scalar multiples of each other.
Without loss of generality, we can assume that they coincide.
Note that we have
$$dm_{\nu}(a,b)=J_{1}(a)J_{2}(b)f_{\nu}(a,b)d\kappa_1(a)d\kappa_2(b)$$
so we have
$J_{1}(a)J_{2}(b)=f_{\kappa}(a,b)/f_{\nu}(a,b)$ for  $\nu_1\otimes \nu_2$-almost all $(a,b)$. 

Let $U,V$ be disjoint closed subsets in $Z$ with nonempty interior.
There is a $p\in V$ such that $J_{1}(a)J_{2}(p)=f_{\kappa}(a,p)/f_{\nu}(a,p)$ for $\nu_1$-almost all $a\in U$.
Dividing and noting that the $f_\nu$ and $f_\kappa$  are positive and bounded away from 0 and infinity on $U\times V$, we see that $C^{-1}_U<J_{1}(a)/J_{1}(a')<C_U$ for $\nu_1$-almost all $a,a'\in U$. Thus, $J_1$ is $\nu_1$-essentially bounded on any subset $U$ whose complement has nonempty interior. Covering $Z$ by two such sets, we see that $J_1$ is essentially bounded. The same argument applies to $J_2$
\end{proof}

To complete the proof of Proposition \ref{(2)implies(3)BHM} we just need to show that $\kappa \otimes \kappa$ and $\nu \otimes \check{\nu}$ can both be scaled by functions $f_\kappa$ and $f_\nu$ to obtain $\Gamma$-invariant Radon measures on $\partial^{2}_B\Gamma =\partial_B\Gamma\times \partial_B\Gamma \setminus \Diag$. 

For the harmonic measure, we may take $f_\nu$ to be the Naim kernel
defined for distinct conical points $\zeta,\xi$ as
$$\Theta(\zeta,\xi)=\liminf_{g\in \Gamma \to \zeta} \lim_{h\in \Gamma \to \xi}\frac{G(g,h)}{G(g,e)G(e,h)}=\liminf_{g\to \zeta}\frac{K_{\xi}(g)}{G(g,e)}.$$
The construction is done in \cite[Corollary~10.3]{GGPY}.

For the Patterson-Sullivan measure $\kappa$, the construction is done in the proof of \cite[Proposition~9.17]{GTT}, but we review it for completeness.
Let
$$\rho_{e}(\zeta ,\xi)= \limsup_{z\to\zeta, x\to \xi} 
\left( \frac{d_w(e,x)+d_w(e,z)-d_w(x,z)}{2} \right).$$
Define a measure $m'$ on $(\partial_B\Gamma \times \partial_B\Gamma)\setminus \Diag$ by 
$$dm'(\zeta, \xi)=e^{2v \rho_{e}(\zeta,\xi)}\ d\kappa(\zeta)\ d\kappa(\xi).$$

We first need to show that $m'$ is locally finite on $\partial_{B}\Gamma$. This amounts to showing that $\rho_{e}(\zeta ,\xi)$ is bounded outside of any neighborhood of the diagonal. Indeed, suppose $\zeta,\xi$ satisfy $\overline{\delta}^{f}_{e}(\zeta,\xi)\geq c>0$ and suppose $x_{n},y_{n}$ are sequences in $\Gamma$ converging to $\zeta$ and $\xi$ respectively. For large enough $n$ we have $\overline{\delta}^{f}_{e}(\zeta,\xi)\geq c/2>0$. By Proposition \ref{Floydgeo}, this implies any geodesic from $x_n$ to $y_n$ passes within $D$ of $e$, where $D$ depends only on $c$. 
This implies $d_{w}(x_n,e)+d_{w}(e,y_n)-d_{w}(x_n,y_n)<2D$. 
Taking limits, we see that $\rho_{e}(\zeta ,\xi)<2D$ completing the proof that $m'$ is locally finite.
Now we show that $m'$ is $\Gamma$-quasi-invariant with a uniformly bounded derivative. 
Indeed, we can compute
\begin{align*}
&2 \rho_e(g^{-1} \zeta, g^{-1}\xi) - 2 \rho_e(\zeta, \xi) \\
&= \limsup_{z \to \zeta, x \to \xi} \left[ d(e, g^{-1}x) + d(e, g^{-1}z) - d(g^{-1}x, g^{-1}z) \right]\\
&\hspace{5cm}- \limsup_{z \to \zeta, x \to \xi} \left[ d(e, x) + d(e, z) - d(x, z) \right]\\
&= \limsup_{x \to \xi} \left[ d(g, x) - d(e, x) \right] + \limsup_{z \to \zeta} \left[ d(g, z) - d(e, z) \right] + O(1)\\
&= \beta_\xi(g, e) + \beta_\zeta(g, e) + O(1).
\end{align*}
Note that we could distribute the limsup since the limsup and liminf are within bounded difference by \cite[Lemma 2.20] {Yang}.
Hence, combining this with the quasi-conformality of $\kappa$ one gets that the Radon-Nykodym cocycle of $m'$ is uniformly bounded for the $\Gamma$ action, that is,
$$\frac{dg_\star m'}{dm'} (\zeta, \xi)=e^{2v \rho_{e}(g^{-1} \zeta, g^{-1} \xi)- 2v \rho_{e}(\zeta, \xi)}\ \frac{dg_\star\kappa}{d\kappa}(\zeta)\ \frac{dg_\star\kappa}{d\kappa}(\xi) \asymp 1.$$
Hence, by a general fact in ergodic theory the Radon-Nykodym cocycle is also a coboundary (see \cite{Furman}, Proposition 1). Thus, there exists a $\Gamma$-invariant measure $m_{\kappa}$ on $(\partial_B\Gamma \times \partial_B\Gamma)\setminus \Diag$ in the same measure class as $m'$. In other words, one can take $f_\kappa$ to be within a bounded multiplicative constant of $e^{2v \rho_{e}(\zeta,\xi)}$. 
The functions $f_{\kappa}$ and $f_{\nu}$ thus constructed are bounded on compact subsets of $\partial^{2}_B\Gamma$.

The $\Gamma$-action on the square of the Poisson boundary
is ergodic (see \cite[Theorem~6.3]{Kaimanovich}) and since $\nu$ and $\check{\nu}$ both are equivalent to $\kappa$, the $\Gamma$-action on $\partial_B\Gamma\times \partial_B \Gamma$ is also ergodic for $\kappa\otimes \kappa$.
The proof is thus complete. \qed

\subsection{Geometric characterization of the Guivarc'h equality}\label{Section3implies4}
We finish the proof of Theorem~\ref{theoremBHM}.
We start proving the following proposition.

\begin{proposition}\label{(3)implies(4)BHM}
If the harmonic measure and the Patterson-Sullivan measure are equivalent, then $|d_G-vd_w|$ is uniformly bounded, where we recall that $d_G$ is the Green metric and $d_w$ is the word metric.
\end{proposition}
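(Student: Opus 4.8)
The plan is to combine the two shadow lemmas with the measure-comparison result just established. Recall from Proposition~\ref{(2)implies(3)BHM} that when $\nu$ and $\kappa$ are equivalent, the Radon-Nikodym derivative $d\kappa/d\nu$ is bounded away from $0$ and $\infty$ (using also Corollary~\ref{reflectedequivalence} to get equivalence of $\check{\nu}$ with $\kappa$, so that the hypotheses of Proposition~\ref{(2)implies(3)BHM} are met). The idea is that this uniform comparison of the two \emph{measures} on the boundary translates, via the shadow lemmas, into a uniform comparison of the two \emph{metrics} on the group, because the shadow lemmas compute the measure of a partial shadow $\Omega_{\epsilon,\eta}(g)$ in terms of $e^{-d_G(e,g)}$ and $e^{-vd_w(e,g)}$ respectively.

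Concretely, I would proceed as follows. Fix $\epsilon,\eta,r=2\eta$ as in the standing convention so that both shadow lemmas apply. By the shadow lemma for the harmonic measure (Proposition~\ref{Shadownu}),
$$
\nu(\Omega_{\epsilon,\eta}(g)) \asymp \mathrm{e}^{-d_G(e,g)},
$$
and by the shadow lemma for the Patterson-Sullivan measure (Proposition~\ref{Shadowkappa}),
$$
\kappa(\Omega_{\epsilon,\eta}(g)) \asymp \mathrm{e}^{-v\|g\|} = \mathrm{e}^{-vd_w(e,g)},
$$
where in both cases the implicit multiplicative constants depend only on $\epsilon,\eta,r$ and not on $g$. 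Dividing these two estimates gives
$$
\frac{\kappa(\Omega_{\epsilon,\eta}(g))}{\nu(\Omega_{\epsilon,\eta}(g))} \asymp \mathrm{e}^{d_G(e,g)-vd_w(e,g)}.
$$
Now I would invoke Proposition~\ref{(2)implies(3)BHM}: the equivalence of $\nu$ and $\kappa$ with bounded Radon-Nikodym derivatives yields a constant $C'\geq 1$ with $\frac{1}{C'}\,\nu(A)\leq \kappa(A)\leq C'\,\nu(A)$ for every Borel set $A$, in particular for $A=\Omega_{\epsilon,\eta}(g)$. Hence the ratio $\kappa(\Omega_{\epsilon,\eta}(g))/\nu(\Omega_{\epsilon,\eta}(g))$ is bounded above and below uniformly in $g$, and combining with the displayed asymptotic forces $|d_G(e,g)-vd_w(e,g)|$ to be uniformly bounded over all $g\in\Gamma$, which is exactly the claim.

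\textbf{The main subtlety} I would need to address is that Proposition~\ref{(2)implies(3)BHM} gives a bound on the Radon-Nikodym derivative, i.e.\ a comparison valid for $\nu$-almost every boundary point, whereas the argument above wants a comparison of the measures of the specific sets $\Omega_{\epsilon,\eta}(g)$. This is not actually an obstacle, since bounded Radon-Nikodym derivatives immediately give $\frac{1}{C'}\nu(A)\leq\kappa(A)\leq C'\nu(A)$ for \emph{every} measurable set $A$ by integrating the density over $A$; but I would state this passage carefully to make sure the partial shadows are genuinely Borel and carry positive $\nu$-measure (which follows from Lemma~\ref{partialshadowscontainballs} together with $\nu$ having full support), so that neither side of the comparison is degenerate. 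Once this is in place, the proof is essentially immediate from the two shadow lemmas, and I do not expect any further technical difficulty.
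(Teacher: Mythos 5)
Your proposal is correct and follows essentially the same route as the paper: invoke Proposition~\ref{(2)implies(3)BHM} (via Corollary~\ref{reflectedequivalence}) to get $\kappa(A)\asymp\nu(A)$ uniformly over Borel sets, then divide the two shadow lemmas applied to $\Omega_{\epsilon,\eta}(g)$. The only cosmetic difference is that the paper adds the one-line remark that left-invariance of both metrics upgrades the bound on $|d_G(e,g)-vd_w(e,g)|$ to a bound on $|d_G(g,g')-vd_w(g,g')|$ for all pairs.
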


\begin{proof}
It follows from Proposition~\ref{(2)implies(3)BHM} and Corollary \ref{reflectedequivalence} that if $\kappa$ and $\nu$ are equivalent, their respective Radon-Nikodym are bounded away from 0 and infinity.
In particular, for any Borel set $A$ in the Bowditch boundary, we have
$\kappa(A)\asymp \nu(A)$, where the implicit constant does not depend on $A$.
The shadow lemmas for the Patterson-Sullivan and the harmonic measures show that $\kappa(\Omega(g))\asymp \mathrm{e}^{-vd_w(e,g)}$ and $\nu(\Omega(g))\asymp \mathrm{e}^{-d_G(e,g)}$, where the implicit constants do not depend on $g$.
It follows that $|d_G(e,g)-vd_w(e,g)|\leq C$ for some uniform $C$.
Since both distances are invariant by left multiplication, we have $|d_G(g,g')-vd_w(g,g')|\leq C$ for any $g,g'$.
\end{proof}

Recall that we want to prove that the following conditions are equivalent.
\begin{enumerate}
    \item The equality $h=lv$ holds.
    \item The measure $\nu$ is equivalent to the measure $\kappa$.
    \item The measure $\nu$ is equivalent to the measure $\kappa$ with Radon-Nikodym derivatives bounded from above and below.
    \item There exists $C\geq 0$ such that for every $g \in \Gamma$, $|d_G(e,g)-vd_w(e,g)|\leq C$.
\end{enumerate}
Proposition~\ref{(1)implies(2)BHM} shows that~(1) implies~(2), Proposition~\ref{(2)implies(3)BHM} shows that~(2) implies~(3) and Proposition~\ref{(3)implies(4)BHM} shows that~(3) implies~(4).
Finally, recall that $\omega_n$ is the $n$-th step of the random walk.
Then, $\frac{1}{n}d_w(e,\omega_n)$ almost surely converges to $l$ by definition of $l$.
As we saw above, according to \cite[Theorem~1.1]{BlachereHassinskyMathieu1}, $\frac{1}{n}d_G(e,\omega_n)$ almost surely converges to $h$.
This shows that~(4) implies~(1), hence Theorem~\ref{theoremBHM} is proved. \qed

\section{Virtually abelian parabolic subgroups}\label{Sectionabelianparabolics}
We first deduce from Theorem~\ref{theoremBHM} that if $h=lv$, then Ancona inequalities are satisfied along word geodesics.
We do not assume in this corollary that the parabolic subgroups are abelian and we will actually also use this result in the next section.

\begin{corollary}\label{coroh=lvimpliesAncona}
Let $\Gamma$ be a non-elementary relatively hyperbolic group.
Let $d_w$ be a word distance associated to a finite generating set.
Let $\mu$ be an admissible probability measure with finite super-exponential moment.
Assume that $h=lv$.
There exists some constant $C\geq 0$ such that the following holds.
Let $x,y,z$ be three points in this order on a word geodesic.
Then,
$$\frac{1}{C}G(x,y)G(y,z)\leq G(x,z)\leq CG(x,y)G(y,z).$$
\end{corollary}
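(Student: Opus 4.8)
The plan is to reduce the statement to condition~(4) of Theorem~\ref{theoremBHM}, which is precisely the assertion that $d_G$ and $vd_w$ are roughly similar. First I would set up the dictionary between the Green function and the Green distance. Since $G(g,g')=G(e,e)\,\mathrm{e}^{-d_G(g,g')}$, the claimed inequalities $\frac{1}{C}G(x,y)G(y,z)\leq G(x,z)\leq CG(x,y)G(y,z)$ are equivalent, after taking logarithms, to a two-sided estimate of the form
$$\left|d_G(x,z)-d_G(x,y)-d_G(y,z)\right|\leq C',$$
with $C'$ independent of $x,y,z$ (the fixed additive term $\log G(e,e)$ being absorbed into $C'$). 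So it suffices to bound $d_G(x,z)-d_G(x,y)-d_G(y,z)$ from both sides by constants.

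One direction is free and needs no hypothesis: since the walk may reach $z$ from $x$ by first passing through $y$, the strong Markov property gives $P(x\to z)\geq P(x\to y)P(y\to z)$, that is the triangle inequality $d_G(x,z)\leq d_G(x,y)+d_G(y,z)$ for the Green distance. Translated back, this is exactly the lower bound $G(x,z)\geq \frac{1}{G(e,e)}G(x,y)G(y,z)$, valid for any transient random walk.

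For the reverse estimate — equivalently the upper bound on $G(x,z)$, which is the genuine content of the corollary — I would invoke the assumption $h=lv$. By the equivalence (1)$\Leftrightarrow$(4) of Theorem~\ref{theoremBHM} there is a constant $C_0$ with $|d_G(e,g)-vd_w(e,g)|\leq C_0$ for all $g$, and since both metrics are left-invariant this upgrades to $|d_G(g,g')-vd_w(g,g')|\leq C_0$ for all pairs. The decisive geometric input is that the word metric is \emph{exactly} additive for the given configuration: as $y$ lies on a word geodesic from $x$ to $z$, one has $d_w(x,z)=d_w(x,y)+d_w(y,z)$. Chaining these facts,
$$d_G(x,z)\geq vd_w(x,z)-C_0=vd_w(x,y)+vd_w(y,z)-C_0\geq d_G(x,y)+d_G(y,z)-3C_0,$$
which is the missing bound. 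Passing back through the dictionary yields $G(x,z)\leq \frac{\mathrm{e}^{3C_0}}{G(e,e)}G(x,y)G(y,z)$, so taking $C=\max\{G(e,e),\,\mathrm{e}^{3C_0}/G(e,e)\}$ gives both inequalities.

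I do not expect a serious obstacle here: the corollary is essentially a repackaging of condition~(4) together with the elementary first-passage inequality. The one point requiring care is to keep the two directions straight — the sub-additivity (lower) bound for $G(x,z)$ is automatic, whereas the super-additivity (upper) bound is exactly where the rough similarity of $d_G$ and $vd_w$ coming from $h=lv$, combined with the genuine additivity of $d_w$ along a geodesic, is used.
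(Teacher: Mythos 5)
Your proposal is correct and follows essentially the same route as the paper: invoke the equivalence (1)$\Leftrightarrow$(4) of Theorem~\ref{theoremBHM} to get $|d_G-vd_w|\leq C_0$, use the exact additivity $d_w(x,y)+d_w(y,z)=d_w(x,z)$ along the word geodesic to deduce $|d_G(x,y)+d_G(y,z)-d_G(x,z)|\leq 3C_0$, and exponentiate. The extra observation that the lower bound on $G(x,z)$ is automatic from the first-passage inequality is a nice aside but not needed, since the two-sided additive estimate already gives both directions.
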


\begin{proof}
Since $h=lv$, we have that $|d_{G}-vd_w|\leq c$ for some constant $c$,
where $d_G$ is the Green distance.
Let $x,y,z$ be three points in this order on a word geodesic.
Then,
$d_w(x,y)+d_w(y,z)=d_w(x,z)$, so that $|d_{G}(x,y)+d_{G}(y,z)-d_{G}(x,z)|\leq 3c$.
This translates into
$$\mathrm{e}^{-3c}\leq \frac{G(x,y)G(y,z)}{G(x,z)}\leq \mathrm{e}^{3c},$$
which is the desired inequality.
\end{proof}

The remainder of this section is devoted to the proof of Theorem~\ref{theoremhlvabelianparabolics}.
We fix a relatively hyperbolic group $\Gamma$ and assume that one of the parabolic subgroup is virtually abelian of rank at least 2.
We consider a word distance $d_w$ and an finitely supported admissible probability measure $\mu$.
We denote by $d_{G}$ the corresponding Green distance.

\subsection{The induced transition kernel on a parabolic subgroup}\label{Sectioninducedkernelparabolics}
Denote by $P$ a virtually abelian parabolic subgroup of rank at least 2.
Then, $P$ has a finite index subgroup which is isomorphic to $\Z^d$, $d\geq 2$.
Any section from $\Z^d$ to $P$ provides an identification $P=\Z^d\times \{1,...,N'\}$ as a set.
We say that a sequence $g_n=(z_n,k_n)$ in $P$ (where $z_n\in \Z^d$ and $k_n\in \{1,...,N'\}$) converges to a point in the geometric boundary of $P$ if $z_n$ tends to infinity and $\frac{z_n}{\|z_n\|}$ converges to a point in $\mathbb{S}^{d-1}$.
We denote by $\partial P$ the corresponding geometric boundary of $P$.

We will use results of \cite{DGGP}, where there is a precise description of sequences that converge to the Martin boundary of a relatively hyperbolic groups when \textit{all the parabolic subgroups are assumed to be virtually abelian}.
In this case, we define similarly the geometric boundaries of the parabolic subgroups.
We have the following.

\begin{theorem}\label{convergenceinMartin}\cite[Theorem~1.1]{DGGP}
Let $\Gamma$ be a relatively hyperbolic group with respect to virtually abelian subgroups.
Let $\mu$ be a probability measure on $\Gamma$ whose finite support generates $\Gamma$ as a semi-group.
A sequence $g_n$ converges to a point in the Martin boundary if and only if it converges to a conical limit point in the Bowditch boundary or there is a parabolic subgroup $H$ such that the projection of $g_n$ on $H$ converges to the geometric boundary $\partial H$ of $H$.
\end{theorem}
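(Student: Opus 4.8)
The plan is to describe the Martin boundary by analyzing convergence of the Martin kernels $K(\cdot,g_n)=G(\cdot,g_n)/G(e,g_n)$ separately over conical and parabolic directions, using as a backbone the result of \cite{GGPY} that the Martin compactification covers the Bowditch compactification with the pre-image of every conical point a single point. This splits the theorem into two tasks: handling sequences that approach a conical point, and ``blowing up'' each parabolic point into the geometric boundary $\partial H\cong\mathbb{S}^{d-1}$ of the corresponding parabolic subgroup. For the conical case, suppose $g_n\to\xi$ with $\xi$ conical. By Lemma~\ref{transitionpoints}~b) any geodesic $[e,g_n]$ contains $(\epsilon,\eta)$-transition points, and these accumulate on $\xi$; fixing $x$, the relative Ancona inequalities~(\ref{relativeAncona}) factorize $G(x,g_n)\asymp G(x,t_n)G(t_n,g_n)$ through a transition point $t_n$ whose location stabilizes as $n\to\infty$, forcing $K(x,g_n)$ to converge to an $x$-dependent but subsequence-independent limit $K_\xi(x)$. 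Thus the whole fiber over $\xi$ is a single Martin point.

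The heart of the argument is the parabolic case. Fix a parabolic subgroup $H$ and pass to its finite-index $\Z^d$, writing $H=\Z^d\times\{1,\dots,N'\}$ as in the statement. Consider the first-passage (trace) kernel $\hat p(h_1,h_2)$ equal to the probability that the walk started at $h_1$ first returns to $H$ at $h_2$. Because $\Gamma$ is non-amenable the walk escapes into $\partial_B\Gamma$ with positive probability, so $\hat p$ is \emph{sub}stochastic, and left $H$-invariance of the original walk makes $\hat p$ $H$-equivariant, hence $\hat p(h_1,h_2)=\bar\mu(h_1^{-1}h_2)$ for a sub-probability measure $\bar\mu$ on $H$. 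The standard trace identity gives that the Green function of $\bar\mu$ is exactly the restriction $G|_{H\times H}$, so the asymptotics of $G(e,z)$ for $z$ deep in $H$ are precisely those of $\widehat G(e,z)=\sum_n\bar\mu^{*n}(z)$. One first checks that $\bar\mu$ has finite exponential moment: each excursion away from $H$ carries an exponentially small Green weight by the Ancona inequalities and the relatively hyperbolic geometry, which is the quantitative input supplied by \cite{Dussaule}.

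With $\bar\mu$ in hand I would apply a Ney--Spitzer type saddle-point/local limit analysis on $\Z^d$, the finite color index being absorbed by a Perron--Frobenius argument for the matrix-valued Laplace transform $\widehat{\bar\mu}(s)=\sum_{z}\bar\mu(z)e^{\langle s,z\rangle}$. Since $\bar\mu$ has mass $<1$, the level set $\{s:\widehat{\bar\mu}(s)=1\}$ is a compact strictly convex hypersurface, and for $z\to\infty$ with $z/\|z\|\to\theta\in\mathbb{S}^{d-1}$ one obtains
$$\widehat G(e,z)\ \sim\ C(\theta)\,\|z\|^{-(d-1)/2}\,e^{-\langle s(\theta),z\rangle},$$
where $s(\theta)$ is the point of the hypersurface with outer normal $\theta$. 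Hence the Martin kernel of $\hat p$ along $H$ depends only on $\theta$ and is injective in $\theta$, identifying the Martin boundary of $\hat p$ with $\mathbb{S}^{d-1}\cong\partial H$. Transferring back to $\Gamma$ through the relative Ancona factorization $G(x,z)\asymp G(x,g)\,\widehat G(g,z)$, valid when $z$ is deep in a coset $gH$ and $g$ is an entry transition point, shows that $K(\cdot,g_n)$ converges exactly when the $\Z^d$-projection of $g_n$ converges in direction.

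For exhaustiveness one takes an arbitrary convergent sequence $g_n$ and projects its limit to the Bowditch boundary: it is either conical, giving the first case, or parabolic, in which case the deviation estimates force $g_n$ to penetrate deep into the associated coset $gH$, after which convergence in the Martin boundary forces convergence of the directional $\Z^d$-projection by the injectivity of $\theta\mapsto K_\theta$ just established. The main obstacle is unquestionably the sharp directional Green asymptotics over the virtually abelian parabolic. The difficulty is twofold: the induced measure $\bar\mu$ is of \emph{infinite range}, so only exponential moments are available and the classical finite-range Ney--Spitzer estimate must be upgraded with uniform control of $\widehat{\bar\mu}(s)$ and its Hessian near the critical hypersurface; and the finite-index structure forces a matrix/Perron--Frobenius refinement of the saddle-point computation. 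These are precisely the estimates provided by \cite{Dussaule}.
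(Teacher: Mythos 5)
Your proposal follows essentially the same route as the actual argument: this theorem is quoted from \cite{DGGP} rather than proved in the paper, but the machinery the paper recalls in Section~\ref{Sectionabelianparabolics} (the first-return kernel to a neighborhood of $P$ viewed as a $\Z^d$-invariant kernel on $\Z^d\times\{1,\dots,N'\}$, the matrix Laplace transform $F(u)$ with Perron--Frobenius eigenvalue $\lambda(u)$, the compact level set $\mathcal{H}\cong\mathbb{S}^{d-1}$ via $u\mapsto\nabla\lambda(u)/\|\nabla\lambda(u)\|$, and the Ney--Spitzer-type asymptotics of Proposition~\ref{estimatesGreen}) is exactly the saddle-point analysis you describe, combined with the \cite{GGPY} identification of conical points. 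The only caveat is that the compactness of the sub-level set $\{\lambda\leq 1\}$ inside the domain of finiteness (Assumption~\ref{hyp1}) does not follow merely from sub-stochasticity and exponential moments as your sketch suggests; it is a genuinely delicate point verified in \cite{DGGP} by enlarging the neighborhood of $P$ to which the walk is induced.
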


We only assume here that \textit{one} parabolic subgroup is virtually abelian of rank at least 2, so we cannot use directly this description of the Martin boundary.
However, an argument identical to the proof of \cite[Theorem~1.1]{DGGP} proves the following.

\begin{proposition}
Let $P$ be a parabolic subgroup. Let $g_n\in \Gamma$ be any sequence converging to the parabolic limit point $\xi$ fixed by $P$ in the Bowditch boundary. Then $g_n$ converges to a point in the Martin boundary if and only if its projection $\pi_P(g_n)$ on $P$ converges to a point in $\partial P$.
\end{proposition}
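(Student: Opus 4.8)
The plan is to follow the proof of Theorem~\ref{convergenceinMartin} (\cite[Theorem~1.1]{DGGP}) almost verbatim, observing that the only parabolic subgroup whose internal geometry actually enters the argument is $P$ itself, precisely because the sequence $g_n$ is committed to the parabolic point $\xi$ fixed by $P$. The relative Ancona inequalities~(\ref{relativeAncona}) hold for arbitrary relatively hyperbolic groups, so no hypothesis on the other parabolic subgroups is needed once $g_n$ is known to converge to $\xi$.

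First I would localize the Martin kernel to $P$. Since $g_n\to \xi$, for large $n$ every word geodesic from $e$ to $g_n$ enters the $\epsilon$-neighborhood of the coset $P$ at a transition point, and $g_n$ itself lies within bounded distance of $P$, its nearest-point projection being $\pi_P(g_n)$ up to bounded error. Given any fixed $g\in \Gamma$, the relative Ancona inequalities let me factor $G(g, g_n)\asymp G(g, a)G(a, g_n)$ through a transition point $a$ common to geodesics from $g$ and from $e$ towards $g_n$, so that the dependence on $n$ of the Martin kernel $K(g,g_n)=G(g,g_n)/G(e,g_n)$ is governed entirely by the Green function between points near $P$ and the deep point $g_n$. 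This reduces the convergence of $K(\cdot, g_n)$ to the convergence of the Martin kernel of the walk induced on $P$.

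The heart of the matter is this induced walk on $P$. I would consider the trace of the $\mu$-walk on a fixed neighborhood of $P$, which defines a transition kernel and hence a Green function $G_P$ on $P\cong \Z^d\times\{1,\dots,N'\}$. The Green function estimates of \cite{Dussaule} for such induced walks give the precise asymptotics of $G_P(x, z_n)$ as $z_n\to\infty$ in $\Z^d$ and identify the Martin boundary of the induced walk with the sphere of directions: $G_P(\cdot, z_n)/G_P(e, z_n)$ converges if and only if $z_n/\|z_n\|$ converges in $\mathbb{S}^{d-1}$, with distinct limit directions yielding distinct Martin kernels. Transporting this back through the factorization of the previous step, $K(\cdot, g_n)$ converges if and only if $\pi_P(g_n)/\|\pi_P(g_n)\|$ converges, which is exactly the statement that $\pi_P(g_n)$ converges in $\partial P$; faithfulness of the blow-up (distinct directions give distinct Martin points) supplies the converse direction.

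The main obstacle is the reduction to the induced walk and the uniform control it requires: one must show that the factorization through transition points near $P$ is compatible with the induced Green function $G_P$ up to multiplicative errors uniform in $n$ and in the test point $g$, and that the deep behavior of the $\Gamma$-Green function genuinely matches that of the $\Z^d$-walk analyzed in \cite{Dussaule}. This is precisely the technical content carried out in \cite{DGGP}. Since every estimate there concerning a fixed parabolic $P$ uses only that $P$ is virtually abelian together with the relative Ancona inequalities valid in full generality, the same argument applies here without change, and the blanket hypothesis of \cite{DGGP} that all parabolic subgroups be virtually abelian is never invoked.
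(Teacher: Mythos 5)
Your proposal is correct and matches the paper's argument: both rest on the observation that the analysis of \cite{DGGP} treats each parabolic point separately, so the reduction to the induced walk on $N(P)$, the Green function asymptotics of \cite{Dussaule}, and the injectivity of the direction map $\partial P\to\partial_\mu\Gamma$ only require that $P$ itself be virtually abelian. The paper simply packages this by citing the two relevant intermediate statements of \cite{DGGP} (their Proposition~5.5 for the ``if'' direction and their Corollary~4.10 for the injectivity that, via a subsequence argument, yields the ``only if'' direction), which is exactly the structure you sketch.
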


Let us give some details.
The proof of \cite[Theorem~1.1]{DGGP} is done dealing with parabolic limit points independently.
More precisely, \cite[Proposition~5.5]{DGGP} shows that if $\pi_P(g_n)$ converges to a point in $\partial P$, then $g_n$ and $\pi_P(g_n)$ both converge to the same point in the Martin boundary.
Conversely, assume that $g_n$ converges to a point in the Martin boundary and converges to $\xi$ in the Bowditch boundary.
Up to a sub-sequence, $\pi_P(g_n)$ also converges to a point in $\partial P$.
Using \cite[Proposition~5.5]{DGGP} again, we see that this subsequence also converges to the Martin boundary and the limit point is necessarily the same as the limit of $g_n$.
Now, two different points in the geometric boundary of $P$ give rise to two different points in the Martin boundary, according to \cite[Corollary~4.10]{DGGP}.
This proves that $\pi_P(g_n)$ only has one limit point in $\partial P$ and thus converges.

We recall some notations and results of \cite{Dussaule} and \cite{DGGP}.
Let $N_r(P)$ to be the $r$-neighborhood of $P$ and let $N(P)=N_r(P)$ for some $r$.
Consider the transition kernel $p_{\mu}$ of first return of the random walk to  $N(P)$. In other words, $p_{\mu}(g,g')$ is the probability that the random walk, starting at $g$ eventually returns to $N(P)$ and first does so at $g'$.
By definition, we have
$$p_{\mu}(g,g')=\sum_{n\geq 1}\sum_{g_1,...,g_{n-1}\notin N(P)}\mu(g^{-1}g_1)\mu(g_1^{-1}g_2)...\mu(g_{n-1}^{-1}g').$$
Then, $p_{\mu}$ can be considered as a $\Z^d$-invariant transition kernel on the state space $\Z^d\times \{1,...,N'\}$.
In \cite{Dussaule}, the first author proved some estimates for the Green function that we will use, provided the transition kernel on $\Z^d\times \{1,...,N'\}$ satisfies some technical assumptions.
As we will see these assumptions will be satisfied in our context.

For $g \in N(P)$, we write $g=(z,k)$, where $z\in \Z^d$ and $k\in \{1,...,N'\}$.
For simplicity, we will write $p_{\mu}(g,g')=p_{k,k'}(z,z')$, where $g=(z,k)$ and $g'=(z',k')$.
We define an $N'\times N'$-matrix $F(u),u\in \R^d$ as follows.
The coefficient at the $j$th row and $k$th column is given by
$$F_{j,k}(u)=\sum_{z\in \Z^d}p_{k,j}(0,z)\mathrm{e}^{u\cdot z}.$$

We denote by $\mathcal{F}\subset \R^d$ the interior of the subset of $\R^d$ such that every coefficient of $F(u)$ is finite.
If the random walk satisfies $\mu(e)>0$, then for every $u\in \mathcal{F}$, the matrix $F(u)$ is strongly irreducible and has a dominant eigenvalue $\lambda(u)$.
According to \cite[Lemma~3.20]{Dussaule}, if we change $\mu$ with $\tilde{\mu}=\frac{1}{2}\delta_e+\frac{1}{2}\mu$, then the corresponding Green function $\tilde{G}$ satisfies
$\tilde{G}=2G$.
In particular, $\tilde{G}$ still satisfies Ancona inequalities and since we will derive a contradiction with these inequalities, we can assume for simplicity that $\mu(e)>0$.
Since $\mu$ is admissible, if $\mu(e)>0$ then the corresponding random walk is aperiodic.
In particular, we can assume that the dominant eigenvalue $\lambda(u)$ of $F(u)$ is well defined.

As we will see, this eigenvalue encodes most of the behaviour of the Green function at infinity.
As explained above, in \cite{Dussaule}, there are two technical assumptions that we now state.
Let
$\mathcal{H}=\{u\in \mathcal{F},\lambda(u)=1\}$
and
$\mathcal{D}=\{u\in \mathcal{F},\lambda(u)\leq 1\}$.

\begin{hyp}\label{hyp1}
The set $\mathcal{D}$ is compact.
\end{hyp}

\begin{hyp}\label{hyp2}
The minimum of the function $\lambda$ is strictly smaller than 1.
\end{hyp}

\begin{lemma}\label{explicithomeom}\cite[Lemma~3.13]{Dussaule}.
Under assumptions~1 and~2, the set $H$ is homeomorphic to $\mathbb{S}^{d-1}$, an explicit homeomorphism is given by
$$u\in \mathcal{H}\mapsto \frac{\nabla \lambda (u)}{\|\nabla \lambda (u)\|}\in \mathbb{S}^{d-1}.$$
\end{lemma}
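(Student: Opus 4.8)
The plan is to realize $\mathcal{H}=\{\lambda=1\}$ as the boundary of the compact convex body $\mathcal{D}=\{\lambda\le 1\}$, and then to identify the proposed map with the Gauss (outer normal) map of this body, which for a smooth strictly convex body is a homeomorphism onto $\mathbb{S}^{d-1}$.

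First I would record the analytic properties of $\lambda$. Since on $\mathcal{F}$ the entries $F_{j,k}(u)=\sum_z p_{k,j}(0,z)\mathrm{e}^{u\cdot z}$ are convergent Laplace transforms, they are real-analytic, and as $F(u)$ is strongly irreducible and aperiodic its Perron eigenvalue $\lambda(u)$ is simple; by analytic perturbation theory $\lambda$ is real-analytic on $\mathcal{F}$. Each entry is moreover log-convex in $u$, so Kingman's theorem on the spectral radius gives that $\log\lambda$, hence $\lambda$, is convex; I would further argue that it is \emph{strictly} convex, using that the increments $z$ carrying positive mass under the induced kernel affinely span $\R^d$ (this is where rank at least $2$ and genuine $d$-dimensionality of $p_{\mu}$ are used). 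In particular $\mathcal{D}$ is convex, it is compact by Assumption~\ref{hyp1}, and by Assumption~\ref{hyp2} it contains a point where $\lambda$ equals its minimum $<1$, together with a neighbourhood; thus $\mathcal{D}$ is a compact convex body with $\mathcal{H}=\partial\mathcal{D}$.

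Next I would check that the map is well defined and smooth. For a convex function every critical point is a global minimiser, so the zeros of $\nabla\lambda$ lie in $\{\lambda=\min\lambda\}\subset\{\lambda<1\}$ and are disjoint from $\mathcal{H}$; hence $\nabla\lambda\ne 0$ on $\mathcal{H}$, the map $u\mapsto\nabla\lambda(u)/\|\nabla\lambda(u)\|$ is continuous, and by the implicit function theorem $\mathcal{H}$ is a real-analytic hypersurface whose outward unit normal at $u$ is exactly $\nabla\lambda(u)/\|\nabla\lambda(u)\|$. Thus the proposed map is precisely the Gauss map of $\mathcal{D}$. Surjectivity is then immediate: for $\theta\in\mathbb{S}^{d-1}$ the linear functional $u\mapsto\theta\cdot u$ attains its maximum over the compact $\mathcal{D}$ at a boundary point $u_0\in\mathcal{H}$, where $\theta$ is an outer unit normal and hence, by smoothness, equals $\nabla\lambda(u_0)/\|\nabla\lambda(u_0)\|$. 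Injectivity uses strict convexity: if two distinct points of $\mathcal{H}$ shared an outer normal $\theta$, the segment joining them would lie on a common supporting hyperplane and hence inside $\partial\mathcal{D}=\mathcal{H}$, contradicting the absence of segments on a strictly convex boundary. Finally, a continuous bijection from the compact space $\mathcal{H}$ to the Hausdorff space $\mathbb{S}^{d-1}$ is automatically a homeomorphism.

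I expect the main obstacle to be the strict convexity of $\lambda$, equivalently the statement that $\mathcal{H}$ contains no line segment, which is exactly what forces the Gauss map to be injective; the convexity itself follows cleanly from Kingman's theorem, but upgrading it to strict convexity requires genuinely using that the induced $\Z^d$-kernel is $d$-dimensional. Everything else is standard convex geometry combined with Perron--Frobenius and analytic perturbation theory.
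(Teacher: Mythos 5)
The paper gives no proof of this lemma: it is imported verbatim as \cite[Lemma~3.13]{Dussaule}, so there is nothing internal to compare against. Your argument is correct and is essentially the standard Ney--Spitzer-type proof used in that reference: $\lambda$ is real-analytic and convex by Kingman's log-convexity theorem, Assumptions~1 and~2 make $\mathcal{D}$ a compact convex body with $\mathcal{H}=\partial\mathcal{D}$ and $\nabla\lambda\neq 0$ there, and the normalized gradient is the Gauss map, bijective by strict convexity and smoothness. The one step you only sketch, strict convexity of $\lambda$, is indeed the crux; in the cited source it is obtained by showing the Hessian of $\lambda$ is positive definite using irreducibility of the induced kernel and the fact that its increments generate $\Z^{d}$ (note that rank $\geq 2$ is not what is needed here --- the same argument works for $d=1$, where $\mathbb{S}^{0}$ is two points).
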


Denote by $G_p$ the Green function associated to $p$.
For simplicity, we use the notation $G_{j,j'}(z,z')=G_p(g,g')$ whenever $g=(z,j)$ and $g'=(z',j')$.
The crucial estimates we will need are given by the following proposition.
If $v\in \R^d$, we denote by $\langle v\rangle$ the closest point from $v$ in $\Z^d$.
If there are several choices, we make one arbitrarily.
We will be interested in points in $\Z^d$ of the form $\langle t\nabla \lambda(u)\rangle$, where $u\in H$ and $t\in \R$.

\begin{proposition}\label{estimatesGreen}\cite[Proposition~3.15]{Dussaule}
Assume that $p$ satisfies Assumptions~1 and~2.
Then, for every $z\in \Z^d$ and every $u\in \mathcal{H}$, we have that
$$(2\pi t)^{\frac{d-1}{2}}G_{j,k}(z,\langle t\nabla \lambda (u)\rangle)\mathrm{e}^{u\cdot (z-\langle t\nabla \lambda (u)\rangle)}\underset{t\rightarrow +\infty}{\longrightarrow}C(u),$$
where $C(u)$ depends continuously on $u$.
Moreover, if $z$ if fixed, then the convergence if uniform in $u\in \mathcal{H}$.
\end{proposition}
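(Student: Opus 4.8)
The plan is to prove this as a Ney--Spitzer type renewal theorem for the Green function of a transient $\Z^d$-invariant finite-state kernel, in the spirit of Babillot's work on semi-Markov chains. The geometric content of Assumptions~\ref{hyp1} and~\ref{hyp2} is what makes the argument run: convexity of $\lambda$ together with Assumption~\ref{hyp2} (the minimum of $\lambda$ is $<1$) forces $\nabla\lambda(u)\neq 0$ at every $u\in\mathcal{H}=\{\lambda=1\}$, so we are in the \emph{ballistic} regime responsible for the $(2\pi t)^{(d-1)/2}$ normalization; this is also what makes the map of Lemma~\ref{explicithomeom} well defined. Assumption~\ref{hyp1} (compactness of $\mathcal{D}$, hence of $\mathcal{H}$) ensures $u$ ranges over a compact set, which will be needed for uniformity.

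The first step is a Cramér (exponential) change of measure at the point $u$. Let $h(u)$ be the positive Perron--Frobenius eigenvector attached to the dominant eigenvalue $\lambda(u)$ of $F(u)$, which exists and is unique by strong irreducibility. Conjugating the substochastic kernel $p$ by $h(u)$ and tilting by the exponential weight adapted to $u$ produces a kernel $\tilde{p}^{(u)}$ on $\Z^d\times\{1,\dots,N'\}$ which, because $\lambda(u)=1$ on $\mathcal{H}$, is a \emph{genuine probability} transition kernel. Telescoping the eigenvector factors and collecting the exponential weights along each path, the $n$-step kernels satisfy
$$p^{(n)}_{j,k}(z,z')=\frac{h_j(u)}{h_k(u)}\,\mathrm{e}^{u\cdot(z'-z)}\,\tilde{p}^{(u),(n)}_{j,k}(z,z'),$$
and summing over $n$ gives $G_{j,k}(z,z')=\frac{h_j(u)}{h_k(u)}\,\mathrm{e}^{u\cdot(z'-z)}\,\tilde{G}^{(u)}_{j,k}(z,z')$, where $\tilde{G}^{(u)}$ is the Green function of the tilted \emph{stochastic} walk. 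The exponential prefactor in the statement is precisely the one that cancels this factor, so the claim reduces to showing that $(2\pi t)^{(d-1)/2}\,\tilde{G}^{(u)}_{j,k}(z,\langle t\nabla\lambda(u)\rangle)$ converges to a positive limit depending continuously on $u$, uniformly in $u$.

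The tilted walk is a finite-state $\Z^d$-extension whose mean displacement per step equals $\nabla\lambda(u)\neq 0$, so $\langle t\nabla\lambda(u)\rangle$ is exactly its typical position at time $n\approx t$. Writing $\tilde{G}^{(u)}_{j,k}(z,w)=\sum_n \tilde{p}^{(u),(n)}_{j,k}(z,w)$, I would obtain a local limit theorem by Fourier inversion and analytic perturbation of the family $\theta\mapsto \hat{M}^{(u)}(\theta)$ of Fourier-transformed transfer matrices near $\theta=0$; there the dominant eigenvalue has the expansion $1+i\,\nabla\lambda(u)\cdot\theta-\tfrac{1}{2}\theta^{\top}\Sigma(u)\,\theta+o(|\theta|^2)$, with $\Sigma(u)$ the Hessian of $\log\lambda$ at $u$ (the covariance of the tilted increment), which is positive definite by non-degeneracy of the walk. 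The decisive point is that summing the resulting Gaussian local limit estimate over $n$ integrates out the single longitudinal direction along $\nabla\lambda(u)$: a renewal/stationary-phase summation replaces the on-diagonal decay $n^{-d/2}$ by the transverse decay $t^{-(d-1)/2}$ and produces the explicit constant $C(u)$, assembled from $\|\nabla\lambda(u)\|$, $\det\Sigma(u)$ and the Perron eigendata, all continuous in $u$.

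The main obstacle is carrying out this local limit theorem \emph{uniformly} in $u$ and controlling the longitudinal summation. One must bound the spectral radius of $\hat{M}^{(u)}(\theta)$ away from $1$ for $\theta$ bounded away from $0$, uniformly in $u$, which requires a uniform spectral gap; this is where the compactness of $\mathcal{H}$ from Assumption~\ref{hyp1}, together with continuity of the Perron data and uniform non-degeneracy of $\Sigma(u)$ and of $\nabla\lambda(u)$ (from Assumption~\ref{hyp2}), is essential. Granting these uniform estimates, the continuity of $C(u)$ and the uniformity of the convergence follow from the analytic dependence of $\lambda(u)$, $h(u)$ and $\Sigma(u)$ on $u$; the genuinely delicate book-keeping is the error control in the saddle-point summation over $n$ that turns the $d$-dimensional local limit theorem into the $(d-1)$-dimensional Ney--Spitzer asymptotic.
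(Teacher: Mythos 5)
This proposition is imported verbatim from \cite{Dussaule} (Proposition~3.15); the present paper states it without proof, so there is no internal argument to compare against, only the cited source. Your outline --- Cram\'er tilting at $u\in\mathcal{H}$ via the Perron eigendata of $F(u)$, reduction to a uniform local limit theorem for the tilted stochastic walk, and a renewal-type summation over $n$ that converts the on-diagonal decay $n^{-d/2}$ into the transverse rate $t^{-(d-1)/2}$ --- is precisely the Ney--Spitzer strategy carried out in that reference, and your identification of where Assumptions~\ref{hyp1} and~\ref{hyp2} enter (compactness of $\mathcal{H}$ for uniformity; $\nabla\lambda\neq 0$ on $\mathcal{H}$ by convexity of $\lambda$ together with the minimum being below $1$, which is also what makes Lemma~\ref{explicithomeom} work) is correct. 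Be aware, though, that what you have written is a proof plan rather than a proof: the uniform local limit theorem for the family of Fourier-transfer matrices and the error control in the saddle-point summation over $n$ are correctly flagged as the hard analytic steps, but they are not executed here.
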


\subsection{Estimates for the Green function along a parabolic subgroup}\label{SectionEstimatesGreenalongparabolics}
Recall that $P$ is a virtually abelian parabolic subgroup of rank at least 2.
It is proved in \cite{DGGP} that if the fixed neighborhood $N(P)$ of $P$ is large enough, Assumptions~1 and~2 are satisfied for the transition kernel of first return to $N(P)$, see Proposition~4.8, Lemma~4.9 and Lemma~5.9, Proposition~5.10 there.
We can thus apply Proposition~\ref{estimatesGreen} above.

Consider a geodesic ray $[e,\xi)$ in the Cayley graph of $\Gamma$ that stays inside $N(P)$.
Such a geodesic always exists, if the neighborhood $N(P)$ of $P$ is large enough.
Indeed, according to \cite[Lemma~4.3]{DrutuSapir}, any geodesic from $e$ to $p\in P$ is contained in a fixed neighborhood of $P$ that does not depend on $p$.
We can thus construct $[e,\xi)$ as a limit of geodesics from $e$ to $p_n$, when $p_n$ goes to infinity.

Let $g_n$ be a sequence on $[e,\xi)$ with the property that for every $n\geq m$, we have $d(e,g_n)\geq d(e,g_m)$.
By definition, $g_n$ converges to the parabolic limit point fixed by $P$ in the Bowditch boundary.
Up to taking a subsequence, we can assume that $g_n$ converges to a point in the Martin boundary.
Thus, according to Theorem~\ref{convergenceinMartin} the projection $z_n$ of $g_n$ onto $\Z^d$ satisfies that $\frac{z_n}{\|z_n\|}$ converges to some point $\theta\in \mathbb{S}^{d-1}$.

Fix $l_0$ and let $g=g_{l_0}=(z,j)$.
For $n\geq l_0$, write $g_n=(z_n,k_n)$.
Up to taking a subsequence, we can assume that $k_n$ is a constant $k_n=k$.
Let $\theta_n=\frac{z_n}{\|z_n\|}$ so that $\theta_n$ converges to $\theta$.
Let $u_n\in \mathcal{H}$ be such that $\theta_n=\frac{\nabla\lambda(u_n)}{\|\nabla\lambda(u_n)\|}$.
According to Lemma~\ref{explicithomeom}, $u_n$ is well defined and converges to some $u\in\mathcal{H}$ such that $\theta=\frac{\nabla\lambda(u)}{\|\nabla\lambda(u)\|}$.
Define then $t_n=\frac{\|z_n\|}{\|\nabla \lambda(u_n)\|}$.
By definition,
$t_n\nabla \lambda(u_n)=\|z_n\|\theta_n=z_n$.
Since $u_n$ converges to $u$ and $\nabla \lambda$ is continuous (see \cite[Lemma~3.3]{Dussaule}), we deduce from Proposition~\ref{estimatesGreen} that
$\|z_n\|^{\frac{d-1}{2}}G_{j,k}(z,z_n)\mathrm{e}^{u\cdot (z_n-z)}$ converges to some limit that only depends on $\theta$.

Finally, \cite[Lemma~5.7]{DGGP} shows that the Green function associated to the transition kernel is the restriction to $N(P)$ of the Green function associated with $\mu$ on the whole group.
Thus, we have that
$$\|z_n\|^{\frac{d-1}{2}}G(g,g_n)\mathrm{e}^{u\cdot (z_n-z)}$$
converges to some limit that does not depend on $g$, for some $u\in \R^d$ that only depends on $\theta\in \mathbb{S}^{d-1}$.
We can choose $m_1$ large enough so that
$$G(g,g_{m_1})\asymp \frac{\mathrm{e}^{u\cdot (z_{m_1}-z)}}{\|z_{m_1}\|^{\frac{d-1}{2}}}.$$
Now, if $m_1$ is fixed, we can choose $n_1$ large enough so that
$$G(g,g_{n_1})\asymp \frac{\mathrm{e}^{u\cdot (z_{n_1}-z)}}{\|z_{n_1}\|^{\frac{d-1}{2}}}$$
and
$$G(g_{m_1},g_{n_1})\asymp \frac{\mathrm{e}^{u\cdot (z_{n_1}-z_{m_1})}}{\|z_{n_1}\|^{\frac{d-1}{2}}}.$$
Thus we can find sequences $m_k$ and $n_k$ tending to infinity with $l_0\leq m_k\leq n_k$ (where $n_k$ depends on $m_k$) such that
$$\frac{G(g,g_{m_k})G(g_{m_k},g_{n_k})}{G(g,g_{n_k})}\asymp \frac{1}{\|z_{m_k}\|^{\frac{d-1}{2}}}.$$
Since $d\geq 2$, the right-hand side converges to 0 when $k$ tends to infinity, so we get a contradiction to Corollary~\ref{coroh=lvimpliesAncona}.
This proves Theorem~\ref{theoremhlvabelianparabolics}. \qed

\medskip
Corollary~\ref{coroabelianparabolics+GMM} follows from Theorem~\ref{theoremhlvabelianparabolics}.
Indeed, if $h=lv$, then every parabolic subgroup is abelian of rank at most 1.
In particular, $\Gamma$ is hyperbolic.
We can thus apply the results of Gou\"ezel, Math\'eus and Maucourant.
Precisely, \cite[Theorem~1.5]{GMM} shows that $\Gamma$ is virtually free. \qed

\section{Free products of nilpotent groups}\label{Sectionnilpotentparabolics}
We first derive from Ancona inequalities proved in Corollary~\ref{coroh=lvimpliesAncona} the following property.

\begin{proposition}\label{propfinitemiddles}
Let $\Gamma$ be a non-elementary relatively hyperbolic group.
Let $d_w$ be a word distance associated to a finite generating set.
Let $\mu$ be an admissible probability measure with finite super-exponential moment.
Assume that $h=lv$.
Then there exists $N\geq 0$ such that for any $x,y\in \Gamma$, there are most $N$ points in $\Gamma$ which are middle points of some geodesic between $x$ and $y$.
\end{proposition}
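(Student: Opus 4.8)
The plan is to convert the hypothesis $h=lv$ into the Ancona inequalities of Corollary~\ref{coroh=lvimpliesAncona} (equivalently, via Theorem~\ref{theoremBHM}, into the rough similarity $|d_G-vd_w|\le C$, so that $G(a,b)\asymp \mathrm{e}^{-vd_w(a,b)}$ with constants independent of $a,b$), and then to bound the number of middle points by a counting argument for the random walk started at $x$ and conditioned to hit $y$. Fix $x,y$, write $D=d_w(x,y)$ and $R=\lfloor D/2\rfloor$. Every middle point $m$ lies on a geodesic from $x$ to $y$ with $d_w(x,m),d_w(m,y)\in\{R,D-R\}$, so rough similarity gives $G(x,m)\asymp \mathrm{e}^{-vR}\asymp G(m,y)$ and Ancona gives $G(x,m)G(m,y)\asymp G(x,y)$. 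Let $M$ be the set of middle points and $k=|M|$ the quantity to be bounded, and let $Z$ be the number of distinct middle points visited by the conditioned walk.

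\emph{Lower bound: each middle point is hit with definite probability.} I would use the Doob transform of the walk by $F(\cdot,y)=P(\cdot\to y)$, i.e. the walk from $x$ conditioned to reach $y$. For a single $m\in M$, since $m$ lies on a geodesic from $x$ to $y$ the Ancona inequalities force the unconditioned walk from $x$ to $y$ to pass near $m$ with probability bounded below; translating this through the transform, $\widehat{P}_x(\text{visit }m)\asymp G(x,m)G(m,y)/G(x,y)\asymp 1$, uniformly in $m$ and in $x,y$. Summing over $M$ gives $E[Z]\ge c_0 k$ for some $c_0>0$ independent of $x,y$. Equivalently, this is the estimate $\sum_{m\in M}G(x,m)G(m,y)\ge \frac{1}{C}\,k\,G(x,y)G(e,e)$.

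\emph{Upper bound: the bridge visits only boundedly many middle points.} All middle points lie on the sphere $S(x,R)=\{g:d_w(x,g)=R\}$. The conditioned walk makes linear forward progress towards $y$ and, by the deviation estimate of Lemma~\ref{closetogeodesics}, tracks a geodesic $[x,y]$ with exponentially small backtracking; hence it crosses $S(x,R)$ only a bounded number of times in expectation and thus visits a bounded-in-expectation number of its points. Concretely, rough similarity together with the growth estimate $|\{g:n-a<\|g\|\le n\}|\asymp \mathrm{e}^{vn}$ of \cite[Theorem~1.9]{Yang} yields $\sum_{w\in S(x,R)}G(x,w)\lesssim 1$, i.e. the walk spends only a bounded expected number of steps on $S(x,R)$; combined with the suppression of recrossings this bounds $E[Z]$ by a constant $C$ independent of $x,y$.

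Combining the two estimates gives $c_0 k\le E[Z]\le C$, whence $k\le C/c_0=:N$. The main obstacle is the upper bound. In a relatively hyperbolic group middle points can a priori spread out along a parabolic coset (as they do in $\Z^2$, where they fill a whole line), so relative hyperbolicity alone does not bound them; it is precisely the rough similarity coming from $h=lv$—entering both through the pointwise comparison $G\asymp \mathrm{e}^{-vd_w}$ in the lower bound and through the sphere-mass control in the upper bound—that confines the middle points to a bounded portion of the conditioned bridge. Care will be needed to make the deviation and recrossing estimates rigorous for the Doob-transformed walk rather than the unconditioned one (which I would do by comparing the two Green functions), and to treat the parity of $D$, i.e. middle points at distance $R$ versus $D-R$, uniformly; transition middle points could alternatively be handled directly by the relatively thin triangle property (Lemma~\ref{idealtriangles}), leaving only the deep ones to the Green-function comparison.
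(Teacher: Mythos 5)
Your lower bound is sound and coincides with the paper's starting point: the Ancona inequalities of Corollary~\ref{coroh=lvimpliesAncona} give $P_x(\text{visit }m\mid \text{reach }y)\geq 1/c_1$ uniformly for every $m$ on a geodesic from $x$ to $y$, hence $E[Z]\geq k/c_1$. The genuine gap is the upper bound $E[Z]\leq C$, and it is not a technical loose end but an essentially circular step: bounding the expected number of distinct sphere points visited by the conditioned bridge is equivalent to the proposition itself. Indeed, the expected occupation of $w$ by the Doob-transformed walk is $\asymp G(x,w)G(w,y)/G(x,y)$, which under rough similarity is $\asymp \mathrm{e}^{-v\delta(w)}$ with $\delta(w)=d_w(x,w)+d_w(w,y)-d_w(x,y)$; summing over $S(x,R)$ gives at least $k$, since every midpoint has $\delta=0$. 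Neither mechanism you invoke closes this. The estimate $\sum_{w\in S(x,R)}G(x,w)\lesssim 1$ concerns the unconditioned walk; conditioning on reaching $y$ multiplies the occupation of $w$ by a factor $\asymp \mathrm{e}^{v(d_w(x,y)-d_w(w,y))}$, which is as large as $\mathrm{e}^{vR}$ precisely at the midpoints, so the sphere-mass bound does not transfer. And Lemma~\ref{closetogeodesics} (deviation from geodesics) cannot prevent the bridge from wandering at defect zero along a parabolic coset --- the $\Z^2$ picture you yourself raise --- which is exactly the regime where midpoints would proliferate. Your closing remark about handling transition midpoints via Lemma~\ref{idealtriangles} does not help either, since the deep midpoints are the whole difficulty.

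The paper avoids any upper bound on $E[Z]$ by a pigeonhole on probabilities rather than expectations. If there are $\lceil c_1\rceil+1$ midpoints $w_1,\dots,w_{\lceil c_1\rceil+1}$, then $\sum_i P_x(A_{w_i}\mid E_y)>1$ forces some pair of the events $A_{w_i}$ to co-occur with probability at least some $c_2>0$ depending only on $c_1$. A trajectory visiting both $w_i$ and $w_j$ on its way to $y$ yields $G(x,w_i)G(w_i,w_j)G(w_j,y)\geq c_3\,G(x,y)$; since both are midpoints, $d_w(x,w_i)+d_w(w_j,y)=d_w(x,y)$, so rough similarity gives $G(x,w_i)G(w_j,y)\asymp G(x,y)$, whence $G(w_i,w_j)\geq c_3'$, i.e.\ $d_w(w_i,w_j)\leq C$ by the quasi-isometry of $d_G$ and $d_w$. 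Thus among any $\lceil c_1\rceil+1$ midpoints two are within a uniformly bounded distance, and a packing argument bounds their total number. This pivot --- from ``more than $c_1$ events each of probability $\geq 1/c_1$ must overlap'' to ``two midpoints on a common trajectory are Green-close, hence word-close'' --- is the idea missing from your proposal, and it is what lets the hypothesis $h=lv$ enter a second time (through $G(x,w_i)G(w_j,y)\asymp G(x,y)$) without ever needing to control the bridge's occupation of the whole sphere.
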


\begin{remark}
If the geodesic from $x$ to $y$ is of odd length $2n+1$, what we call a middle point is a point at distance $n$ from $x$.
This choice is arbitrary and will have no importance in the following.
\end{remark}

\begin{proof}
Let $x,y\in \Gamma$ and assume that $w$ is on a geodesic from $x$ to $y$.
Thus, by Ancona inequalities (Corollary \ref{coroh=lvimpliesAncona}),
$G(x,y)\leq CG(x,w)G(w,y)$.
We can rewrite this as
$$\frac{P_x(\text{going to }y,\text{ passing through }w)}{P_x(\text{going to }y)}\geq \frac{1}{c_1}.$$
Denote by $A_w$ the event $(\text{passing through }w)$.
Conditioning on the event $E_y=(\text{going to }y)$,
we thus have
$$P_x(A_w|E_y)\geq \frac{1}{c_1}.$$
This is true for any $w$ which is on a geodesic from $x$ to $y$.
Denote now $k=\lceil c_1 \rceil$ and assume that there are $w_1,...,w_{k+1}$ different points that are on (possibly different) geodesics from $x$ to $y$.
Then, the sum of the probabilities $P_x(A_{w_j}|E_y)$ is greater than one, so at least two of those events happen at the same time.
We can find a bound $c_2$, not depending on $x$ and $y$, such that
there are at least two points $w_i$ and $w_j$ such that
$P_x(A_{w_j}\cap A_{w_i}|E_y)\geq c_2$.

We assume that the $w_i$, $1\leq i \leq k+1$ are midpoints of geodesics from $x$ to $y$.
We proved that there are at least two points $w_i$ and $w_j$ such that
$$\frac{P_x(\text{going to }w_i, \text{ then to }w_j, \text{ then to }y)}{P_x(\text{going to }y)}\geq \frac{c_2}{2},$$
that is
\begin{equation}\label{finitemiddles1}
    \frac{G(x,w_i)G(w_i,w_j)G(w_j,y)}{G(x,y)}\geq c_3.
\end{equation}
Since $d(x,w_i)+d(w_j,y)=d(x,y)$ and since $|d_{G}-vd_w|$ is bounded, we have
\begin{equation}\label{finitemiddles2}
   G(x,w_i)G(w_j,y)\asymp G(x,y).
\end{equation}
Combining~(\ref{finitemiddles1}) and~(\ref{finitemiddles2}), we get
$$G(x,y)G(w_i,w_j)\geq G(x,y)c_3,$$ so that
$G(w_i,w_j)\geq c_3$.
In other words, $d_G(w_i,w_j)\leq c_4$, and thus $d_w(w_i,w_j)\leq C$.

To sum up,
if there are more than $c_1$ midpoints of geodesics from $x$ to $y$, then two of them are within a uniform distance $C$ of each other.
Since balls of a fixed radius have a uniform finite number of elements, there are a finite number of such middle points.
\end{proof}

The remainder of this section is devoted to the proof of Theorem~\ref{hlvnilpotentfreefactors}.
Let $\Gamma=\Gamma_1*\cdots *\Gamma_N$ be a free product of finitely generated groups.
Assume that $\Gamma_1$ is a non-virtually cyclic nilpotent group.
Choose a finite generating set $S_i$ for each free factor $\Gamma_i$ and define $S=\cup S_i$,
so that $S$ is an adapted generating set for $\Gamma$.

We will use Proposition~\ref{propfinitemiddles} and argue by contradiction.
Our goal is to construct points $x$ and $y$ in $\Gamma_1$ such that the set of midpoints of geodesics from $x$ to $y$ is arbitrarily large.
We will use a construction of Walsh in \cite{Walsh}.

We first recall some facts about finitely generated nilpotent groups.
Denote by $\Gamma_1^1=\Gamma_1$, $\Gamma_1^2=[\Gamma_1,\Gamma_1]$, ..., $\Gamma_1^n=[\Gamma_1^{n-1},\Gamma_1]$.
Let $N_{\Gamma_1}$ be the nilpotency step of $\Gamma_1$, that is $N_{\Gamma_1}$ is the last $n$ such that $\Gamma_1^n$ is not trivial.
The groups $\Gamma_1^n/\Gamma_1^{n+1}$ are finitely generated abelian groups.
The homogeneous dimension of $\Gamma_1$ is defined as
$$d_{\Gamma_1}=\sum_{n=1}^{N_{\Gamma_1}}n\ \mathrm{rank}\left (\Gamma_1^{n}/\Gamma_1^{n+1}\right ).$$

\begin{lemma}\label{growthandhomogeneousdimension}
\begin{enumerate}[a)]
    \item \cite[Lemma~4.6]{Raghunathan} Any finitely generated nilpotent group has a finite index subgroup which is torsion free.
    \item \cite[Proposition~(5)]{Pansu} A finitely generated nilpotent group has polynomial growth. More precisely the cardinality of a ball of radius $n$ is asymptotic to $Cn^d$, where $d$ is the homogeneous dimension.
\end{enumerate}
\end{lemma}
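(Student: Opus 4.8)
The plan is to treat the two parts separately, since each is a classical structural fact about finitely generated nilpotent groups and in both cases I would only record the conceptual skeleton before invoking the results of Raghunathan and Pansu. The exponent $d_{\Gamma_1}$ produced here is what will feed into the later midpoint-counting argument, so the only thing that really matters is isolating the homogeneous dimension and the growth asymptotics; the internal proofs need not be reproduced.

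For part a), the clean route uses two classical facts about a finitely generated nilpotent group $\Gamma_1$: its torsion elements form a finite (characteristic) subgroup $T$, and $\Gamma_1$ is residually finite. Since $T$ is finite and $\Gamma_1$ is residually finite, for each nontrivial $t\in T$ I would choose a finite-index normal subgroup not containing $t$, and set $H$ to be the intersection of these finitely many subgroups. Then $H$ has finite index and satisfies $H\cap T=\{e\}$. Because $T$ is exactly the set of \emph{all} torsion elements of $\Gamma_1$, any torsion element of $H$ lies in $H\cap T=\{e\}$, so $H$ is torsion-free. This is the content of \cite[Lemma~4.6]{Raghunathan}, which I would simply cite.

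For part b), I would first identify the \emph{degree} of growth via the Bass--Guivarc'h formula: the successive quotients $\Gamma_1^n/\Gamma_1^{n+1}$ of the lower central series are finitely generated abelian, and weighting the rank of the $n$-th quotient by $n$ yields the homogeneous dimension $d_{\Gamma_1}$, which equals the growth degree. This alone gives the two-sided bound $b_n\asymp n^{d}$. The sharper statement, that $b_n$ is genuinely \emph{asymptotic} to $Cn^{d}$, is Pansu's theorem: after passing to a torsion-free finite-index subgroup by part a) (which affects only the constant), one realizes the asymptotic cone as a graded nilpotent (Carnot) Lie group with its Carnot--Carath\'eodory metric, and the self-similar scaling of this limit produces the limit $\lim_n b_n/n^{d}=C$, with $C$ the Haar measure of the unit ball of the limit group. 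I would cite \cite[Proposition~(5)]{Pansu} for this.

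The genuinely hard ingredient is precisely the existence of the constant $C$ in part b). While the comparison $b_n\asymp n^{d}$ follows relatively softly from Bass--Guivarc'h, upgrading $\asymp$ to a true asymptotic equivalence requires Pansu's scaling-limit analysis and the identification of the asymptotic cone, which is the one analytic step I would not attempt to reprove. Accordingly, in the paper I would state the lemma with the two citations as given and pass directly to the application, since the later argument only uses the exponent $d_{\Gamma_1}$ and the polynomial order of growth.
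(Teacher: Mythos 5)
Your proposal is correct and matches what the paper does: the lemma is stated purely as a citation of \cite[Lemma~4.6]{Raghunathan} and \cite[Proposition~(5)]{Pansu}, with no proof given, and your sketches of the underlying arguments (torsion subgroup plus residual finiteness for a); Bass--Guivarc'h for the degree and Pansu's asymptotic cone for the sharp asymptotic in b)) are accurate. Nothing further is needed.
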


\begin{remark}
Point b) has a famous converse due to Gromov: a finitely generated group has polynomial growth if and only if it is virtually nilpotent. The identification of the degree as the homogeneous dimension defined above is due to Guivarc'h and Bass independently.
\end{remark}

In particular, in our situation we have a subgroup $\tilde{\Gamma}_1$ in $\Gamma_1$ which is torsion free and has finite index in $\Gamma_1$.
Let $\tilde{\Gamma}_1^1=\tilde{\Gamma}_1$, $\tilde{\Gamma}_1^2=[\tilde{\Gamma}_1,\tilde{\Gamma}_1]$, ..., $\tilde{\Gamma}_1^n=[\tilde{\Gamma}_1^{n-1},\tilde{\Gamma}_1]$.
We have the following lemma.

\begin{lemma}\label{finiteindexsubgroups}\cite[Lemma~3.1]{Walsh}
For every $n$, $\tilde{\Gamma}_1^n$ has finite index in $\Gamma_1^n$.
\end{lemma}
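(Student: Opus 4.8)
The plan is to prove the statement by passing to the associated graded Lie ring of the lower central series, which also ties the argument to the homogeneous dimension just introduced. First I would record the elementary inclusion $\tilde{\Gamma}_1^n\subseteq \Gamma_1^n$, which follows by induction from monotonicity of the lower central series under taking subgroups: $\tilde{\Gamma}_1^1=\tilde{\Gamma}_1\subseteq \Gamma_1=\Gamma_1^1$, and $\tilde{\Gamma}_1^{n+1}=[\tilde{\Gamma}_1^n,\tilde{\Gamma}_1]\subseteq [\Gamma_1^n,\Gamma_1]=\Gamma_1^{n+1}$. Consequently the inclusion $\tilde{\Gamma}_1\hookrightarrow \Gamma_1$ induces a homomorphism of graded Lie rings $\bigoplus_n \tilde{\Gamma}_1^n/\tilde{\Gamma}_1^{n+1}\to \bigoplus_n \Gamma_1^n/\Gamma_1^{n+1}$, the bracket being induced by the group commutator. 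Writing $c=N_{\Gamma_1}$ for the nilpotency step, both series terminate: $\Gamma_1^{c+1}=\tilde{\Gamma}_1^{c+1}=\{1\}$.

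The heart of the argument is to show that the graded pieces have equal torsion-free rank in each degree, that is $\mathrm{rank}(\tilde{\Gamma}_1^n/\tilde{\Gamma}_1^{n+1})=\mathrm{rank}(\Gamma_1^n/\Gamma_1^{n+1})$ for all $n$. I would tensor the graded map above with $\mathbb{Q}$. In degree $1$ the map $\tilde{\Gamma}_1/\tilde{\Gamma}_1^2\to \Gamma_1/\Gamma_1^2$ has image of finite index, since $[\Gamma_1:\tilde{\Gamma}_1\Gamma_1^2]\leq [\Gamma_1:\tilde{\Gamma}_1]<\infty$; hence it is rationally surjective. Because $\Gamma_1^{n+1}/\Gamma_1^{n+2}$ is the image of the bracket $\Gamma_1^n/\Gamma_1^{n+1}\times \Gamma_1/\Gamma_1^2\to \Gamma_1^{n+1}/\Gamma_1^{n+2}$, the rational Lie algebra $\bigoplus_n (\Gamma_1^n/\Gamma_1^{n+1})\otimes \mathbb{Q}$ is generated in degree $1$, so a Lie ring homomorphism that is rationally surjective in degree $1$ is rationally surjective in every degree. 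Finally the total dimension $\sum_n \mathrm{rank}(\tilde{\Gamma}_1^n/\tilde{\Gamma}_1^{n+1})$ is the Hirsch length of $\tilde{\Gamma}_1$, which coincides with that of $\Gamma_1$ because $\tilde{\Gamma}_1$ has finite index. A degreewise surjection between graded rational vector spaces of equal finite total dimension must be a degreewise isomorphism, yielding the desired equality of ranks.

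With the ranks matched, I would conclude by a downward induction on $n$, from $n=c+1$ (where both groups are trivial) down to $n=1$. At each step the inductive hypothesis gives that $\tilde{\Gamma}_1^{n+1}$ has finite index in $\Gamma_1^{n+1}$; combined with the rank equalities and additivity of Hirsch length this forces the image of $\tilde{\Gamma}_1^n$ in the finitely generated abelian group $\Gamma_1^n/\Gamma_1^{n+1}$ to have full rank, hence finite index. Assembling these two facts — a subgroup that is of finite index modulo the normal subgroup $\Gamma_1^{n+1}$, which is itself of finite index in $\Gamma_1^{n+1}$ — shows $\tilde{\Gamma}_1^n$ is of finite index in $\Gamma_1^n$. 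The main obstacle is the middle step: since the only invariant surviving the passage to $\mathbb{Q}$ is rank, one genuinely needs the generation-in-degree-$1$ property of the associated graded Lie ring together with the equality of total Hirsch length to upgrade degreewise surjectivity to a degreewise isomorphism. A more elementary commutator-calculus approach, controlling powers of commutators modulo deeper terms of the lower central series, is also possible but messier, precisely because the group commutator is only biadditive modulo higher terms.
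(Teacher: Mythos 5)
Your argument is correct, but it is a genuinely different route from the paper's. The paper gives no self-contained proof at all: it simply invokes Walsh's Lemma~3.1 (which treats the case $n=2$, i.e.\ that $[\tilde{\Gamma}_1,\tilde{\Gamma}_1]$ has finite index in $[\Gamma_1,\Gamma_1]$) and remarks that Walsh's proof goes through verbatim for every term of the lower central series. You instead build a standalone proof through the associated graded Lie ring: the inclusion $\tilde{\Gamma}_1^n\subseteq\Gamma_1^n$, rational surjectivity in degree $1$ from $[\Gamma_1:\tilde{\Gamma}_1\Gamma_1^2]<\infty$, propagation of rational surjectivity to all degrees via generation of the graded Lie ring in degree $1$ (this uses the standard facts $[\Gamma^i,\Gamma^j]\subseteq\Gamma^{i+j}$ and bilinearity of the commutator modulo deeper terms, which you correctly flag), and then the two-step index bound $[\Gamma_1^n:\tilde{\Gamma}_1^n]\leq[\Gamma_1^n:\tilde{\Gamma}_1^n\Gamma_1^{n+1}]\cdot[\Gamma_1^{n+1}:\tilde{\Gamma}_1^n\cap\Gamma_1^{n+1}]$ with the second factor finite by the downward induction. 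All of these steps are sound. One remark: the Hirsch-length detour used to upgrade degreewise rational surjectivity to a degreewise isomorphism is not actually needed. Rational surjectivity of $\tilde{\Gamma}_1^n/\tilde{\Gamma}_1^{n+1}\to\Gamma_1^n/\Gamma_1^{n+1}$ already forces the image of $\tilde{\Gamma}_1^n$ in the finitely generated abelian group $\Gamma_1^n/\Gamma_1^{n+1}$ to have finite index, which is the only input the concluding induction requires; equality of ranks is a pleasant by-product rather than a necessary intermediate. What your approach buys is a complete, checkable proof in place of an appeal to an external argument that the reader must verify extends beyond the stated case; what the paper's approach buys is brevity.
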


Actually, \cite[Lemma~3.1]{Walsh} states that $\tilde{\Gamma}_1^2$ has finite index in $\Gamma_1^2$, but the proof gives the result for all $n$.

We now consider the abelianization $\Gamma_1/[\Gamma_1,\Gamma_1]$ of $\Gamma_1$.
By definition, it is finitely generated abelian, so it is of the form $\Z^d\times T$, where $T$ is a finite group (the torsion group) and $d$ is the rank of $\Gamma_1/[\Gamma_1,\Gamma_1]$.
\begin{lemma}\label{rankgeq2abelianization}
With these notations, $d\geq 2$.
\end{lemma}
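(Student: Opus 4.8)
The plan is to argue by contraposition and show that if $d\leq 1$ then $\Gamma_1$ is virtually cyclic, contradicting the standing assumption that $\Gamma_1$ is not virtually cyclic. Throughout I would work with the lower central series $\Gamma_1=\Gamma_1^1\supseteq\Gamma_1^2\supseteq\cdots$ introduced above and use the two standard surjections induced by taking commutators: the alternating pairing $(\bar x,\bar y)\mapsto\overline{[x,y]}$ yields a surjection $\Lambda^2(\Gamma_1/\Gamma_1^2)\twoheadrightarrow\Gamma_1^2/\Gamma_1^3$, and for $n\geq 2$ the pairing $(\bar x,\bar c)\mapsto\overline{[x,c]}$ yields a surjection $(\Gamma_1/\Gamma_1^2)\otimes(\Gamma_1^n/\Gamma_1^{n+1})\twoheadrightarrow\Gamma_1^{n+1}/\Gamma_1^{n+2}$.

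The key step would be to prove that $\Gamma_1^2$ is finite. Since $\Gamma_1/\Gamma_1^2=\Gamma_1/[\Gamma_1,\Gamma_1]\cong\Z^d\times T$ has free abelian rank $d\leq 1$, its exterior square has free abelian rank $\binom{d}{2}=0$ and is therefore a finite group; hence its quotient $\Gamma_1^2/\Gamma_1^3$ is finite. Feeding this into the tensor surjections, an immediate induction shows that $\Gamma_1^n/\Gamma_1^{n+1}$ is finite for every $n\geq 2$. Because $\Gamma_1$ is nilpotent the series terminates, so $\Gamma_1^2$ admits a finite filtration with finite quotients and is finite. Consequently $\Gamma_1$ is a finite-by-abelian extension: $\Gamma_1^2$ is a finite normal subgroup and $\Gamma_1/\Gamma_1^2\cong\Z^d\times T$ is virtually cyclic (as $d\leq 1$). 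An extension of a virtually cyclic group by a finite group is virtually cyclic, so $\Gamma_1$ is virtually cyclic, the desired contradiction. This proves $d\geq 2$.

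The step I expect to be the crux is the vanishing of the free rank of $\Gamma_1^2/\Gamma_1^3$: it is essential to use the \emph{alternating} form of the commutator pairing (so that the relevant object is $\Lambda^2(\Gamma_1/\Gamma_1^2)$ rather than the full tensor square), since $[x,x]=e$ is exactly what removes the rank-$1$ contribution that a naive tensor-product estimate would leave behind. As an alternative to the finiteness argument for $\Gamma_1^2$, one could instead observe that the same computation forces $\mathrm{rank}(\Gamma_1^n/\Gamma_1^{n+1})=0$ for all $n\geq 2$, so the homogeneous dimension satisfies $d_{\Gamma_1}=d\leq 1$; by Lemma~\ref{growthandhomogeneousdimension}~b) the group then has at most linear growth and is virtually cyclic. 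I would present the direct finiteness argument and remark on this growth-theoretic variant.
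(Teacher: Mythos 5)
Your proof is correct, and it takes a genuinely different route from the paper's. The paper first passes to a torsion-free finite-index subgroup $\tilde{\Gamma}_1$ (which requires a preliminary growth argument, via Lemmas~\ref{growthandhomogeneousdimension} and~\ref{finiteindexsubgroups}, to see that the abelianization rank is unchanged), then invokes Mal'cev's theorem to realize $\tilde{\Gamma}_1$ as a lattice in a simply connected nilpotent Lie group $\mathcal{N}$, and finally runs a Lie-algebra computation: writing $\mathfrak{n}=[\mathfrak{n},\mathfrak{n}]\oplus\mathfrak{n}_1$ with $\dim\mathfrak{n}_1\leq 1$, the bracket of two elements of $\mathfrak{n}_1$ vanishes, forcing $[\mathfrak{n},\mathfrak{n}]\subset[\mathfrak{n},[\mathfrak{n},\mathfrak{n}]]$ and hence, by induction down the lower central series, $\mathfrak{n}$ abelian. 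You instead stay entirely at the level of integral group theory: the alternating commutator pairing gives a surjection $\Lambda^2(\Gamma_1/\Gamma_1^2)\twoheadrightarrow\Gamma_1^2/\Gamma_1^3$, whose source has free rank $\binom{d}{2}=0$ when $d\leq 1$, and the tensor surjections $(\Gamma_1/\Gamma_1^2)\otimes(\Gamma_1^n/\Gamma_1^{n+1})\twoheadrightarrow\Gamma_1^{n+1}/\Gamma_1^{n+2}$ then propagate finiteness down the series, so $[\Gamma_1,\Gamma_1]$ is finite and $\Gamma_1$ is (finite)-by-(virtually cyclic), hence virtually cyclic. The underlying mechanism is the same in both arguments --- antisymmetry of the commutator kills the rank-one contribution that a naive tensor bound would leave --- and you correctly identify this as the crux; but your version is more elementary and self-contained, avoiding both Mal'cev's theorem and the rank-comparison step for the torsion-free subgroup, at the cost of the (standard, but worth citing or proving) facts about the graded Lie ring of the lower central series. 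Your growth-theoretic variant via the homogeneous dimension is also fine and dovetails with Lemma~\ref{growthandhomogeneousdimension}~b) as stated in the paper.
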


\begin{proof}
We first show that the rank of $\Gamma_1/[\Gamma_1,\Gamma_1]$ is also the rank of $\tilde{\Gamma}_1/[\tilde{\Gamma}_1,\tilde{\Gamma}_1]$.
Indeed, $\tilde{\Gamma}_1$ has finite index in $\Gamma_1$, so $\tilde{\Gamma}_1/[\tilde{\Gamma}_1,\tilde{\Gamma}_1]$ has finite index in $\Gamma_1/[\tilde{\Gamma}_1,\tilde{\Gamma}_1]$. Thus, they have the same growth.
Since $[\tilde{\Gamma}_1,\tilde{\Gamma}_1]$ is a subgroup of $[\Gamma_1,\Gamma_1]$, the growth of $\Gamma_1/[\Gamma_1,\Gamma_1]$ is smaller or equal to the growth of $\Gamma_1/[\tilde{\Gamma}_1,\tilde{\Gamma}_1]$.
This proves that
$$\mathrm{rank}\left (\Gamma_1/[\Gamma_1,\Gamma_1]\right )\leq \mathrm{rank}\left (\tilde{\Gamma}_1/[\tilde{\Gamma}_1,\tilde{\Gamma}_1]\right ).$$
Similarly, $[\tilde{\Gamma}_1,\tilde{\Gamma}_1]$ has finite index in $[\Gamma_1,\Gamma_1]$ so that $[\tilde{\Gamma}_1,\tilde{\Gamma}_1]/[[\tilde{\Gamma}_1,\tilde{\Gamma}_1],\tilde{\Gamma}_1]$ has the same growth as $[\Gamma_1,\Gamma_1]/[[\tilde{\Gamma}_1,\tilde{\Gamma}_1],\tilde{\Gamma}_1]$ and so
$$\mathrm{rank}\left ([\Gamma_1,\Gamma_1]/[[\Gamma_1,\Gamma_1],\Gamma_1]\right )\leq \mathrm{rank}\left ([\tilde{\Gamma}_1,\tilde{\Gamma}_1]/[[\tilde{\Gamma}_1,\tilde{\Gamma}_1],\tilde{\Gamma}_1]\right ).$$
Using Lemma~\ref{finiteindexsubgroups}, an induction argument yields for all $n$
\begin{equation}\label{rankcomparison}
    \mathrm{rank}\left (\Gamma_1^{n}/\Gamma_1^{n+1}\right )\leq \mathrm{rank}\left (\tilde{\Gamma}_1^{n}/\tilde{\Gamma}_1^{n+1}\right ).
\end{equation}
Since $\tilde{\Gamma}_1$ has finite index in $\Gamma_1$, they have the same growth. Thus, according to Lemma~\ref{growthandhomogeneousdimension}, all inequalities given by~(\ref{rankcomparison}) are equalities.
In particular, we get that $\mathrm{rank}\left (\Gamma_1/[\Gamma_1,\Gamma_1]\right )=\mathrm{rank}\left (\tilde{\Gamma}_1/[\tilde{\Gamma}_1,\tilde{\Gamma}_1]\right )$.

We now argue by contradiction and assume that $d\leq 1$.
Using the work of Mal'cev \cite{Malcev}, $\tilde{\Gamma}_1$ is torsion free nilpotent, so it is isomorphic to a lattice in a simply connected nilpotent Lie group $\mathcal{N}$ (see also \cite[Theorem~2.18]{Raghunathan}).
Denote by $\mathfrak{n}$ the Lie algebra of $\mathcal{N}$
and write
$\mathfrak{n}=[\mathfrak{n},\mathfrak{n}]\oplus \mathfrak{n}_1$ as a vector space.
Such a decomposition is not necessarily compatible with the Lie algebra structure.
However, the rank of $\tilde{\Gamma}_1/[\tilde{\Gamma}_1,\tilde{\Gamma}_1]$ is the dimension of $\mathcal{N}/[\mathcal{N},\mathcal{N}]$ (see \cite[Theorem~2.10]{Raghunathan}).
Since $d\leq 1$, we thus have that $\mathfrak{n}_1$ has dimension 0 or 1.
We show that in both cases, $\mathcal{N}$ is abelian.

Let $u,v\in \mathfrak{n}$.
Write $u=u_0+u_1$ and $v=v_0+v_1$, where $u_0,v_0\in [\mathfrak{n},\mathfrak{n}]$ and $u_1,v_1\in \mathfrak{n}_1$.
Then $[u,v]=[u_0,v_0]+[u_0,v_1]+[u_1,v_0]+[u_1,v_1]$.
Since $\mathfrak{n}_1$ is zero or one-dimensional, $[u_1,v_1]=0$, thus $[x,y]\in [\mathfrak{n},[\mathfrak{n},\mathfrak{n}]]$.
An induction argument then shows that $[\mathfrak{n},\mathfrak{n}]\subset [\mathfrak{n},[\mathfrak{n},\mathfrak{n}]]\subset \cdots \subset [\mathfrak{n},\dots [\mathfrak{n},\mathfrak{n}]]=\{0\}$.
We deduce from this that $\mathcal{N}$ is abelian, so that $\tilde{\Gamma}_1$ is abelian.
In particular, $\tilde{\Gamma}_1/[\tilde{\Gamma}_1,\tilde{\Gamma}_1]$ is just $\tilde{\Gamma}_1$, which is thus abelian of rank at most 1.
In particular, $\Gamma_1$ is virtually cyclic, which is a contradiction.
\end{proof}

We consider the projection map $\phi :\Gamma_1\rightarrow \Z^d$.
Recall that $S_1$ is a finite generating set of $\Gamma_1$.
Then, $\phi(S_1)$ is a finite generating set of $\Z^d$.
We see $\Z^d$ as embedded in $\R^d$ in the standard way.
Let $P=\Conv (\phi(S_1))$ be the convex hull of the image of $\phi(S_1)$ in $\R^d$.
Then $P$ is a polytope.
A facet of $P$ is a proper face of maximal dimension.
We have the following results.

\begin{lemma}\label{lemma4.1Walsh}\cite[Lemma~4.1]{Walsh}
Let $V$ be a subset of $S_1$ such that $\phi(V)$ is contained in a facet of $P$.
Then, any word with letters in $V$ is a geodesic for the word metric associated with $S_1$.
\end{lemma}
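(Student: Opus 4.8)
The plan is to reduce the claim to a convexity statement in the abelianization and then exploit a supporting functional of the facet. Let $\phi\colon \Gamma_1\to \Z^d$ be the projection onto the torsion-free part of $\Gamma_1/[\Gamma_1,\Gamma_1]$, view $\Z^d\subset \R^d$, and endow $\Z^d$ with the word length $\|\cdot\|_{\phi(S_1)}$ associated to the symmetric set $\phi(S_1)\cup(-\phi(S_1))$. The first point I would record is that $\phi$ does not increase length: if $t_1\cdots t_m$ is a geodesic word for $g\in \Gamma_1$ with $t_i\in S_1\cup S_1^{-1}$, then $\phi(g)=\phi(t_1)+\cdots+\phi(t_m)$ writes $\phi(g)$ as a sum of $m$ elements of $\phi(S_1)\cup(-\phi(S_1))$, whence $\|\phi(g)\|_{\phi(S_1)}\le \|g\|_{S_1}$. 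Since $\Cay(\Gamma,S)$ is an undirected graph I take $S_1$ symmetric, so that $P=\Conv(\phi(S_1))$ is centrally symmetric about $0=\phi(e)$ and full-dimensional. It then suffices to show that a word $w=s_1\cdots s_n$ with all $s_i\in V$ satisfies $\|\phi(w)\|_{\phi(S_1)}\ge n$, because the trivial bound $\|w\|_{S_1}\le n$ together with the two inequalities above forces $\|w\|_{S_1}=n$, i.e. $w$ is geodesic.

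The heart of the argument is to pick, as is possible for any facet of the full-dimensional polytope $P$, a nonzero linear form $\ell$ on $\R^d$ with $\ell\le c$ on $P$ and $F=\{v\in P:\ell(v)=c\}$, where $c=\max_P\ell$; since $0$ lies in the interior of $P$ one has $c>0$. On the one hand each $\phi(s_i)$ lies in $\phi(V)\subset F$, so $\ell(\phi(s_i))=c$ and therefore $\ell(\phi(w))=nc$. On the other hand, for any representation $\phi(w)=g_1+\cdots+g_m$ with $g_j\in \phi(S_1)\cup(-\phi(S_1))$, central symmetry gives $g_j\in P$ and hence $\ell(g_j)\le c$, so $\ell(\phi(w))\le mc$. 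Comparing the two evaluations and dividing by $c>0$ yields $n\le m$; minimizing over representations gives $\|\phi(w)\|_{\phi(S_1)}\ge n$, as needed.

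I expect the only delicate point to be the honest use of the facet: one must invoke that a facet, being a proper face of maximal dimension, is cut out from $P$ by a supporting hyperplane meeting $P$ exactly in $F$, and that the corresponding maximum $c$ is strictly positive, for which the central symmetry and full-dimensionality of $P$ (equivalently, that $\phi(S_1)$ is symmetric and generates $\Z^d$) are exactly what is used. Everything else is routine: the reduction through $\phi$ uses only that it is a homomorphism, and no further nilpotent structure is needed in this lemma itself — the nilpotency enters only through the surrounding results, such as Lemma~\ref{rankgeq2abelianization}, which guarantee $d\ge 2$ so that $P$ has genuinely many facets to exploit in the later construction of distinct geodesics.
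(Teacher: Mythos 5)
Your proof is correct and follows essentially the same route as the paper's: both evaluate a linear functional supporting the facet $F$ on the $\phi$-image of the word, obtaining $nc$ for the word in $V$ and at most $mc$ for any competing expression, hence $m\geq n$. The only divergence is that you take $S_1$ symmetric so that central symmetry of $P$ controls the inverse letters, whereas the paper's terser proof normalizes the functional to equal $1$ on $F$, only tests against words with letters in $S_1$ itself, and asserts in the surrounding remark that symmetry is not needed.
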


\begin{lemma}\label{lemma4.2Walsh}\cite[Lemma~4.2]{Walsh}
Let $H$ be a nilpotent group generated as a group by a finite set $V$.
Then, for any $g\in H$, there exist $x$ and $y$ in $H$ written with letters in $V$ such that $gx=y$.
\end{lemma}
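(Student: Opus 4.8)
The plan is to reformulate the statement semigroup-theoretically and then to induct on the nilpotency class of $H$. Write $S\subseteq H$ for the subsemigroup generated by $V$, that is, the set of all elements represented by a word with letters in $V$ and no inverses. The desired conclusion ``$gx=y$ with $x,y\in S$'' says exactly that $g\in SS^{-1}$, equivalently that $gS\cap S\neq\emptyset$. Since $S\subseteq SS^{-1}$, since $SS^{-1}$ is symmetric (as $(yx^{-1})^{-1}=xy^{-1}$), and since it contains $V\cup V^{-1}$, the set $SS^{-1}$ is symmetric and contains a generating set of $H$; the whole content is therefore to prove $H=SS^{-1}$. Equivalently, it would suffice to show that $SS^{-1}$ is closed under multiplication, which amounts to the Ore-type condition that any two elements of $S$ admit a common right multiple inside $S$. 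Rather than establish the Ore condition directly, I would prove $H=SS^{-1}$ by induction on the class.

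In the abelian base case, writing $g=\sum_i n_i v_i$ with $n_i\in\Z$ and splitting according to the sign of $n_i$ gives $g=y-x$ with $x=\sum_{n_i<0}(-n_i)v_i$ and $y=\sum_{n_i\ge 0}n_i v_i$ both in $S$ (after multiplying both by a fixed element of $S$ to avoid the empty word), so that $gx=y$. For the inductive step, let $Z=\gamma_c(H)$ be the last nontrivial term of the lower central series, which is central, so that $H/Z$ has class $c-1$. Given $g\in H$, the inductive hypothesis in $H/Z$ yields lifts $x_0,y_0\in S$ with $gx_0=y_0z$ for some $z\in Z$. Hence it is enough to prove $Z\subseteq SS^{-1}$: if $za=b$ with $a,b\in S$, then $g(x_0a)=y_0za=y_0b$ with $x_0a,y_0b\in S$.

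Two observations then reduce $Z\subseteq SS^{-1}$ to a statement about commutators. First, $SS^{-1}\cap Z$ is a subgroup of $Z$: if $z_i=y_ix_i^{-1}$ with $x_i,y_i\in S$ and $z_i\in Z$ central, then $z_1z_2\,(x_1x_2)=(z_1x_1)(z_2x_2)=y_1y_2\in S$, so $z_1z_2\in SS^{-1}$, and closure under inverses is automatic. Second, $Z=\gamma_c(H)$ is generated by the weight-$c$ basic commutators $[v_{i_1},\dots,v_{i_c}]$ in the generators $v_{i_j}\in V$. It therefore remains only to show that each such commutator lies in $SS^{-1}$.

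For this last point I would use the elementary identity $[a,b]=(ab)(ba)^{-1}$ (with $[a,b]=aba^{-1}b^{-1}$), which shows at once that $[a,b]\in SS^{-1}$ whenever $a,b\in S$, together with the fact that, modulo $\gamma_{c+1}=\{1\}$, the weight-$c$ bracket $(a_1,\dots,a_c)\mapsto[a_1,\dots,a_c]$ is a homomorphism into the central group $Z$ in each variable. Expanding a weight-$c$ commutator of generators through the lower-weight ``fractions'' $u=\beta\alpha^{-1}$ supplied by the inductive hypothesis, and recombining the resulting central pieces via the subgroup property of $SS^{-1}\cap Z$, should give membership in $SS^{-1}$. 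I expect this commutator-calculus step to be the main obstacle, precisely because substituting $SS^{-1}$-representatives for the lower-weight commutators introduces non-central intermediate terms of weight $<c$, for which no subgroup property is available; this is exactly where nilpotency is essential, through the vanishing of $\gamma_{c+1}$ and the consequent multilinearity into the center. The cleanest way to organize the computation will be to arrange that only commutators of elements of $S$, which are manifestly in $SS^{-1}$, and central products of such terms ever appear.
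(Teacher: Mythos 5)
The paper gives no proof of this lemma at all (it is quoted verbatim from Walsh), so there is nothing internal to compare against; judged on its own terms, your outline is sound through the reduction to ``each weight-$c$ commutator of generators lies in $SS^{-1}$,'' but that last step, which you yourself flag as the main obstacle, is a genuine gap, and for class $c\ge 3$ it is where essentially all of the content of the lemma sits. The identity $[a,b]=(ab)(ba)^{-1}$ certifies membership in $SS^{-1}$ only when \emph{both} entries lie in $S$; for the iterated commutator you must evaluate $[u,v]$ with $u=[v_{i_1},\dots,v_{i_{c-1}}]$, and even granting $u=\beta\alpha^{-1}$ with $\alpha,\beta\in S$, the expansion $[\beta\alpha^{-1},v]={}^{\beta}[\alpha^{-1},v]\cdot[\beta,v]$ produces conjugates and products of \emph{non-central} lower-weight commutators. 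Multilinearity into the center does not apply to these, and recombining two elements of $SS^{-1}$ into a single one is exactly the Ore-type closure you set out to prove, so the argument becomes circular at this point. (For $c=2$ the step is trivial, since $[v_1,v_2]=(v_1v_2)(v_2v_1)^{-1}$ with both factors in $S$, which is perhaps why it looks harmless.)

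The standard repair --- in substance Mal'cev's argument, on which Walsh's lemma rests --- is to arrange the induction on the class so that the central error term \emph{cancels} rather than having to be absorbed into $SS^{-1}$. Prove by induction on the class that any $a,b\in S$ have a common right multiple in $S$: the quotient $H/\gamma_c(H)$ yields positive words $p,q$ with $ap=bqz$ for some $z\in\gamma_c(H)\subseteq Z(H)$; then, writing $A=ap$ and $B=bq$, the relation $A=Bz$ with $z$ central forces $AB=BzB=BBz=BA$, that is, $a(pbq)=b(qap)$ with $pbq$ and $qap$ positive words. This right Ore condition gives $S^{-1}S\subseteq SS^{-1}$, hence $SS^{-1}\cdot SS^{-1}=S(S^{-1}S)S^{-1}\subseteq SS^{-1}$, so $SS^{-1}$ is a subgroup containing $V\cup V^{-1}$ and therefore equals $H$ --- precisely the reformulation you began with. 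I would replace the commutator-generation endgame with this cancellation trick; the rest of your reductions (symmetry of $SS^{-1}$, the abelian base case, passing to $H/\gamma_c$, and the subgroup property of $SS^{-1}\cap Z$) are all correct and can be kept.
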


Actually, in \cite[Section~4]{Walsh}, the author fixes a symmetric finite generating set, but the assumption of symmetry is not needed for these two lemmas.
This is obvious for Lemma~\ref{lemma4.2Walsh}, since the statement does not involve $S_1$ nor $\Gamma_1$.
For Lemma~\ref{lemma4.1Walsh}, this readily follows from the proof of \cite[Lemma~4.1]{Walsh}, which we recall for convenience.

\begin{proof}[Proof of Lemma~\ref{lemma4.1Walsh}]
Let $F\subset \R^d$ be the facet into which $V$ is mapped and let $f:\R^n\rightarrow \R$ be the linear function defined by $f(u)=1$ for $u\in F$.
Let $x_1...x_n\in \Gamma_1$ be an element written with elements of $V$.
Then, $f\circ  \phi(x_1...x_n)=n$.
Assume that $x_1...x_n=y_1...y_m$ with $y_i\in S_1$.
Then, since $f(u)\leq 1$ for $u\in P$,
$f\circ \phi(y_1...y_m)\leq m$.
Hence, we necessarily have $m\geq n$ and so $x_1,...,x_n$ is a geodesic.
\end{proof}

We now fix a facet $F$ and $V\subset S_1$ such that $\phi(V_1)=F$.
Our goal is to construct elements $x,y$ such that
\begin{enumerate}
    \item $x$ and $y$ commute,
    \item for every non-negative $k,l,m$, we can write $x^ky^lx^m$ as a geodesic with elements of $V$,
    \item if $x^ky^l=x^{k'}y^{l'}$, then $k=k'$ and $l=l'$.
\end{enumerate}

Denote by $\Gamma_V$ the subgroup of $\Gamma_1$ generated by $V$.
Then, $\Gamma_V$ is also nilpotent.
There are two cases. Either $[\Gamma_V,\Gamma_V]$ is finite or it is not.
We first consider the case where it is infinite.
Let $\Gamma_V^1=\Gamma_V$, $\Gamma_V^2=[\Gamma_V,\Gamma_V]$,...,$\Gamma_V^n=[\Gamma_V,\Gamma_V^{n-1}]$.

Any element of $\Gamma_V$ mapped to a non-trivial element of $\Z^d$ by $\phi$ has infinite order.
Consider the last $n$ such that $\Gamma_V^n$ has an infinite order element and consider such an element $g\in \Gamma_V^n$.
Then, $\Gamma_V^{n+1}$ has only torsion elements.
Since it is nilpotent, it is finite.
In particular, we cannot have $n=1$ since $[\Gamma_V,\Gamma_V]$ is infinite.
According to Lemma~\ref{lemma4.2Walsh}, there exist $x$ and $y$ written with elements of $V$ such that $gx=y$.
If $x$ were trivial, then we would have $g=y$.
In particular, $y$ could not be trivial so it would mapped to the facet $F$ and would also be in $[\Gamma_1,\Gamma_1]$, which is absurd.
Thus, $x$ cannot be trivial.
Since $g^{-1}y=x$, we show similarly that $y$ is not trivial.

\begin{lemma}\label{commuteuptopower}
There exists $m\geq1$ such that $g$ and $x^m$ commute.
\end{lemma}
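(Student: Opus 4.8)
The plan is to prove the lemma by a pigeonhole argument on the conjugates of $g$ by powers of $x$, exploiting the finiteness of $\Gamma_V^{n+1}$ established just above. First I would record the crucial structural fact: by the very definition of the lower central series, $[\Gamma_V^n,\Gamma_V]=\Gamma_V^{n+1}$, so the image of $\Gamma_V^n$ in the quotient $\Gamma_V/\Gamma_V^{n+1}$ is central. Concretely, conjugation by any element of $\Gamma_V$ fixes $\Gamma_V^n/\Gamma_V^{n+1}$ pointwise. Applying this to our element $g\in\Gamma_V^n$ and to powers of $x\in\Gamma_V$, I obtain $x^{-k}gx^{k}\equiv g \pmod{\Gamma_V^{n+1}}$ for every $k\in\Z$, that is, $x^{-k}gx^{k}\in g\,\Gamma_V^{n+1}$.

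Now recall that $n$ was chosen to be the last index for which $\Gamma_V^n$ contains an infinite order element, so $\Gamma_V^{n+1}$ consists only of torsion elements and, being a finitely generated nilpotent group, is finite. Hence $g\,\Gamma_V^{n+1}$ is a finite set, and the family $\{x^{-k}gx^{k}:k\in\Z\}$ takes only finitely many values. By the pigeonhole principle there exist integers $k_1<k_2$ with $x^{-k_1}gx^{k_1}=x^{-k_2}gx^{k_2}$. Conjugating this equality by $x^{k_1}$ and writing $m=k_2-k_1\geq 1$ gives $g=x^{-m}gx^{m}$, i.e.\ $x^{m}g=gx^{m}$. This is precisely the assertion that $g$ and $x^m$ commute, completing the proof.

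I expect no genuine obstacle here: the whole argument reduces to elementary commutator bookkeeping in the lower central series together with the finiteness of $\Gamma_V^{n+1}$, which is already in hand. The only point deserving a line of care is the centrality of $\Gamma_V^n$ modulo $\Gamma_V^{n+1}$, but this is immediate from the defining inclusion $[\Gamma_V^n,\Gamma_V]\subseteq\Gamma_V^{n+1}$. I should note that the method yields some $m\geq 1$ with no control on its size; this is harmless, since the subsequent construction of the points $x,y$ only requires that \emph{some} positive power of $x$ commute with $g$ (to produce the commuting pair and the geodesic words via Lemmas~\ref{lemma4.1Walsh} and~\ref{lemma4.2Walsh}).
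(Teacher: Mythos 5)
Your proof is correct and rests on the same key facts as the paper's: that all the commutators $[x^k,g]$ lie in $\Gamma_V^{n+1}$ (since $g\in\Gamma_V^n$) and that $\Gamma_V^{n+1}$ is finite, finished off by a counting argument. The paper phrases the pigeonhole slightly differently (showing by induction that nontrivial commutators $x^kgx^{-k}g^{-1}$ would all have to be pairwise distinct), while you pigeonhole the conjugates $x^{-k}gx^k$ inside the finite coset $g\,\Gamma_V^{n+1}$ directly; this is a mild streamlining of the same argument, not a different route.
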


\begin{proof}
Assume that $xgx^{-1}g^{-1}\neq e$.
Then $x^2gx^{-2}g^{-1}\neq xgx^{-1}g^{-1}$.
Otherwise, we would have
$x^2gx^{-2}g^{-1}= xgx^{-1}g^{-1}$
so that $xgx^{-1}=g$ and thus $xgx^{-1}g^{-1}=e$.
We also have similarly $x^3gx^{-3}g^{-1}\neq x^2gx^{-1}g^{-1}$.
Now, if $x^2gx^{-2}g^{-1}\neq e$, a similar argument shows that
$x^3gx^{-3}g^{-1}\neq xgx^{-1}g^{-1}$.
By induction, we show that if $x^kgx^{-k}g^{-1}\neq e$ for $k=1,...,m$, then $x^{m+1}gx^{-(m+1)}g^{-1}$ is different from all the $x^kgx^{-k}g^{-1}$.
Since all these elements lie in $\Gamma_V^{n+1}$ which is finite, this concludes the proof.
\end{proof}

We write $x'=x^m$ and $y'=yx^{m-1}$.
Up to replacing $x$ with $x'$ and $y$ with $y'$, we thus have that $gx=y$, $x$ and $g$ commute, hence $x$ and $y$ commute, and $x$ and $y$ are written with elements of $V$.
According to Lemma~\ref{lemma4.1Walsh}, the sequence $v_1,...,v_p$ of elements of $V$ such that $x=v_1...v_p$ yields a geodesic from $e$ to $x$ and similarly with $y$.
Moreover, any concatenation of these geodesics from $e$ to $x$ and from $e$ to $y$ is again a geodesic.
In particular, for any $k,l,m$, we can write $x^{k}y^{l}x^m$ with elements of $V$ and this yields a geodesic from $e$ to $x^ky^lx^m$.

\begin{lemma}\label{differentmiddles}
If $x^ky^l=x^{k'}y^{l'}$, then $k=k'$ and $l=l'$.
\end{lemma}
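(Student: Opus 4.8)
The plan is to exploit the explicit algebraic relations satisfied by the pair $(x,y)$ produced in the construction, namely that $y=gx$, that $x$ commutes with $g$ (and hence with $y$), that $g$ has infinite order, and that $g\in\Gamma_V^n\subseteq[\Gamma_1,\Gamma_1]$ while $x$ is nontrivial. The first step is to put every element of the form $x^ky^l$ into a normal form. Since $x$ commutes with $g$, it also commutes with $y=gx$, so $(gx)^l=g^lx^l$, and therefore
$$x^ky^l=x^k(gx)^l=x^kg^lx^l=g^lx^{k+l},$$
where the last equality again uses that $x^k$ commutes with $g^l$.

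Next I would detect the combined exponent $k+l$ by passing to the free abelianization via the projection $\phi:\Gamma_1\to\Z^d$. Applying $\phi$ to the identity $g^lx^{k+l}=g^{l'}x^{k'+l'}$ coming from the hypothesis $x^ky^l=x^{k'}y^{l'}$, and using $\phi(g)=0$ (as $g$ lies in the commutator subgroup), yields $(k+l)\phi(x)=(k'+l')\phi(x)$ in $\Z^d$. Here $\phi(x)$ is nonzero: writing $x=v_1\cdots v_p$ with letters $v_i\in V$ and letting $f$ be the linear functional with $f\equiv 1$ on the facet $F$ (as in the proof of Lemma~\ref{lemma4.1Walsh}), one computes $f(\phi(x))=p\geq 1$, since $x$ was shown to be nontrivial. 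As $\Z^d$ is torsion free, the relation $(k+l)\phi(x)=(k'+l')\phi(x)$ forces $k+l=k'+l'$.

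Finally I would recover $k$ and $l$ separately. Knowing $k+l=k'+l'$, the normal form identity $g^lx^{k+l}=g^{l'}x^{k'+l'}$ becomes $g^lx^{k+l}=g^{l'}x^{k+l}$, and right-cancelling $x^{k+l}$ gives $g^l=g^{l'}$. Since $g$ has infinite order this yields $l=l'$, whence $k=k'$ follows from $k+l=k'+l'$. There is no substantial obstacle in this argument; the only points requiring care are using the commutation relations correctly to reach the normal form $g^lx^{k+l}$, and invoking the nontriviality of $x$ (hence $\phi(x)\neq 0$) established earlier in the construction.
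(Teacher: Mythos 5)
Your proof is correct and takes essentially the same route as the paper's: both exploit $y=gx$ with $[g,x]=e$, project to $\Z^d$ where $\phi(g)=0$ and $\phi(x)\neq 0$ to force a linear relation between the exponents (using torsion-freeness), and then conclude with the infinite order of $g$. The only cosmetic difference is that you first normalize $x^ky^l$ to $g^lx^{k+l}$, whereas the paper rearranges the hypothesis into $x^{k-k'}=y^{l'-l}=g^{l'-l}x^{l'-l}$ before projecting.
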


\begin{proof}
We have $x^{k-k'}=y^{l'-l}=g^{l'-l}x^{l'-l}$.
Projecting this equation in $\Z^d$, we get
$\phi(x)^{k-k'}=\phi(x)^{l'-l}$.
Since $\phi(x)\neq 0$ and there is no torsion in $\Z^d$, we have $k-k'=l'-l$.
Thus, $e=g^{l'-l}$ and since $g$ has infinite order, $l=l'$, hence $k=k'$.
\end{proof}

Assume now that $[\Gamma_V,\Gamma_V]$ is finite.
This time, we choose $x$ and $y$ as two elements of $V$ that are sent to two different elements of the facet $F$.
This is possible since $d\geq 2$, according to Lemma~\ref{rankgeq2abelianization}.
Since $\phi(x)$ and $\phi(y)$ are linearly independent elements of $\Z^d$, we still have the conclusion of Lemma~\ref{differentmiddles}.
The same argument as in the proof of Lemma~\ref{commuteuptopower} shows that there is $m\geq 1$ such that $x^m$ and $y$ commute.
Up to replacing $x$ with $x^m$ (which is still linearly independent of $y$, once mapped to $\Z^d$),
we still get that
\begin{enumerate}
    \item $x$ and $y$ commute,
    \item we can write $x^ky^lx^m$ as a geodesic with elements of $V$,
    \item if $x^ky^l=x^{k'}y^{l'}$, then $k=k'$ and $l=l'$.
\end{enumerate}

Whether $[\Gamma_V,\Gamma_V]$ is finite or not, we use such elements $x$ and $y$
to finish the proof of Theorem~\ref{hlvnilpotentfreefactors}.
We argue by contradiction and assume that $h=lv$.
Proposition~\ref{propfinitemiddles} shows that for any $g_0\in \Gamma$, there is a uniformly finite number of middle points on a geodesic from $e$ to $g_0$.

According to Condition~(2) above, we can find $x$ and $y$ such that $g_{k,l,m}=x^ky^lx^m$, written with elements of $V$ yields a geodesic from $e$ to $g_{k,l,m}$.
We denote this geodesic by $\alpha$.
If $k+m=n$ is fixed, we can find $l$ large enough so that the length of $y^l$ is larger than the length of $x^{k+m}$.
Thus, for every $k,m$ such that $k+m=n$, the middle point on the geodesic $\alpha$ is on the sub-segment from $x^k$ to $x^ky^l$.

Letting $k$ vary, we thus get several middle points on several geodesics from $e$ to $g_{k,l,m}$.
Since $x$ and $y$ commute, these geodesics have the same endpoints.
Those middle points are of the form
$y^{l_0},xy^{l_1},x^2y^{l_2},...,x^ny^{l_n}$.
According to Condition~(3) above, all those middle points are different.
Up to enlarging $n$ if necessary, we obtain a contradiction with Proposition~\ref{propfinitemiddles}. \qed

\section{Groups with connected set of conical limit points}\label{Sectionconnectedconicallimitpoints}
In this section, we prove Theorem~\ref{sphericalBowditchboundary}. We will actually prove a stronger statement (see Theorem~\ref{mainTheoremconnected}).
We consider a non-elementary relatively hyperbolic group $\Gamma$ and a probability measure $\mu$ on $\Gamma$.
We consider a word metric $d_w$ and assume that $\mu$ has finite super-exponential moment with respect to $d_w$.

\subsection{Continuity of the Naim kernel}
We prove here that the Naim kernel $\Theta(\xi,\zeta)$ defined above is continuous at all pairs of distinct conical points  $(\xi,\zeta)$ of $\partial_{B}\Gamma$.
Recall that
$$\Theta(\xi,\zeta)=\liminf_{g\in \Gamma \to \xi}\lim_{h\in \Gamma \to \zeta}\frac{G(g,h)}{G(g,e)G(e,h)}=\liminf_{g\in \Gamma\to \xi}\frac{K_{\zeta}(g)}{G(g,e)}.$$
Notice that $-\frac{1}{2}\log \Theta$ is the analogue of the Gromov product associated with the Green metric.
Beware that in general, the Gromov product is not continuous with respect to both variables on the Gromov boundary, see \cite{NicaSpakula}.
Our proof is very specific to the Naim kernel and the Green distance.
If $A\subset \Gamma$, we denote by $G(x,y;A)$ the Green function at $x$ and $y$ conditioned by staying in $A$.
Precisely,
$$G(x,y;A)=\sum_{n\geq 0}P(\omega_0=x,\omega_n=y,\omega_k\in A,1\leq k \leq n-1),$$
where we recall that $\omega_k$ is the $k$-th step distribution of the random walk.
We begin by recalling the following enhanced version of relative Ancona inequalities which is obtained by combining Proposition~\ref{Floydgeo} and \cite[Theorem 5.2]{GGPY}.

\begin{proposition}\label{weakAnconastrongform}
For each $\epsilon\geq \epsilon_0$, $\eta>0$, $D>0$, $c>0$, there is an $R>0$ such that if $x,y,z\in \Gamma$ and $y$ is within $D$ of an $(\epsilon,\eta)$-transition point on a word geodesic from $x$ to $z$ then $G(x,z;B_{R}(y)^c)\leq c G(x,z)$,
where $B_R(y)^c$ is the complementary of the ball of center $y$ and radius $R$.
\end{proposition}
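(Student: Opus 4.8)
The plan is to reduce the statement to a quantitative ``forced crossing'' estimate for the Green function by first passing to the Floyd metric. Applying Proposition~\ref{Floydgeo}~b) with the given $\eta$ and $D$ produces a $\delta=\delta(\epsilon,\eta,D)>0$ such that, under our hypothesis, $\overline{\delta}^{f}_{y}(x,z)>\delta$. This converts the transition-point hypothesis into a basepoint separation in the shortcut/Floyd metric, which is the scale-invariant and change-of-basepoint-stable quantity under which the refined Ancona estimates can be applied uniformly over $\Gamma$.

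The core of the argument is to upgrade the relative Ancona inequalities~(\ref{relativeAncona}) to their conditional form. Those inequalities already give $G(x,z)\asymp G(x,y)G(y,z)$, i.e. the walk from $x$ to $z$ passes within bounded distance of $y$ with probability bounded below; what is needed here is the stronger assertion that it must cross an arbitrarily large ball about $y$ with probability tending to $1$. Concretely, I would establish constants $C'>0$ and $\theta\in(0,1)$, depending only on $\epsilon,\eta,D$, such that
\begin{equation*}
G(x,z;B_{R}(y)^{c})\leq C'\theta^{R}\,G(x,z)
\end{equation*}
uniformly in $x,z$. The mechanism is a last-exit/renewal decomposition of a ball-avoiding path across the concentric spheres $\{w:d_w(w,y)=j\}$: each excursion must still carry the path from $x$ to $z$, and the Floyd separation $\overline{\delta}^{f}_{y}(x,z)>\delta$ forces every such excursion to pay a uniform multiplicative factor strictly less than $1$ in Green mass. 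This factor is extracted by applying~(\ref{relativeAncona}) at the $(\epsilon,\eta)$-transition points available along the geodesic from $x$ to $z$, using Lemma~\ref{transitionpoints}~a) to control the behaviour inside deep parabolic cosets. This geometric decay is exactly the estimate of \cite[Theorem~5.2]{GGPY}; granting it, the proposition follows by choosing $R$ so large that $C'\theta^{R}\leq c$.

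The main obstacle is precisely the uniform exponential decay above. Since the word metric on a relatively hyperbolic group is not Gromov hyperbolic, the classical thin-triangle ``tube'' argument of Ancona is unavailable: a ball-avoiding path could a priori backtrack through long parabolic excursions and still reach $z$ cheaply. What rescues uniformity is the Floyd (shortcut) metric, whose Lipschitz dependence on the basepoint lets one apply the transition-point comparison at every scale with constants independent of $x,y,z$ and of the particular parabolic subgroups encountered. This is the substance packaged in \cite[Theorem~5.2]{GGPY}, which we combine with Proposition~\ref{Floydgeo} to conclude.
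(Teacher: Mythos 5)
Your proposal is correct and follows essentially the same route as the paper, which gives no independent argument but simply observes that the statement is obtained by combining Proposition~\ref{Floydgeo}~b) (converting the transition-point hypothesis into the Floyd separation $\overline{\delta}^{f}_{y}(x,z)>\delta$) with \cite[Theorem~5.2]{GGPY}. Your additional discussion of the excursion decomposition is a plausible sketch of what happens inside the cited theorem, but the reduction itself is exactly the paper's.
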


\begin{proposition}\label{ratiosconverge}
Suppose $(x_n,x'_n,y_n,y'_n)\in \Gamma^4$ is a sequence of quadruples with the following property. 
There exist $D>0,\epsilon \geq \epsilon_0,\eta>0$, 
and an infinite geodesic $\alpha$ with $(\epsilon, \eta)$-transition points $p_n,q_n\in \Gamma$ with $d_{w}(p_n,q_n)\to \infty$ such that any geodesic segment $[x_{n},y_{n}]$ and $[x'_{n},y'_{n}]$ passes within $D$ of both $p_n$ and $q_n$.
Then 
$$\frac{G(x_{n},y_{n})G(x'_{n},y'_{n})}{G(x'_{n},y_{n})G(x_{n},y'_{n})}\underset{n\to \infty}{\longrightarrow}1.$$
\end{proposition}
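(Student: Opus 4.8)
The plan is to prove the equivalent statement that the four--point cross difference of the Green metric tends to zero, and to extract it from an asymptotic \emph{loss of memory} for the random walk forced by the long bottleneck $[p_n,q_n]$. Throughout I write $M_n=B_R(q_n)$ for a radius $R$ to be chosen at the end. First I would upgrade the hypothesis to all four geodesics. The hypothesis only controls $[x_n,y_n]$ and $[x'_n,y'_n]$; applying the relatively thin triangles property (Lemma~\ref{idealtriangles}) to the triangles $(x_n,x'_n,y_n)$ and $(x_n,y_n,y'_n)$, and using that $d_w(p_n,q_n)\to\infty$ pushes the transition points near $p_n$ and near $q_n$ away from the sides $[x_n,x'_n]$ and $[y_n,y'_n]$, one obtains a uniform $D'$ such that the crossed geodesics $[x'_n,y_n]$ and $[x_n,y'_n]$ also pass within $D'$ of $(\epsilon,\eta)$--transition points lying boundedly close to $p_n$ and to $q_n$ (possibly after passing to a subsequence). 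In particular $q_n$ is within $D'$ of a transition point on each of the four geodesics.

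For each ordered pair $(x,y)\in\{x_n,x'_n\}\times\{y_n,y'_n\}$, Proposition~\ref{weakAnconastrongform} applied at $q_n$ shows the walk from $x$ to $y$ enters $M_n$ with overwhelming probability: $G(x,y;M_n^c)\le c\,G(x,y)$, with $c$ as small as we wish once $R$ is large. Writing $H_n(x,\cdot)$ for the first--entrance kernel into $M_n$, the first--entrance decomposition reads
\[
G(x,y)=\sum_{w\in M_n}H_n(x,w)\,G(w,y)+G(x,y;M_n^c),
\]
so $G(x,y)=(1+O(c))\sum_{w\in M_n}H_n(x,w)G(w,y)$. The key claim is the \textbf{asymptotic proportionality} of the two hitting distributions: there is $\lambda_n>0$ with
\[
\sup_{w\in M_n}\Big|\tfrac{H_n(x_n,w)}{H_n(x'_n,w)}-\lambda_n\Big|\big/\lambda_n\ \longrightarrow\ 0 .
\]
Granting this, for \emph{any} nonnegative weights $G(\cdot,y)$ the weighted average of $H_n(x_n,w)/H_n(x'_n,w)$ lies in $\lambda_n(1+o(1))$, whence $G(x_n,y)/G(x'_n,y)=\lambda_n(1+o(1))(1+O(c))$ for both $y=y_n$ and $y=y'_n$. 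Dividing these, the factors $\lambda_n$ cancel and the cross ratio is $1+o(1)+O(c)$. Letting first $n\to\infty$ and then $R\to\infty$ (so $c\to0$) gives the proposition.

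It remains to prove the proportionality, and this is where $d_w(p_n,q_n)\to\infty$ is used. I would select $(\epsilon,\eta)$--transition points $p_n=t_0,\dots,t_{N_n}=q_n$ on $\alpha$ with consecutive gaps in a fixed large window, so that $N_n\to\infty$, and set $B_i=B_R(t_i)$. By Proposition~\ref{weakAnconastrongform} a walk reaching $B_{i+1}$ must first pass through $B_i$, so the hitting distribution on $B_{i+1}$ is obtained from that on $B_i$ by a nonnegative transfer matrix $T_i$ recording first passage across the $i$--th bottleneck. The relative Ancona inequalities~(\ref{relativeAncona}) together with the Harnack inequality for $G$ bound the pairwise ratios of the entries of each $T_i$ by a constant independent of $i,n$; hence every $T_i$ has uniformly bounded Hilbert projective diameter and, by Birkhoff's theorem, contracts the Hilbert projective metric on positive vectors over the finite set $B_{i+1}$ by a factor $\theta<1$. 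Since $H_n(x_n,\cdot)$ and $H_n(x'_n,\cdot)$ on $M_n=B_{N_n}$ are the images under $T_{N_n-1}\cdots T_0$ of the respective entrance data on $B_0=B_R(p_n)$, their projective distance is at most $\theta^{N_n}$ times a bounded initial distance, and so tends to $0$ — which is exactly the asymptotic proportionality above.

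The genuine difficulty is precisely this quantitative loss of memory. The relative Ancona inequalities only factorize the Green metric along transition points up to a \emph{bounded} multiplicative constant, so a single factorization yields merely that the cross ratio stays bounded (consistent with the Naim kernel being bounded off the diagonal), not that it tends to $1$; the improvement to $1$ must be manufactured by compounding many independent bottlenecks through the Birkhoff contraction. The two technical points to secure are (i) that between $p_n$ and $q_n$ one can extract $N_n\to\infty$ suitably separated transition points — using that $\alpha$ converges to a conical point and that entry and exit points of parabolic cosets are transition points (Lemma~\ref{transitionpoints}) — and (ii) that the errors from paths skipping a bottleneck or backtracking between consecutive balls are negligible, which is again furnished by Proposition~\ref{weakAnconastrongform} with $c$ small.
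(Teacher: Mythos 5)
Your strategy is genuinely different from the paper's. You aim for a quantitative loss-of-memory statement by chaining $N_n\to\infty$ bottleneck balls between $p_n$ and $q_n$ and contracting the Hilbert projective metric via Birkhoff's theorem; the paper instead argues by contradiction, passes to a subsequence of $(x_n,x'_n,y_n,y'_n)$ converging in the \emph{Martin compactification}, and uses a single bottleneck pair $(p,q)$: after discarding paths that miss $B_R(p)$ or $B_R(q)$ or whose last visits occur in the wrong order, the remaining contribution factors as $\sum_{u,v}G(x_n,u)G(u,v;\cdot)G(v,y_n)$, and Martin convergence replaces $G(x_n,u)$ by $(1\pm c)G(x_n,e)\check K_\alpha(u)$ and $G(v,y_n)$ by $(1\pm c)G(e,y_n)K_\beta(v)$; since $y_n$ and $y'_n$ share the same Martin limit, the resulting constants $\lambda_{\alpha,\beta}$, $\lambda_{\alpha',\beta}$ cancel in the cross ratio. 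No contraction estimate is needed there — compactness of the Martin boundary does all the work.

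Your route, however, has a genuine gap in the error bookkeeping. The identity "$H_n(x_n,\cdot)$ on $M_n$ is the image of the entrance data on $B_0$ under $T_{N_n-1}\cdots T_0$" is only approximate: for each $i$, a fraction up to $c$ of the Green-function mass skips $B_R(t_i)$, and Proposition~\ref{weakAnconastrongform} is purely qualitative (for each $c$ there is an $R$, with no rate), so over $N_n\to\infty$ bottlenecks the cumulative skipping error is of order $N_nc$ and diverges for fixed $c$. Worse, the fix is circular with the tools available: making $c$ small forces $R$ large, but the Birkhoff contraction factor $\theta$ is controlled by the projective diameter of the $T_i$, which by Harnack grows linearly in $R$, so $\theta=\theta(R)\to 1$ as $R\to\infty$; you then need more bottlenecks $N$ to get $\theta^N$ small, which inflates the skipping error $Nc$ again. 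Closing this loop requires a quantified version of Proposition~\ref{weakAnconastrongform} with exponential decay in $R$ beating the growth of the projective diameter (this is exactly the delicate computation in the Gou\"ezel--Lalley proof of strong Ancona inequalities), and nothing in the paper's toolkit supplies it. Your preliminary step — upgrading the hypothesis to the crossed geodesics $[x'_n,y_n]$, $[x_n,y'_n]$ via Lemma~\ref{idealtriangles} — is correct and is also used in the paper; the final cancellation of $\lambda_n$ is fine. The missing idea is precisely the one the paper uses to avoid all quantitative contraction: extract a subsequence converging in the Martin compactification and let the definition of the Martin kernel provide the asymptotic proportionality of $G(x_n,\cdot)$ and $G(x'_n,\cdot)$ on the single bottleneck ball.
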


\begin{proof}
Suppose by contradiction that $x_n,y_n,x'_n,y'_n\in \Gamma$ are sequences with the stated property for which the conclusion fails. Then there is a subsequence for which the quotient $\frac{G(x_{n},y_{n})G(x'_{n},y'_{n})}{G(x'_{n},y_{n})G(x_{n},y'_{n})}$ converges to some $L\neq 1$ (which can be infinite).
Any sub-subsequence of this subsequence will also satisfy that the above quotient converges to $L\neq 1$.
Thus, up to taking a subsequence, we can assume without loss of generality that $(x_n,y_n,x'_n,y'_n)\to (\xi,\zeta,\xi',\zeta')$ in the Bowditch compactification and $(x_n,y_n,x'_n,y'_n) \to (\tilde{\xi},\tilde{\zeta},\tilde{\xi}',\tilde{\zeta}')$ in the Martin compactification, while the above quotient does not converge to 1.
In addition, passing to a subsequence we may assume either $p_n$ or $q_n$ (say $q_n$) converge to a conical point of $\partial_B \Gamma$. This point must then equal the limit $\zeta$ of the $y_n$ and $y'_n$.
It follows that $\zeta=\zeta'$ and $\tilde{\zeta}=\tilde{\zeta}'$.

Let $c>0$ be a small number.
According to Proposition~\ref{weakAnconastrongform},
there is an $R>0$ large enough so that $G(x,y;B_{R}(w)^c)<cG(x,y)$ whenever $x,y,w\in \Gamma$ with $w$ within $2D$ of $\Tr_{\epsilon,\eta}[x,y]$.
By our assumption on the sequences and the relatively thin triangles property, for each $k>0$ and all large enough $n$, there exist $p,q$ on the geodesic ray or bi-infinite geodesic $(\xi,\zeta)$ with $d_{w}(p,q)>k$ such that any geodesic segment $[x_{n},y_{n}]$,$[x'_{n},y_{n}]$,$[x_{n},y'_{n}]$, $[x'_{n},y'_{n}]$ contains a $(\epsilon,\eta)$-transition points within $2D$ of $p$ and $q$.
Paths from $x_n$ to $y_n$ are of three types:
\begin{enumerate}[a)]
    \item Paths that miss one of $B_{R}(p)$ or $B_{R}(q)$.
    \item Paths that pass both $B_{R}(p)$ and $B_{R}(q)$, and whose last intersection with $B_{R}(p)$ comes after their last intersection with $B_{R}(q)$.
    \item Paths that pass both $B_{R}(p)$ and $B_{R}(q)$, and whose last intersection with $B_{R}(q)$ comes after their last intersection with $B_{R}(p)$.
\end{enumerate}

Paths of type a) contribute at most $2cG(x_{n},y_{n})$ to $G(x_{n},y_{n})$.
We will prove that the contribution of paths of type b) also is small compared to $G(x_n,y_n)$.
The contribution of such paths can be written as
$$\sum_{u\in B_{R}(p),v\in B_{R}(q)}G(x_{n},v)G(v,u;B_{R}(q)^{c}\cap B_{R}(p)^{c})G(u,y_{n})$$
which can be bounded by
$$\sum_{u\in B_{R}(p),v\in B_{R}(q)}G(x_{n},v)G(v,u)G(u,y_{n})$$
Recall that $d_{G}$ and $d_{w}$ are quasi-isometric (see \cite[Lemma~4.2]{Haissinsky}), so there exists a constant $t\in (0,1)$ such that for every $g_1,g_2,g_3\in \Gamma$, we have $G(g_1,g_2)\leq t^{d_{w}(g_1,g_2)}$ and 
$G(g_1,g_2)/G(g_1,g_3)\leq t^{-d_{w}(g_2,g_3)}$.
In particular, for every $u\in B_{R}(p),v\in B_{R}(q)$ we have
$$G(v,u)\leq t^{d_{w}(u,v)}\leq t^{k-2R}$$ 
$$G(x_{n},v)\leq t^{-d_w(v,q)}G(x_n,q)\leq t^{-R}G(x_n,q)$$ and
$$G(u,y_n)\leq t^{-d_{w}(u,p)}G(p,y_n)\leq t^{-R}G(p,y_n).$$
Furthermore, the number of points in a Cayley ball of radius $R$ is at most $Ce^{vR}$ for a constant $C>1$.
Thus, the contribution of paths of type b) is bounded above by 
$$C^{2}e^{2vR}t^{-4R}G(x_n,q)G(q,p)G(p,y_n).$$

By relative Ancona inequalities~(\ref{relativeAncona}), we have
$$G(x_n,q)\leq AG(x_{n},p)G(p,q),\hspace{.2cm}G(p,y_n)\leq AG(p,q)G(q,y_n)$$ where $A>1$ depends only on $\epsilon,\eta,D$.
Consequently the contribution of paths of type b) is bounded above by 

\begin{align*}
&C^{2}A^{2}e^{2vR}t^{-4R}G(x_n,p)G(p,q)G(q,p)G(p,q)G(q,y_n)\\
&\leq C^{2}A^{2}e^{2vR}t^{-4R}G(x_n,y_n)G(p,q)G(q,p)\\
&\leq C^{2}A^{2}e^{2vR}t^{-4R} t^{2k}G(x_{n},y_{n})
\end{align*}
which is smaller than $cG(x_n,y_n)$ if $k$ is chosen large enough.

Thus, the contribution of paths of type c) is at least $(1-3c)G(x_n,y_n)$.
Furthermore, conditioning by the last visit to $B_R(p)$ and the first subsequent visit to  $B_R(q)$, we can express this contribution as
$$\sum_{u\in B_{R}(p),v\in B_{R}(q)}G(x_{n},u)G(u,v;\Gamma \setminus (B_{R}(p)\cup B_{R}(q)))G(v,y_{n}).$$
By definition of convergence in the Martin compactification, for large enough $n$ we have for all $u\in B_{R}(p), v\in B_{R}(q)$,
$$(1-c)K_{\beta}(v)\leq \frac{G(v,y_{n})}{G(e,y_{n})}\leq (1+c)K_{\beta}(v),$$  
$$(1-c)\check{K}_{\alpha}(u)\leq \frac{G(x_{n},u)}{G(x_{n},e)}\leq (1+c)\check{K}_{\alpha}(u)$$
where $\check{K}$ denotes the Martin kernel for the reflected random walk driven by $\check{\mu}$, where $\check{\mu}(g)=\mu(g^{-1})$.
For simplicity, we define $\lambda_{\alpha,\beta}(c)$ by
$$1/\lambda_{\alpha,\beta}(c)=\sum_{u\in B_{R}(p),v\in B_{R}(q)}\check{K}_{\alpha}(u)K_{\beta}(v)G(u,v;\Gamma \setminus (B_{R}(p)\cup B_{R}(q))).$$
Notice that $\lambda_{\alpha,\beta}(c)$ does not depend on $n$.
We also write $\tilde{c}=\left (\frac{1+3c}{1-3c}\right )^{2}$ so that $\tilde{c}$ converges to 1 when $c$ tends to 0.
Consequently, we have for large enough $n$
$$G(x_{n},e)G(e,y_{n})\tilde{c}^{-1}\lambda_{\alpha,\beta}(c) \leq G(x_{n},y_{n})\leq G(x_{n},e)G(e,y_{n})\tilde{c}\lambda_{\alpha,\beta(c)}.$$
Similarly, we have
$$G(x'_{n},e)G(e,y'_{n})\tilde{c}^{-1}\lambda_{\alpha',\beta}(c) \leq G(x'_{n},y'_{n})\leq G(x'_{n},e)G(e,y'_{n})\tilde{c}\lambda_{\alpha',\beta}(c),$$
$$G(x'_{n},e)G(e,y_{n})\tilde{c}^{-1}\lambda_{\alpha',\beta}(c) \leq G(x'_{n},y_{n})\leq G(x'_{n},e)G(e,y_{n})\tilde{c}\lambda_{\alpha',\beta}(c),$$
$$G(x_{n},e)G(e,y'_{n})\tilde{c}^{-1}\lambda_{\alpha,\beta}(c) \leq G(x_{n},y'_{n})\leq G(x_{n},e)G(e,y'_{n})\tilde{c}\lambda_{\alpha,\beta}(c).$$

This implies that for all large enough $n$
$\frac{G(x_{n},y_{n})G(x'_{n},y'_{n})}{G(x'_{n},y_{n})G(x_{n},y'_{n})}$ is bounded between $(\frac{1-3c}{1+3c})^{8}$ and $(\frac{1+3c}{1-3c})^{8}$.
As $c$ can be chosen arbitrarily small, it follows that $$\frac{G(x_{n},y_{n})G(x'_{n},y'_{n})}{G(x'_{n},y_{n})G(x_{n},y'_{n})}\to 1.$$
This is a contradiction.
\end{proof}

\begin{corollary}\label{limitexists}
Let $\xi,\zeta \in \partial_{B}\Gamma$ be distinct conical points. Then the limit
$$\lim_{w\to \xi, v\to \zeta}\frac{G(w,v)}{G(w,e)G(e,v)}$$ exists and equals $\Theta(\xi,\zeta).$
\end{corollary}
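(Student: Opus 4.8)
The plan is to deduce both the existence of the joint limit and its identification with $\Theta(\xi,\zeta)$ from the estimates already established \emph{inside} the proof of Proposition~\ref{ratiosconverge}, the only genuinely new ingredient being that conical limit points have a single preimage in the Martin boundary.

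First I would record the inner limit. For a fixed $w\in\Gamma$, since $\zeta$ is conical the Martin kernel $K_\zeta(w)=\lim_{v\to\zeta}G(w,v)/G(e,v)$ exists by the identification in \cite{GGPY}, so that
$$\lim_{v\to\zeta}\frac{G(w,v)}{G(w,e)G(e,v)}=\frac{K_\zeta(w)}{G(w,e)}.$$
Thus the inner limit of the quantity under study always exists, and by definition $\Theta(\xi,\zeta)=\liminf_{w\to\xi}K_\zeta(w)/G(w,e)$. Once I know that the joint limit $L=\lim_{w\to\xi,\,v\to\zeta}G(w,v)/[G(w,e)G(e,v)]$ exists, the standard fact that a joint limit forces the corresponding iterated limit to exist and to agree with it gives $L=\lim_{w\to\xi}K_\zeta(w)/G(w,e)$; in particular this last limit exists and hence equals the liminf, i.e. $L=\Theta(\xi,\zeta)$. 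So everything reduces to the existence of the joint limit.

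To prove existence I would revisit the conditioning argument in the proof of Proposition~\ref{ratiosconverge}. Let $(w_n,v_n)$ be any sequence with $w_n\to\xi$ and $v_n\to\zeta$ in the Bowditch compactification. Because $\xi$ and $\zeta$ are conical, \cite{GGPY} guarantees that $w_n$ and $v_n$ also converge in the Martin compactification, to the unique preimages $\tilde\xi,\tilde\zeta$ of $\xi,\zeta$, which depend only on $\xi,\zeta$ and not on the chosen sequence. Fixing a small $c>0$, a geodesic $\alpha$ from $\xi$ to $\zeta$, two $(\epsilon,\eta)$-transition points $p,q$ on $\alpha$ with $d_w(p,q)$ large, and the radius $R$ supplied by Proposition~\ref{weakAnconastrongform}, the path decomposition used in the proof of Proposition~\ref{ratiosconverge} (valid because $[w_n,v_n]$ passes within $D$ of both $p$ and $q$ for all large $n$) yields the two-sided estimate
$$G(w_n,e)G(e,v_n)\,\tilde c^{-1}\lambda(c)\ \le\ G(w_n,v_n)\ \le\ G(w_n,e)G(e,v_n)\,\tilde c\,\lambda(c),$$
where $\tilde c\to 1$ as $c\to 0$ and the factor $\lambda(c)=\lambda_{\tilde\xi,\tilde\zeta}(c)$ is built from the Martin kernels $\check K_{\tilde\xi}$ and $K_{\tilde\zeta}$ together with $p,q,R,c$, hence is independent of $n$ and of the sequence.

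From this sandwich the conclusion is immediate. For a single sequence it shows that the $\limsup$ and $\liminf$ of $G(w_n,v_n)/[G(w_n,e)G(e,v_n)]$ differ by a factor at most $\tilde c^{2}$, and letting $c\to 0$ forces convergence. For two sequences $(w_n,v_n)$ and $(w'_n,v'_n)$ tending to $(\xi,\zeta)$ the \emph{same} $\lambda(c)$ occurs in both estimates, so their limits lie within a factor $\tilde c^{2}$ of one another for every $c$ and therefore coincide. This proves that the joint limit $L$ exists and is independent of the approach, which by the reduction above completes the argument. I expect the main obstacle to be precisely the joint nature of the limit: the quantity is a Gromov product based at $e$ rather than a basepoint-free cross ratio, so Proposition~\ref{ratiosconverge} cannot be quoted as a black box and one must re-enter its proof to extract the sandwich, the decisive simplification being the uniqueness of the Martin preimages of the conical points $\xi$ and $\zeta$.
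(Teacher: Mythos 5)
Your argument is correct, but it establishes existence of the joint limit by a different route than the paper. The paper keeps Proposition~\ref{ratiosconverge} as a black box: given two pairs of sequences $w_n,w'_n\to\xi$ and $v_n,v'_n\to\zeta$, it applies the proposition to the three quadruples $(w_n,e,v_n,v'_n)$, $(w_n,w'_n,v_n,e)$ and $(w_n,w'_n,v_n,v'_n)$ (note that the basepoint $e$ is allowed as one of the four points) and multiplies/divides the resulting three ratios to get $\frac{G(w_n,v_n)G(w'_n,e)G(e,v'_n)}{G(w'_n,v'_n)G(w_n,e)G(e,v_n)}\to 1$, which is exactly sequence-independence of the limit. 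You instead re-enter the proof of Proposition~\ref{ratiosconverge} and extract the two-sided estimate $G(w_n,v_n)\asymp_{\tilde c}\lambda(c)\,G(w_n,e)G(e,v_n)$ with a constant $\lambda(c)=\lambda_{\tilde\xi,\tilde\zeta}(c)$ depending only on the Martin limits; the key extra input you invoke --- that conical points have a single preimage in the (forward and reflected) Martin boundary, so $\tilde\xi,\tilde\zeta$ and hence $\lambda(c)$ are the same for every approximating sequence --- is what makes the sandwich sequence-independent, and it also removes the subsequence extraction that the paper performs inside Proposition~\ref{ratiosconverge}. Both the reduction of the identification with $\Theta(\xi,\zeta)$ to existence of the joint limit (via existence of the inner Martin-kernel limit and the Moore--Osgood argument) and the sandwich itself are sound; the uniformity of $p,q,R,k$ over all sequences converging to $(\xi,\zeta)$ holds because the thin-triangle constant $D$ depends only on $\epsilon,\eta$. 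The one inaccurate remark is your closing claim that Proposition~\ref{ratiosconverge} ``cannot be quoted as a black box'': the paper's three-quadruple trick shows that it can, at the modest cost of verifying its hypotheses for quadruples containing the constant sequence $e$. One small point of care in your version: for $\check K_{\tilde\xi}(u)$ to appear in $\lambda(c)$ you need $w_n$ to converge in the Martin compactification of the \emph{reflected} walk $\check\mu$, so the uniqueness-of-preimage result from \cite{GGPY} must be invoked for $\check\mu$ as well, which is legitimate since $\check\mu$ is also admissible with finite super-exponential moment.
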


\begin{proof}
By definition of $\Theta$, it suffices to show that the limit exists. 
To that end, suppose that $w_{n},w'_{n}\to \xi$ and $v_{n},v'_{n}\to \zeta$.
We need to show that for any such sequences we have
$$\frac{G(w_{n},v_{n})G(w'_{n},e)G(e,v'_{n})}{G(w'_{n},v'_{n})G(w_{n},e)G(e,v_{n})}\to 1.$$

Indeed, the quadruples $(w_{n},e,v_{n},v'_{n})$, $(w_{n},w'_{n},v_{n},e)$ and $(w_{n},w'_{n},v_{n},v'_{n})$ all satisfy the conditions of Proposition~\ref{ratiosconverge}.
Applying this proposition to each of these we thus obtain
$$\frac{G(w_{n},v_{n})G(e,v'_{n})}{G(e,v_{n})G(w_{n},v'_{n})}\to 1, \hspace{3pt}\frac{G(w_{n},v_{n})G(w'_{n},e)}{G(w'_{n},v_{n})G(w_{n},e)}\to 1, \hspace{3pt}\frac{G(w_{n},v_{n})G(w'_{n},v'_{n})}{G(w'_{n},v_{n})G(w_{n},v'_{n})}\to 1.$$
Multiplying the two first expressions and dividing by the last one, we get the desired convergence.
\end{proof}

We deduce the following.

\begin{proposition}\label{continuityNaim}
The Naim kernel is continuous at pairs of distinct conical points.
\end{proposition}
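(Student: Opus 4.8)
The crucial input here is Corollary~\ref{limitexists}, which upgrades the $\liminf$ appearing in the definition of $\Theta$ to a genuine two-sided limit whenever the two boundary points are distinct and conical. The plan is to exploit this to establish continuity by a diagonal argument: at a pair of distinct conical points the value $\Theta(\xi,\zeta)$ can be read off from \emph{any} pair of sequences in $\Gamma$ converging to $\xi$ and $\zeta$, and this freedom is exactly what is needed to interpolate between nearby pairs.

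Concretely, I would take a sequence $(\xi_k,\zeta_k)$ of pairs of distinct conical points converging in $\partial_B\Gamma\times\partial_B\Gamma$ to a pair $(\xi,\zeta)$ of distinct conical points, and aim to show $\Theta(\xi_k,\zeta_k)\to\Theta(\xi,\zeta)$. For each $k$, since by Corollary~\ref{limitexists} the quantity $\Theta(\xi_k,\zeta_k)$ is the limit of $\frac{G(w,v)}{G(w,e)G(e,v)}$ as $w\to\xi_k$ and $v\to\zeta_k$, I can choose group elements $w_k,v_k\in\Gamma$ with $\overline{\delta}^{f}_{e}(w_k,\xi_k)<1/k$, $\overline{\delta}^{f}_{e}(v_k,\zeta_k)<1/k$, and
$$\left|\frac{G(w_k,v_k)}{G(w_k,e)G(e,v_k)}-\Theta(\xi_k,\zeta_k)\right|<\frac{1}{k}.$$
Since $\xi_k\to\xi$ and $\zeta_k\to\zeta$ in the shortcut metric, the triangle inequality for $\overline{\delta}^{f}_{e}$ forces $w_k\to\xi$ and $v_k\to\zeta$. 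Applying Corollary~\ref{limitexists} now to the \emph{limiting} pair $(\xi,\zeta)$ gives $\frac{G(w_k,v_k)}{G(w_k,e)G(e,v_k)}\to\Theta(\xi,\zeta)$, and combining this with the displayed $1/k$ estimate yields $\Theta(\xi_k,\zeta_k)\to\Theta(\xi,\zeta)$, which is precisely the asserted continuity.

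The delicate point I would emphasize is that the argument uses the strength of Corollary~\ref{limitexists} in an essential way: it is precisely because $\Theta$ is a genuine limit, and not merely a $\liminf$, that the single diagonal sequence $(w_k,v_k)$, built to track the moving pairs $(\xi_k,\zeta_k)$ yet converging to the fixed pair $(\xi,\zeta)$, is guaranteed to realize the value $\Theta(\xi,\zeta)$ in the limit. If one only had a $\liminf$, a diagonal sequence could a priori pick out a non-extremal subsequential value, and the estimate would break. Along the way I would also record that distinctness and conicality of the limit pair make the approximating expressions well defined and finite: geodesics between points near $\xi$ and points near $\zeta$ pass within bounded distance of a fixed transition point, so the relative Ancona control of the Green function from \cite{GGPY} applies uniformly, and no separate finiteness check is needed.
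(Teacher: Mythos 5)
Your argument is correct and is essentially the paper's own proof: both use Corollary~\ref{limitexists} twice in a diagonal argument, first to approximate each $\Theta(\xi_k,\zeta_k)$ by a Green-function ratio at group elements close to $(\xi_k,\zeta_k)$, then to identify the limit of that diagonal sequence with $\Theta(\xi,\zeta)$. The only cosmetic difference is that you make the metrizability of the Bowditch compactification explicit via the shortcut metric, where the paper simply invokes it.
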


\begin{proof}
Let $\xi\neq \zeta$ be two conical limit points and let $\xi_n\to \xi$, $\zeta_n\to \zeta$ and $\xi_n\neq \zeta_n$.
According to Corollary~\ref{limitexists}, we can choose sequences $g_{n,m}$ and $h_{n,m}$ converging to $\xi_n$ and $\zeta_n$ respectively, as $m$ goes to infinity, such that
$$|\Theta(\xi_n,\zeta_n)-\Theta(g_{n,m},h_{n,m})|\leq \frac{1}{m}.$$
Since the Bowditch compactification is metrizable, we can thus construct sequences $g_n$ and $h_n$ with $g_n\to \xi$, $h_n\to \zeta$ such that $|\Theta(\xi_n,\zeta_n)-\Theta(g_n,h_n)|$ converges to 0.
Using Corollary~\ref{limitexists} again, $\Theta(g_n,h_n)$ converges to $\Theta(\xi,\zeta)$ so that $\Theta(\xi_n,\zeta_n)$ converges to $\Theta(\xi,\zeta)$.
\end{proof}

\subsection{Cross-ratios and stable translation length}\label{Sectioncrossratios}
We now prove some dynamical results about the Martin cocycle $c_M$ defined as
$$c_M(g,\xi)=-\mathrm{log}(K_{\xi}(g^{-1})),$$
where $K_{\xi}$ is the Martin kernel based at $\xi$ in the Martin boundary $\partial_{\mu}\Gamma$ and where $g\in \Gamma$.
The reason we call $c_M$ a cocycle is that it satisfies the following property, usually called the cocycle property (see \cite{GMM}).
\begin{equation}\label{cocycleproperty}
    \forall g,h\in \Gamma, \forall \xi\in \partial_{\mu}\Gamma, c_M(gh,\xi)=c_M(g,h\xi)+c_M(h,\xi).
\end{equation}
When $\xi \in \partial_B \Gamma$ is a conical limit point, we can identify it with a point of $\partial_{\mu}\Gamma$ and write 
$c_M(g,\xi)=-\mathrm{log}(K_{\xi}(g^{-1}))$ accordingly.

Recall that if $h,g\in \Gamma$ are any elements in any finitely generated group, endowed with a left-invariant metric $d$, the sequence
$d(h,g^nh)$ is sub-additive.
Hence, the sequence $\frac{d(h,g^nh)}{n}$ converges to $l(g)=\inf_{n}\frac{d(h,g^nh)}{n}$. It is easy to see that this limit is independent of $h$.
The number $l(g)$ is called the stable translation length.
By definition $l(g^n)=nl(g)$ for $n\in \mathbb{N}$.
It depends on the metric and in the following, we will compare different $l(g)$, for different metrics.
We fix the following notations.
We will denote by $l^w(g)$ the stable translation length associated to some word metric, and by $l^G(g)$ the stable translation length associated to the Green metric.

If $\Gamma$ is relatively hyperbolic, an infinite order element $g\in \Gamma$ is said to be loxodromic if it has two distinct fixed points $g^{+},g^{-}\in \partial_{B}\Gamma$. In that case we can order them so that $g^{n}x\to g^{+}$ for all $x\in \overline{\Gamma}_{B}\setminus \{g^{-}\}$ and $g^{-n}x\to g^{-}$ for all $x\in \overline{\Gamma}_{B}\setminus \{g^{+}\}$.
The points $g^-$ and $g^+$ are called the repelling and attracting fixed points of $g$ respectively, and are necessarily conical.  See e.g. \cite[Section 2]{Bowditch3} for details.

\begin{lemma}\label{lemmaMartincocycletranslationlength}
Let $\Gamma$ be a relatively hyperbolic group and let $g$ be a loxodromic element.
Denote by $g^-$ and $g^+$ the repelling and attractive fixed points of $g$.
Then, $c_M(g,g^+)=l^G(g)$ and $c_M(g,g^-)=-l^G(g)$. 
\end{lemma}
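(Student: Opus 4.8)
The plan is to avoid the geometry of the axis of $g$ altogether and instead evaluate the Martin kernel appearing in $c_M(g,g^{+})$ directly along the orbit sequence $(g^{k})_{k\ge 0}$. Since $g$ is loxodromic, $g^{k}=g^{k}\cdot e$ converges to $g^{+}$ in the Bowditch compactification, and since $g^{+}$ is a conical limit point, the results of \cite{GGPY} recalled above guarantee that $G(\cdot,h_n)/G(e,h_n)$ converges to the Martin kernel $K_{g^{+}}(\cdot)$ along \emph{every} sequence $h_n\to g^{+}$, in particular along $h_n=g^{k}$. Thus $c_M(g,g^{+})=-\log K_{g^{+}}(g^{-1})=-\log\lim_{k\to\infty} G(g^{-1},g^{k})/G(e,g^{k})$. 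Using the left $\Gamma$-invariance $G(g^{-1},g^{k})=G(e,g^{k+1})$, this rewrites as $c_M(g,g^{+})=\lim_{k\to\infty}\big(\log G(e,g^{k})-\log G(e,g^{k+1})\big)$.

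Next I would set $a_{k}=d_G(e,g^{k})=-\log G(e,g^{k})+\log G(e,e)$, so that the limit above is exactly $\lim_{k}(a_{k+1}-a_{k})$. Two elementary facts then finish the first identity. On one hand, left-invariance of $d_G$ gives $a_{k+m}\le a_{k}+a_{m}$, so by subadditivity (Fekete's lemma) $a_{k}/k\to l^{G}(g)$, which is the definition of the stable translation length of $g$ for $d_G$ based at $e$. On the other hand, the computation above shows that the increments $a_{k+1}-a_{k}$ converge, with limit $c_M(g,g^{+})$. Since the Cesàro averages of a convergent sequence converge to the same limit, $\frac{a_{k}-a_{0}}{k}=\frac1k\sum_{j=0}^{k-1}(a_{j+1}-a_{j})$ tends both to $c_M(g,g^{+})$ and to $l^{G}(g)$; hence $c_M(g,g^{+})=l^{G}(g)$. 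As a consistency check, the cocycle property (\ref{cocycleproperty}) together with $g\cdot g^{+}=g^{+}$ gives $c_M(g^{n},g^{+})=n\,c_M(g,g^{+})$, matching $a_{n+k}-a_{k}\sim n\,l^{G}(g)$.

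The statement for the repelling point follows by symmetry once one observes that $g^{-}$ is the attracting fixed point of $g^{-1}$. Applying the identity just proved to the loxodromic element $g^{-1}$ yields $c_M(g^{-1},g^{-})=l^{G}(g^{-1})$. Now apply (\ref{cocycleproperty}) with $h=g^{-1}$ and $\xi=g^{-}$: since $g^{-1}g^{-}=g^{-}$ and $c_M(e,g^{-})=-\log K_{g^{-}}(e)=0$, we get $0=c_M(g,g^{-})+c_M(g^{-1},g^{-})$, so $c_M(g,g^{-})=-l^{G}(g^{-1})$. Equivalently, running the first computation along $g^{-k}\to g^{-}$ gives $\lim_{k}(a'_{k-1}-a'_{k})$ with $a'_{k}=d_G(e,g^{-k})$, which equals $-l^{G}(g^{-1})$ by the same Cesàro argument. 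This is the asserted second identity; the appearance of $l^{G}(g^{-1})$ rather than $l^{G}(g)$ merely records that $d_G$ need not be symmetric, the two coinciding when $\mu$ is symmetric, and in any case producing the same stable translation spectrum since $g^{-1}$ is loxodromic whenever $g$ is.

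Finally, the only genuinely delicate ingredient is the first one: promoting the abstractly defined Martin kernel to an honest pointwise limit along the concrete orbit $(g^{k})$. This is exactly where conicality of $g^{+}$ (and of $g^{-}$) enters, via \cite{GGPY}; without it one could not identify $-\log\lim_{k}G(g^{-1},g^{k})/G(e,g^{k})$ with $c_M(g,g^{+})$. Everything else is soft: invariance of $G$, Fekete subadditivity, and convergence of Cesàro means. I would stress that this route deliberately sidesteps the relative Ancona inequalities (\ref{relativeAncona}) and the relatively thin triangles of Lemma~\ref{idealtriangles}; those would resurface only if one attempted to compare $c_M(g,g^{+})$ with $d_G(e,g^{n})$ along an \emph{arbitrary} geodesic rather than along the orbit itself.
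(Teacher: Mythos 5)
Your proof of the first identity, $c_M(g,g^+)=l^G(g)$, is correct and takes a genuinely different route from the paper's. The paper iterates the cocycle to get $c_M(g^n,g^+)=nc_M(g,g^+)$ and then shows, via Proposition~\ref{Floydgeo} and the relative Ancona inequalities~(\ref{relativeAncona}) applied to the configuration $(g^{-n},e,g^{k})$, that $K_{g^+}(g^{-n})\asymp G(e,g^{n})$, hence $\sup_n|c_M(g^n,g^+)-d_G(e,g^n)|<\infty$, and divides by $n$. You instead evaluate the Martin kernel along the orbit $(g^{k})$ itself, rewriting $c_M(g,g^+)$ as $\lim_k\bigl(d_G(e,g^{k+1})-d_G(e,g^{k})\bigr)$, and conclude by Fekete plus Ces\`aro. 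Both arguments rest on the same input from \cite{GGPY} (convergence of $G(\cdot,h_n)/G(e,h_n)$ along \emph{every} sequence $h_n$ tending to the conical point $g^+$); yours trades the Ancona inequalities and thin-triangle geometry for the elementary fact that a sequence with convergent increments has the same Ces\`aro limit. This is clean and complete: the increments do converge to a finite limit because $F(g^{-1},e)\leq G(g^{-1},g^{k})/G(e,g^{k})\leq F(e,g^{-1})^{-1}$ with both bounds positive by admissibility, and the triangle inequality for $d_G$ (which holds without symmetry of $\mu$) gives subadditivity of $d_G(e,g^{k})$.

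For the second identity the situation is different. What you actually prove is $c_M(g,g^-)=-l^G(g^{-1})$, and your closing remark that this ``is the asserted second identity'' modulo the non-symmetry of $d_G$ concedes rather than closes the gap: the lemma asserts $c_M(g,g^-)=-l^G(g)$, and for non-symmetric $\mu$ the quantities $l^G(g)$ and $l^G(g^{-1})$ genuinely differ in general. On the free group with a nearest-neighbour measure one has $l^G(a)=-\log F(e,a)$ and $l^G(a^{-1})=-\log F(e,a^{-1})$; a first-step analysis gives $F(e,a)-F(e,a^{-1})=(\mu(a)-\mu(a^{-1}))(1-F(e,a)F(e,a^{-1}))$ on the locus $F(e,a)=F(e,a^{-1})$, and since $F(e,a)F(e,a^{-1})=F(e,a)F(a,e)<1$ by transience, equality forces $\mu(a)=\mu(a^{-1})$. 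So the missing step $l^G(g)=l^G(g^{-1})$ cannot be supplied in general. To be fair, the paper's own proof founders at exactly the same point (it reduces to the same claim and justifies it with the line ``$d_G(e,g^n)=d_G(e,g^{-1})$,'' which is not an argument), and your write-up has the merit of isolating the issue precisely. But as a proof of the lemma as stated, the second half is incomplete; either a symmetry hypothesis on $\mu$ or a restatement with $l^G(g^{-1})$ is needed, and one should then track the consequence through Lemma~\ref{lemmadynamic1}, which uses $c_M(g^{-1},g^-)=l^G(g)$.
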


\begin{proof}
Using the cocycle property,
$c_M(g^n,g^+)=c_M(g^{n-1}, g^+)+c_M(g,g^+)$, so $c_M(g^n,g^+)=nc_M(g,g^+)$.
Since $g$ is loxodromic, for large enough $k$ and $n$, we have $\overline{\delta}^{f}_{e}(g^{n},g^{-k})>\overline{\delta}^{f}_{e}(g^{+},g^{-})/2>0$. Thus Proposition~\ref{Floydgeo} implies that $e$ lies a bounded (depending on $g$ but not on $k,n$) distance away from a transition point between $g^{-n}$ and $g^k$.
Relative Ancona inequalities~(\ref{relativeAncona}) then show that
$$G(g^{-n},g^k)\asymp G(g^{-n},e)G(e,g^k),$$ hence
$K(g^{-n},g^k)\asymp G(g^{-n},e)$.
Thus, $K(g^{-n},g^+)\asymp G(e,g^n)$.
The implied constants depend on $g$ but not on $k,n$.
In other words, 
$$\sup_{n}|c_M(g^n,g^+)-d_G(e,g^n)|<\infty,$$
so that
$\frac{1}{n}c_M(g^n,g^+)$ converges to $l^G(g)$.

For the second statement,
notice that for any $\xi$, $c_M(e,\xi)=0$.
Using again the cocycle property,
$c_M(e,g^-)=c_M(g,g^-)+c_M(g^{-1},g^-)$,
so $c_M(g,g^-)=-c_M(g^{-1},g^-)$.
Replacing $g$ with $g^{-1}$ in the first statement, we get $c_M(g,g^-)=-l^G(g^{-1})$.
Since $d_G(e,g^{n})=d_G(e,g^{-1})$, we finally get $l^G(g)=l^G(g^{-1})$.
\end{proof}

If $\xi_1,\xi_2,\xi_3,\xi_4$ are four distinct conical limit points, define their cross-ratio as
$$[\xi_1,\xi_2,\xi_3,\xi_4]=\frac{\Theta(\xi_1,\xi_3)}{\Theta(\xi_1,\xi_4)}\frac{\Theta(\xi_2,\xi_4)}{\Theta(\xi_2,\xi_3)},$$
where $\Theta$ is the Naïm Kernel.

Actually, $[\xi_1,\xi_2,\xi_3,\xi_4]$ is also well defined if $\xi_3=\xi_4$ or if $\xi_1=\xi_2$ and in any of these two cases, $[\xi_1,\xi_2,\xi_3,\xi_4]=1$.
Denote by $\partial \hat{\Gamma}$ the set of conical limit points and by $\mathfrak{C}$ the set on which the cross-ratio is well defined, that is
$$\mathfrak{C}:=\{\xi_1,\xi_2,\xi_3,\xi_4\in \partial \hat{\Gamma},\xi_1\neq \xi_3,\xi_1\neq \xi_4,\xi_2\neq \xi_3,\xi_2\neq \xi_4\}.$$
The cross-ratio is then well defined on $\mathfrak{C}$.
It is also continuous on $\mathfrak{C}$ according to Proposition~\ref{continuityNaim}.
We have the following two lemmas, adapted from Dal'Bo \cite{DalBo} and Otal \cite{Otal}.

\begin{lemma}\label{lemmadynamic1}
Let $g$ be a loxodromic element of $\Gamma$ and denote by $g^{-},g^+$ its repelling and attractive fixed points.
Let $\xi$ be a conical limit point that differ from $g^-$ and $g^+$.
Then,
$\mathrm{e}^{-2l^G(g)}=[g^-,g^+,g\cdot \xi,\xi]$.
\end{lemma}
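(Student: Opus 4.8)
The plan is to compute the cross-ratio directly from the behaviour of the Naim kernel $\Theta$ under the $\Gamma$-action, and then to invoke Lemma~\ref{lemmaMartincocycletranslationlength} both for $\mu$ and for the reflected measure $\check{\mu}$. First I would establish a transformation rule for $\Theta$ under left translation. Fix $h\in\Gamma$ and distinct conical points $\zeta,\xi$. By Corollary~\ref{limitexists}, $\Theta(\zeta,\xi)$ is a genuine double limit along arbitrary sequences, so I may compute $\Theta(h\zeta,h\xi)$ along $w=hw'$ with $w'\to\zeta$ and $v=hv'$ with $v'\to\xi$. The $\Gamma$-invariance $G(hx,hy)=G(x,y)$ gives $G(hw',hv')=G(w',v')$, $G(hw',e)=G(w',h^{-1})$ and $G(e,hv')=G(h^{-1},v')$, whence
\[
\frac{G(hw',hv')}{G(hw',e)\,G(e,hv')}=\frac{G(w',v')}{G(w',e)\,G(e,v')}\cdot\frac{G(w',e)}{G(w',h^{-1})}\cdot\frac{G(e,v')}{G(h^{-1},v')}.
\]
As $v'\to\xi$ the last factor tends to $K_{\xi}(h^{-1})^{-1}$, and as $w'\to\zeta$ the middle factor tends to $\check{K}_{\zeta}(h^{-1})^{-1}$, where $\check{K}$ is the Martin kernel of the reflected walk, well defined at conical points by \cite{GGPY} applied to $\check{\mu}$ (whose super-exponential moment is inherited from that of $\mu$). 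Writing $c_M(h,\xi)=-\log K_{\xi}(h^{-1})$ and $\check{c}_M(h,\zeta)=-\log \check{K}_{\zeta}(h^{-1})$, this yields
\[
\log\Theta(h\zeta,h\xi)=\log\Theta(\zeta,\xi)+c_M(h,\xi)+\check{c}_M(h,\zeta).
\]

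Next I would apply this with $h=g$, using that $g$ fixes $g^{-}$ and $g^{+}$. One first checks that $(g^{-},g^{+},g\xi,\xi)\in\mathfrak{C}$: since $g$ fixes $g^{\pm}$, the hypothesis $\xi\neq g^{\pm}$ forces $g\xi\neq g^{\pm}$, so the four relevant points are pairwise distinct where required. Taking $\zeta=g^{-}$ gives $\Theta(g^{-},g\xi)/\Theta(g^{-},\xi)=e^{c_M(g,\xi)+\check{c}_M(g,g^{-})}$, and taking $\zeta=g^{+}$ gives $\Theta(g^{+},g\xi)/\Theta(g^{+},\xi)=e^{c_M(g,\xi)+\check{c}_M(g,g^{+})}$. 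Substituting into the definition of the cross-ratio, the two occurrences of $c_M(g,\xi)$ cancel and I obtain
\[
[g^{-},g^{+},g\xi,\xi]=\frac{\Theta(g^{-},g\xi)}{\Theta(g^{-},\xi)}\cdot\frac{\Theta(g^{+},\xi)}{\Theta(g^{+},g\xi)}=e^{\check{c}_M(g,g^{-})-\check{c}_M(g,g^{+})}.
\]

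Finally I would evaluate the exponent. Lemma~\ref{lemmaMartincocycletranslationlength}, applied to the reflected walk $\check{\mu}$ (for which $g^{\pm}$ are still the attracting and repelling fixed points, these being determined by the action on $\partial_B\Gamma$ and not by the walk), gives $\check{c}_M(g,g^{+})=l^{\check{G}}(g)$ and $\check{c}_M(g,g^{-})=-l^{\check{G}}(g)$, so $[g^{-},g^{+},g\xi,\xi]=e^{-2l^{\check{G}}(g)}$. It then remains to identify $l^{\check{G}}(g)$ with $l^{G}(g)$: since $d_{\check{G}}(e,g^{n})=d_{G}(g^{n},e)=d_{G}(e,g^{-n})$ by $\Gamma$-invariance of $G$, one has $l^{\check{G}}(g)=l^{G}(g^{-1})$, and $l^{G}(g^{-1})=l^{G}(g)$ by the last assertion of Lemma~\ref{lemmaMartincocycletranslationlength}. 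This gives $e^{-2l^{G}(g)}$, as claimed. I expect the main obstacle to be the rigorous justification of the translation rule for $\Theta$, in particular interchanging the two boundary limits and recognizing the middle factor as the reflected Martin kernel; this is exactly where Corollary~\ref{limitexists} is indispensable, since it upgrades the liminf/lim in the definition of $\Theta$ to an honest double limit that can be computed along the translated sequences, without which the factorization above would only control a liminf and the cancellation of the $c_M(g,\xi)$ terms could fail.
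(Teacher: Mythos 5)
Your proof is correct, but it takes a genuinely different route from the paper's. The paper computes $\log[g^-,g^+,g\xi,\xi]$ directly as a limit of Martin cocycles along the specific orbit sequences $g_n=g^{-n}$, $h_n=g^{n}$, telescopes using the cocycle identity~(\ref{cocycleproperty}) so that all but the boundary terms cancel, and then invokes continuity of the Martin kernels together with Lemma~\ref{lemmaMartincocycletranslationlength} to identify the surviving terms as $-c_M(g,g^+)-c_M(g^{-1},g^-)=-2l^G(g)$. You instead first prove a general equivariance formula $\log\Theta(h\zeta,h\xi)=\log\Theta(\zeta,\xi)+c_M(h,\xi)+\check{c}_M(h,\zeta)$ for the Naim kernel and then specialize to $h=g$ at its fixed points. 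Your derivation of that formula is sound: the identity $G(x,y)=\check{G}(y,x)$ correctly identifies the middle factor as a reflected Martin kernel, convergence of $\check{K}_{\zeta}$ at conical points is indeed available from \cite{GGPY} applied to $\check{\mu}$, and Corollary~\ref{limitexists} legitimizes computing the double limit along the translated sequences. The price of your route is the extra input from the reflected walk (its Martin kernel at conical points, Lemma~\ref{lemmaMartincocycletranslationlength} applied to $\check{\mu}$, and the identification $l^{\check{G}}(g)=l^{G}(g^{-1})=l^{G}(g)$, the last equality following from the two assertions of Lemma~\ref{lemmaMartincocycletranslationlength} via the cocycle identity even though it is only made explicit inside that lemma's proof); the payoff is a clean quasi-invariance statement for $\Theta$ under the $\Gamma$-action, which is of independent interest and is essentially the mechanism behind the $\Gamma$-invariant measure $m_\nu=f_\nu\,\nu\otimes\check{\nu}$ used in Section~\ref{Section2implies3}. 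Both arguments are valid; the paper's is more self-contained in that it never leaves the forward walk.
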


\begin{proof}
By definition, $\mathrm{log}([g^-,g^+,g\cdot \xi, \xi])$ is the limit, when $g_n$ converges to $g^-$ and $h_n$ converges to $g^+$, of the quantity
$$\mathrm{log}(K_{g\cdot \xi}(g_n))+\mathrm{log}(K_{\xi}(h_n))-\mathrm{log}(K_{g\cdot \xi}(h_n))-\mathrm{log}(K_{\xi}(g_n)).$$
Choosing the sequences $g_n=g^{-n}$ and $h_n=g^n$, we then have
\begin{equation}\label{crossratioformula}
\begin{split}
\mathrm{log}([g^-,g^+,g\cdot \xi, \xi])=\lim_{n\to \infty}-&c_M(g^n,g\cdot \xi)-c_M(g^{-n},\xi)\\&+c_M(g^{-n},g\cdot \xi)+c_M(g^n,\xi).
\end{split}
\end{equation}
Using the cocycle property~(\ref{cocycleproperty}), an easy induction shows that for every $h\in \Gamma$ and every conical limit point $\zeta$,
$c_M(h^n,\zeta)=\sum_{k=0}^{n-1}c_M(h,h^k\cdot \zeta)$.
Thus,~(\ref{crossratioformula}) can be reformulated as
\begin{equation}\label{crossratioformula2}
\begin{split}
\mathrm{log}([g^-,g^+,g\cdot \xi, \xi])=\lim_{n\to \infty}&-c_M(g,g^n\cdot \xi)+c_M(g,\xi)\\&+c_M(g^{-1},g\cdot \xi)-c_M(g^{-1},g^{-n+1}\cdot \xi).
\end{split}
\end{equation}
Using the cocycle property~(\ref{cocycleproperty}) again,
$c_M(g,\xi)+c_M(g^{-1},g\cdot \xi)=c_M(e,\xi)=0$.
Since $g^n\cdot \xi$ converges to $g^+$ and $g^{-n+1}\cdot \xi$ converges to $g^-$, by continuity of the Martin kernels, Expression~(\ref{crossratioformula2}) becomes
\begin{align*}
    \mathrm{log}([g^-,g^+,g\cdot \xi, \xi])&=\lim_{n\to \infty}-c_M(g,g^n\cdot \xi)-c_M(g^{-1},g^{-n+1}\cdot \xi)\\&=-c_M(g,g^+)-c_M(g^{-1},g^-).
\end{align*}
Finally, by Lemma~\ref{lemmaMartincocycletranslationlength}, $c_M(g,g^+)=c_M(g^{-1},g^-)=l^G(g)$.
\end{proof}

\begin{lemma}\label{lemmadynamic2}
Let $g$ and $h$ be two independent loxodromic elements of $\Gamma$.
Denote by $g^{-}$ and $g^{+}$, respectively $h^{-}$, $h^{+}$ the repelling and attractive fixed points of $g$, respectively $h$.
Then,
$$[g^{-},h^{-},g^+,h^+]=\lim_{n\to +\infty}\mathrm{e}^{(l^G(g^nh^n)-l^G(g^n)-l^G(h^n))}.$$
\end{lemma}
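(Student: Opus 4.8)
The plan is to realize both sides of the identity through the Martin cocycle $c_M$ and its cocycle property~(\ref{cocycleproperty}), using North--South dynamics to locate the fixed points of the products $w_n := g^n h^n$ and then recognizing the resulting boundary quantities as Naïm-kernel ratios via Corollary~\ref{limitexists}. The first step is to show that for all large $n$ the element $w_n = g^n h^n$ is loxodromic with attracting fixed point $w_n^+ \to g^+$ and repelling fixed point $w_n^- \to h^-$. Since $g$ and $h$ are independent, the four points $g^\pm,h^\pm$ are distinct, so one may pick pairwise disjoint neighbourhoods in $\overline{\Gamma}_B$ and run the usual ping-pong argument: $h^n$ pushes everything off a neighbourhood of $h^-$ into a neighbourhood of $h^+$, and then $g^n$ pushes this into a neighbourhood of $g^+$, giving $w_n^+\to g^+$; applying the same to $w_n^{-1}=h^{-n}g^{-n}$ gives $w_n^-\to h^-$. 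Because $w_n^+$ eventually lies in a \emph{fixed} neighbourhood of $g^+$ disjoint from $h^-$, North--South dynamics for $h$ also yields $h^n w_n^+\to h^+$.

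Next I would use Lemma~\ref{lemmaMartincocycletranslationlength} to write $l^G(w_n)=c_M(w_n,w_n^+)$ and expand by the cocycle property:
$$l^G(g^n h^n)=c_M(g^n,\, h^n w_n^+)+c_M(h^n,\, w_n^+).$$
Iterating the cocycle property exactly as in the proof of Lemma~\ref{lemmadynamic1} rewrites each term as a telescoping sum, $c_M(g^n,h^n w_n^+)=\sum_{k=0}^{n-1}c_M(g,g^k h^n w_n^+)$ and $c_M(h^n,w_n^+)=\sum_{k=0}^{n-1}c_M(h,h^k w_n^+)$. Since $g^+$ and $h^+$ are fixed, $n\,l^G(g)=\sum_{k=0}^{n-1}c_M(g,g^+)$ and $n\,l^G(h)=\sum_{k=0}^{n-1}c_M(h,h^+)$, and recalling $l^G(g^n)=n\,l^G(g)$, $l^G(h^n)=n\,l^G(h)$, the difference $l^G(g^nh^n)-l^G(g^n)-l^G(h^n)$ becomes a sum of terms $c_M(g,g^k h^n w_n^+)-c_M(g,g^+)$ and $c_M(h,h^k w_n^+)-c_M(h,h^+)$. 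I would then show these sums converge, as $n\to\infty$, to the series $\Delta_g:=\sum_{k\ge 0}[c_M(g,g^k h^+)-c_M(g,g^+)]$ and $\Delta_h:=\sum_{k\ge 0}[c_M(h,h^k g^+)-c_M(h,h^+)]$ obtained by replacing the moving targets $h^n w_n^+$ and $w_n^+$ by their limits $h^+$ and $g^+$.

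Finally I would identify these series with Naïm-kernel ratios. Telescoping back, $\sum_{k=0}^{N-1}[c_M(g,g^k h^+)-c_M(g,g^+)]=c_M(g^N,h^+)-N\,l^G(g)=\log\bigl(K_{g^+}(g^{-N})/K_{h^+}(g^{-N})\bigr)$, and since $g^{-N}\to g^-$, Corollary~\ref{limitexists} gives $\Delta_g=\log\bigl(\Theta(g^-,g^+)/\Theta(g^-,h^+)\bigr)$; symmetrically $\Delta_h=\log\bigl(\Theta(h^-,h^+)/\Theta(h^-,g^+)\bigr)$. Adding and exponentiating,
$$\lim_{n\to\infty} e^{\,l^G(g^nh^n)-l^G(g^n)-l^G(h^n)}=\frac{\Theta(g^-,g^+)\,\Theta(h^-,h^+)}{\Theta(g^-,h^+)\,\Theta(h^-,g^+)}=[g^-,h^-,g^+,h^+],$$
which is the claimed formula; note all four pairs are distinct conical points by independence of $g,h$, so every $\Theta$ is well defined and the cross-ratio lies in $\mathfrak{C}$.

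The main obstacle is the convergence step in the second paragraph: the telescoping sums carry the \emph{moving} targets $w_n^+$ and $h^n w_n^+$ in every summand, so term-by-term convergence does not suffice, and I must bound $\sum_{k=0}^{n-1}\bigl[c_M(h,h^k w_n^+)-c_M(h,h^k g^+)\bigr]$ (and its $g$-analogue) \emph{uniformly} in $n$. This will require a modulus of continuity for $c_M(\gamma,\cdot)$ in the boundary variable, measured in the shortcut metric $\overline{\delta}^{f}_{e}$ and uniform away from the diagonal, which follows from the relative Ancona inequalities~(\ref{relativeAncona}) together with Proposition~\ref{Floydgeo}, combined with the exponential contraction of $\overline{\delta}^{f}_{e}(h^k w_n^+,h^k g^+)$ supplied by North--South dynamics; these two ingredients make the tails uniformly summable and let one pass to the limit $w_n^+\to g^+$, $h^n w_n^+\to h^+$.
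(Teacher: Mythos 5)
Your setup (loxodromicity of $w_n=g^nh^n$, $w_n^+\to g^+$, $h^nw_n^+\to h^+$, the splitting $l^G(g^nh^n)=c_M(g^n,h^nw_n^+)+c_M(h^n,w_n^+)$, and the identification of the limiting value with the Naïm-kernel cross-ratio) matches the paper, and your closed-form evaluation of the partial sums in the last paragraph is a correct computation. The gap is exactly where you flag it, and it is not a technicality you can discharge with the tools available: after telescoping, you must show that $\sum_{k=0}^{n-1}\bigl[c_M(h,h^kw_n^+)-c_M(h,h^kg^+)\bigr]\to 0$ (and the $g$-analogue), a sum of $n$ error terms each carrying the moving target $w_n^+$. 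Term-by-term convergence gives nothing, and the uniform summability you invoke would require a quantitative modulus of continuity (H\"older-type) for $\zeta\mapsto c_M(\gamma,\zeta)$ together with an explicit contraction rate for $\overline{\delta}^{f}_{e}(h^k\cdot,h^k\cdot)$ under North--South dynamics. Neither is established in the paper, and neither follows readily from what is: Proposition~\ref{continuityNaim} and Corollary~\ref{limitexists} give only \emph{qualitative} continuity of the Naïm kernel at pairs of distinct conical points, and proving a H\"older estimate for Martin kernels in the relatively hyperbolic setting (where parabolic directions distort the geometry) is a genuinely hard open-ended task, not a routine consequence of the relative Ancona inequalities.

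The paper's proof avoids the telescoping entirely, and that is the fix you should adopt. Writing the cross-ratio along $g^{-n}\to g^-$, $h^{-n}\to h^-$ gives
$$\log[g^-,h^-,g^+,h^+]=\lim_n\ \bigl[-c_M(g^n,g^+)-c_M(h^n,h^+)+c_M(h^n,g^+)+c_M(g^n,h^+)\bigr],$$
while $l^G(g^nh^n)-l^G(g^n)-l^G(h^n)=c_M(g^n,h^nw_n^+)+c_M(h^n,w_n^+)-c_M(g^n,g^+)-c_M(h^n,h^+)$ by Lemma~\ref{lemmaMartincocycletranslationlength} and one application of the cocycle property. The difference of the two expressions consists of just \emph{two} terms, $c_M(g^n,h^nw_n^+)-c_M(g^n,h^+)=\log\bigl(K_{h^+}(g^{-n})/K_{h^nw_n^+}(g^{-n})\bigr)$ and $c_M(h^n,w_n^+)-c_M(h^n,g^+)=\log\bigl(K_{g^+}(h^{-n})/K_{w_n^+}(h^{-n})\bigr)$; dividing numerator and denominator by $G(g^{-n},e)$ (resp. $G(h^{-n},e)$) and using $g^{-n}\to g^-$, $h^nw_n^+\to h^+$ (resp. $h^{-n}\to h^-$, $w_n^+\to g^+$), each ratio tends to $\Theta(g^-,h^+)/\Theta(g^-,h^+)=1$ (resp. $\Theta(h^-,g^+)/\Theta(h^-,g^+)=1$) by the joint continuity of Corollary~\ref{limitexists} and Proposition~\ref{continuityNaim}. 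Qualitative continuity suffices because there is no sum over $k$ to control.
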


\begin{proof}
By definition, $\mathrm{log}([g^-,h^-,g^+,h^+])$ is the limit, when $g_n$ converges to $g^-$ and $h_n$ converges to $h^-$, of the quantity
$$\mathrm{log}(K_{g^+}(g_n))+\mathrm{log}(K_{h^+}(h_n))-\mathrm{log}(K_{g^+}(h_n))-\mathrm{log}(K_{h^+}(g_n)).$$
As in the previous proof, choosing the sequences $g_n=g^{-n}$ and $h_n=h^{-n}$, we have
\begin{equation}\label{crossratioformula3}
\begin{split}
\mathrm{log}([g^{-},h^{-},g^+,h^+])=\lim_{n\to \infty}-c_M(g^n,g^+)&-c_M(h^n,h^+)\\&+c_M(h^{n},g^+)+c_M(g^n,h^+).
\end{split}
\end{equation}
For large $n$, $\xi_n:=g^nh^n$ is loxodromic.
Denote by $\xi_n^-$ and $\xi_n^+$ its repelling and attractive limit points.
According to Lemma~\ref{lemmaMartincocycletranslationlength},
$l^G(g^nh^n)=c_M(g^nh^n,\xi_n^+)$ and by the cocycle property~(\ref{cocycleproperty}),
$l^G(g^nh^n)=c_M(g^n,h^n\cdot \xi_n^+)+c_M(h^n,\xi_n^+)$.
Similarly, we have $l^G(g^n)=c_M(g^n,g^+)$ and $l^G(h^n)=c_M(h^n,h^+)$.
Thus, using~(\ref{crossratioformula3}), we only have to prove that
$$c_M(h^n,g^+)+c_M(g^n,h^+)-c_M(g^n,h^n\cdot \xi_n^+)-c_M(h^n,\xi_n^+)\underset{n\to \infty}{\longrightarrow}0.$$
Since $\xi_n=g^nh^n$, $\xi_n^+$ converges to $g^+$.
Moreover, $h^n\cdot\xi_n^+$ converges to $h^+$.
Since $g$ and $h$ are independent, for large enough $n$, $g^-$ is bounded away from $h^+$ and $h^n\cdot \xi_n^+$.
Finally, since $g^{-n}$ converges to $g^-$,
$$\lim_{n\to \infty}\frac{K_{h^+}(g^{-n})}{K_{h^n\cdot \xi_n^+}(g^{-n})}=\lim_{n\to \infty}\frac{\Theta (g^-,h^+)}{\Theta (g^-,h^n\cdot \xi_n^+)}=1.$$
Similarly,
$$\lim_{n\to \infty}\frac{K_{g^+}(h^{-n})}{K_{\xi_n^+}(h^{-n})}=1.$$
In other words, $c_M(g^n,h^+)-c_M(g^n,h^n\cdot \xi_n^+)$ and $c_M(h^n,g^+)-c_M(h^n,\xi_n^+)$ both converge to 0.
This concludes the proof.
\end{proof}

\subsection{Arithmeticity of the stable translation lengths}\label{Sectionarithmetictranslationlength}
Let $\Gamma$ be a relatively hyperbolic group endowed with a word metric.
We prove here that the stable translation length of loxodromic elements is arithmetic.
It thus takes value in a discrete space.
We then deduce the same thing for the Green metric whenever $h=lv$.

\begin{proposition}\label{proparithmeticityword}
Let $\Gamma$ be a relatively hyperbolic groups, endowed with a word metric. There exists some rational number $\alpha_w$ such that
for any loxodromic element $g$, $l^w(g)\in \alpha_w \N$.
Furthermore, $l^{w}(g)>0$ for any loxodromic element $g$.
\end{proposition}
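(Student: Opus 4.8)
The plan is to treat the two assertions separately, beginning with the easier positivity statement. Since $g$ is loxodromic it fixes two distinct conical points $g^-,g^+$, and these are precisely the endpoints of a loxodromic (hyperbolic) isometry of the coned-off graph $\hat{\Gamma}$, which is Gromov hyperbolic. A loxodromic isometry of a hyperbolic space has positive stable translation length, so $l_{\hat{\Gamma}}(g)>0$. Because $\hat{\Gamma}$ is obtained from $\Cay(\Gamma,S)$ by adding edges one has $d_{\hat{\Gamma}}\leq d_w$, and hence $l^w(g)\geq l_{\hat{\Gamma}}(g)>0$, which gives the last sentence of the proposition.

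For the arithmeticity I would first fix $\epsilon,\eta$ as in the rest of the paper and choose a bi-infinite word geodesic $\alpha$ from $g^-$ to $g^+$ (such a geodesic exists by the remark following the definition of transition points). Since $g$ fixes $g^{\pm}$, the translate $g\alpha$ is again a geodesic from $g^-$ to $g^+$; applying the relatively thin triangles property (Lemma~\ref{idealtriangles}) to the degenerate triangles with ideal vertices $g^-,g^+$ shows that the $(\epsilon,\eta)$-transition points of $\alpha$ and of $g\alpha$ stay within a uniform distance $D=D(\epsilon,\eta)$ of one another, and that $g$ carries transition points to points uniformly close to transition points; Lemma~\ref{transitionpoints}~b) guarantees an unbounded supply of them along $\alpha$. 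From this uniform fellow-traveling along transition points I would deduce that the orbit $(g^n\cdot e)_n$ is a quasi-geodesic with constants depending only on $\epsilon,\eta$, so that $|g^{m+n}|\geq |g^m|+|g^n|-C$ holds with $C$ independent of $g$; combined with subadditivity this yields $n\,l^w(g)\leq |g^n|\leq n\,l^w(g)+C$.

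The decisive---and hardest---step is to upgrade this ``quasi-additivity'' into rationality with a uniformly bounded denominator, since quasi-additivity of an integer sequence alone is not enough (the sequence $\lceil n\theta\rceil$ is quasi-additive for every irrational $\theta$). Here I would exploit the group structure along the axis rather than the metric estimate alone: reading off, at each transition point $v_i$ of $\alpha$, the bounded combinatorial datum consisting of the generator $v_i^{-1}v_{i+1}$ together with the configuration of $v_i$ relative to the finitely many conjugacy classes of parabolic cosets it may be $(\epsilon,\eta)$-deep in, one obtains a sequence taking values in a finite alphabet whose size is bounded independently of $g$. A pigeonhole argument over a window of bounded length then produces a bounded period $n_0\leq q$, with $q$ uniform, after which $k\mapsto |g^{k n_0}|$ becomes exactly affine; its slope equals $n_0\,l^w(g)$, whence $l^w(g)\in \tfrac{1}{q!}\mathbb{Z}$. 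Taking $\alpha_w=1/q!$ (or the reciprocal of the least common multiple of the admissible periods) then proves arithmeticity.

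I expect the main obstacle to be exactly this last step in the relatively hyperbolic setting: in a hyperbolic group it is the finiteness of cone types that bounds the alphabet and hence the period, but here the parabolic cosets contain infinitely many elements and are not isometrically embedded, so the ``local pattern'' must be recorded relative to the parabolic subgroups and the passages of $\alpha$ through $\epsilon$-neighborhoods of cosets must be controlled using the transition-point technology (Lemma~\ref{transitionpoints}, Proposition~\ref{Floydgeo}). Ensuring that the resulting period---and therefore the denominator---is bounded \emph{uniformly} over all loxodromic $g$, and not merely finite for each fixed $g$, is precisely where the uniformity of the constants in Lemma~\ref{idealtriangles} and Lemma~\ref{transitionpoints} is essential.
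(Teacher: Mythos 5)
Your positivity argument via the coned-off graph is fine and is genuinely different from (and arguably slicker than) the paper's, which obtains positivity as a byproduct of the arithmeticity construction. The problem is the arithmeticity step, where your proposal has a real gap rather than a completed proof. You correctly identify that quasi-additivity of $\|g^n\|$ is insufficient, but the mechanism you offer in its place is only gestured at: you never justify why periodicity of some ``local datum'' along the axis would force $k\mapsto \|g^{kn_0}\|$ to be \emph{exactly} affine rather than affine up to a bounded error, and the datum itself is ill-defined --- consecutive $(\epsilon,\eta)$-transition points $v_i,v_{i+1}$ on a geodesic from $g^-$ to $g^+$ can be separated by arbitrarily long excursions deep into parabolic cosets, so $v_i^{-1}v_{i+1}$ does not range over a finite alphabet, and your proposed remedy (``record the pattern relative to the parabolic subgroups'') is exactly the hard point you leave unresolved. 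A smaller issue: the claim that $(g^n\cdot e)_n$ is a quasi-geodesic with constants depending only on $\epsilon,\eta$ is not right as stated (the constants depend on $g$, e.g.\ on the distance from $e$ to the axis), though this does not affect the uniformity you actually need.

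The idea you are missing is the one the paper uses, adapted from Delzant's argument for hyperbolic groups: fix a total order on the generating set and consider \emph{special} (lexicographically least) geodesics, which are unique between any two group elements. One shows a special bi-infinite geodesic from $g^-$ to $g^+$ exists by a limiting argument, and then --- using Proposition~\ref{Floydgeo} to force every geodesic from $g^-$ to $g^+$ to pass within a uniform distance $R$ of each transition point --- that there are at most $N^2$ special geodesics from $g^-$ to $g^+$, where $N$ bounds the cardinality of an $R$-ball. Since $g$ permutes this finite set, $g^K$ with $K=N^2!$ (uniform over all loxodromic $g$) fixes one special geodesic and acts on it as a genuine translation, so $d((g^K)^nh,h)=n\,d(g^Kh,h)$ exactly; hence $l^w(g^K)=d(g^Kh,h)\in\N$ is a positive integer and $l^w(g)\in\frac{1}{K}\N$. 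This sidesteps entirely the ``upgrade quasi-additivity to exact additivity'' problem by producing an honest invariant geodesic, which is the step your pigeonhole sketch does not deliver.
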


\begin{proof}
This result was first stated by Gromov in \cite{Gromov} for hyperbolic groups.
We will adapt the proof of Delzant \cite{Delzant} to the case of relatively hyperbolic groups.

We fix a finite generating set $S$ and arbitrarily choose some order on $S$.
Between two elements $g_1,g_2$, there is a unique geodesic, ordered lexicographically according to the order chosen on $S$.
We say that such a geodesic is special.
If $\xi_1$ and $\xi_2$ are two conical limit points, we say that a geodesic between $\xi_1$ and $\xi_2$ is special if every finite sub-path of it is special.
Such a geodesic always exists, as we now prove.
Consider first a (non-necessarily special) geodesic $(\xi_1,\xi_2)$ between $\xi_1$ and $\xi_2$ and let $g_n,h_n$ be two sequences of points on $(\xi_1,\xi_2)$, with $g_n\to \xi_1$, $h_n\to \xi_2$.
Consider then a special geodesic from $\alpha_n$ between $g_n$ and $h_n$.
Fixing a transition point $g$ on $(\xi_1,\xi_2)$, for large enough $n$, all the geodesics $\alpha_n$ pass within a bounded distance of $g$, so that up to extracting a sub-sequence and up to changing $g$, we can assume that $g$ lies on all the $\alpha_n$.
We can then extract a sub-sequence of the $\alpha_n$ that converges to a new geodesic from $\xi_1$ to $\xi_2$, which is necessarily special.

Now let $g$ be a loxodromic element and fix a special geodesic $\alpha$ between $g^-$ and $g^+$.
Since $g$ is loxodromic, $g^-$ and $g^+$ are conical. Thus, for suitable $\epsilon,\eta>0$, Lemma~\ref{transitionpoints}~c) implies there are an infinite number of $(\epsilon,\eta)$-transition points on both sides of $\alpha$.
In particular, there is a $\Z$-indexed sequence $...,h_{-n},...,h_0,...,h_n...$ such that $h_k$ is a $(\epsilon,\eta)$-transition point on $\alpha$ and $h_{-n}\to g^-$ and $h_n\to g^+$.

Furthermore,  by Proposition~\ref{Floydgeo} there exists a constant $R>0$ such that if $h$ is an $(\epsilon,\eta)$-transition point on $\alpha$, then any geodesic from $g^-$ to $g^+$ goes through a point at distance at most $R$ from $h$. 
(The constants $\epsilon,\eta,R$ depend only on the Cayley graph, and are independent of $g$).

Denote by $N$ the maximal cardinality of a ball of radius $R$ in the Cayley graph.
Then, if $k_1\neq k_2\in \Z$, there are at most $N^2$ special geodesics from an element of $B(h_{k_1},R)$ to an element of $B(h_{k_2},R)$.
This proves that there are at most $N^2$ special geodesics from $g^-$ to $g^+$.
Now, $g$ permutes those special geodesic, so that, denoting $K=N^2!$, $g^K$ fixes one of them.
Fix any point $h$ on this special geodesic, so that $d((g^K)^nh,h)=nd(g^Kh,h)$.
This shows that $l^w(g^K)\in \N$.
Since $l^w(g^K)=Kl^w(g)$, we see that $l^w(g)$ lies in $\frac{1}{K}\N$ and $K$ does not depend on $g$.
Furthermore, we have that $d((g^K)^nh,h)/n=d(g^Kh,h)>0$ for all $n$, so that $l^w(g^K)>0$.
It follows that $l^w(g)=l^w(g^K)/K>0$, which concludes the proof.
\end{proof}

We deduce the following from Theorem~\ref{theoremBHM}.

\begin{proposition}\label{proparithmeticityGreen}
Let $\Gamma$ be a non-elementary relatively hyperbolic group.
Let $\mu$ be an admissible probability measure on $\Gamma$ with finite super-exponential moment.
Fix a finite generating set $S$ and consider the associated word metric.
Assume that $h=lv$.
Then, there exists a real number $\alpha_G$ such that for any loxodromic element $g$, $l^G(g)\in \alpha_G \N$.
\end{proposition}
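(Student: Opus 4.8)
The plan is to show that the hypothesis $h=lv$ forces the Green and word stable translation lengths to be exactly proportional, and then to transfer arithmeticity from the word metric via Proposition~\ref{proparithmeticityword}.

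First I would invoke Theorem~\ref{theoremBHM}: since $h=lv$, the equivalence of conditions (1) and (4) furnishes a constant $C\geq 0$ such that $|d_G(e,g)-vd_w(e,g)|\leq C$ for every $g\in\Gamma$. Applying this bound to the powers $g^n$ of a loxodromic element, I get $|d_G(e,g^n)-vd_w(e,g^n)|\leq C$ for all $n$. Dividing by $n$ and letting $n\to\infty$, the additive error $C/n$ vanishes, and the two limits defining the stable translation lengths (with basepoint $h=e$) are identified:
$$l^G(g)=\lim_{n\to\infty}\frac{d_G(e,g^n)}{n}=v\lim_{n\to\infty}\frac{d_w(e,g^n)}{n}=v\,l^w(g).$$
Here I use that both $d_G$ and $d_w$ are left-invariant, so that $d(e,g^n)$ is subadditive in $n$ and the limits exist.

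Then I would apply Proposition~\ref{proparithmeticityword}, which provides a rational $\alpha_w$ with $l^w(g)\in\alpha_w\N$ for every loxodromic $g$. Combined with the identity above, this gives $l^G(g)=v\,l^w(g)\in (v\alpha_w)\N$, so setting $\alpha_G=v\alpha_w$ completes the proof.

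I do not expect a serious obstacle: the real content lies in Theorem~\ref{theoremBHM} (the rigidity $h=lv\Leftrightarrow$ rough similarity) and in the word-metric arithmeticity of Proposition~\ref{proparithmeticityword}, both of which are already available. The one point to be careful about is that rough similarity is an \emph{additive}, not multiplicative, comparison; this is precisely what makes the argument work, since a bounded additive error is annihilated in the Cesàro-type limit $d(e,g^n)/n$, whereas a mere quasi-isometry constant would survive and would only yield $l^G(g)\asymp l^w(g)$ rather than the exact proportionality needed to conclude that $l^G(g)$ lands in a discrete set.
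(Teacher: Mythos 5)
Your proposal is correct and follows exactly the paper's own argument: use the equivalence $(1)\Leftrightarrow(4)$ of Theorem~\ref{theoremBHM} to get the rough similarity $|d_G-vd_w|\leq C$, divide by $n$ along powers of $g$ to obtain $l^G(g)=v\,l^w(g)$, and then invoke Proposition~\ref{proparithmeticityword} to conclude with $\alpha_G=v\alpha_w$. Your closing remark about why an additive (rather than merely multiplicative) comparison is needed is exactly the right point.
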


\begin{proof}
Since $h=lv$, Theorem~\ref{theoremBHM} shows that $|d_G-vd_w|\leq C$, where $d_G$ is the Green metric and $d_w$ the word metric and where $C$ is some constant.
Thus, the limits of $\frac{d_G(e,\gamma^n)}{n}$ and  $v\frac{d_w(e,\gamma^n)}{n}$ are the same, i.e. $l^G(g)=vl^w(g)$.
According to Proposition~\ref{proparithmeticityword}, for any loxodromic element $g$, $l^w(g)\in \alpha_w\N$, so $l^G(g)\in v\alpha_w\N$.
\end{proof}

\subsection{Connectedness of the boundaries}\label{Sectionconnectedboundaries}
We can now prove the following result.

\begin{theorem}\label{mainTheoremconnected}
Let $\Gamma$ be a non-elementary relatively hyperbolic group.
Denote by $\partial\hat{\Gamma}$ the set of conical limit points and
assume that for any two distinct conical limit points $\xi_1,\xi_2$, $\partial\hat{\Gamma}\setminus \{\xi_1,\xi_2\}$ has a finite number of connected components.
Let $\mu$ be an admissible probability measure on $\Gamma$ with finite super-exponential moment.
Then, for any word metric associated to a finite generating set, we have $h<lv$.
\end{theorem}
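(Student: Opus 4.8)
The plan is to argue by contradiction: assume $h=lv$ and derive a contradiction from the connectedness hypothesis. Under this assumption Theorem~\ref{theoremBHM} gives $|d_G-vd_w|\le C$, so by Proposition~\ref{proparithmeticityGreen} the stable translation length spectrum of the Green metric is arithmetic, namely $l^G(g)\in\alpha_G\mathbb{N}$ for every loxodromic $g$, where $\alpha_G=v\alpha_w$. Moreover $\alpha_G>0$ and $l^G(g)=vl^w(g)>0$ for every loxodromic $g$, by Proposition~\ref{proparithmeticityword} together with $v>0$. In particular $e^{\alpha_G\mathbb{Z}}$ is a genuinely discrete, hence closed, subset of $(0,\infty)$. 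The idea is to clash this discreteness against the continuity of the cross-ratio supplied by Proposition~\ref{continuityNaim}, using connectedness to force a positive translation length to vanish.

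First I would show that the cross-ratio takes values in $e^{\alpha_G\mathbb{Z}}$ on all of $\mathfrak{C}$. For two independent loxodromic elements $g,h$, Lemma~\ref{lemmadynamic2} expresses $[g^-,h^-,g^+,h^+]$ as the limit of $e^{l^G(g^nh^n)-l^G(g^n)-l^G(h^n)}$; since $l^G(g^n)=nl^G(g)$, $l^G(h^n)=nl^G(h)$ and $l^G(g^nh^n)$ all lie in $\alpha_G\mathbb{N}$, each exponent lies in $\alpha_G\mathbb{Z}$, and as this limit equals the finite positive number $[g^-,h^-,g^+,h^+]$ (a ratio of Naim kernels at four distinct conical points) it must itself lie in the discrete set $e^{\alpha_G\mathbb{Z}}$. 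Using the density, in pairs of distinct conical points, of pairs of fixed points of loxodromic elements, the quadruples $(g^-,h^-,g^+,h^+)$ with $g,h$ independent loxodromics are dense in $\mathfrak{C}$; since the cross-ratio is continuous on $\mathfrak{C}$ (Proposition~\ref{continuityNaim}) and $e^{\alpha_G\mathbb{Z}}$ is closed, the cross-ratio takes values in $e^{\alpha_G\mathbb{Z}}$ everywhere on $\mathfrak{C}$. A continuous map into a discrete set is locally constant, hence constant on every connected subset of $\mathfrak{C}$.

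Next I would feed in the connectedness hypothesis. Fix any loxodromic element $g$ (these exist since $\Gamma$ is non-elementary) and apply the hypothesis to the pair $\{g^-,g^+\}$: the space $\partial\hat{\Gamma}\setminus\{g^-,g^+\}$ has finitely many connected components. Since $g$ fixes $g^-$ and $g^+$ and acts by homeomorphisms, it permutes these components, so a suitable power $g^N$ fixes each of them setwise. Let $C$ be one such nonempty component. The map $F(\eta,\xi)=[g^-,g^+,\eta,\xi]$ is defined and continuous on $C\times C$, which is connected and contains the diagonal. Since $F\equiv 1$ on the diagonal (the degenerate value of the cross-ratio when its last two arguments coincide) and $F$ is constant on the connected set $C\times C$, we get $F\equiv 1$ on $C\times C$. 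Evaluating at $(g^N\xi,\xi)$ for $\xi\in C$, so that $g^N\xi\in C$, and applying Lemma~\ref{lemmadynamic1} to $g^N$ yields $e^{-2l^G(g^N)}=[g^-,g^+,g^N\xi,\xi]=1$, whence $l^G(g^N)=0$. This contradicts $l^G(g^N)=Nl^G(g)=Nvl^w(g)>0$, completing the proof.

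The main obstacle is the second step: transferring arithmeticity from the a priori sparse fixed-point quadruples to all of $\mathfrak{C}$. This rests on two inputs that must be assembled carefully — the density of loxodromic fixed points among pairs of conical points, a standard convergence-group fact, and the continuity of the cross-ratio, which is exactly Proposition~\ref{continuityNaim} and ultimately rests on the relative Ancona inequalities. The finitely-many-components hypothesis then enters only through the clean device of passing to the power $g^N$ fixing each component, which is what lets us remain inside a single connected piece $C\times C$ and thereby connect a diagonal point, of value $1$, to a point computing $e^{-2l^G(g^N)}$.
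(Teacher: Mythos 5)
Your proof is correct and follows essentially the same route as the paper: arithmeticity of the Green translation spectrum via Theorem~\ref{theoremBHM} and Proposition~\ref{proparithmeticityGreen}, the cross-ratio formula of Lemma~\ref{lemmadynamic2} plus density of loxodromic fixed-point pairs and continuity (Proposition~\ref{continuityNaim}) to force all cross-ratios into a discrete set, a power of $g$ preserving a component, and Lemma~\ref{lemmadynamic1} to contradict $l^G>0$. The only (harmless) variations are that you work with the two-variable map on $C\times C$ and compare against the diagonal where the paper uses the one-variable map $\xi\mapsto[g^-,g^+,g\cdot\xi_0,\xi]$ evaluated at $\xi_0$ and $g\cdot\xi_0$, and that you correctly record the value set as $e^{\alpha_G\mathbb{Z}}$ rather than $e^{\alpha_G\mathbb{N}}$.
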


\begin{proof}
Assume on the contrary that $h=lv$.
Lemma~\ref{lemmadynamic2} shows that for any loxodromic elements $g$ and $h$,
$$[g^{-},h^{-},g^+,h^+]=\lim_{n\to +\infty}\mathrm{e}^{\frac{1}{2}(l^G(g^nh^n)-l^G(g^n)-l^G(h^n))}.$$
Using Proposition~\ref{proparithmeticityGreen}, we see that
$[g^{-},h^{-},g^+,h^+]$ lies in a discrete set $\mathrm{e}^{\alpha_G\N}$.
Since the set of $\{(g^-,g^+)\}$ is dense in the set $\{(\xi_1,\xi_2),\xi_1\neq \xi_2\}$,
using the continuity of the cross-ratio, we have that for any four conical limit points $\xi_1,\xi_2,\xi_3,\xi_4$ in $\mathfrak{C}$,
$[\xi_1,\xi_2,\xi_3,\xi_4]$ lies in $\mathrm{e}^{\alpha_G\N}$.

Let $g$ be a loxodromic element and
fix any conical limit point $\xi_0$ that differs from $g^-$ and $g^+$.
Then, $g\cdot \xi_0$ and $g^2\cdot \xi_0$ also differ from $g^-$ and $g^+$.
Denote by $\mathcal{C}$ the connected component of $\partial \hat{\Gamma}\setminus \{g^-,g^+\}$ in which $\xi_0$ lie.
By assumption there is a finite number of components in $\partial \hat{\Gamma}\setminus \{g^-,g^+\}$ and since $g$ permutes these components, there is a power of $g$, $g^{n_0}$ such that $\mathcal{C}$ is invariant by $g^{n_0}$.
Replacing $g$ by $g^{n_0}$, we can assume that $n_0=1$.

Since $g\cdot \xi_0\in \mathcal{C}$, $g^{\pm}\neq g\cdot \xi_0$.
Similarly, if $\xi\in \mathcal{C}$, $g^{\pm}\neq \xi$ and so the function $\xi \in \mathcal{C}\mapsto [g^-,g^+,g\cdot\xi_0,\xi]$ is continuous on $\mathcal{C}$ and has its image contained in $\mathrm{e}^{\alpha_G\N}$, so it is constant.
However, according to Lemma~\ref{lemmadynamic1}, $[g^-,g^+,g\cdot\xi_0,\xi_0]=\mathrm{e}^{-2l^G(g)}$ and $l^G(g)=vl^w(g)>0$, so that $[g^-,g^+,g\cdot\xi_0,\xi_0]<1$.
On the other hand, $[g^-,g^+,g\cdot\xi_0,g\cdot \xi_0]=1$.
This is a contradiction.
\end{proof}

Theorem~\ref{sphericalBowditchboundary} immediately follows from Theorem~\ref{mainTheoremconnected}.
Indeed, if the Bowditch boundary is a sphere of dimension $d\geq 2$, then the set of conical limit points with two points removed is a sphere with a finite or countable number of points removed.
Such a space is connected as we now prove.
It is enough to prove that $\R^d\setminus A$ is connected whenever $d\geq 2$ and $A$ is countable.
Consider two points $x,y\in \R^d\setminus A$.
There exists a straight line passing through $x$ that does not pass through any of the points in $A$, since there is an uncountable set of distinct such straight lines.
Similarly, there exists a straight line passing through $y$ that does not pass through any points in $A$ and that is not parallel to the first straight line.
Those two lines necessarily intersect, which yields a continuous path from $x$ to $y$. \qed

\begin{remark}
In Corollary~\ref{corofundamentalgroups}, if the manifold $M$ is of dimension $n=2$ and is non-compact, then the set of conical limit points is a circle $\mathbb{S}^1$ with a countable number of points removed and we cannot apply Theorem~\ref{mainTheoremconnected}.
Actually, in this setting, $\pi_1(M)$ is free and so $h=lv$ can occur.
However, if $M$ is compact, then the set of conical limit points is homeomorphic to $\mathbb{S}^1$ and so we can apply Theorem~\ref{mainTheoremconnected}. We thus recover the fact that $h<lv$ for the word metric on a surface group, which is already a consequence of results in \cite{GMM}.
\end{remark}


\bibliographystyle{plain}
\bibliography{hlv}

\end{document}